\def \qed {\hfill \vrule height6pt width 6pt depth 0pt}
\def\textmatrix#1&#2\\#3&#4\\{\bigl({#1 \atop #3}\ {#2 \atop #4}\bigr)}
\def\dispmatrix#1&#2\\#3&#4\\{\left({#1 \atop #3}\ {#2 \atop #4}\right)}
\newcommand{\beg}{\begin{equation}}
        \newcommand{\eeg}{\end{equation}}
\newcommand{\ben}{\begin{eqnarray*}}
        \newcommand{\een}{\end{eqnarray*}}
\newtheorem{thm}{Theorem}[section]
\newtheorem{cor}[thm]{Corollary}
\newtheorem{lem}[thm]{Lemma}
\newtheorem{prop}[thm]{Proposition}
\numberwithin{equation}{section} \theoremstyle{definition}
\newtheorem{defn}[thm]{Definition}
\newtheorem{rem}[thm]{Remark}
\newtheorem{eg}[thm]{Example}
\def\textmatrix#1&#2\\#3&#4\\{\bigl({#1 \atop #3}\ {#2 \atop #4}\bigr)}
\def\dispmatrix#1&#2\\#3&#4\\{\left({#1 \atop #3}\ {#2 \atop #4}\right)}
\newcommand{\X}{\mathbb{X}}
\newcommand{\Y}{\mathbb{Y}}
\newcommand{\Z}{\mathbb{Z}}
\newcommand{\N}{\mathbb{N}}
\newcommand{\D}{\mathbb{D}}
\newcommand{\SL}{\mathsf{L}}
\newcommand{\SR}{\mathsf{R}}
\newcommand{\Ann}{\mathsf{Ann}}
\newcommand{\J}{\mathbb{J}}
\newcommand{\C}{\mathbb{C}}
\newcommand{\BL}{\mathbb{L}}
\newcommand{\HS}{\mathcal{H}}
\newcommand{\KS}{\mathcal{K}}
\title[A dilation theoretic approach to Banach spaces]{A dilation theoretic approach to Banach spaces}
\author[Jana, Pal and Roy]{Swapan Jana, Sourav Pal and Saikat Roy}
\address[Swapan Jana]{Mathematics Department, Indian Institute of Technology Bombay,
		Powai, Mumbai - 400076, India.} \email{ swapan.jana@iitb.ac.in , swapan.math2015@gmail.com}
\address[Sourav Pal]{Mathematics Department, Indian Institute of Technology Bombay,
		Powai, Mumbai - 400076, India.} \email{sourav@math.iitb.ac.in , souravmaths@gmail.com}
	\address[Saikat Roy]{Mathematics Department, Indian Institute of Technology Bombay, Powai, Mumbai-400076, India.} \email{saikatroy@math.iitb.ac.in , saikatroy.cu@gmail.com}		
	\keywords{Banach space contraction, Dilation, Wold isometry, $\sigma$-shift, Complemented subspace, Canonical decomposition, Norm attainment set}	
	\subjclass[2020]{46C15, 47A20, 47A30, 47A65, 47B01, 46A22, 46B10}	
\begin{document}

\begin{abstract}

For a complex Banach space $\mathbb X$, we prove that $\mathbb X$ is a Hilbert space if and only if every strict contraction $T$ on $\mathbb X$ dilates to an isometry if and only if for every strict contraction $T$ on $\mathbb X$ the function $A_T: \mathbb X \rightarrow [0, \infty]$ defined by $A_T(x)=(\|x\|^2 -\|Tx\|^2)^{\frac{1}{2}}$ gives a norm on $\mathbb X$. We also find several other necessary and sufficient conditions in this thread such that a Banach sapce becomes a Hilbert space. We construct examples of strict contractions on non-Hilbert Banach spaces that do not dilate to isometries. Then we characterize all strict contractions on a non-Hilbert Banach space that dilate to isometries and find explicit isometric dilation for them. We characterize complemented subspaces of a reflexive, smooth and strictly convex Banach space in terms of duality of subspaces and linearity of Hahn Banach extension operator. We obtain characterizations for the isometries on a reflexive Banach space whose ranges are orthogonally complemented subspaces and thus answer a question posed by Faulkner and Huneycutt. Since bilateral shift on a Banach space is not in general a bounded operator, we define $\sigma$-shift meaningfully which is a Banach space unitary and show that a unilateral shift on a Banach space extends to a $\sigma$-shift. Then we show that a $\sigma$-shift becomes a bilateral shift if the underlying Banach space is a Hilbert space. We show that the spectrum of a $\sigma$-shift acting on a smooth Banach space is the whole unit circle $\mathbb T$. Also, we prove that every Wold isometry on a Banach space extends to a Banach space unitary. We determine the norm attainment set for every Banach sapce operator $T$ such that $T/\|T\|$ dilates to a Banach space isometry. We define a new adjoint $T_*$ for a Banach space operator $T$ and show that a reflexive, smooth and strictly convex Banach space $\mathbb X$ of dimension greater than $2$ is a Hilbert space if and only if $T_*$ is linear for every rank one $T \in \mathcal B(\mathbb X)$. Finally, we find a canonical decomposition for a Banach space contraction $T$ that splits $T$ orthogonally into two parts of which one is a Banach space unitary and the other is a completely non-unitary Banach space contraction.

\end{abstract}

\maketitle

\tableofcontents

\section{Introduction}

\vspace{0.2cm}

\noindent
We begin with a few basic definitions, notations and termonilogies that we shall follow throughout the paper. Unless and otherwise stated, all Banach spaces and Hilbert spaces are over the field of complex numbers $\mathbb{C}$. The collection of all unit vectors in a Banach space $\X$ is denoted by $S_\X$. By a subspace, we always mean a linear subspace. The term `operator' stands for a bounded linear operator and the linear operators that are not bounded will be mentioned separately. An operator $T$ on a Banach space $\X$ is said to be a \textit{contraction} (or, a \textit{strict contraction}) if $\|T\|\leq 1$ (or, $\|T\|< 1$). A \textit{norm-one} operator is an operator whose norm is equal to $1$. The Banach adjoint of $T$ is denoted by $T^\times$. For two normed spaces $\X_1, \X_2$, their \textit{orthogonal $2$-sum} or \textit{$2$-direct sum} is the normed space $\X_1 \oplus_2 \X_2$ that consists of vectors of the form $(x_1,x_2)$ or $x_1 \oplus_2 x_2$, where $x_1 \in \X_1, \, x_2 \in \X_2$ with $\|(x_1,x_2)\|=(\|x_1\|^2 + \| x_2 \|^2)^{\frac{1}{2}}$. Orthogonal direct sum of two Hilbert spaces $\HS_1, \HS_2$ will also be denoted by $\HS_1 \oplus_2 \HS_2$, though the standard practice is $\HS_1 \oplus \HS_2$. If $\X_1,\X_2$ are any two normed spaces over the same filed, then the space $\X_1 \oplus \X_2$ stands for the vector space direct sum of $\X_1, \X_2$. For two Banach spaces $\X$ and $\Y$, an \textit{isometry} is a linear map $V:\, \X \rightarrow \Y$ that satisfies $\|Vx\|=\|x\|$ for all $x\in \X$. A \textit{Banach space unitary} is a surjective isometry. Given any two elements $x$ and $y$ in a Banach space $\X$, $x$ is said to be orthogonal to $y$, written as $x\perp_B y$, if $x$ is orthogonal to $y$ in the sense of Birkhoff-James \cite{Birkhoff, James 1}, i.e.  if $\|x+\lambda y\|\geq \|x\|$ for all scalars $\lambda$. For a Banach space $\X$, the space $\ell_2(\X)$ consists of sequences $(x_n)$ of vectors from $\X$ such that ${\displaystyle \Sigma_{n=1}^{\infty} \ \|x_n\|^2 < \infty}$. The \textit{forward shift operator} $M_z$ on $\ell_2(\X)$ maps a vector $(x_1, x_2, x_3, \dots) $ to $(\mathbf{0}, x_1, x_2, \dots)$ and is a Banach space isometry. Similarly, the \textit{backward shift operator} $\widehat{M}_{z}$ on $\ell_2(\X)$ maps $(x_1, x_2, x_3, \dots) $ to the vector $(x_2, x_3, x_4, \dots)$.

\medskip

Isometric dilation of a Hilbert space contraction is a fundamental concept in operator theory which is defined in the following way.

\begin{defn} \label{def:new-001A}
Suppose $T$ is a contraction on a Hilbert space $\mathcal{H}.$ An isometry $V$ on a Hilbert space $\mathcal{K}$ is said to be an \textit{isometric dilation} of $T$ if there exists an isometry $W:\mathcal{H}\to \mathcal{K}$ such that
\begin{equation} \label{eqn:001}
q(T)=W^* q(V)W \ \ \text{ or equivalently } \ \ q(\widehat{T})= P_{_{W(\HS)}} q(V)|_{W(\HS)}
\end{equation}
for all polynomials $q\in \C[z]$, where $\widehat{T}$ is defined by $\widehat{T}:= \widehat{W} T \widehat{W}^{-1}: W(\HS) \to W(\HS)$ with $\widehat{W}$ being the unitary $\widehat{W}:= W:\HS \to W(\HS)$ and $P_{_{W(\HS)}}$ being the orthogonal projection of $\KS$ onto $W(\HS)$. Such an isometric dilation is called \textit{minimal} if
\[
 \mathcal{K} = \bigvee_{n=0}^\infty V^n W(\mathcal{H}) = \overline{span}\{V^nWh:~h\in \mathcal{H},~n\geq 0\}.
\]
\end{defn}

A celebrated theorem due to Bela Sz. Nagy \cite{Nagy} states that every Hilbert space contraction possesses a minimal isometric dilation. Moreover, a minimal isometric dilation of a contraction is unique up to unitary equivalence, e.g. see \cite{Pisier, Nagy, Nagy Foias}. Thus, every contraction on a Hilbert space can be realized as a compression of an isometry acting on a bigger Hilbert space. Also, the space $\KS$ is isomorphic with the orthogonal direct sum $W(\HS) \oplus_2 \BL$, where $\BL$ is the orthogonal complement of $W(\HS)$ in $\KS$. The first obstruction in the way of defining dilation for a Banach space contraction is that we do not have an adjoint for the isometry $W$. Interestingly, (\ref{eqn:001}) shows that isometric dilation of a Hilbert space contraction can be defined in an alternative and equivalent way by avoiding the Hilbert space adjoint of $W$. If we want to adopt this definition for dilation in Banach space, then the next issue is that a Banach space projection can have norm strictly greater than $1$ which can cause imbalance in norm in either side of (\ref{eqn:001}). However, in Banach space setting if the bigger space $\KS$ is chosen to be isomorphic with $W(\HS) \oplus_2 \BL$ for some Banach space $\BL$, then the projection of $\KS$ onto $W(\HS)$ as in (\ref{eqn:001}) becomes a norm-one projection and the norm-issue is resolved. Thus, to get rid of all such hindrances we define isometric dilation for a Banach space contraction in the following way which generalizes the Hilbert space dilation.

\begin{defn}\label{Definition of Dilation of contractions on Banach space-1}
Suppose $T$ is a contraction acting on a Banach space $\mathbb{X}$. An isometry $V$ on a Banach space $\widetilde{\mathbb{X}}$ is said to be an \textit{isometric dilation} of $T$ if there is an isometry $W:\mathbb{X}\to \widetilde{\mathbb{X}}$ and a closed linear subspace $\BL$ of $\widetilde{\X}$ such that $\widetilde{\mathbb{X}}$ is isomorphic with $ W(\mathbb{X}) \oplus_2 \BL$ and the operator $\widehat{T}:= \widehat{W} T \widehat{W}^{-1}: W(\X) \to W(\X)$ satisfies
\[
q(\widehat{T})= P_{_{W(\mathbb{X})}} q(V)|_{W(\X)}
\]
for all polynomials $q\in \mathbb{C}[z]$, where $\widehat{W}$ is the unitary (i.e. surjective isometry) $\widehat{W}:= W:\mathbb{X}\to W(\mathbb{X})$ and $P_{_{W(\mathbb{X})}}$ is the norm-one projection of $\widetilde{\X}$ onto $W(\mathbb{X})$. Moreover, such an isometric dilation is called \textit{minimal} if
\[
 \widetilde{\X} = \bigvee_{n=0}^\infty V^n W(\X) = \overline{span}\{V^nWx:~x\in \X,~n\geq 0\}.
\]
 
\end{defn}

In Section \ref{sec:NEW-02}, we explicitly justify Definition \ref{Definition of Dilation of contractions on Banach space-1} with proofs as our approach differs from the previous notions of Banach space dilation, e.g., see \cite{Stroescu, AE, AL 1, AL 2, Peller, SJ}. The fact that every Hilbert space contraction dilates to a Hilbert space isometry makes the following three statements equivalent: $T$ is a contraction on a Hilbert space $\HS$; the operator $I-T^*T$ is positive definite; $T$ is an operator (on $\HS$) that admits an isometric dilation. Also, an easy computation shows that if $T$ is a strict contraction, then $I-T^*T$ is positive definite if and only if the map $A_T:\HS \to [0, \infty)$ given by $h \mapsto (\|h\|^2-\|Th\|^2)^{\frac{1}{2}}$ defines a norm on $\HS$. The reason behind dealing with strict contractions only is that if $T$ is a contraction with $\|T\|=1$, then $A_T$ does not define a norm if $T$ attains its norm at a point, i.e. if $\|Tx\|=\|x\|$ for a nonzero vector $x$. The minimal isometric dilation of a strict contraction is more subtle. The unilateral shift $M_z$ on $\ell_2(\HS)$ happens to be the minimal isometric dilation for every strict contraction on a Hilbert space $\HS$, see \cite{Nagy Foias} for details. However, we shall see that in general a Banach space contraction may not dilate to a Banach space isometry. Indeed, if every strict contraction $T$ on a Banach space $\X$ dilates to a Banach space isometry or equivalently if the map $A_T$ defines a norm on $\X$ for every strict contraction $T$, then $\X$ becomes a Hilbert space and vice-versa. This is one of our main results and is stated below.

\begin{thm}\label{thm:main-01}
Let $\mathbb{X}$ be a complex Banach space. Then the following are equivalent.

\smallskip

\begin{enumerate}

\item[(i)] Every strict contraction $T$ on $\X$ dilates to the unilateral shift $M_z$ on $\ell_2(\mathbb{X}).$

\smallskip

\item[(ii)] Every strict contraction $T$ on $\X$ dilates to an isometry.

\smallskip

\item[(iii)]  For every strict contraction $T\in \mathcal{B}(\mathbb{X}),$ the function $A_T: \mathbb{X}\to [0,\infty)$ given by 
\[
  A_T(x) = \left( \| x\|^2 - \|Tx\|^2 \right)^{\frac{1}{2}}, \quad x\in \mathbb{X},
\]
defines a norm on $\mathbb{X}.$

\smallskip

\item[(iv)] $\mathbb{X}$ is a Hilbert space.

\smallskip

\item[(v)] For every strict contraction $T\in \mathcal{B}(\mathbb{X}),$ there exists an isometry $W: \mathbb{X}\to \ell_2(\mathbb{X})$ such that 
\[
 \widehat{M}_{z} W(x)= W(Tx), \quad x\in \mathbb{X},
\]
where $\widehat{M}_{z}$ is the backward shift operator.

\smallskip

\item[(vi)] For every strict contraction $T \in \mathcal B(\X)$ and for every automorphism $\phi_{\lambda}$ of the unit disk $\mathbb D$, the operator $\phi_\lambda(T)$ is a contraction.

\smallskip

\item[(vii)] For every strict contraction $S$ on the dual space $\mathbb{X}^*,$ the function $A_{S}:\mathbb{X}^*\to [0,\infty)$ given by 
\[
   A_S(x^*) = \left( \|x^*\|^2 - \|S(x^*)\|^2 \right)^{\frac{1}{2}},\quad x^*\in \mathbb{X}.
\]
defines a norm on $\mathbb{X}^*.$

\smallskip

\item[(viii)] $\mathbb{X}$ is reflexive and the Banach adjoint $T^\times$ of $T$ dilates to $M_z$ on $\ell_2(\mathbb{X}^*)$ for every strict contraction $T$ on $\mathbb{X}$.

\smallskip

\item[(ix)] The operator $\phi_\alpha(U)= (U-\alpha I)(I-\bar{\alpha}U)^{-1}$ is a contraction for every automorphism $\phi_\alpha$ of the unit disk $ \mathbb{D}$, $U$ being the bilateral shift operator on $\ell_2(\mathbb{Z},\mathbb{X})$ defined by
\[
U((\dots, x_{-2}, x_{-1}, \boxed{x_0}, x_1, x_2, \dots))= (\dots, x_{-2}, \boxed{x_{-1}}, x_0, x_1, x_2, \dots),
\]
where the box on either side indicates the $0$-th position.
\end{enumerate}
\end{thm}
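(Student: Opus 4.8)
The plan is to establish the core cycle (iv)$\Rightarrow$(i)$\Rightarrow$(ii)$\Rightarrow$(iii)$\Rightarrow$(iv) and then to attach each of (v),(vi),(vii),(viii),(ix) to it by a short two-way argument. Everything except (iii)$\Rightarrow$(iv) is either a manipulation of Definition~\ref{Definition of Dilation of contractions on Banach space}, the classical Sz.-Nagy--Foias theorem, or a direct computation; the one place where the geometry of $\X$ genuinely enters is (iii)$\Rightarrow$(iv), and that is where I expect the real work to be.

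For the soft links: (iv)$\Rightarrow$(i) is Sz.-Nagy--Foias --- for a \emph{strict} contraction $T$ on a Hilbert space $\HS$ the defect operator $D_T=(I-T^*T)^{1/2}$ is invertible, so the minimal isometric dilation is a pure shift of multiplicity $\dim\HS$, i.e.\ $M_z$ on $\ell_2(\HS)$, and the embedding $Wh=(D_Th,\,D_TTh,\,D_TT^2h,\dots)$, an isometry by the telescoping identity $\sum_{n\ge0}\|D_TT^nh\|^2=\|h\|^2$, realizes this dilation in the sense of Definition~\ref{Definition of Dilation of contractions on Banach space} (the same $W$ satisfies $\widehat{M}_z W=WT$, which is already (iv)$\Rightarrow$(v)). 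The implication (i)$\Rightarrow$(ii) is immediate. For (ii)$\Rightarrow$(iii): if the strict contraction $T$ dilates to an isometry $V$ on $\widetilde\X\cong W(\X)\oplus_2\BL$, then $q(z)=z$ in the dilation identity gives $P_{W(\X)}(VWx)=WTx$, hence $VWx=WTx\oplus_2\ell(x)$ for the \emph{linear} map $\ell\colon\X\to\BL$ obtained by composing $V$, $W$ and the norm-one projection onto $\BL$; since $V,W$ are isometries and the decomposition is a $2$-sum, $\|x\|^2=\|Tx\|^2+\|\ell(x)\|^2$, so $A_T(x)=\|\ell(x)\|$ is a seminorm, and it is a norm because $\|Tx\|<\|x\|$ for $x\neq 0$. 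The same bookkeeping gives (v)$\Rightarrow$(iii): writing $Wx=(W_1x,W_2x,\dots)$, the relation $\widehat{M}_z W=WT$ forces $W_k=W_1T^{\,k-1}$, and then $\sum_{j\ge0}\|W_1T^jx\|^2=\|x\|^2$ together with the same sum for $Tx$ telescopes to $A_T(x)=\|W_1x\|$.

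Now the heart, (iii)$\Rightarrow$(iv): I would squeeze (iii) down to two dimensions. Fix a $2$-dimensional subspace $M\subseteq\X$ and $\phi\in S_{M^*}$; extend $\phi$ to $f\in S_{\X^*}$ by Hahn--Banach, pick any $w\in S_M$, and for $0<\varepsilon<1$ put $T_\varepsilon x=\varepsilon f(x)w$, a strict contraction on $\X$. By (iii), $A_{T_\varepsilon}$ is a norm on $\X$, hence so is its restriction to $M$, which (using $\|w\|=1$ and $f|_M=\phi$) is $x\mapsto(\|x\|^2-\varepsilon^2|\phi(x)|^2)^{1/2}$. Letting $\varepsilon\to1^-$ and using that a pointwise limit of norms is a seminorm, we obtain that $N_\phi(x):=(\|x\|^2-|\phi(x)|^2)^{1/2}$ is a seminorm on $M$. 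Choose $u\in S_M$ at which $\phi$ attains its norm (possible since $\dim M<\infty$), so $N_\phi(u)=0$, and $v\in S_M\cap\ker\phi$. A seminorm is unchanged under adding multiples of a vector it kills, so $N_\phi(au+bv)=N_\phi(bv)=|b|$ for all scalars $a,b$; squaring and using $\phi(au+bv)=a$ gives $\|au+bv\|^2=|a|^2+|b|^2$, i.e.\ $M$ is isometrically a Hilbert space. Since every $2$-dimensional subspace of $\X$ is then a Hilbert space, the parallelogram law holds on $\mathrm{span}\{x,y\}$ for all $x,y\in\X$, and Jordan--von Neumann yields (iv). The delicate point --- the main obstacle --- is precisely this reduction: recognizing that (iii), a statement about operators on all of $\X$, is already forced down to the $2$-dimensional picture by Hahn--Banach-extended rank-one operators and the limit $\varepsilon\to1$, and that the resulting seminorm, by vanishing at a norm-attaining vector, collapses the $2$-dimensional norm to the Euclidean one.

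It remains to attach (vi),(vii),(viii),(ix). Condition (vii) is literally (iii) for $\X^*$, so by the equivalence just proved (vii)$\Leftrightarrow$ $\X^*$ is a Hilbert space $\Leftrightarrow$ $\X$ is a Hilbert space (a Banach space is Hilbert iff its dual is). For (viii): when $\X$ is reflexive every strict contraction $S$ on $\X^*$ equals the Banach adjoint $T^\times$ of the strict contraction $T:=S^\times$ on $\X^{**}=\X$, so the hypothesis of (viii) says every strict contraction on $\X^*$ dilates to an isometry, i.e.\ (ii) holds for $\X^*$, forcing $\X^*$ and hence $\X$ to be Hilbert; conversely (iv)$\Rightarrow$(viii) follows from reflexivity of Hilbert spaces and (iv)$\Rightarrow$(i) applied to $\X^*$. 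For (vi): (iv)$\Rightarrow$(vi) is von Neumann's inequality, $\|\phi_\lambda(T)\|\le\sup_{\mathbb D}|\phi_\lambda|=1$ for a strict contraction $T$. Conversely, each product $\phi_{\lambda_1}(T)\cdots\phi_{\lambda_n}(T)$ is again a contraction by (vi) and equals $B(T)$ for the finite Blaschke product $B=\phi_{\lambda_1}\cdots\phi_{\lambda_n}$; since the unit ball of $H^\infty(\mathbb D)$ is the closed convex hull of the finite Blaschke products, this improves to the full von Neumann inequality $\|p(T)\|\le\|p\|_{\infty,\mathbb D}$ for all polynomials $p$ and strict contractions $T$ on $\X$, from which $\X$ must be a Hilbert space --- either by the known characterization of Hilbert spaces via von Neumann's inequality, or by running the $2$-dimensional reduction above through the resulting estimate $\|aI+b(\phi\otimes v)\|_M\le1$ valid for $|b|\le1-|a|^2$ and $v\in\ker\phi$. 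Finally (iv)$\Rightarrow$(ix) because on a Hilbert space $U$ is unitary with $\sigma(U)=\mathbb T$ and $\phi_\alpha$ is inner, so $\phi_\alpha(U)$ is unitary; for the converse I would apply the norm-convergent expansion $\phi_\alpha(U)=-\alpha I+(1-|\alpha|^2)\sum_{m\ge1}\bar\alpha^{\,m-1}U^m$ to the sequence $\xi$ carrying $x$ in the $0$-th slot and $y$ in the $1$-st slot and compute
\[
\|\phi_\alpha(U)\xi\|^2=|\alpha|^2\|x\|^2+\|(1-|\alpha|^2)x-\alpha y\|^2+(1-|\alpha|^2)\|\bar\alpha x+y\|^2 ,
\]
so (ix) forces this to be $\le\|x\|^2+\|y\|^2$; the substitution $\alpha=r\in(0,1)$, $y=\tfrac{1-r^2}{r}z$ turns the inequality into $\|tx+(1-t)z\|^2+t(1-t)\|x-z\|^2\le t\|x\|^2+(1-t)\|z\|^2$ with $t=r^2$, and specializing $t=\tfrac12$ gives the parallelogram inequality, which self-improves (apply it to the pair $x+z,\,x-z$) to the parallelogram law, and Jordan--von Neumann again gives (iv). The secondary obstacle is the converse half of (vi): passing from the Blaschke-product bound to the full von Neumann inequality and thence to Hilbert-ness.
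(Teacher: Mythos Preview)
Your argument is essentially correct, but one slip and several genuine methodological differences from the paper are worth flagging.

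The slip is in (iv)$\Rightarrow$(i): the embedding $Wh=(D_Th,D_TTh,\dots)$ satisfies $\widehat M_zW=WT$, i.e.\ $M_z^*W=WT$, which makes $W(\HS)$ invariant under $M_z^*$ with $M_z^*|_{W(\HS)}\cong T$. In the block decomposition $M_z=\begin{bmatrix}A&0\\ *&*\end{bmatrix}$ relative to $W(\HS)\oplus W(\HS)^\perp$ one then gets $A^*\cong T$, so $P_{W(\HS)}M_z^n|_{W(\HS)}\cong (T^*)^n$, not $T^n$: this $W$ realizes a dilation of $T^*$. The paper uses $Wh=(D_{T^*}h,D_{T^*}T^*h,\dots)$ instead, which gives $M_z^*W=WT^*$ and hence $M_z$ dilating $T$. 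Your logic survives (apply your construction to $T^*$, or simply keep your $W$ for (v) and switch to $D_{T^*}$ for (i)), but the sentence ``the same $W$'' is not quite right.

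On the substantive differences: your (iii)$\Rightarrow$(iv) is cleaner than the paper's. The paper first proves the Pythagorean lemma $x\perp_By\Rightarrow\|x\pm y\|^2=\|x\|^2+\|y\|^2$ (via two separate rank-one contractions and limits $r\to1$), then splits into $\dim\X\ge3$ (Birkhoff--James symmetry $\Rightarrow$ Hilbert, quoting James) versus $\dim\X=2$. Your single rank-one Hahn--Banach construction, the limit seminorm $N_\phi$, and the observation that a seminorm is blind to its kernel give $\|au+bv\|^2=|a|^2+|b|^2$ on every $2$-plane directly, whence Jordan--von Neumann; no case split, no appeal to James. For (vi)$\Rightarrow$(iv) and (ix)$\Rightarrow$(iv) the paper goes the other way: both times it reduces to $\|x-\alpha y\|=\|y-\bar\alpha x\|$ for $\|x\|=\|y\|$ and $\alpha\in\mathbb D$ (for (vi) via the Foias rank-one trick $T=r f_x(\cdot)y$ and $\|(T-\lambda I)x\|\le\|(I-\bar\lambda T)x\|$; for (ix) by applying $\|(U-\alpha I)\underline x\|\le\|(U^{-1}-\bar\alpha I)\underline x\|$ to a two-term sequence), and then invokes Ficken's criterion. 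Your (ix) route through the series for $\phi_\alpha(U)$, the substitution $\alpha=r$, $y=\tfrac{1-r^2}{r}z$, and the self-improving parallelogram inequality is correct and self-contained. Your (vi) route is the weakest link: passing from $\|\phi_\lambda(T)\|\le1$ to von Neumann via density of finite Blaschke products (Carath\'eodory--Schur, compact-open topology) and then quoting ``von Neumann $\Rightarrow$ Hilbert'' works, but that last quotation is precisely Foias' argument with Ficken --- so you are implicitly invoking the paper's method anyway. If you want an independent proof of (vi)$\Rightarrow$(iv), run your own $2$-plane reduction on the rank-one $T_\varepsilon=\varepsilon f(\cdot)w$ directly from $\|(T_\varepsilon-\lambda I)x\|\le\|(I-\bar\lambda T_\varepsilon)x\|$; this reproduces the seminorm $N_\phi$ and closes the loop without external input.
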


This is Theorem \ref{Characterisation of Hilbert spaces in terms of Dilation on Banach space} in this paper and the definition of Banach space dilation (i.e., Definition \ref{Definition of Dilation of contractions on Banach space-1}) plays a crucial role in the proof of this result. So, we learn from Theorem \ref{thm:main-01} that a strict contraction on a non-Hilbert Banach space (i.e. a Banach space which is not a Hilbert space) may not dilate to an isometry. One naturally asks if we can characterize the class of strict contractions on a non-Hilbert Banach space that possess isometric dilation. The following theorem answers this question.

\begin{thm}\label{thm:main-02} Suppose $\X$ is a complex Banach space. Then a strict contraction $T$ on $\X$ dilates to an isometry if and only if the function $A_T: \mathbb{X}\to [0,\infty)$ given by $
  A_T(x) = \left( \| x\|^2 - \|Tx\|^2 \right)^{\frac{1}{2}}$
defines a norm on $\mathbb{X}.$ Moreover, the minimal isometric dilation space of $T$ is isometrically isomorphic to $\mathbb{X}\oplus_2 \ell_2(\mathbb{X}_0)$, where $\mathbb{X}_0$ is the Banach space $(\mathbb{X}, A_T).$
\end{thm}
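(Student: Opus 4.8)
The plan is to prove the equivalence by treating the two implications separately, and then to establish the ``moreover'' part by exhibiting the minimal dilation explicitly on the model space. First I would record the elementary features of $A_T$. Absolute homogeneity is immediate, and since $\|T\|<1$ we have $\|Tx\|\le \|T\|\,\|x\|<\|x\|$ for $x\neq 0$, so $A_T(x)>0$; more precisely $(1-\|T\|^2)^{\frac12}\|x\|\le A_T(x)\le \|x\|$. Hence, once $A_T$ is known to be subadditive it is automatically a norm equivalent to the original one, and then $\X_0:=(\X,A_T)$ is automatically complete, i.e. a genuine Banach space. Consequently the assertion ``$A_T$ defines a norm'' is the same as ``$A_T$ is subadditive'', and this is all that has to be produced (or used) on each side.

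For the ``only if'' direction, suppose $V$ on $\widetilde{\X}$ is an isometric dilation of $T$, with associated isometry $W:\X\to\widetilde{\X}$ and closed subspace $\BL$, so that (by Definition \ref{Definition of Dilation of contractions on Banach space}, in particular the requirement that $P_{W(\X)}$ be a norm-one projection) $\widetilde{\X}$ is isometrically the orthogonal $2$-sum $W(\X)\oplus_2\BL$. Taking $q(z)=z$ in the dilation identity gives $P_{W(\X)}VWx=W(Tx)$ for all $x$, so the vector $b_x:=(I-P_{W(\X)})VWx$ lies in $\BL$ and, being a composition of linear maps, depends linearly on $x$. Since $V$ and $W$ are isometries, $\|VWx\|^2=\|x\|^2$, while the $2$-sum structure gives $\|VWx\|^2=\|W(Tx)\|^2+\|b_x\|^2=\|Tx\|^2+\|b_x\|^2$; hence $\|b_x\|_{\BL}=A_T(x)$. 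Linearity of $x\mapsto b_x$ then forces $A_T(x+y)=\|b_x+b_y\|\le\|b_x\|+\|b_y\|=A_T(x)+A_T(y)$, so $A_T$ is subadditive, hence a norm.

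For the ``if'' direction together with the ``moreover'' part, I would construct the dilation on $\widetilde{\X}:=\X\oplus_2\ell_2(\X_0)$. Let $D:\X\to\X_0$ be the identity map, so that $\|Dx\|_{\X_0}=A_T(x)$ and therefore $\|Tx\|^2+\|Dx\|_{\X_0}^2=\|x\|^2$, and define
\[
V\bigl(x\oplus(x_1,x_2,x_3,\dots)\bigr)=Tx\oplus(Dx,x_1,x_2,\dots).
\]
Then $V$ is linear, and the displayed norm identity shows $V$ is an isometry. Setting $Wx=x\oplus 0$ and $\BL=\{0\}\oplus_2\ell_2(\X_0)$, the map $W$ is an isometry, $\widetilde{\X}=W(\X)\oplus_2\BL$ is the required $2$-sum with norm-one projection $P_{W(\X)}$, and an easy induction gives $V^nWx=T^nx\oplus(DT^{n-1}x,\dots,DTx,Dx,0,0,\dots)$, so that $P_{W(\X)}V^nWx=W(T^nx)$ and hence $q(\widehat{T})=P_{W(\X)}q(V)|_{W(\X)}$ for all polynomials $q$. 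For minimality, the linear span of $\{V^nWx:n\ge 0,\ x\in\X\}$ contains $\X\oplus 0$ (from $n=0$) and, peeling off coordinates one at a time and using that $D$ maps onto $\X_0$, it also contains the subspace of sequences supported in the $k$-th coordinate for every $k$; so the span is dense in $\widetilde{\X}$. This yields a minimal isometric dilation of $T$ on $\X\oplus_2\ell_2(\X_0)$, as claimed.

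The routine parts — linearity of $V$, the inductive formula for $V^nW$, and the triangle-inequality bookkeeping — are the bulk of the work but are straightforward. The one genuinely delicate point is the correct use of the \emph{isometric} $2$-sum structure in the ``only if'' direction: this is exactly what upgrades an a priori estimate on $\|b_x\|$ to the precise identity $\|b_x\|=A_T(x)$, and without the norm-one-projection clause in the definition of dilation this step would break down. If one reads the ``moreover'' part in the stronger sense that \emph{every} minimal isometric dilation of $T$ is isometrically isomorphic to the above model (rather than merely that one such realization exists), then the customary uniqueness argument still applies: one intertwines two minimal dilations by $V_1^nW_1x\mapsto V_2^nW_2x$ and checks this is a well-defined surjective isometry on the dense span, the key being that in the $2$-sum picture all relevant norms reduce to norms of the defect vectors $DT^kx$, which are determined by $T$ alone.
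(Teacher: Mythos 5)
Your proof is correct and follows essentially the same route as the paper: the necessity direction is the paper's argument verbatim (the defect vector $(I-P_{_{W(\X)}})VWx$ lies in the complementary summand, has norm exactly $A_T(x)$ by the $\oplus_2$ structure, and depends linearly on $x$), and the sufficiency direction is the paper's Schäffer-type construction with the identity map $D:\X\to\X_0$ playing the role of the defect operator, merely realized directly on the minimal space $\X\oplus_2\ell_2(\X_0)$ rather than on $\ell_2(\X\oplus_2\X_0)$ with the minimal subspace identified afterwards. The only cosmetic difference is that your two-sided bound $(1-\|T\|^2)^{\frac12}\|x\|\le A_T(x)\le\|x\|$ gives the completeness and norm-equivalence of $\X_0$ directly, where the paper invokes the open mapping theorem.
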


This is Theorem \ref{Necessary and sufficient condition for a contraction to be dilated 1} in this paper. Also, in this theorem we explicitly construct a minimal isometric dilation for a strict Banach space contraction. We learn from the literature (e.g. see Chapter 1 of \cite{Nagy Foias}) that a major role is played by the defect operator $D_T=(I-T^*T)^{\frac{1}{2}}$ in the construction of isometric dilation of a Hilbert space contraction $T$. Since we do not have adjoint for a Banach space contraction, naturally we would like to utilize the map $A_T$ in order to avoid the operator pencil $I-T^*T$ while constructing dilation for a Banach space contraction. Interestingly, more is true about the map $A_T$. If it induces a norm, then there is an operator $A$ such that $\|Ax\|=A_T(x)$ for every $x \in \X$ and vice-versa, see Corollary \ref{cor:main-01}. However, in Example \ref{exmp:main-01} we formulate a strict contraction $T$ on a non-Hilbert Banach space such that $A_T$ is not a norm or equivalently in view of Theorem \ref{thm:main-02}, $T$ does not dilate to any Banach space isometry.

\smallskip

An isometry $V$ on a Hilbert space $\HS$ orthogonally splits into two parts of which one is a unitary and the other is a unilateral shift, i.e. $\HS$ admits an orthogonal decomposition $\HS= \HS_0 \oplus_2 \HS_1$ into reducing subspaces $\HS_0, \HS_1$ of $V$ such that $V_0=V|_{\HS_0}$ is a unitary and $V_1=V|
_{\HS_1}$ is isomorphic to a unilateral shift. In the literature (e.g. see \cite{Wold}), this is known as Wold decomposition of a Hilbert space isometry. Also, every unilateral shift on a Hilbert space naturally extends to a bilateral shift with same multiplicity. Hence, every Hilbert space isometry extends to a Hilbert space unitary. Wold decomposition of a Hilbert space isometry was further generalized to canonical decomposition of a Hilbert space contraction. Indeed, every Hilbert space contraction $T$ decomposes into an orthogonal direct sum of a unitary and a completely non-unitary (c.n.u.) contraction, e.g. see \cite{Langer} or Chapter-I of \cite{Nagy Foias}. A c.n.u. contraction on a Hilbert space is a contraction that is not a unitary on any of its nonzero reducing subspaces.

\smallskip

The main hindrance in studying such results in Banach space setting is dealing with orthogonal complement of a closed subspace. The idea of orthogonal complement of a closed subspace in a Hilbert space was generalized to Banach spaces through the notion of $1$-complemented subspace. A closed subspace of a Banach space is said to be \textit{orthogonally complemented} or $1$-\textit{complemented} \cite{Kinnunen, Moslehian, R 1, R 2} if it is the range of a norm-one projection. However, unlike every closed subspace of a Hilbert space, a closed subspace of a Banach space may not be $1$-complemented. In fact, the Banach space isometries that have $1$-complemented ranges are not fully known till date even though there is an extensive research on this particular problem, e.g. see \cite{Ando, CFS, CFG, Ditor, Faulkner Huneycutt, F, LL, Moslehian}. Here we characterize all $1$-complemented subspaces of a reflexive, smooth and strictly convex Banach space and show that the study of $1$-complemented subspaces of a Banach space has connections with important topics like duality of subspaces, linearity of the Hahn-Banach extension operator, which were not known before. This is also a main result of this article.

\begin{thm}\label{Characterization of Right-complemented}
Let $\X$ be a reflexive, smooth and strictly convex Banach space and let $\Y$ be a proper closed subspace of $\X$. Then the following are equivalent.

\begin{itemize}
    \item[(i)] $\Y$ is $1$-complemented in $\X$.
    
    \smallskip
    
    \item[(ii)] The closed linear subspace 
    \[
     \bigcap_{f\in \X^*}\{\ker~f:~M_f\subseteq S_\Y\} 
    \]
    is a vector space complement of $\Y$, where $M_f=\{y\in S_\X:~|f(y)|=\|f\|\}$.
    
    \smallskip
    
    \item[(iii)] $\J_\X(\Y)$ is isometrically isomorphic to $\Y^*$, where $\J_\X(\Y)$ is as in $(\ref{eqn:new-001})$.
    
    \smallskip
    
    \item[(iv)] The Hahn-Banach extension operator $\Psi:\Y^*\to \X^*$ is linear. 
\end{itemize}
\end{thm}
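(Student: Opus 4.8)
The plan is to first collect the structural facts that follow from the hypotheses, then establish $(i)\Leftrightarrow(ii)$ by a direct Birkhoff--James argument, obtain $(i)\Leftrightarrow(iv)$ by passing to Banach adjoints, and finally treat $(iii)\Leftrightarrow(iv)$ as bookkeeping about the range of the Hahn--Banach extension operator $\Psi$.

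For the preliminaries I would record that, $\X$ being reflexive, smooth and strictly convex, so are $\X^*$ and every closed subspace of $\X$, in particular $\Y$; that the normalized duality map $J=J_\X:\X\to\X^*$ is then a single-valued bijection with $x\perp_B y\Leftrightarrow J(x)(y)=0$; and that strict convexity of $\X^*$ makes the norm-preserving Hahn--Banach extension unique, so $\Psi:\Y^*\to\X^*$ is a well-defined isometry satisfying $\Psi\circ J_\Y=J|_\Y$, whence $\mathrm{ran}\,\Psi=J(\Y)=\J_\X(\Y)$ by $(\ref{eqn:new-001})$ and surjectivity of $J_\Y$. I would also identify the subspace in $(ii)$: since $\X$ is reflexive every $f\in\X^*$ attains its norm, and strict convexity forces $M_f$ (for $f\neq 0$) to be the set of unimodular multiples of a single unit vector $x_f$, so $M_f\subseteq S_\Y$ iff $x_f\in\Y$; as $f=\|f\|\,J(x_f)$ by smoothness, this gives
\[
\bigcap_{f\in\X^*}\{\ker f:\,M_f\subseteq S_\Y\}\;=\;\bigcap_{y\in\Y}\ker J(y)\;=:\;Z,
\]
a closed subspace with $\Y\cap Z=\{0\}$ (because $J(y)(y)=\|y\|^2$) and, similarly, $\Y^\perp\cap\J_\X(\Y)=\{0\}$.

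Granting this, $(i)\Leftrightarrow(ii)$ should be quick: from a norm-one projection $P$ onto $\Y$ one checks $\ker P\subseteq Z$ via $\|y\|=\|P(y+\lambda x)\|\le\|y+\lambda x\|$ for $y\in\Y$, $x\in\ker P$, and $Z\subseteq\ker P$ because $Px\in\Y\cap Z=\{0\}$ for $x\in Z$, so $\ker P=Z$ is the required vector space complement; conversely, the algebraic decomposition $\X=\Y\oplus Z$ yields a linear idempotent $P$ with range $\Y$ and $\|P(y+z)\|=\|y\|\le\|y+z\|$, since $z\in Z$ forces $y\perp_B z$. For $(i)\Leftrightarrow(iv)$ I would argue through adjoints: if $P$ is a norm-one projection onto $\Y$, then $P^\times$ is a norm-one projection with $\ker P^\times=\Y^\perp$, and $g\mapsto P^\times\tilde g$ (for any extension $\tilde g$ of $g$) is well defined, linear, and a norm-preserving extension operator (using that $P^\times$ has norm one and that $\tilde g$ may be chosen norm-preserving), hence equals $\Psi$; conversely, if $\Psi$ is linear then $\Psi^\times$, which by reflexivity of $\X$ and $\Y$ is a norm-one map $\X\to\Y$ fixing $\Y$ pointwise since $\langle\Psi^\times y,g\rangle=\langle y,\Psi g\rangle=g(y)$, is a norm-one projection onto $\Y$. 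Finally $(iii)\Leftrightarrow(iv)$: since $\mathrm{ran}\,\Psi=\J_\X(\Y)$ and $R\Psi=\mathrm{id}_{\Y^*}$ for the restriction $R:\X^*\to\Y^*$, linearity of $\Psi$ makes $\J_\X(\Y)$ a closed subspace that $\Psi$ carries isometrically onto from $\Y^*$, and conversely a closed-subspace $\J_\X(\Y)$ makes $R|_{\J_\X(\Y)}$ a linear norm-preserving bijection onto $\Y^*$ whose inverse is $\Psi$.

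I expect the only genuinely non-routine point to be the adjoint argument in $(i)\Leftrightarrow(iv)$: the observation that the Banach adjoint of a norm-one projection onto $\Y$ is exactly a linear norm-preserving Hahn--Banach extension operator, and dually that the adjoint of a linear $\Psi$ is a norm-one projection, so that ``$1$-complementation of $\Y$'' and ``linearity of $\Psi$'' are adjoint phenomena. The rest is standard manipulation with reflexive, smooth, strictly convex spaces --- uniqueness of norm-preserving extensions, the Birkhoff--James description of $\ker J(\cdot)$, and the identification of the subspace in $(ii)$ with $\bigcap_{y\in\Y}\ker J(y)$ --- and should present no real obstacle.
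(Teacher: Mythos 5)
Your proposal is correct, but it follows a genuinely different route from the paper. The paper proves the single cycle (i)$\Rightarrow$(ii)$\Rightarrow$(iii)$\Rightarrow$(iv)$\Rightarrow$(i): the technical heart is (ii)$\Rightarrow$(iii), where the identity $\Ann(\widetilde{\Y})=\J_\X(\Y)$ is established and an isometry $\J_\X(\Y)\to\X^*/\Ann(\Y)\to\Y^*$ is built by hand, and the closing implication (iv)$\Rightarrow$(i) is outsourced to Calvert's theorem (linearity of $\J_\X(\Y)$ implies $1$-complementedness). You instead use (i) and (iv) as hubs: (i)$\Leftrightarrow$(ii) by the Birkhoff--James description of $\ker P$, (i)$\Leftrightarrow$(iv) by the adjoint duality $P\mapsto\bigl(g\mapsto P^\times\tilde g\bigr)$ and $\Psi\mapsto\Psi^\times$ (which, under reflexivity, exchanges norm-one projections onto $\Y$ with linear norm-preserving extension operators), and (iii)$\Leftrightarrow$(iv) via $\mathrm{ran}\,\Psi=\J_\X(\Y)$ and the restriction map. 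Your argument is self-contained --- it removes the dependence on Calvert --- and isolates the conceptual point that $1$-complementation and linearity of $\Psi$ are literally adjoint to one another; the paper's longer route has the side benefit of producing the explicit identity $\Ann(\widetilde{\Y})=\J_\X(\Y)$, which it reuses later in the proof of the left-complemented analogue. Two small points to make explicit when writing this up: in (iii)$\Rightarrow$(iv) you (like the paper) read statement (iii) as entailing that $\J_\X(\Y)$ is a \emph{linear} subspace of $\X^*$, and you should say so; and in showing the restriction $R|_{\J_\X(\Y)}$ is norm-preserving you need the observation that every nonzero $f\in\J_\X(\Y)$ attains its norm on $S_\Y$, which is exactly the identification $\J_\X(\Y)=\{f\in\X^*:M_f\subseteq S_\Y\}\cup\{\mathbf{0}\}$ from your preliminaries.
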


This is Theorem \ref{Characterization of Right-complemented} in this paper. In Section \ref{sec:02}, we shall define left-complemented and right-complemented subspaces in terms of Birkhoff-James orthogonality and show that a $1$-complemented subspace is just a right-complemented subspace and in a similar way a left-complemented subspace is the kernel of a norm-one projection. In Theorem \ref{Characterization of left-complemented}, we provide several characterizations of left-complemented subspaces of a reflexive, smooth and strictly convex Banach space.

\smallskip

In 1966, Ando \cite{Ando} proved that the range of an isometry on the space $\mathcal{L}_p(\Omega, M, \mu)$ for $1 < p < \infty$ is always orthogonally complemented if $\mu$ is a finite measure. Needless to mention that for $1< p < \infty$, the space $\mathcal{L}_p(\Omega, M, \mu)$ is a reflexive, smooth and strictly convex Banach space. In 1973, Ditor \cite{Ditor} constructed a closed linear subspace $\Y$ of the non-reflexive Banach space $(C[0,1], \|\cdot\|_\infty)$ such that $\Y$ is isometrically isomorphic with $(C[0,1], \|\cdot\|_\infty)$ but $\Y$ is not orthogonally complemented in $(C[0,1], \|\cdot\|_\infty)$. In 1978, Faulkner and Huneycutt \cite{Faulkner Huneycutt} obtained a Wold-type decomposition for an isometry having $1$-complemented range and acting on a smooth and reflexive Banach space. The result of Faulkner and Huneycutt was further generalized to reflexive Banach spaces removing the smoothness assumption by Campbell, Faulkner and Sine in \cite{CFS}. In the same paper \cite{Faulkner Huneycutt}, Faulkner and Huneycutt asked if there are Banach space isometries without $1$-complemented ranges (see Question 2 at the end of \cite{Faulkner Huneycutt}). Recently, Pelczar-Barwacz \cite{P-B} constructed a reflexive Banach space $\X$ and a closed linear subspace $\Y$ of $\X$ such that $\Y$ is isometrically isomorphic with $\X$ but is not $1$-complemented in $\X$; in fact, $\Y$ is not even complemented in $\X$. Thus, going back and forth with such $\X$ and $\Y$, one can easily define an isometry on a reflexive Banach space whose range is not $1$-complemented.

\smallskip

In Theorem \ref{Left Inverse of Wold Isometry}, we characterize an isometry on a reflexive Banach space whose range is $1$-complemented (i.e. right-complemented) and hence answer the question raised by Faulkner and Huneycutt in \cite{Faulkner Huneycutt} for such Banach spaces. This is also an important result of this article and is stated below.

\begin{thm}
Let $\mathbb{X}$ be a reflexive Banach space and let $V$ be an isometry on $\X$. Then the following are equivalent:
\item[(i)] $V$ has right-complemented range ;

\item[(ii)] There is an operator $T: \mathbb{X} \to \mathbb{X}$ with $\|T\| =1$ such that $TV = I$.
\end{thm}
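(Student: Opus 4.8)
The plan is to set up two mutually inverse constructions: from a norm-one projection onto $V(\X)$ produce a norm-one left inverse of $V$, and conversely from a norm-one left inverse $T$ of $V$ produce a norm-one projection with range $V(\X)$. The facts I need beyond elementary algebra are that an isometry $V$ is a bijection of $\X$ onto its closed range $V(\X)$ whose inverse $V^{-1}\colon V(\X)\to\X$ is again an isometry, and the identification established in Section \ref{sec:02} that a closed subspace is right-complemented exactly when it is the range of a norm-one projection.

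For (i) $\Rightarrow$ (ii) I would fix a norm-one projection $P$ of $\X$ onto $V(\X)$; since a projection acts as the identity on its range, $PV=V$. Setting $T:=V^{-1}P\colon\X\to\X$ then gives a bounded operator with $\|T\|\le\|V^{-1}\|\,\|P\|=1$ and $TV=V^{-1}PV=V^{-1}V=I$. Conversely, $TV=I$ together with $\|Vx\|=\|x\|$ yields $\|x\|\le\|T\|\,\|x\|$ for all $x\in\X$, so $\|T\|\ge 1$ and hence $\|T\|=1$.

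For (ii) $\Rightarrow$ (i) I would put $P:=VT$ and verify it is the desired projection. It is idempotent, since $P^2=V(TV)T=VT=P$. It has norm one: on the one hand $\|Px\|=\|VTx\|=\|Tx\|\le\|x\|$ for every $x$, and on the other hand $P(Vx)=VTVx=Vx$ shows that $P$ restricts to the identity on the nonzero subspace $V(\X)$, so $\|P\|\ge 1$. Finally, the range of $P$ is exactly $V(\X)$: the inclusion $VT(\X)\subseteq V(\X)$ is clear, and every $Vx=P(Vx)$ lies in the range of $P$. Thus $V(\X)$ is the range of a norm-one projection, i.e.\ right-complemented.

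Since the computations are routine, the only genuine points of care are matching the paper's notion of ``right-complemented'' --- defined through Birkhoff--James orthogonality of a vector complement, not merely algebraic complementation --- with the norm-one-projection picture via Section \ref{sec:02}, and upgrading the easy estimates $\|T\|\le 1$, $\|P\|\le 1$ to equalities by using that $V$ preserves norm. Reflexivity of $\X$ is not actually needed for this equivalence; it is the standing hypothesis of the section, in which the left inverse $T$ is then used to produce a Wold-type decomposition of $V$.
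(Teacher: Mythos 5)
Your proof is correct, and the backward direction $(ii)\Rightarrow(i)$ via $P=VT$ is exactly the paper's argument. The forward direction, however, is genuinely different and notably more economical. The paper first invokes its Theorem on Wold isometries (reflexivity plus right-complemented range implies a Wold decomposition $\X=\X_1\oplus\X_2$ with $V|_{\X_1}$ unitary and $V|_{\X_2}$ a unilateral shift), and then builds $T$ piecewise as $T(x_1+x_2)=T_1^{-1}x_1+T_2x_2$, where $T_2$ acts as a backward shift on $\bigoplus_n V^n\mathcal{L}$, verifying $\|T\|\le 1$ by hand using $V\X\perp_B\mathcal{L}$. Your construction $T:=V^{-1}P$, with $P$ the norm-one projection onto the closed range $V(\X)$ and $V^{-1}\colon V(\X)\to\X$ the inverse isometry, produces the same operator in one line, and your observation that reflexivity is therefore not needed for this equivalence is accurate: reflexivity enters the paper's proof only through the Wold-decomposition step, which your route bypasses entirely. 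What the paper's longer route buys is the explicit structural description of the left inverse as ``unitary inverse plus backward shift,'' which it reuses later (e.g., in realizing the left inverse as a backward shift on the sequence space $(\mathcal{S},\|\cdot\|_{\mathcal{L}})$ and in the canonical-decomposition theorem); what your route buys is brevity and strictly greater generality. The one hypothesis you do still need, and correctly cite, is the equivalence between ``right-complemented'' (defined via Birkhoff--James orthogonality of a closed vector-space complement) and ``range of a norm-one projection,'' which is the paper's Proposition on ranges and kernels of norm-one projections.
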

Also, under the hypotheses of reflexivity, smoothness and strict convexity, we apply Theorem \ref{Characterization of Right-complemented} and obtain in Theorem \ref{Isometry Range} a few different characterizations for an isometry with $1$-complemented range. 

\smallskip

 Wold decomposition of a Banach space isometry was previously studied in \cite{Faulkner Huneycutt} assuming its range being $1$-complemented. In the same article, the authors introduced the notion of unilateral shift on a Banach space. Bilateral shifts on Banach spaces are not as well-behaved as they are on Hilbert spaces. In fact, a bilateral shift on a Banach space may not be even bounded, e.g. see \cite{CFS, Gellar}. So, in Banach space setting we do not have a meaningful generalization of the fact that a Hilbert space unilateral shift always extends to a Hilbert space bilateral shift. Also, to the best of our knowledge there is no decomposition result for Banach space contractions that is analogous to the canonical decomposition of a Hilbert space contraction.
 
\smallskip

In Section \ref{sec:05}, we define $\sigma$-shift on a Banach space, which is a unitary and is an analogue of bilateral shift on a Hilbert space. In Proposition \ref{prop:011}, we show that a $\sigma$-shift is a bilateral shift if the underlying Banach space is a Hilbert space. In Theorem \ref{Sigma shift extension}, we prove that every unilateral shift on a Banach space can be extended to a $\sigma$-shift and thus generalize the same result in Hilbert space setting. Then, in Theorem \ref{thm:general-extension-01} we show that every Wold isometry on a Banach space extends to a Banach space unitary. Finally, in Proposition \ref{Spectrum of sigma shift operator} we show that the spectrum of a $\sigma$-shift acting on a smooth Banach space is the whole unit circle $\mathbb T$.

\smallskip

Section \ref{sec:07} is all about isometric dilation of a Banach space contraction and characterizations for Hilbert spaces. Here we characterize all strict contractions on Banach spaces that dilate to Banach space isometries and construct explicit dilations. Also, we show that all strict contractions on a Banach space dilate to isometries if and only if the Banach space is a Hilbert space. We have already mentioned the main results of this Section (see Theorems \ref{thm:main-01} \& \ref{thm:main-02}). A special emphasis is given on contractions on $\mathcal L_p$ spaces. Indeed, we present a new proof to the fact that $\mathcal{L}_p$ ($1\leq p\leq \infty$) is a Hilbert space if and only if $p=2$.

\smallskip

In Subsection \ref{Subsection:new-6.1}, we study all norm-one Banach space contractions that dilates to an isometry. In Theorem \ref{thm:new-6.26}, we show that a norm-one contraction $T$ dilates to an isometry acting on a normed (or, semi-normed linear) space if and only if the function $A_T$ as in Theorem \ref{thm:main-02} defines a norm (or, semi-norm) on $\X$. Moreover, we also show that in either case the minimal isometric dilation space cannot be a Banach space.

\smallskip

In Subsection \ref{Subsection:new-6.2}, we define a new adjoint $T_*$ for every operator $T$ acting on a reflexive, smooth and strictly convex Banach space. This definition generalizes the notion of adjoint for Hilbert space operators. With this notion of adjoint, we prove in Theorem \ref{Characterisation of Hilbert space by rank one operators} that a reflexive, smooth and strictly convex Banach space $\X$ with dimension greater than $2$ is a Hilbert space if and only if $T_*$ is linear for every rank one operator $T$ on $\X$.

\smallskip

In Section \ref{Sec:08}, we describe the norm attainment set of all Banach space operators which are scalar multiples of norm-one contractions that possess isometric dilations. This is Theorem \ref{thm:623} in this paper and even more is true about this theorem. Indeed, Theorem \ref{thm:623} shows that the norm attainment set of such an operator (i.e., scalar multiple of a norm-one contraction that admits isometric dilation) is the unit sphere of some subspace of the underlying Banach space. Needless to mention that this theorem is valid for all Hilbert space operators.

\smallskip

 The main result of Section \ref{sec:06} is Theorem \ref{Canonical Decomposition Banach space}, which provides a canonical decomposition for specific Banach space contractions. Then we illustrate this result with suitable examples.
 
\smallskip

Section \ref{sec:02} describes background materials that are necessary for this paper along with some preparatory results. Also, here we develop a few new terminologies that will be followed throughout the paper. 

\smallskip

\section{A brief literature and justification for Definition \ref{Definition of Dilation of contractions on Banach space-1}} \label{sec:NEW-02}

\smallskip

\noindent In this Section, we clarify the validity of Definition \ref{Definition of Dilation of contractions on Banach space-1}, i.e., the definition of isometric dilation of a Banach space contraction and establish with proofs its effectiveness. The two main results, Theorems \ref{thm:main-01} \& \ref{thm:main-02}, and their consequences are dependent on this definition. Dilation of a Banach space contraction has been studied in past in different contexts, sometimes for positive or positively dominated contractions on $\mathcal L_p$ spaces, e.g., \cite{AE, AL 1, AL 2, Peller}, sometimes for general Banach space contractions \cite{SJ, Stroescu} and also for sectorial operators \cite{NLW}. In all these references, the dilation space $\widetilde{\X}$ for a contraction $T$ on a Banach space $\X$ is chosen to be isomorphic with $W(\X) \oplus \BL$, where $\BL$ is another appropriate Banach space and $W:\X \rightarrow \widetilde{\X}$ is the isometric embedding. We prefer to choose $\widetilde{\X}$ to be isomorphic with $W(\X) \oplus_2 \BL$ in Definition \ref{Definition of Dilation of contractions on Banach space-1} and the reasons behind this choice are evident from its consequences as explained below. 

\smallskip

The famous theorem due to Bela Sz. Nagy \cite{Nagy} about Hilbert space dilation along with von Neumann's path-breaking result \cite{JVNII} on Hilbert space contraction versus spectral set made the following statements equivalent.

\begin{thm}\label{thm:401}
For a Hilbert space operator $T$ the following are equivalent:
\begin{enumerate}
\item[(i)] $T$ is a contraction;
\item[(ii)] $T$ dilates to an isometry;
\item[(iii)] $\overline{\D}$ is spectral set of $T$.
\end{enumerate}
\end{thm}
On the other hand, the following celebrated theorem by Foias \cite{CFI} reveals an interesting interplay between Banach space contractions and the notion of spectral set.
\begin{thm} \label{thm:Foias-New}
A complex Banach space $\X$ is a Hilbert space if and only if the closed unit disk $\overline{\D}$ is a spectral set for all $T\in \mathcal{B}(\X)$ with $\|T\| \leq 1$.
\end{thm} 
In view of these two theorems together with a few results from the literature (accumulated below), we place our logics behind the choice of dilation space in Definition \ref{Definition of Dilation of contractions on Banach space-1} through the following points.

\begin{enumerate}
\item The definition of Banach space dilation, i.e., Definition \ref{Definition of Dilation of contractions on Banach space-1} generalizes the definition of Hilbert space dilation (see Definition \ref{def:new-001A}) as is clear from Equation-(\ref{eqn:001}) and is valid for all Banach space contractions. Also, if the dilation space is $W(\X) \oplus_2 \BL$, then the projection map $P_{W(\X)}$ (as in Definition \ref{Definition of Dilation of contractions on Banach space-1}) is a norm-one orthogonal projection. Thus, we do not need to assume the existence of such a norm-one projection unlike the previous definitions, e.g., see \cite{Stroescu, AE, AL 1, AL 2, Peller, SJ}. Also, in these references only unitary dilation of a Banach space contraction is considered, whereas in this article we deal with more general isometric dilation and have constructed such dilation in Theorem \ref{Necessary and sufficient condition for a contraction to be dilated 1}.

\smallskip

\item If a Hilbert space contraction $T\in \mathcal B(\mathcal H)$ dilates to an isometry $V$ on $\mathcal K =W(\HS) \oplus_2 W(\HS)^{\perp}$, then $\|P_{W(\HS)^ \perp} V Wh \| = \left(\|h\|^2 - \|Th\|^2\right)^{\frac{1}{2}}$ holds for all $h \in \HS$ as is shown below in Lemma \ref{lem:V6 new 001}. If a Banach space contraction $T \in \mathcal B(\X)$ dilates to a Banach space isometry $V$ as per Definition \ref{Definition of Dilation of contractions on Banach space-1}, then also $\|P_{W(\X)^ \perp} V Wx \| = \left(\|x\|^2 - \|Tx\|^2\right)^{\frac{1}{2}}$ holds for all $x \in \X$ as is shown in Proposition \ref{prop:new-001A}. Thus, we can say that Definition \ref{Definition of Dilation of contractions on Banach space-1} is an appropriate generalization of Hilbert space dilation. However, we shall prove in Proposition \ref{prop:new-002A} that the same is not true in case of the previously obtained Banach space dilations.

\smallskip

\item It is a straight consequence of Theorem \ref{thm:Foias-New} that all strict contractions on a Banach space $\X$ have the closed unit disk $\overline{\D}$ as a spectral set if and only if $\X$ is a Hilbert space. On the other hand, Theorem \ref{thm:401} confirms the equivalence of isometric dilation of a Hilbert space contraction $T$ and $T$ having $\overline{\D}$ as a spectral set. Thus, it is naturally expected that all strict contractions on a Banach space $\X$ admit isometric dilations if and only if $\X$ is a Hilbert space. So, all Banach space contractions should not dilate to Banach space isometries as $\overline{\D}$ is not a spectral set for all Banach space contractions. However, it was shown in \cite{Stroescu, SJ} that every contraction on a Banach space dilates to a Banach space isometry. This happened because the dilation space was taken to be isomorphic with $W(\X)\oplus \BL$ for some appropriate Banach space $\BL$. Our choice of the dilation space $\widetilde{\X}$, which is isomorphic with $ W(\X) \oplus_2 \BL$ as in Definition \ref{Definition of Dilation of contractions on Banach space-1}, provides the desired equivalence (i.e., all strict contractions on $\X$ dilate to isometries if and only if $\X$ is a Hilbert space) as is shown in Theorem \ref{thm:main-01}.

\smallskip

\item The theory of Hilbert space dilation (e.g., see \cite{Nagy Foias}) tells us that the minimal dilation space of a Hilbert space contraction $T \in \mathcal B(\HS)$ is isomorphic with $\HS \oplus_2 \ell_2(\mathcal D_T)$, which becomes $\HS \oplus_2 \ell_2(\HS)$ when $T$ is a strict contraction. Here $\mathcal D_T$ is the range space of $(I-T^*T)^{\frac{1}{2}}$. Note that such minimal dilation space is unique upto unitary equivalence, e.g., see \cite{Nagy, Nagy Foias, Pisier}. In Theorem \ref{thm:main-02} (which is Theorem \ref{Necessary and sufficient condition for a contraction to be dilated 1}), we have shown that if a strict Banach space contraction $T \in \mathcal B(\X)$ dilates to a Banach space isometry, then the minimal dilation space is isomorphic with $\X \oplus_2 \ell_2(\X_0)$. Here $\X_0$ is the Banach space $(\X, A_T)$, where the norm $A_T$ is given by $A_T(x)=(\|x\|^2 -\|Tx\|^2)^{\frac{1}{2}}$,  $x \in X$. Moreover, when $\X$ is a Hilbert space with norm $\|.\|$, then it is unitarily equivalent with $\X_0$. Also, $\X \oplus_2 \ell_2(\X_0)$ is a Hilbert space if and only if $\X$ is a Hilbert space. Thus, our minimal isometric dilation space for Banach space contractions generalizes the same for Hilbert space contractions. This becomes possible only because our dilation is based on Definition \ref{Definition of Dilation of contractions on Banach space-1}. However, the same is not true for the previously obtained dilations, e.g., see Propositions \ref{lem:V6 new 002} \& \ref{thm:V6 new 004}, where we have proved that $\ell_1(\X; \Z)$ and $\ell_1(\X;\N)$ are minimal unitary dilation space and minimal isometric dilation space respectively, for the dilations obtained in \cite{SJ}.

\smallskip

\item In Theorem \ref{Necessary and sufficient condition for a contraction to be dilated 1},
we explicitly construct a Sch\"{a}ffer-type minimal isometric dilation for a strict Banach space contraction, provided it dilates and the construction goes parallel with the minimal dilation for a Hilbert space contraction. A major role is played by the defect operator $D_T=(I-T^*T)^{\frac{1}{2}}$ in the Sch\"{a}ffer-type construction of Hilbert space dilation, e.g., see Chapter 1 of \cite{Nagy Foias}. Since we do not have adjoint for a Banach space contraction, naturally utilize the map $A_T$ in order to avoid the operator pencil $I-T^*T$ in our Sch\"{a}ffer-type Banach space dilation. So, our minimal isometric dilation for a strict Banach space contraction (when it dilates) is a canonical generalization of the Sch\"{a}ffer-type minimal Hilbert space dilation. Needless to mention, Definition \ref{Definition of Dilation of contractions on Banach space-1} plays the main role in the proof of Theorem \ref{Necessary and sufficient condition for a contraction to be dilated 1}.

\smallskip

\item Theorem \ref{Norm if and only if Hilbert space} is a part of Theorem \ref{thm:main-01} and a direct consequence of Theorem \ref{Norm if and only if Hilbert space} is Theorem \ref{thm:new-003A}, which gives an alternative proof to the well-known fact that $\mathcal{L}_p$ is a Hilbert space if and only if $p = 2$. Since the proof of Theorem \ref{thm:main-01} is based on Definition \ref{Definition of Dilation of contractions on Banach space-1}, it can be concluded that Definition \ref{Definition of Dilation of contractions on Banach space-1} is appropriate and is a perfect fit in the setting of Banach space dilation theory. One can easily verify that Theorem \ref{thm:new-003A} cannot be obtained from the dilation theorems of \cite{Stroescu} and \cite{SJ}.
\end{enumerate}

\smallskip

Now we give proofs to the evidences displayed above in support of Definition \ref{Definition of Dilation of contractions on Banach space-1}. To do this we need to recall a few results from the literature on Banach space dilation theory. As mentioned before, dilation for Banach space contractions has been studied in various contexts starting from Stroescu's unitary dilation \cite{Stroescu}, followed by dilation on $\mathcal L_p$ spaces by Akcoglu, Ekkehard, Sucheston \cite{AE, AL 1, AL 2} and Peller \cite{Peller}. Recently, Fackler and Gl\"{u}k \cite{SJ} made an attempt to tie up all these previously obtained dilations in a single thread by defining and providing unitary dilation for a (general) Banach space contraction. Note that their definition of dilation (as in \cite{SJ}) combines and generalizes the previous definitions which were mainly for contractions on $\mathcal L_p$ spaces.

\begin{defn} [Fackler \& Gl\"{u}k, \cite{SJ}] \label{defn:V6 new 001}
An operator $T$ on a Banach space $\X$ has a dilation to a Banach space $\Y$ if there exist an isometry $J:\X \to \Y$ and a contraction $Q:\Y \to \X$, along with a invertible linear isometry $U:\Y \to \Y$ such that 
\begin{equation*}
T^n = QU^nJ, \quad n\in \mathbb{N} \cup \{0\}.
\end{equation*}
\end{defn}
 
In 1973, Stroescu \cite[Corollary 2]{Stroescu} proved the following dilation theorem for a Banach space contraction.
\begin{thm}\label{thm:V6 new 001}
Let $T$ be a contraction on a Banach space $\X$. Then there exists a Banach space $\widetilde{\X}$ containing $\X$, a norm-one projection $P$ of $\widetilde{\X}$ onto $\X$ and an invertible isometry $U$ on $\widetilde{\X}$ such that the following holds:
\begin{enumerate}
\item[(i)] $PU^n x = T^{|n|} x$, for all $x\in \X$,  $n\in \Z$.

\item[(ii)] $\widetilde{\X}$ is the closed vector space spanned by $\{U^n x: n\in \Z, ~ x\in \X \}$.
\end{enumerate}
\end{thm}

In 1977, Akcoglu and Sucheston introduced positive operators between real $\mathcal{L}_p$ spaces in \cite{AL 2}. A linear operator between real $\mathcal{L}_p$ spaces is called \textit{positive} if it preserves the order or, equivalently if it maps non-negative functions into non-negative functions.  In the same paper, they proved the following dilation theorem for positive contractions on real $\mathcal{L}_p$ spaces, see Theorem 1.1 in \cite{AL 2}.

\begin{thm} \label{thm:V6 new 002}
Let $1\leq p < \infty$ and $T:\mathcal{L}_p(\Omega, M, \mu) \to \mathcal{L}_p(\Omega, M, \mu)$ be a positive contraction. Then there exist another $\mathcal{L}_p(\Omega', M', \mu')$ space and a positive invertible isometry $Q:\mathcal{L}_p(\Omega', M', \mu') \to \mathcal{L}_p(\Omega', M', \mu')$ such that $D T^n = PQ^n D$ for all $n\in \N\cup \{0\}$, where $D:\mathcal{L}_p(\Omega, M, \mu) \to \mathcal{L}_p(\Omega', M', \mu')$ is a positive
isometric embedding and $P:\mathcal{L}_p(\Omega', M', \mu') \to \mathcal{L}_p(\Omega', M', \mu')$ is a positive projection.
\end{thm}

In 1981, Peller introduced positively dominated contractions on $\mathcal{L}_p$ spaces and extend the work by Akcoglu and Sucheston \cite{AL 2} one step ahead. A linear contraction $T\in \mathcal{B}(\mathcal{L}_p(\Omega, M, \mu))$ is said to be \textit{positively dominated} if there is a positive contraction $\widetilde{T}\in \mathcal{B}(\mathcal{L}_p(\Omega, M, \mu))$ such that $|T f | \leq \widetilde{T} |f|$ for every $f\in \mathcal{L}_ p(\Omega, M, \mu)$. In \cite{Peller}, Peller proved that the positively dominated contractions on a $\mathcal{L}_p$ ($1\leq p < \infty$) space are the only contractions that admit unitary dilation on a bigger $\mathcal{L}_p$ space for some fixed $p$. The theorem is stated below.

\begin{thm}\label{thm:V6 new 003}
For $1 < p < \infty$ with $p \neq 2$, the positively dominated operators on $\mathcal{L}_p(\Omega, M, \mu)$ are exactly those which admit unitary dilation on a bigger $\mathcal{L}_p$ space. For $p = 1$, every linear contraction on $\mathcal{L}_1(\Omega, M, \mu)$ has a unitary dilation on a bigger $\mathcal{L}_1$ space.
\end{thm}

Evidently, Peller's work expands the boundary of operators dilated by Akcoglu-Ekkehard \cite{AE} and Akcoglu-Sucheston \cite{AL 2}. However, \cite[Theorem 3]{Peller} provides evidences of contractions on a $\mathcal{L}_p$ space that do not dilate to unitaries on any bigger $\mathcal{L}_p$ space. On the other hand, the work due to Fackler and Gl\"{u}k \cite{SJ} shows that a convex combination of unitaries on a super-reflexive Banach space dilates to a unitary on another space in the same class. In the same paper (see Construction 1.1 in \cite{SJ}), they proved that any contraction on a Banach space $\X$ dilates to a unitary on $\ell_1(\X; \mathbb{Z})$, the space consisting of all absolutely summable bi-sequences (i.e., two-sided sequences) $(x_n)_{n \in \mathbb{Z}}$ on $\mathcal{\X}$. More precisely, the right shift operator $U$ on $\ell_1(\X;\Z)$ dilates a contraction on $\X$ in this case. The construction of Fackler and Gl\"{u}k seems to be inspired by Stroescu's dilation theorem \cite[Corollary 2]{Stroescu}, though Stroescu chose her unitary dilation space to be $\ell_\infty(\X; \mathbb{Z})$.

\smallskip

Below we prove that the unitary dilation of Fackler and Gl\"{u}k is actually a minimal dilation. Let us recall that an isometric dilation $V$, or a unitary dilation $U$ on a Banach space $\Y$ of a contraction $T\in \mathcal B(\X)$ is said to be minimal if
\[
\Y = \overline{span}\{V^n Jx: n\in \N \cup \{0\},  ~x\in \X\} \quad \text{ or } \quad \Y = \overline{span}\{U^n Jx: n\in \Z,  ~x\in \X\},
\]
respectively, where $J:\X \to \Y$ is the associated isometric embedding.

\begin{prop}\label{lem:V6 new 002}
Let $T$ be a contraction on a Banach space $\X$. Then the unitary dilation of $T$ as in \cite[Construction 1.1]{SJ} is a minimal dilation.
\end{prop}

\begin{proof}
The unitary that dilates $T$ as in \cite[Construction 1.1]{SJ} is the right shift operator $U: \ell_1(\X;\Z) \to \ell_1(\X; \Z)$ defined by $U(x_n)_{n\in \Z} = (x_{n-1})_{n\in \Z}$. Also, note that the operators $J, ~ Q$ as in Definition \ref{defn:V6 new 001} are the following:

\begin{align*}
\begin{cases}
& J : \X \to \ell_1(\X;\Z) \\
  & J(x) = (\dotsc, \mathbf{0}, \boxed{x}, \mathbf{0}, \dotsc), \quad x\in \X
\end{cases} 
\quad \text{and} \quad
\begin{cases}
& Q : \ell_1(\X;\Z) \to \X \\
  & Q ((x_n)_{n\in \Z}) = \sum_{n=0}^\infty T^n x_n, \; (x_n)_{n\in \Z} \in \ell_1(\X;\Z),
\end{cases}
\end{align*}
where the box denotes the zeroth component. We now show that $U$ is minimal unitary dilation of $T$. Naturally, $\overline{span}\{U^n Jx: n\in \Z, ~x\in \X\}$ is a closed subspace of $\ell_1(\X;\Z)$. For the reverse inclusion, let $(x_n)_{n\in \Z}$ be arbitrary. Then for each $n\in \N$, the finite sum $\sum_{k=-n}^n U^k Jx_k $ belongs to $\overline{span}\{U^n Jx: n\in \Z, ~x\in \X\}$, and the sequence of bi-sequences $\left(\sum_{k=-n}^n U^k Jx_k \right)_{n=1}^\infty$ converges to $(x_n)_{n\in \Z}$ in $\ell_1(\X;\Z)$. Indeed, we have
\begin{align*}
   \left\|\sum_{k=-n}^n U^k Jx_k - (x_n)_{n\in \Z}\right\| & = \left\|(\dotsc, x_{-n-2}, \underbrace{x_{-n-1}}_{-(n+1)\text{-th}},\mathbf{0}, \dotsc, \mathbf{0},  \underbrace{x_{n+1}}_{(n+1)\text{-th}}, x_{n+2}, \dotsc)\right\| \\
   & = \left(\sum_{k=n+1}^\infty \|x_{-k}\| + \sum_{k=n+1}^\infty \|x_k\| \right) \longrightarrow 0 \quad \text{ as } n\to \infty.
\end{align*}
Consequently, we have $\ell_1(\X;\Z) = \overline{span}\{U^n Jx: n\in \Z, ~ x\in \X\}$. This completes the proof.
\end{proof}

It is merely mentioned that the definition of dilation (see Definition \ref{defn:V6 new 001}) given by Fackler and Gl\"{u}k in \cite{SJ} was about unitary dilation of a Banach space contraction. If one defines isometric dilation of a contraction $T$ on a Banach space $\X$ in the same way as in Definition \ref{defn:V6 new 001}, where the phrase `unitary $U$' is replaced by `isometry $U$', then it can be proved that the forward shift operator $M_z$ on $\ell_1(\X; \N)$ is an isometric dilation of $T$ which further is minimal. The following proposition shows this.

\begin{prop}\label{thm:V6 new 004}
Let $T$ be a contraction on a Banach space $\X$. Then the forward shift operator $M_z:\ell_1(\X;\N) \to \ell_1(\X;\N)$ defined by 
\[
M_z(x_n) = (\mathbf{0}, x_1, x_2, \dotsc), \quad (x_n)\in \ell_1(\X;\N)
\]
is a minimal isometric dilation of $T$.
\end{prop}

\begin{proof}
Consider the linear maps $W: \X \to \ell_1(\X;\N)$ and  $Q: \ell_1(\X;\N) \to \X$ defined by 
\[
W(x) = (x, \mathbf{0}, \mathbf{0}, \dotsc), \quad x\in \X \quad \text{ and } \quad Q((x_n)) = \sum_{n=0}^\infty T^n x_n, \quad (x_n)\in \ell_1(\X;\N).
\]
Then $W$ an isometric embedding of $\X$ into $\ell_1(\X;\N)$, and $Q$ is a contraction satisfying $QW(x) = x$ for all $x\in \X$. Consequently, $P_{_{W(\X)}}:= WQ$ is a norm-one projection of $\ell_1(\X;\N)$ onto $W(\X)$. We also have that 
\begin{equation}\label{eq:V6 new 001}
   Q M_z^k W(x) = T^k x, \quad \text{which implies} \quad P_{_{W(\X)}} M_z^k W(x) = WT^k (x), \quad x\in \X, \quad k\geq 0.
\end{equation}
This shows that $M_z$ is an isometric dilation of $T$. Moreover, we also have that
\begin{equation}\label{eq:V6 new 002}
   \ell_1(\X;\N) = \overline{span}\left\{ M_z^n W(x): n\geq 0, ~ x\in \X \right\}.
\end{equation}
Indeed, $\overline{span}\left\{ M_z^n W(x): n\geq 0, ~ x\in \X \right\}$ is a closed subspace of $\ell_1(\X;\N)$. We now show that every point in $\ell_1(\X;\N)$ is a limit of a sequence from $\overline{span}\left\{ M_z^n W(x): n\geq 0, ~ x\in \X \right\}$. For any $(x_n)_{n=1}^\infty\in \ell_1(\X;\N)$, consider the sequence $\left(\sum_{k=1}^m M_z^{k-1} Wx_k\right)_{m=1}^\infty$ from $\overline{span}\left\{ M_z^n W(x): n\geq 0, ~ x\in \X \right\}$. Needless to mention that $\left(\sum_{k=1}^m M_z^{k-1} Wx_k\right)_{m=1}^\infty$ is a sequence of sequences. Now, we have
\[
   \left\|\sum_{k=1}^m M_z^{k-1} Wx_k - (x_n) \right\| = \left\| (x_1, \dotsc, x_m, \mathbf{0}, \mathbf0, \dotsc) - (x_n)_{n=1}^\infty \right\| = \sum_{k=m+1}^\infty \|x_k\| \rightarrow 0 \quad \text{ as } m \to \infty.
\]
Therefore, $M_z$ on $\ell_1(\X;\N)$ is a minimal isometric dilation of $T$.
\end{proof}

Below we recall an elementary lemma related to Hilbert space dilation which can be found in the literature, e.g., \cite{Nagy Foias}. However, for the convenience of a reader, we present a short proof here.

\begin{lem}\label{lem:V6 new 001}
Let $T$ be a contraction on a Hilbert space $\HS$ and $V\in \mathcal{B}(\KS)$ be an isometric $($or unitary$)$ dilation of $T$. If $W: \HS \to \KS$ is the associated isometric embedding, then for all $h\in \HS$, $\|P_{W(\HS)^ \perp} V Wh \| = \left(\|h\|^2 - \|Th\|^2\right)^{\frac{1}{2}}$ holds, where $P_{W(\HS)^\perp}$ is the orthogonal projection of $\KS$ onto the orthogonal complement of $W(\HS)$ in $\KS$. 
\end{lem}

\begin{proof}
Since $W(\HS)$ is a closed subspace of $\KS$, we have $\KS = W(\HS) \oplus_2 W(\HS)^\perp$. Therefore, for all $h\in \HS$, the expression $VWh = P_{W(\HS)} (VW h) + P_{W(\HS)^\perp} (VWh)$ implies that
\[
  \|h\|^2 = \|VWh\|^2 = \|P_{W(\HS)} VWh \|^2 + \|P_{W(\HS)^\perp} VWh \|^2 = \|Th\|^2 + \| P_{\HS^\perp} VWh \|^2,
\]
where the last equality follows from the fact that  $P_{W(\HS)} VWh = WTh$. This completes the proof.
\end{proof}

Isometric dilation of a Banach space contraction as per Definition \ref{Definition of Dilation of contractions on Banach space-1} generalizes Hilbert space dilation as it also satisfies the conclusion of Lemma \ref{lem:V6 new 001} as shown below.

\begin{prop} \label{prop:new-001A}
If a strict contraction $T$ on a Banach space $\X$ dilates to an isometry $V$ on a Banach space $\widetilde{\X} \supseteq \X$ as per Definition \ref{Definition of Dilation of contractions on Banach space-1} and if $W:\X \to \widetilde{\X}$ is the associate isometric embedding, then for all $x\in \X$, $\|P_{_{W(\X)^ \perp}} V Wx \| = \left(\|x\|^2 - \|Tx\|^2\right)^{\frac{1}{2}}$ holds, where $P_{_{W(\X)^\perp}}$ is the orthogonal projection of $\widetilde{\X}$ onto the orthogonal complement of $W(\X)$ in $\widetilde{\X}$.
\end{prop}

\begin{proof}
It follows from Definition \ref{Definition of Dilation of contractions on Banach space} that there exists a closed linear subspace $\mathbb{L}$ of $\widetilde{\X}$ such that $\widetilde{\X} = W(\X) \oplus_2 \mathbb{L}$. Therefore, for all $x\in \X$, we have
\[
  \|x\|^2 = \|VW(x)\|^2 = \left\|P_{_{W(\X)}}VW(x)\right\|^2 + \left\|\left(I- P_{_{W(\X)}}\right)VW(x)\right\|^2 = \|Tx\|^2 + \left\|P_{_{W(X) ^\perp}}VW(x)\right\|^2,
\]
where the first equality follows from the facts that $V$ and $W$ are isometries, and the last equality follows from the fact that $P_{_{W(\X)}}VW(x) = W(Tx)$. This completes the proof.
\end{proof}

Now, we show that neither the isometric dilation $M_z$ as in Proposition \ref{thm:V6 new 004} nor the unitary dilations $U$ as in Proposition \ref{lem:V6 new 002} and Theorem \ref{thm:V6 new 001} satisfy the conclusion of Lemma \ref{lem:V6 new 001}.

\begin{prop} \label{prop:new-002A}
The dilations as in Proposition \ref{thm:V6 new 004} , Proposition \ref{lem:V6 new 002} and Theorem \ref{thm:V6 new 001} do not satisfy the conclusion of Lemma \ref{lem:V6 new 001}.
\end{prop}

\begin{proof}
First, we note that the only difference between the associated isometric embeddings $W$ (as in Proposition \ref{thm:V6 new 004}) and $J$ (as in Proposition \ref{lem:V6 new 002}) is that $W(x)$ is a sequence, whereas $J(x)$ is a bi-sequence with identical non-zero entries for all $x \in \X$. Therefore, the proofs for the cases Proposition \ref{thm:V6 new 004} and Proposition \ref{lem:V6 new 002} are similar. We give proof only to Proposition \ref{thm:V6 new 004}. From equation \eqref{eq:V6 new 001} in Proposition \ref{thm:V6 new 004}, it follows that for all $x \in \X$, we have
\[
    \|(I-P_{_{W(\X)}}) M_z W(x)\| = \left\|\left(-Tx, x, \mathbf{0}, \mathbf{0}, \dotsc\right)\right\| = \|x\| + \|Tx\| \geq \|x\| \geq \left(\|x\|^2 - \|Tx\|^2\right)^{\frac{1}{2}}.
\]
This shows that $\|P_{{W(\X)}^{\perp}} {\widehat U} Wx \|=\|(I-P_{_{W(\X)}}) M_z W(x)\| > \left(\|x\|^2 - \|Tx\|^2\right)^{\frac{1}{2}}$ for all $x\in \X \setminus \ker(T)$, which violates the conclusion of Lemma \ref{lem:V6 new 001}. Such a choice of vectors is possible since $T$ is a nonzero operator.

\smallskip

We now prove the proposition for the unitary dilation constructed by Stroescu, as in \cite[Corollary 2]{Stroescu} and stated in Theorem \ref{thm:V6 new 001}. Before proceeding, we briefly outline the construction. Consider the Banach space $\Y = \ell_\infty(\X; \Z)$ consisting of all bi-sequences $(x_n)_{n\in \Z}$ in $\X$ such that $\displaystyle\sup_{n\in \Z}\|x_n\| < \infty$. Consider the linear maps $W: \X \to \Y$ and $Q: \Y \to \X$ defined by
\[
 W(x) = (T^{|n|}x)_{n\in \Z}, \quad x\in \X, \quad \text{ and } \quad Q((x_n)_{n\in \Z}) = x_0, \quad (x_n)_{n\in \Z} \in \Y.
\]
Then $W$ is an isometry and $Q$ is a contraction satisfying $Q W(x) = x$ for all $x\in \X$. Therefore, $P_{_{W(\X)}}:= WQ$ is a norm-one projection of $\Y$ onto $W(\X)$. Now, consider the backward shift operator $\widehat U: \Y \to \Y$ defined by
\[
 {\widehat U}(x_n)_{n\in \Z} = (x_{n+1})_{n\in \Z}, \quad (x_n)_{n\in \Z}\in \Y=\ell_\infty(\X; \Z).
\]
Then ${\widehat U}$ is an isometry and we have $Q {\widehat U}^k W = T^k,$ which further implies that
\[
  P_{_{W(\X)}} {\widehat U}^k W(x) = WT^k (x), \quad k \geq 0, \quad x\in \X.
\]
Now, we show that ${\widehat U}$ does not satisfy the conclusion of Lemma \ref{lem:V6 new 001}. For all $x\in \X$, we have that
\[
(I- P_{_{W(\X)}}) {\widehat U} Wx = (z_n)_{n\in \Z},~~\text{where} ~~ z_n = \begin{cases}
							 \mathbf{0} & \text{ if } n\geq 0 \\
							  T^{(|n|-1 )}(x- T^2x) & \text{ if } n \leq -1.
							\end{cases}
\]
Therefore, for all $x\in \X$, we have
\begin{equation}\label{eq:V6 new 003}
\|(I- P_{_{W(\X)}}) {\widehat U} Wx \| = \sup_{n\in \Z} \|z_n\| = \|x- T^2x\| \quad \left[\text{ as } \|T\| \leq 1~\right].
\end{equation}
It is well known that for an arbitrary Banach space operator $T$, we have $\ker(T^2) = \ker(T)$ if and only if $\ker(T) \cap Range(T)=\{\mathbf{0}\}$. Therefore, \eqref{eq:V6 new 003} shows that $\|(I- P_{_{W(\X)}}) {\widehat U} Wx \| > \left(\|x\|^2 - \|Tx\|^2 \right)^{\frac{1}{2}}$ for all $x\in \ker(T^2) \setminus \ker(T)$, if we choose $T$ in such way that $\ker(T)$ becomes a proper subspace of $\ker(T^2)$. For example, consider $\X = (\C^2, \|\cdot\|_1)$ and $T(x,y) = r(x+y, 0)$, where $r\in (0,1)$ is fixed. Then $\|T\| = r < 1$ and $T^2 (0,1) = (r^2, 0)$. Thus, we have
\[
   \left(\|(0,1)\|^2 - \|T(0,1)\|^2 \right)^{\frac{1}{2}} = (1- r^2)^{\frac{1}{2}} < \|(0,1) - T^2(0,1)\| = (1-r^2).
\]
Evidently, the the fact that $\|P_{{W(\X)}^{\perp}} {\widehat U} Wx \|=\|(I- P_{_{W(\X)}}) {\widehat U} Wx \| > \left(\|x\|^2 - \|Tx\|^2 \right)^{\frac{1}{2}}$ contradicts the conclusion of Lemma \ref{lem:V6 new 001}. The proof is complete.
\end{proof}

\section{Background materials and preparatory results} \label{sec:02}

\vspace{0.2cm}

\noindent
In this Section, we recall from the literature (e.g. \cite{Birkhoff, James 1, James 2, Kinnunen, Megginson, R 1, Sain 1, Sain 2} etc.) a few basic concepts as well as introduce a few new terminologies that will be used throughout the paper . We give proofs to a few results that we could not locate in the literature. The mathematical tool that we use very frequently is Birkhoff-James orthogonality.

\medskip

\noindent \textbf{$\bullet$ Birkhoff-James orthogonality:} Given any two elements $x$ and $y$ in a Banach space $\X$, $x$ is said to be orthogonal to $y$ in the sense of Birkhoff-James \cite{Birkhoff, James 1}, written as $x\perp_B y$, if $\|x+\lambda y\|\geq \|x\|$ for all scalars $\lambda$. Similarly, two subspaces $\X_1$ and $\X_2$ of $\X$, $\X_1\perp_B \X_2$, if $x_1\perp_B x_2$ for all $x_1\in \X_1$ and $x_2\in \X_2$. When $\X$ is a Hilbert space, the notion of Birkhoff-James orthogonality is equivalent to the inner product orthogonality. However, unlike to the inner product orthogonality, Birkhoff-James orthogonality may not be symmetric in general \cite{Sain 1, Sain 2}, i.e, $x\perp_B y$ may not necessarily imply $y\perp_B x$. In fact, regarding the symmetry of Birkhoff-James orthogonality, we have the following famous result due to James.

\begin{thm}[\cite{James 2}]\label{Symmetry of Birkhoff-James orthogonality}
Let $\X$ be Banach space with $dim~ \X \geq 3.$ Then $\X$ is a Hilbert space if and only if Birkhoff-James orthogonality is symmetric in $\X.$
\end{thm}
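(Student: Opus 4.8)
The plan is to prove the two implications separately; the forward one is a short computation and the converse is the substance of James's theorem.

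\emph{Hilbert $\Rightarrow$ symmetric.} I would first show that on a Hilbert space $\HS$ Birkhoff--James orthogonality coincides with inner-product orthogonality. If $\langle x,y\rangle=0$, then $\|x+\lambda y\|^2=\|x\|^2+|\lambda|^2\|y\|^2\geq\|x\|^2$ for every scalar $\lambda$, so $x\perp_B y$. Conversely, if $x\perp_B y$, put $c=\langle x,y\rangle$; the inequality $\|x+\lambda y\|^2\geq\|x\|^2$ reads $2\operatorname{Re}(\bar\lambda c)+|\lambda|^2\|y\|^2\geq 0$ for all $\lambda$, and substituting $\lambda=-tc$ with $t>0$ small gives $-2t|c|^2+t^2|c|^2\|y\|^2\geq 0$, forcing $c=0$. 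Since $\langle x,y\rangle=0$ is symmetric in $x$ and $y$, so is $\perp_B$.

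\emph{Symmetric $\Rightarrow$ Hilbert.} Assume $\perp_B$ is symmetric and $\dim\X\geq 3$. Step one is an existence fact valid in any normed space: for fixed $x\neq 0$ the function $\lambda\mapsto\|y-\lambda x\|$ is convex and coercive, hence attains its minimum at some $\lambda_0$, and then $(y-\lambda_0 x)\perp_B x$; thus every $y$ splits as $y=\alpha x+z$ with $z\perp_B x$, and by symmetry $x\perp_B z$. The decisive step is to show that $H_x:=\{z\in\X: x\perp_B z\}$ is a \emph{linear subspace}. Homogeneity ($x\perp_B z\Rightarrow x\perp_B\alpha z$) is immediate from the definition, so the content is additivity: $x\perp_B y$ and $x\perp_B z$ imply $x\perp_B(y+z)$, equivalently (by symmetry) that $y\perp_B x$ and $z\perp_B x$ imply $(y+z)\perp_B x$. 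This is the main obstacle, and it is exactly the place where $\dim\X\geq 3$ is indispensable: in dimension $2$ symmetry is far too weak, since Radon planes are non-Euclidean normed planes in which Birkhoff--James orthogonality is symmetric. Following James, I would attack this through the Hahn--Banach description of $\perp_B$ (namely $u\perp_B v$ iff some norm-one functional $f$ satisfies $f(u)=\|u\|$ and $f(v)=0$) together with a compactness/continuity argument on the unit sphere that uses the freedom of a third independent direction to produce, from the supporting functionals witnessing $x\perp_B y$ and $x\perp_B z$, a single functional witnessing $x\perp_B(y+z)$.

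Once $H_x$ is known to be a subspace the proof closes routinely. The splitting above gives a vector-space decomposition $\X=\C x\oplus H_x$ (it is direct since $\alpha x\in H_x$ would force $x\perp_B\alpha x$, i.e.\ $\alpha=0$), and the projection $P_x$ onto $\C x$ has norm exactly one: from $x\perp_B z$ with $\lambda=1/\alpha$ one gets $\|\alpha x+z\|\geq\|\alpha x\|$, i.e.\ $\|P_x y\|\leq\|y\|$, while $P_x x=x$. With $\perp_B$ now homogeneous, symmetric and additive, one recovers a compatible inner product on $\X$ by James's reconstruction — defining a bilinear form through the components relative to $\X=\C x\oplus H_x$, checking bilinearity from additivity and homogeneity, and using $\|P_x\|=1$ to obtain $[x,x]=\|x\|^2$; equivalently, each line and hence each finite-dimensional subspace of $\X$ is $1$-complemented, and one may invoke the classical Kakutani-type characterization that a normed space of dimension $\geq 3$ all of whose $2$-dimensional subspaces are $1$-complemented is an inner product space. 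Either route yields that $\X$ is a Hilbert space.
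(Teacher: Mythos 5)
First, a point of comparison: the paper does not prove this statement at all — it is quoted as a classical theorem, attributed to James \cite{James 2} for real Banach spaces (with the proof resting on Kakutani's projection criterion \cite{Kakutani}) and to Bohnenblust \cite{Bohenblust} in the complex case. So your sketch is being measured against a citation, and the question is whether it stands on its own. The forward implication is fine. The converse, however, has a genuine gap exactly where you flag ``the main obstacle'': you never actually prove that symmetry together with $\dim \X\geq 3$ forces additivity of $\perp_B$, equivalently that $H_x=\{z\in\X:\,x\perp_B z\}$ is a linear subspace. Saying you would ``attack this through the Hahn--Banach description \dots together with a compactness/continuity argument on the unit sphere'' is a statement of intent, not an argument; this step is the entire content of James's theorem. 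Concretely, $\{z:\,x\perp_B z\}=\bigcup_{f\in J(x)}\ker f$ is a union of hyperplanes, and showing that symmetry collapses this union to a single hyperplane at every point (which is where the third independent direction is genuinely used) is the hard part you have skipped.

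The closing step also contains a non sequitur. You write that ``each line and hence each finite-dimensional subspace of $\X$ is $1$-complemented.'' Every one-dimensional subspace of every Banach space is $1$-complemented (project along the kernel of a norming functional at $x$), so this observation cannot feed into the Kakutani-type criterion, which needs every \emph{two}-dimensional subspace to be the range of a norm-one projection. The correct deduction — that for independent $x,y$ the set $H_x\cap H_y$ is a closed vector-space complement of $\mathrm{span}\{x,y\}$ with $\mathrm{span}\{x,y\}\perp_B (H_x\cap H_y)$ — again requires both left- and right-additivity of $\perp_B$, i.e.\ precisely the step left open. Finally, since the paper works over $\C$, you would need Bohnenblust's complex version of the criterion rather than Kakutani's real one. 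If you want a self-contained proof you must supply James's additivity argument in full; otherwise the honest course is the paper's, namely to cite \cite{James 2}, \cite{Kakutani} and \cite{Bohenblust}.
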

The above result was proved for real Banach spaces using \cite{Kakutani}. In the complex case the result follows from \cite{Bohenblust}.

\medskip

\noindent \textbf{$\bullet$ Support functionals:} Denote the topological dual of $\X$ by $\X^*$. Let $x\in \X$ be nonzero. Then a functional $f\in \X^*$ is said to be a \textit{support functional} at $x$ if $f(x)=\|x\|^2$ and $\|f\|=\|x\|$. The collection of all support functionals at $x$, denoted by $J(x)$, is the following set: 
\begin{equation} \label{eqn:new-021}
 J(x)=\{f_x\in \X^*:~\|f_x\|=\|x\|,~f_x(x)=\|x\|^2\}.
\end{equation}
It follows from Hahn-Banach Theorem that the collection $J(x)$ is nonempty. James \cite{James 1} characterized Birkhoff-James orthogonality in terms of support functionals. Though, the characterization was first proved for real Banach spaces, it was later found to be valid for complex Banach spaces also.

\begin{thm}[James]\cite[Theorem 2.1]{James 1}\label{James Characterization}
 Let $\X$ be a Banach space and $x,y\in \X.$ Then $x\perp_B y$ if and only if there exists $f_x\in J(x)$ such that $f_x(y)=0.$  
\end{thm}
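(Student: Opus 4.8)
The plan is to prove the two directions separately, using the Hahn–Banach theorem as the only external input. Assume throughout that $x \neq 0$ (the case $x = 0$ being trivial since $0 \perp_B y$ always and any $f_0 \in J(0) = \{0\}$ kills $y$), and recall that $J(x)$ is nonempty by Hahn–Banach.

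First I would treat the easy direction: suppose there exists $f_x \in J(x)$ with $f_x(y) = 0$. For any scalar $\lambda$ we then have
\[
\|x\|^2 = f_x(x) = f_x(x + \lambda y) \leq \|f_x\|\,\|x + \lambda y\| = \|x\|\,\|x + \lambda y\|,
\]
and dividing by $\|x\| > 0$ gives $\|x\| \leq \|x + \lambda y\|$, i.e. $x \perp_B y$. This is just the standard one-line computation.

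The substantive direction is the converse: assume $x \perp_B y$ and produce a support functional at $x$ annihilating $y$. Here I would pass to the two-dimensional (or, if $y$ is a scalar multiple of $x$, one-dimensional) subspace $Z = \mathrm{span}\{x, y\}$ and first construct a functional on $Z$ with the required properties, then extend by Hahn–Banach without increasing the norm. On $Z$, consider the subspace $\mathbb{C}x$ and the linear functional $g_0$ on $\mathbb{C}x$ defined by $g_0(\alpha x) = \alpha \|x\|^2$, so $g_0(x) = \|x\|^2$ and $\|g_0\|_{(\mathbb{C}x)^*} = \|x\|$. The Birkhoff–James condition $\|x + \lambda y\| \geq \|x\|$ for all $\lambda$ is exactly what is needed to extend $g_0$ to a functional $g$ on $Z$ with $g(y) = 0$ and $\|g\|_{Z^*} = \|x\|$: define $g$ on $Z = \mathbb{C}x \oplus \mathbb{C}y$ (vector-space direct sum, in the generic case) by $g(\alpha x + \beta y) = \alpha\|x\|^2$, which automatically satisfies $g(y)=0$ and $g(x) = \|x\|^2$; the point is to verify $\|g\|_{Z^*} \le \|x\|$, and for a vector $z = \alpha x + \beta y$ with $\alpha \neq 0$ we get $|g(z)| = |\alpha|\,\|x\|^2 \le |\alpha|\,\|x\|\,\|x + (\beta/\alpha) y\| = \|x\| \cdot \|z\|$ using Birkhoff–James, while for $\alpha = 0$ we have $g(z) = 0$. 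Thus $\|g\|_{Z^*} = \|x\|$ (equality since $g(x/\|x\|) = \|x\|$). Finally, applying the Hahn–Banach theorem, extend $g$ to $f_x \in \X^*$ with $\|f_x\| = \|g\|_{Z^*} = \|x\|$; then $f_x(x) = \|x\|^2$ so $f_x \in J(x)$, and $f_x(y) = g(y) = 0$, completing the proof.

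The only mild subtlety — and the step I would be most careful about — is the degenerate case where $x$ and $y$ are linearly dependent. If $y = cx$ for some scalar $c$, then $x \perp_B y$ forces $\|x + \lambda c x\| = |1 + \lambda c|\,\|x\| \ge \|x\|$ for all $\lambda$, which is only possible if $c = 0$, i.e. $y = 0$; then any $f_x \in J(x)$ trivially has $f_x(y) = 0$. So the generic construction above covers all remaining cases. No real obstacle arises; the whole argument is a packaging of Hahn–Banach together with the definition of Birkhoff–James orthogonality, which is presumably why James's original real-variable proof transfers verbatim to the complex setting.
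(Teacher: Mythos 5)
Your proof is correct. The paper does not prove this statement at all --- it is quoted as a classical theorem of James (cited as Theorem 2.1 of \cite{James 1}), with only the remark that the real case transfers to the complex case --- so there is no in-paper argument to compare against. Your argument is the standard one: the forward direction via $\|x\|^2 = |f_x(x+\lambda y)| \le \|x\|\,\|x+\lambda y\|$, and the converse by defining $g(\alpha x + \beta y) = \alpha\|x\|^2$ on $\mathrm{span}\{x,y\}$, using $x\perp_B y$ to bound $\|g\|$ by $\|x\|$, and extending by the (complex) Hahn--Banach theorem; the treatment of the degenerate case $y\in\mathbb{C}x$ is also handled correctly. No gaps.
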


\noindent \textbf{$\bullet$ Reflexive Banach spaces:} Let $\X$ be a Banach space and let $\pi_\X:\X\to \X^{**}$ be the canonical embedding defined by
\begin{equation}\label{Cano. embed. X**}
    \pi_\X (x) = \hat{x}, \qquad \hat{x}(x^*) = x^*(x), \quad x^* \in \X^*.
\end{equation}
The map $\pi_\X$ is a linear isometry. The space $\X$ is called \textit{reflexive} if $\pi_\X$ is surjective. Using James characterization (Theorem \ref{James Characterization}), orthogonality of linear functionals on a reflexive Banach space can be characterized in terms of norm attainment sets and kernels. Given any member $f$ of $\X^*$, the symbol $M_f$ stands for the norm attainment set of $f$, i.e.,
\begin{equation} \label{eqn:new-022}
 M_f=\{y\in S_\X:~|f(y)|=\|f\|\}.
\end{equation}
The following lemma is analogous to the characterization of orthogonality of weak$^*$ continuous functionals proved in Lemma 3.7 of \cite{Roy-Sain-new}. However, we give a brief proof to it, because, in \cite{Roy-Sain-new} the analogous result was proved in the setting of the double dual $X^{**}$.
\begin{lem}\label{Orthogonality of functionals}
Let $\X$ be a reflexive Banach space and let $f,~g\in \X^*.$ Then $f\perp_B g$ if and only if $M_f\cap \ker~ g \neq\emptyset.$
\end{lem}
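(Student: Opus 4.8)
The plan is to reduce the statement to James's characterization of Birkhoff--James orthogonality (Theorem \ref{James Characterization}) \emph{applied to the dual space} $\X^*$, and then to exploit reflexivity to convert the resulting support functional on $\X^*$ into a vector of $\X$. First I would dispose of the degenerate case $f=0$: here $M_f=S_\X$, the set $\ker g$ is either all of $\X$ or a nontrivial closed subspace and in either case contains a unit vector, so $M_f\cap\ker g\neq\emptyset$, while $0\perp_B g$ holds vacuously. So from now on assume $f\neq 0$.

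Applying Theorem \ref{James Characterization} with the ambient Banach space taken to be $\X^*$ (so that the role of support functionals is played by elements of $\X^{**}$ as in $(\ref{eqn:new-021})$), we get that $f\perp_B g$ if and only if there exists $\Phi\in\X^{**}$ with $\|\Phi\|=\|f\|$ and $\Phi(f)=\|f\|^2$ such that $\Phi(g)=0$. Since $\X$ is reflexive, $\Phi=\pi_\X(x)=\hat x$ for a unique $x\in\X$ (with $\pi_\X$ as in $(\ref{Cano. embed. X**})$), and the three conditions become $\|x\|=\|f\|$, $f(x)=\|f\|^2$, and $g(x)=0$. Thus the whole lemma comes down to showing that the existence of such an $x$ is equivalent to $M_f\cap\ker g\neq\emptyset$ (with $M_f$ as in $(\ref{eqn:new-022})$). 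For the forward implication I would set $y=x/\|f\|$: then $\|y\|=1$, $|f(y)|=|f(x)|/\|f\|=\|f\|$ so $y\in M_f$, and $g(y)=g(x)/\|f\|=0$, so $y\in M_f\cap\ker g$. For the converse, given $y\in M_f\cap\ker g$ I would choose a unimodular scalar $\alpha$ with $\alpha\,f(y)=|f(y)|=\|f\|$ and put $x=\alpha\|f\|\,y$; then $\|x\|=\|f\|$, $f(x)=\alpha\|f\|\,f(y)=\|f\|^2$, and $g(x)=\alpha\|f\|\,g(y)=0$, so $\hat x\in J(f)$ annihilates $g$ and Theorem \ref{James Characterization} yields $f\perp_B g$.

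There is no genuinely hard step here; the argument is essentially a transcription of James's theorem through the canonical isometry $\pi_\X$. The only point requiring care — and the only place where the complex scalar field intervenes — is the insertion of the unimodular rotation $\alpha$ in the converse direction: the definition of $M_f$ involves the modulus $|f(y)|$, so one must rotate $y$ so that $f$ takes a positive real value on it before rescaling to norm $\|f\|$, in order to meet the defining equality $f(x)=\|f\|^2$ of a support functional. Handling the trivial case $f=0$ separately and carrying out this normalization bookkeeping are the only details; everything else is forced.
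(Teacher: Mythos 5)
Your proof is correct and follows essentially the same route as the paper: apply James's characterization in $\X^*$, use reflexivity to realize the support functional as $\pi_\X(x)$ for some $x\in\X$, and normalize to land in $M_f\cap\ker g$. The only cosmetic difference is in the direction ``$M_f\cap\ker g\neq\emptyset\Rightarrow f\perp_B g$'', where the paper uses the one-line estimate $\|f+\lambda g\|\geq|(f+\lambda g)(u)|=\|f\|$ instead of invoking James's theorem a second time with your unimodular rotation; both are valid.
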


\begin{proof}
If $u\in M_f\cap \ker~ g,$ then 
\[
   \|f+\lambda g\| \geq | (f+\lambda g)u| = |f(u)|= \|f\|, \qquad \lambda\in \mathbb{C}.
\]
Thus, $f\perp_B g.$ On the other hand, if $f\perp_B g$, then by Theorem \ref{James Characterization} there exists $l\in J(f)$ such that $l(g)=0.$ Since $\X$ is reflexive, $\pi_\X(x)=l$ for some $x\in \X.$ Therefore,
\[
 l(f)= \pi_\X(x)(f)=f(x)= \|f\|^2=\|f\| \|x\|, \quad \text{ and } \quad \pi_\X(x)(g)=g(x)=0.
\]
Consequently, $\dfrac{x}{\|x\|}\in M_f\cap \ker~ g.$ This completes the proof.
\end{proof}

\noindent \textbf{$\bullet$ Smooth and strictly convex Banach spaces:} Let $\X$ be a Banach space. A nonzero point $x\in \X$ is said to be \textit{smooth}, if $J(x)$ is a singleton set. Also, a point $x \in \X$ is said to be an \textit{exposed point} of $B(\mathbf{0},\|x\|)$, if there exists $f_x\in J(x)$ such that $f_x(y)=\|x\|\|y\|$ for any $y\in \X$ with $\|x\|=\|y\|$  implies that $x=y$. The Banach space $\X$ is said to be \textit{smooth} if each nonzero point $x \in \X$ is smooth. The Banach space $\X$ is said to be \textit{strictly convex} or \textit{rotund} or \textit{strictly normed} if $\|tx_1+(1-t)x_2\|<1$ whenever $x_1,x_2$ are distinct points of $S_{\X}$ and $0<t<1$. Also, we learn from the literature (see \cite{Megginson}) that a Banach space $\X$ is strictly convex if and only if any nonzero $y\in \X$ is an exposed point of $B(\mathbf{0},\|y\|)$. In general, if $\X^*$ is smooth then $\X$ is strictly convex and if $\X^*$ is strictly convex then $\X$ is smooth but the converse does not hold, e.g. see Propositions 5.4.5 \& 5.4.6 in \cite{Megginson} and the discussion thereafter. Therefore, a reflexive Banach space $\X$ is smooth or strictly convex if and only if $\X^*$ is strictly convex or smooth respectively. Smoothness of a point can also be characterized in terms of the right-additivity of Birkhoff-James orthogonality in the following way.

\begin{thm}\cite{James 1}, \cite[Theorem 1.1]{Saikat Roy} \label{Orthogonality = Right-additivity}
    Let $\X$ be a Banach space and $x\in\X$ be a nonzero element. Then $x$ is smooth if and only if $x\perp_B y$ and $x\perp_B z$ imply that $x\perp_B (y+z)$ for all $y,z\in\X.$
\end{thm}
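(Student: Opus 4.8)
The plan is to transport the whole statement into the language of support functionals via the James characterization (Theorem \ref{James Characterization}), so that ``$x$ is smooth'' becomes ``$J(x)$ is a singleton'' and ``Birkhoff--James orthogonality is right-additive at $x$'' becomes ``the set $W(x):=\{y\in\X:~x\perp_B y\}$ is a linear subspace.'' The bridge between the two formulations is the identity $W(x)=\bigcup_{f\in J(x)}\ker f$, which is immediate from Theorem \ref{James Characterization}: $x\perp_B y$ holds precisely when $f(y)=0$ for some $f\in J(x)$. Alongside this I would record two elementary observations that are used repeatedly: $W(x)$ is automatically closed under scalar multiplication and contains $\mathbf{0}$ (so the only thing right-additivity adds is closure under addition); and, since $f(x)=\|x\|^2\neq 0$ for every $f\in J(x)$ because $x\neq\mathbf{0}$, the vector $x$ lies in no $\ker f$, hence $x\notin W(x)$ and in particular $W(x)\neq\X$.

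For the forward implication, if $x$ is smooth then $J(x)=\{f_x\}$ and the identity above gives $W(x)=\ker f_x$, a linear subspace; consequently $x\perp_B y$ and $x\perp_B z$ force $f_x(y)=f_x(z)=0$, whence $f_x(y+z)=0$ and $x\perp_B(y+z)$.

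For the reverse implication, assume the right-additivity condition. Combined with the automatic closure under scalars and the presence of $\mathbf{0}$, this says $W(x)$ is a linear subspace, and by the observation above it is a \emph{proper} one. Now $W(x)$ contains each hyperplane $\ker f$ with $f\in J(x)$; since a hyperplane is a maximal proper subspace, the chain $\ker f\subseteq W(x)\subsetneq\X$ forces $\ker f=W(x)$ for every $f\in J(x)$, so all these kernels coincide. Finally, two functionals $f,g\in J(x)$ sharing the same kernel are proportional, say $f=cg$; the normalizations $\|f\|=\|g\|=\|x\|$ give $|c|=1$, and then $f(x)=g(x)=\|x\|^2\neq0$ gives $c=1$, so $f=g$. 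Hence $J(x)$ is a singleton, i.e.\ $x$ is smooth.

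The forward direction is essentially bookkeeping; the one place that needs a genuine idea is the reverse direction, specifically the step that makes $J(x)$ collapse to a single point. That step rests on two facts pulled together: that a proper subspace containing a hyperplane must equal it (maximality of hyperplanes), and that the two normalizing conditions built into the definition of $J(x)$ rigidify the functional once its kernel is prescribed. The only external input being borrowed is that the James characterization (Theorem \ref{James Characterization}) holds over $\mathbb{C}$, which is what legitimizes testing smoothness against the set $J(x)$ in the first place.
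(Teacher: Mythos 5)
Your proof is correct. Note that the paper does not prove this statement at all --- it is quoted as a known result from \cite{James 1} and \cite[Theorem 1.1]{Saikat Roy} --- so there is no in-paper argument to compare against; what you have written is essentially the classical proof (transporting everything to $W(x)=\bigcup_{f\in J(x)}\ker f$ via the James characterization, using that a proper subspace containing a hyperplane equals it, and that the normalization $f(x)=\|x\|^2$ rigidifies a functional with prescribed kernel), and every step, including the appeal to the complex-scalar version of Theorem \ref{James Characterization}, is sound.
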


\smallskip

We now introduce the notion of right and left-complemented subspaces of a Banach space. These concepts will be useful in the coming sections.

\smallskip

\noindent \textbf{$\bullet$ Right-complemented subspace:} Let $\X$ be a Banach space and let $\Y$ be a closed linear subspace of $\X$. Then $\Y$ is said to be \textit{right-complemented} in $\X$, if there exists a subspace $\widetilde{\Y}$ of $\X$ such that $\widetilde{\Y}$ is closed, vector space complement of $\Y$, and $\Y\perp_B \widetilde{\Y}$. In that case, $\X$ is written as $\X=\Y\bigoplus_\SR \widetilde{\Y}$, and $\widetilde{\Y}$ is said to be the right complement of $\Y$ in $\X$.

\medskip

\noindent \textbf{$\bullet$ Left-complemented subspace:} Let $\X$ be a Banach space and let $\Y$ be a closed linear subspace of $\X$. Then $\Y$ is said to be \textit{left-complemented} in $\X$, if there exists a subspace $\widetilde{\Y}$ of $\X$ such that $\widetilde{\Y}$ is closed, vector space complement of $\Y$, and $\widetilde{\Y}\perp_B \Y$. In that case, $\X$ is written as $\X=\widetilde{\Y}\bigoplus_\SL \Y$, and $\widetilde{\Y}$ is said to be the left complement of $\Y$ in $\X$.

\medskip 

\noindent \textbf{$\bullet$ Complemented subspace:}  We define complemented subspaces of a Banach space as it is defined in \cite{Megginson}. A closed linear subspace $\Y$ of a Banach space $\X$ is said to be a \textit{complemented subspace} if there is a closed linear subspace $\widetilde{\Y}$ of $\X$ such that $\Y \cap \widetilde{\Y} = \{ \mathbf 0 \}$ and $\X=\Y \oplus \widetilde{\Y}$, where $\X=\Y \oplus \widetilde{\Y}$ means that every element $x\in \X$ can be (uniquely) written as $x=y + y_1$ with $y \in \Y$ and $y_1 \in \widetilde{\Y}$. In this case, $\widetilde{\Y}$ is called a \textit{complement} of $\Y$ in $\X$.

\begin{prop}
A left-complemented or right-complemented subspace of a Banach space is a complemented subspace.
\end{prop}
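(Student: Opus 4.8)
The plan is to treat this as a direct unwinding of the definitions, with the only nontrivial ingredient being the automatic boundedness of the projection. First I would take $\Y$ right-complemented in $\X$ (the left-complemented case is handled identically, reading $\widetilde{\Y}\perp_B\Y$ in place of $\Y\perp_B\widetilde{\Y}$): by definition there is a \emph{closed} subspace $\widetilde{\Y}\subseteq\X$ which is a vector space complement of $\Y$, that is, $\Y\cap\widetilde{\Y}=\{\mathbf 0\}$ and $\Y+\widetilde{\Y}=\X$, so that every $x\in\X$ is uniquely of the form $x=y+y_1$ with $y\in\Y$, $y_1\in\widetilde{\Y}$. But this is precisely the requirement in the definition of a complemented subspace: a closed linear subspace $\widetilde{\Y}$ with $\Y\cap\widetilde{\Y}=\{\mathbf 0\}$ and $\X=\Y\oplus\widetilde{\Y}$. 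Hence $\Y$ is complemented in $\X$, and the Birkhoff--James orthogonality hypothesis is simply not used; it is exactly the extra feature distinguishing left/right-complemented subspaces from general complemented ones.

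If one also wants to record the stronger, equivalent fact that the induced linear projection $P:\X\to\Y$ (with $\ker P=\widetilde{\Y}$) is bounded — i.e.\ that $\X$ is a \emph{topological} direct sum $\Y\oplus\widetilde{\Y}$ — I would invoke the closed graph theorem: if $x_n\to x$ in $\X$ and $Px_n\to y$, then $y\in\Y$ since $\Y$ is closed, while $x_n-Px_n\to x-y$ forces $x-y\in\widetilde{\Y}$ since $\widetilde{\Y}$ is closed; uniqueness of the decomposition $x=y+(x-y)$ then gives $Px=y$, so $P$ has closed graph and is therefore continuous. This is the one point where completeness of $\X$ (as opposed to its being merely normed) is used.

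I do not anticipate any real obstacle here: the statement is true by definition chasing, and the optional closed-graph remark is the only place where any work occurs. The main thing to be careful about is keeping the roles of $\Y$ and $\widetilde{\Y}$ straight between the left- and right-complemented cases, and noting explicitly that "vector space complement" in both definitions already encodes $\Y\cap\widetilde{\Y}=\{\mathbf 0\}$ together with $\Y+\widetilde{\Y}=\X$, which is all the definition of complemented subspace asks for once $\widetilde{\Y}$ is known to be closed.
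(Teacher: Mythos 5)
Your argument is correct and is exactly what the paper intends: the paper's proof is just ``Follows from the definitions,'' and your unwinding — a right- or left-complemented subspace comes by definition with a \emph{closed} vector space complement, which is precisely what the definition of complemented subspace requires — is the same observation made explicit. The optional closed-graph remark about boundedness of the induced projection is a correct bonus but not needed for the statement.
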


\begin{proof}

Follows from the definitions of left-complemented and right-complemented subspaces.
\end{proof}
However, a complemented subspace may not always be right or left-complemented as the following example shows.
\begin{eg}
Let $\X$ be a Banach space which is not a Hilbert space. By Theorem A in \cite{Bohenblust}, there exists a two dimensional subspace $\Y$ which is not right-complemented in $\X$. But the subspace $\Y$ is complemented by Theorem 3.2.18 in \cite{Megginson}. Again, by Theorem A in \cite{Bohenblust} and Theorem 3.2.18 in \cite{Megginson}, there is a complemented subspace $\widehat{\Y}$ of codimension 2 which is not left-complemented in $\X$. 
\end{eg} \qed

Interestingly, a closed subspace in a Banach space is complemented if and only if it is the range of a bounded projection operator. A reader is referred to Corollary 3.2.15 in \cite{Megginson} for a proof to this. Also, a right-complemented subspace of a Banach space is the range of a norm-one projection and a left-complemented subspace is the kernel of a norm-one projection and vice-versa. This result is known in a different form for real Banach spaces, e.g. see \cite{Kinnunen, R 1}. However, we could not locate a proof in the literature for the same for a complex Banach space and thus we write a proof below. 

\begin{prop}\label{Range and kernel of norm one Projection}
Let $\X$ be a Banach space and $\Y$ be a closed proper subspace of $\X$. Then the following hold.
\begin{itemize}
\item[(i)] $\Y$ is the range of a norm-one projection if and only if there is a closed subspace $\widetilde{\Y}$ of $\X$ such that $\widetilde{\Y}$ is vector space complement of $\Y$ and $\Y\perp_B \widetilde{\Y}$.

\smallskip

\item[(ii)] $\Y$ is the kernel of a norm-one projection if and only if there is a closed subspace $\widetilde{\Y}$ of $\X$ such that $\widetilde{\Y}$ is vector space complement of $\Y$ and $\widetilde{\Y}\perp_B \Y$.
\end{itemize}
\end{prop}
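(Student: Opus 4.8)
The plan is to prove part (i) and then obtain part (ii) by a completely parallel argument (essentially replacing "range" by "kernel" and reversing the side on which Birkhoff--James orthogonality appears), so I concentrate on (i). The key observation tying the two sides together is the characterization of Birkhoff--James orthogonality via support functionals (Theorem \ref{James Characterization}): $y \perp_B z$ iff there is $f_y \in J(y)$ with $f_y(z) = 0$.

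For the forward direction of (i), suppose $P$ is a norm-one projection with range $\Y$. Set $\widetilde{\Y} = \ker P$, which is a closed subspace; since $P$ is a projection, $\X = \Y \oplus \widetilde{\Y}$ as a vector space direct sum, so $\widetilde{\Y}$ is a closed vector space complement of $\Y$. It remains to check $\Y \perp_B \widetilde{\Y}$: for $y \in \Y$ and $z \in \widetilde{\Y}$ and any scalar $\lambda$, we have $P(y + \lambda z) = Py + \lambda Pz = y$, whence $\|y + \lambda z\| \geq \|P\|^{-1}\|P(y+\lambda z)\| = \|y\|$. Thus $y \perp_B z$, and since this holds for all such $y,z$, we get $\Y \perp_B \widetilde{\Y}$.

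For the converse, suppose $\widetilde{\Y}$ is a closed vector space complement of $\Y$ with $\Y \perp_B \widetilde{\Y}$, and let $P : \X \to \X$ be the linear projection onto $\Y$ along $\widetilde{\Y}$ (well-defined by the direct-sum hypothesis). I must show $\|P\| \leq 1$, i.e. $\|Py\| \leq \|x\|$ whenever $x = y + z$ with $y \in \Y$, $z \in \widetilde{\Y}$. Since $y \perp_B z$ (any scalar, so in particular $\lambda = 1$), we get $\|y\| \leq \|y + z\| = \|x\|$, so $\|Px\| = \|y\| \leq \|x\|$. This gives $\|P\| \leq 1$; and $\|P\| \geq 1$ is automatic since $P$ is a nonzero projection ($\Y \neq \{\mathbf 0\}$ as $\Y$ is a proper subspace but $P$ is nonzero because... actually one just notes $P^2 = P \neq 0$ forces $\|P\| \geq 1$), so $P$ is a norm-one projection with range $\Y$.

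Part (ii) is handled symmetrically: if $P$ is a norm-one projection with $\ker P = \Y$, take $\widetilde{\Y} = \operatorname{ran} P$, a closed complement of $\Y$; for $z \in \widetilde{\Y}$, $y \in \Y$ and any $\lambda$, $P(z + \lambda y) = z$, so $\|z + \lambda y\| \geq \|z\|$, giving $\widetilde{\Y} \perp_B \Y$. Conversely, given a closed complement $\widetilde{\Y}$ of $\Y$ with $\widetilde{\Y} \perp_B \Y$, let $P$ be the projection onto $\widetilde{\Y}$ along $\Y$; then for $x = z + y$ ($z \in \widetilde{\Y}$, $y \in \Y$) the relation $z \perp_B y$ at $\lambda = 1$ yields $\|Px\| = \|z\| \leq \|z + y\| = \|x\|$, so $\|P\| = 1$ and $\ker P = \Y$. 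I do not foresee a genuine obstacle here; the only point requiring a little care is making sure the Birkhoff--James orthogonality of subspaces (quantified over \emph{all} scalars $\lambda$) is correctly specialized to $\lambda = 1$ to extract the norm inequality, and conversely that the single inequality "$\|y\| \le \|y+z\|$ for the chosen decomposition'' is all that is needed rather than the full orthogonality relation — so the proof is short and essentially bookkeeping once Theorem \ref{James Characterization} or, more simply, the definition of $\perp_B$ is invoked.
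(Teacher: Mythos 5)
Your proposal is correct and follows essentially the same route as the paper's proof: take $\widetilde{\Y}=(I-P)\X=\ker P$ (resp.\ $\operatorname{ran}P$) in the forward direction and read off the orthogonality from $\|y+\lambda z\|\geq\|P(y+\lambda z)\|=\|y\|$, and in the converse use the definition of $\perp_B$ at $\lambda=1$ to bound the algebraic projection by $1$. The invocation of Theorem~\ref{James Characterization} is unnecessary (as you note yourself, only the definition of $\perp_B$ is used), but this does not affect correctness.
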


\begin{proof}
(i) Suppose that $\Y$ is range of norm-one projection $P$. Set $(I-P)\X=\widetilde{\Y}$, where $I$ is the identity operator on $\X$. Then $\widetilde{\Y}$ is closed and is a vector space complement of $\Y$. Also, for any $y\in \Y$ and $z\in \widetilde{\Y}$, we have
\[
 \|y+\lambda z\|\geq \|P(y+\lambda z)\| = \|P(y)\| = \|y\|, \qquad \lambda \in \mathbb{C}.
\]
Therefore, $\Y\perp_B \widetilde{\Y}$.

\smallskip

Conversely, let $\widetilde{\Y}$ be a subspace of $\X$ satisfying the stated conditions. Let $P_\Y$ be the projection on $\Y$. Since $\Y\perp_B \widetilde{\Y}$, we have that
\[
 \|y+z\|\geq \|y\| = \|P_\Y(y)\| = \|P_\Y(y+z)\|, \qquad y\in \Y,~z\in \widetilde{\Y}.
\]
Thus, $\|P_\Y\|=1.$

\smallskip

(ii) Suppose that $\Y$ is the kernel of a norm-one projection $P$. Set $\widetilde{\Y} = \text{Ran}~P$. Then $\widetilde{\Y}$ is a closed subspace and is also vector space complement of $\Y$. Also, for any $z\in \widetilde{\Y}$ and $y\in \Y$, we have
\[
 \|z+\lambda y\| \geq \|P(z+\lambda y)\| = \|z\|, \qquad \lambda \in \mathbb{C}.
\]
Thus, $\widetilde{\Y}\perp_B \Y$.

\smallskip

Conversely, let $\widetilde{\Y}$ be a subspace of $\X$ satisfying the stated conditions. Let $P_{\widetilde{\Y}}$ be the projection on $\widetilde{\Y}$. Then $\ker~P_{\widetilde{\Y}}=\Y$. Moreover, since $\widetilde{\Y}\perp_B \Y$, we have $\|P_{\widetilde{\Y}}\|=1$. This completes the proof.

\end{proof}
We shall shortly see that right-complemented (left-complemented) subspaces are also connected to the concept of duality maps and Hahn-Banach extension operator.

\smallskip

\noindent \textbf{$\bullet$ Orthogonally complemented or 1-complemented subspaces:} An \textit{orthogonally complemented} or $1$-\textit{complemented} subspace of a Banach space is precisely a right-complemented subspace, i.e. the range of a norm-one projection (as per Proposition \ref{Range and kernel of norm one Projection}). See \cite{Kinnunen, Moslehian, R 1, R 2} for more details.

\smallskip

\noindent \textbf{$\bullet$ Duality map and Hahn-Banach extension operator:} A duality map of $\X$ is a selector $\J_\X:\X\to \X^*$ such that $\J_\X(x)\in J(x)$, where $J(x)$ is as in (\ref{eqn:new-021}), for each nonzero $x\in \X$ and $\J_\X(\mathbf{0})=\mathbf{0}$. Note that if $\X$ is smooth then the duality map of $\X$ is uniquely determined. For a closed subspace $\mathbb{Y}$ of $\X$, we denote by $\mathbb{J}_{\X}(\Y)$ the following set:
\begin{equation} \label{eqn:new-001}
\J_{\X}(\Y) = \{\J_{\X}(x): x\in \Y\}.
\end{equation}
Let $\Y$ be a closed subspace of $\X$. Given any $f\in \Y^*$, we consider the (nonempty) collection
\begin{equation} \label{eqn:neq-0002}
 H_\Y(f):=\left\{\widetilde{f}\in \X^*:~\widetilde{f}~\text{is a Hahn-Banach extension of}~f\right\}.
\end{equation}
A Hahn-Banach extension operator for $\mathbb{Y}$ is a selector $\Psi:\mathbb{Y}^*\to \X^*$ such that $\Psi(f)\in H_\Y(f)$ for each $f\in \Y^*$. Note that if $\X^*$ is strictly convex then the Hahn-Banach extension operator is uniquely determined.  Also, denote the annihilator of $\Y$ by $\Ann(\Y)$, which is defined as
\begin{equation} \label{eqn:new-0003}
\Ann(\mathbb{Y}): = \{f\in \X^*:~f|_\mathbb{Y}= \mathbf 0\}.
\end{equation}
A Hahn-Banach extension operator for a closed subspace is not necessarily linear even in the strictly convex case. In fact, linearity of a Hahn-Banach extension operator is an extensive area of research in Banach space theory, e.g. see \cite{AGS, Martin, SY}.

\smallskip

\noindent \textbf{$\bullet$ Wandering subspace, unilateral shift and bilateral shift on a Hilbert space:} Let $V$ be an isometry on a Hilbert space $\mathcal{H}$. A subspace $\mathcal{L}$ of $\mathcal{H}$ is called a \textit{wandering subspace} for $V$ if $V^n \mathcal{L} \perp \mathcal{L}$ for all $n\in \mathbb{N}$. The isometry $V$ is called a \textit{unilateral shift} if there exists a subspace $\mathcal{L}$, which is wandering for $V$ and such that $\displaystyle\oplus_{n=0}^\infty V^n \mathcal{L}= \mathcal{H}$. This subspace $\mathcal{L}$ is called generating subspace for $V$. It is uniquely determined by $ \mathcal{H} \ominus V\mathcal{H}$, the orthogonal complement of $V\mathcal{H}$ in $\mathcal{H}$. The dimension of $\mathcal{L}$ is called the \textit{multiplicity} of the unilateral shift $V$. 

\smallskip

An operator $U$ on $\mathcal{H}$ is called a bilateral shift if $U$ is unitary and if there exists a subspace $\mathcal{L}$ of $\mathcal H$, such that $U^n \mathcal{L} \perp \mathcal{L}$ for all $n\in \mathbb{Z} \setminus \{0\}$ and $ \displaystyle \oplus_{n=-\infty}^\infty V^n \mathcal{L} = \mathcal{H}$. Every such subspace $\mathcal{L}$ is called a generating subspace of $U$. Unlike a unilateral shift, a wandering subspace $\mathcal{L}$ is not uniquely determined for a bilateral shift. However, the dimension of $\mathcal{L}$ is uniquely determined and $\ dim \mathcal{L}$ is called the \textit{multiplicity} of the bilateral shift $U$. Indeed, if $\mathcal{L}$ is a generating subspace for $U$, then $U\mathcal{L}$ is also a generating subspace for $U$. 

\medskip

\noindent \textbf{$\bullet$ Wandering subspace of a Banach space isometry:} Let $V$ be an isometry on a Banach space $\X$. Then a subspace $\mathcal L$ of $\X$ is said to be a \textit{wandering subspace} for $V$ if $V^n \mathcal L \perp_B \mathcal L$ for all $n\geq 1$. Analogous to Hilbert space isometries, the wandering subspace of a Banach space isometry has the following property.

\begin{lem}\label{Wandering Space}
Suppose that $V$ is an isometry on a Banach space $\mathbb{X}$ and $\mathcal{L}$ is a closed subspace of $\mathbb{X}.$ Then the following are equivalent:
\begin{itemize}
\item[(i)] $\mathcal L$ is a wandering subspace for $V$;

\item[(ii)] $V^n\mathcal{L}\perp_B V^m\mathcal{L},~0\leq m < n.$
\end{itemize}
\end{lem}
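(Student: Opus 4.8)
The plan is to prove the two implications separately, with the forward direction $(i)\Rightarrow(ii)$ being essentially a formal consequence of the definition and the reverse direction $(ii)\Rightarrow(i)$ requiring an induction. First I would address $(ii)\Rightarrow(i)$: assume $V^n\mathcal{L}\perp_B V^m\mathcal{L}$ for all $0\le m<n$, and simply specialize $m=0$ to obtain $V^n\mathcal{L}\perp_B\mathcal{L}$ for all $n\ge 1$, which is exactly the definition of $\mathcal{L}$ being wandering for $V$. So this direction is immediate.

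The substantive content is $(i)\Rightarrow(ii)$. Here I would proceed by showing that if $\ell\in V^n\mathcal{L}$, $\ell'\in V^m\mathcal{L}$ with $0\le m<n$, then $\|\ell+\lambda\ell'\|\ge\|\ell\|$ for every scalar $\lambda$. Write $\ell=V^n u$ and $\ell'=V^m u'$ for some $u,u'\in\mathcal{L}$. Since $m<n$, we have $\ell=V^m(V^{n-m}u)$, so $\ell+\lambda\ell'=V^m\big(V^{n-m}u+\lambda u'\big)$. Because $V$ is an isometry, $\|\ell+\lambda\ell'\|=\|V^{n-m}u+\lambda u'\|$ and likewise $\|\ell\|=\|V^{n-m}u\|$. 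Now $V^{n-m}u\in V^{n-m}\mathcal{L}$ with $n-m\ge 1$, and $u'\in\mathcal{L}$, so the hypothesis that $\mathcal{L}$ is wandering (i.e. $V^k\mathcal{L}\perp_B\mathcal{L}$ for all $k\ge 1$, applied with $k=n-m$) gives $\|V^{n-m}u+\lambda u'\|\ge\|V^{n-m}u\|$. Combining the three displayed relations yields $\|\ell+\lambda\ell'\|\ge\|\ell\|$, which is precisely $V^n\mathcal{L}\perp_B V^m\mathcal{L}$.

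The key idea — and the only place any care is needed — is the observation that Birkhoff--James orthogonality is \emph{shift-invariant under an isometry}: if $V$ is an isometry then $x\perp_B y$ implies $Vx\perp_B Vy$, since $\|Vx+\lambda Vy\|=\|V(x+\lambda y)\|=\|x+\lambda y\|\ge\|x\|=\|Vx\|$. Equivalently, one pulls the common power $V^m$ out of both arguments and uses that it preserves norms. I do not expect a genuine obstacle here; the main thing to get right is the bookkeeping of indices (ensuring $n-m\ge 1$ so that the wandering hypothesis applies) and noting that the statement is symmetric enough that we only need to handle the case $m<n$, the case $m=n$ being vacuous for orthogonality of a subspace with itself in the relevant sense (or rather, not claimed). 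This completes the equivalence.
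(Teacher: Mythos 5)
Your proof is correct and follows essentially the same route as the paper: the reverse implication is handled by taking $m=0$, and the forward implication by factoring out $V^m$, using that $V$ is an isometry to reduce to $\|V^{n-m}l_1+\lambda l_2\|\geq\|V^{n-m}l_1\|$, which is the wandering hypothesis with exponent $n-m\geq 1$. No gaps.
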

\begin{proof}
It suffices to prove (i)$\implies$(ii) as the other implication follows trivially. Let $\mathcal L$ be a wandering subspace for $V$. Then $V^n \mathcal L \perp_B \mathcal L$ for all $n \geq 1$. Let $\lambda\in\mathbb{C}$ and let $l_1,l_2\in\mathcal{L}$ be arbitrary. Then for $n > m\geq 0,$ we have
\[
    \|V^nl_1+\lambda V^ml_2\|  = \|V^m\left(V^{n-m}l_1+\lambda l_2\right)\| = \|\left(V^{n-m}l_1+\lambda l_2\right)\| \geq \|V^{n-m}l_1\|=\|l_1\|=\|V^nl_1\|
\]
and the proof is complete. 
\end{proof}

\noindent \textbf{$\bullet$ Unilateral shift on a Banach space:} The following definition of unilateral shift on a Banach space was first introduced by Faulkner and Huneycutt in \cite{Faulkner Huneycutt}.

\begin{defn}
An isometry $V: \mathbb{X}\to \mathbb{X}$ is said to be a \textit{unilateral shift} if there exists a subspace $\mathcal{L}\subseteq\mathbb{X}$ such that
\begin{itemize}
\item[(i)] for $n > m$, $V^n \mathcal{L}\perp_B V^m \mathcal{L} ;$
\item [(ii)] $\mathbb{X}=\displaystyle\bigoplus_{n=0}^{\infty}V^n\mathcal{L}$,
\end{itemize}
where the direct sum $\displaystyle\bigoplus_{n=0}^{\infty}V^n\mathcal{L}$ stands for the following set:
\begin{equation} \label{eqn:new-0222}
   \bigoplus_{n=0}^{\infty}V^{n}\mathcal{L} =\left\{x\in\X: x\mbox{ can be uniquely expressed as }x=\sum_{n=0}^{\infty}V^nl_n,\;l_n\in \mathcal{L} \text{ for } n \geq 0\right\}.
\end{equation}
The space $\mathcal{L}$ is called the \textit{generating subspace} and the $\dim \mathcal{L}$ is called the \textit{multiplicity} of $V$.
\end{defn}
It is easy to see that the multiplicity of a unilateral shift is well-defined. Let $\X$ be a Banach space and $V$ be a unilateral shift on $\X$ with a generating subspace $\mathcal{L}$. Then $\dim \mathcal{L}$ is uniquely determined by $\dim(V / V\X)$. This is because, $\X = V\X \oplus \mathcal{L}$ and $V\X$ is a closed subspace of $\X$. Therefore, the quotient space $\X / V\X$ is isomorphic with $\mathcal{L}$ as a vector space. Hence $\dim(\X / V\X) = \dim \mathcal{L}$.

\smallskip

Evidently, the generating subspace $\mathcal L$ is a wandering subspace for $V$. The forward shift operator $M_z$ on $\ell_2(\X)$, defined by $M_z(x_1, x_2,x_3, \dots)=(\mathbf 0,x_1,x_2, \dots)$, is an example of a unilateral shift. Note that the direct sum as in (\ref{eqn:new-0222}) can be expressed in different forms under the assumption of smoothness, as the Equation-(\ref{Direct sum equality}) in the next lemma shows.

\begin{lem}\label{span closure equals to direct sum} Let $V$ be an isometry on a smooth Banach space $\mathbb{X}$ and 
$
    \mathbb{X}=V\mathbb{X}\bigoplus_\SR \mathcal{L}.
$
Suppose 
$
    L_0 = \mathcal{L}, \ \ L_n=\bigoplus_{k=0}^nV^k\mathcal{L},~n\geq 1.
$
Then
\begin{equation}\label{Direct sum equality}
    \left(~\overline{\bigcup_{n=0}^\infty L_n}~\right)=\overline{span}\left\{V^n\mathcal{L}:~n\geq 0\right\}=\bigoplus_{n=0}^{\infty}V^{n}\mathcal{L}.
\end{equation}
\end{lem}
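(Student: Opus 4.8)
The plan is to reduce the whole statement to the existence of one norm-one projection, supplied by the right-complementedness hypothesis, and then run an elementary iteration-plus-density argument; the first of the two equalities will be essentially formal, while the second carries the real content. To set up, note that $\mathbb X = V\mathbb X \bigoplus_\SR \mathcal L$ says $\mathcal L$ is a closed vector-space complement of the closed subspace $V\mathbb X$ with $V\mathbb X \perp_B \mathcal L$; hence by Proposition \ref{Range and kernel of norm one Projection}(ii) the projection $Q$ of $\mathbb X$ onto $V\mathbb X$ with $\ker Q = \mathcal L$ satisfies $\|Q\| = 1$. Write $P := I - Q$, the bounded projection onto $\mathcal L$ with $\ker P = V\mathbb X$, and (using that $V$ is an isometry onto the closed subspace $V\mathbb X$, so $V^{-1}\colon V\mathbb X \to \mathbb X$ is an isometry) set $R := V^{-1}Q \in \mathcal B(\mathbb X)$. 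Then $\|R\| \le 1$, because $\|Rx\| = \|V^{-1}Qx\| = \|Qx\| \le \|x\|$; the operator $R$ behaves as a backward shift on the vectors in play, namely $R\ell = 0$ and $R(V^{k}\ell) = V^{k-1}\ell$ for $\ell \in \mathcal L$ and $k \ge 1$; and every $x \in \mathbb X$ satisfies $x = Px + VRx$ with $Px \in \mathcal L$, which on iteration gives, for each $N \ge 0$,
\[
 x \;=\; \sum_{n=0}^{N} V^{n}\bigl(PR^{n}x\bigr) \;+\; V^{N+1}R^{N+1}x, \qquad PR^{n}x \in \mathcal L .
\]

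For the first equality I would observe that each $L_n$ is a genuine direct sum: if $\sum_{k=0}^{n} V^{k}\ell_k = 0$, then applying $P$ forces $\ell_0 = 0$, after which applying $V^{-1}$ and inducting forces every $\ell_k = 0$ (alternatively one invokes that $\mathcal L$ is wandering for $V$, Lemma \ref{Wandering Space}, together with the right-additivity of Birkhoff--James orthogonality at the smooth points of $\mathbb X$, Theorem \ref{Orthogonality = Right-additivity}). Hence, as a set, $L_n = \mathrm{span}\{V^{k}\mathcal L : 0 \le k \le n\}$, and since $L_0 \subseteq L_1 \subseteq \cdots$ their union is precisely $\mathrm{span}\{V^{n}\mathcal L : n \ge 0\}$; passing to closures gives $\overline{\bigcup_{n\ge 0}L_n} = \overline{\mathrm{span}}\{V^{n}\mathcal L : n \ge 0\}$.

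The substantive equality is $\overline{\mathrm{span}}\{V^{n}\mathcal L\} = \bigoplus_{n=0}^{\infty}V^{n}\mathcal L$. The inclusion $\supseteq$ is immediate since the partial sums of a convergent series $\sum_n V^{n}\ell_n$ lie in $\mathrm{span}\{V^{n}\mathcal L\}$. For $\subseteq$, the argument I have in mind is: take $x \in \overline{\mathrm{span}}\{V^{n}\mathcal L\} = \overline{\bigcup_m L_m}$, put $\ell_n := PR^{n}x \in \mathcal L$, and prove first that $R^{n}x \to 0$. Given $\varepsilon > 0$, choose $m$ and $w \in L_m$ with $\|x - w\| < \varepsilon$; because $R$ is a backward shift on $L_m$ one has $R^{m+1}w = 0$, hence $R^{n}w = 0$ for all $n \ge m+1$, and therefore $\|R^{n}x\| = \|R^{n}(x - w)\| \le \|x - w\| < \varepsilon$ for $n \ge m+1$, using $\|R\| \le 1$. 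Feeding this into the displayed identity yields $\bigl\|x - \sum_{n=0}^{N} V^{n}\ell_n\bigr\| = \|V^{N+1}R^{N+1}x\| = \|R^{N+1}x\| \to 0$, so $x = \sum_{n=0}^{\infty}V^{n}\ell_n$ with $\ell_n \in \mathcal L$. Uniqueness then follows from the continuity of $P$: if also $x = \sum_n V^{n}\ell_n'$ with $\ell_n' \in \mathcal L$, then $Px = \ell_0'$ since $P$ annihilates each $V^{n}\ell_n'$ with $n \ge 1$ (these lie in $V\mathbb X = \ker P$); applying $V^{-1}$ to $x - \ell_0'$ puts us in the same situation for $Rx$, and an easy induction gives $\ell_n' = PR^{n}x = \ell_n$ for every $n$. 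Thus $x \in \bigoplus_{n=0}^{\infty}V^{n}\mathcal L$; combined with the first equality, this shows the three displayed sets coincide.

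The one place I expect genuine care to be needed is the convergence $R^{n}x \to 0$ for $x$ in the closed span rather than the algebraic span; this is exactly where the contractivity $\|R\| \le 1$ — equivalently, the hypothesis that $\mathcal L$ is the \emph{right} complement of $V\mathbb X$ rather than merely a complement — is used essentially. Everything else is routine bookkeeping with $P$, $Q$, $V$ and $V^{-1}$.
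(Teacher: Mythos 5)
Your proof is correct, and it takes a genuinely different route from the paper's. The paper works directly with Birkhoff--James orthogonality: smoothness gives right-additivity, hence $V^n\mathcal{L}\perp_B\bigl(\mathcal{L}\oplus V\mathcal{L}\oplus\cdots\oplus V^{n-1}\mathcal{L}\bigr)$, and this relation is used to show that $\sum_k V^k l_k=\mathbf{0}$ forces every $l_k=\mathbf{0}$ and that tails of such series are controlled; existence of a representation $x=\sum_n V^n l_n$ for $x$ in the closed span is handled there only by a telescoping-subsequence argument whose terms are general elements of the span rather than vectors of the form $V^n l_n$. You instead package the single orthogonality relation $V\X\perp_B\mathcal{L}$ into the norm-one projection $Q$ onto $V\X$ along $\mathcal{L}$ (part (i) of Proposition \ref{Range and kernel of norm one Projection} applied to $V\X$ is the more direct citation, though part (ii) applied to $\mathcal{L}$ yields the same projection), build the contractive left inverse $R=V^{-1}Q$, and read off existence, convergence of the tails $V^{N+1}R^{N+1}x$, and uniqueness from the identity $x=\sum_{n=0}^{N}V^n(PR^nx)+V^{N+1}R^{N+1}x$. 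This buys two things. First, your existence argument is tighter: you actually produce the coefficients $\ell_n=PR^nx$ and prove the partial sums converge to $x$, via the correct observation that $R^{m+1}$ kills $L_m$ and $\|R\|\leq 1$, so $R^nx\to\mathbf{0}$ on the closed span. Second, you never use smoothness of $\X$ (your parenthetical appeal to Lemma \ref{Wandering Space} and Theorem \ref{Orthogonality = Right-additivity} is an optional alternative, not a needed step), so your argument establishes the lemma under the hypothesis $\X=V\X\bigoplus_\SR\mathcal{L}$ alone; the paper's proof, by contrast, invokes smoothness essentially.
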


\begin{proof}
We only prove the second equality in (\ref{Direct sum equality}) since 
the first equality is straightforward. Let 
$\mathbb{Y}=\bigoplus_{n=0}^{\infty}V^{n}\mathcal{L}\subseteq\mathbb{X}$ 
and $\mathbb{W}= \overline{span} \left\{V^n\mathcal{L}:~n\geq 0 \right\}$. 
Evidently, $\mathbb{Y}\subseteq {\mathbb{W}}$. We prove the 
other side of the inclusion, i.e.,
$\mathbb{W}\subseteq\mathbb{Y}.$ Let $x\in {\mathbb{W}}$ be 
arbitrary. Then there is a sequence $(x_n)$ in $ span \left\{V^n\mathcal{L}:~n\geq 0 \right\}$ such that 
${\displaystyle \lim_{n \rightarrow \infty}x_n=x }$. Therefore, there is a subsequence $(x_{n_{k}})$ of 
$(x_n)$ satisfying
$
    \|x_{n_{k}}-x_{n_{k-1}}\|\leq\frac{1}{2^{k}}$ for $k\geq 1$. Since $(x_{n_{k}}-x_{n_{k-1}})\in {\mathbb{W}}$ for every $k,$ 
the series $\displaystyle\sum_{k=1}^{\infty}(x_{n_{k}}-x_{n_{k-1}})$ 
converges in ${\mathbb{W}}.$ Thus,

\[
   x= \sum_{k=1}^{\infty}y_k, \quad y_1=x_{n_{1}},\quad 
y_k=(x_{n_{k}}-x_{n_{k-1}}),\qquad k\geq 2.
\]

To prove that the above expression is unique, it is enough to show that

\[
\sum_{k=1}^{\infty}V^{k}l_k=\mathbf{0} \quad \text{implies}\quad 
l_k=\mathbf{0},\qquad k\geq 1.
\]

Let $x\in\mathbb{X}$ be such that 
$x=\sum_{k=1}^{\infty}V^{k}l_k=\mathbf{0}.$ Evidently, $Vx\perp_B Vy$ 
when $x\perp_B y$. Thus, $V^n\mathcal{L}\perp_B V^m\mathcal{L}$, for 
$n>m\geq 1.$ Now, by the right-additivity of Birkhoff-James 
orthogonality, we have

\begin{equation}\label{Vn_perpendicular_to_all_lowerpowers}
V^n(\mathcal{L})\perp_B\left( \mathcal{L} \oplus V\mathcal{L} \oplus 
V^2\mathcal{L} \oplus \cdots \oplus V^{n-1}\mathcal{L} \right), \qquad 
n\geq 1.
\end{equation}

\medskip

Let $y_m= \sum_{k=m+1}^{\infty}V^{k}l_k.$ We show that $y_m = 
\mathbf{0}$ for all $m\geq 1.$ Let $(t_n)$ be the sequence of partial 
sum of the series $\sum_{i=m+1}^{\infty}V^{i-m-1}l_i.$ For $q>p$ we have

\begin{align*}
     \|t_q-t_p\| & 
=\left\|V^{p+1}l_{p+m+2}+V^{p+2}l_{p+m+3}+\cdots+V^ql_{q+m+1}\right\| \\
     & = \left\|V^{p+m+2}l_{p+m+2} + V^{p+m+3}l_{p+m+3} + 
\cdots+V^{q+m+1}l_{q+m+1}\right\| \longrightarrow 0, \quad \text{ as } 
p,~ q\to\infty.
\end{align*}

Thus, $(t_n)$ is a Cauchy sequence and the claim is proved. For 
$z=\sum_{k=m+1}^{\infty}V^{k-m-1}l_k,$ we have

\[
   V^{m+1}z=y_m=\sum_{k=m+1}^{\infty}V^kl_k,
\]

and by (\ref{Vn_perpendicular_to_all_lowerpowers}) we have

\[
   0 = \|x\| = \left\| \sum_{k=0}^{m}V^k l_k+y_m \right\| = \left\| 
\sum_{k=0}^mV^kl_k + V^{m+1}z \right\|\geq\| V^{m+1}z \| = \|y_m\|, 
\qquad m\geq 1.
\]

Consequently, $y_m=\mathbf{0}$ for $m\geq 1.$ On the other hand by 
(\ref{Vn_perpendicular_to_all_lowerpowers}) we also have

\[
\|l_k\| = \| V^k l_k \| \leq \left\| \sum_{i=0}^{k-1}V^il_i + V^kl_k 
\right\|=0,\quad 1\leq k\leq m.
\] Thus,
$
l_i = \mathbf{0}, \quad 0\leq i\leq m $ for $m\geq 1$. This completes the proof.
\end{proof}

\noindent \textbf{$\bullet$ Bilateral shift between Banach spaces:} Bilateral shift between Banach spaces was defined in \cite{CFS}, see Definition 2 in \cite{CFS}. A bijective linear (not necessarily bounded) map $U: \mathbb X \rightarrow \mathbb X_1$, where $\X, \, \X_1$ are Banach spaces with $\X_1 \subseteq \X$, is said to be a \textit{bilateral shift} if there is a linear subspace $\mathcal{L}\subseteq \X$ such that $\X_1 = \displaystyle \bigoplus_{n=-\infty}^\infty U^n \mathcal{L}$, that is every element $x\in \X_1$ has unique representation as a convergent series $x= \displaystyle \sum_{n=-\infty}^\infty U^n l_n$ with $l_n\in \mathcal{L}$ for all $n\in \mathbb{Z}$. The subspace $\mathcal{L}$ is a \textit{wandering subspace} for $U$ and $\dim \mathcal L$ is called the \textit{multiplicity} of $U$. In \cite{Gellar}, Gellar and Silber constructed a Banach space $\X$ with a Schauder basis $(y_n)_{n\in \mathbb{Z}}$ and proved that the bilateral shift $S: \X \to \X$ defined by 
\[
S\left(\sum_{n=-\infty}^\infty a_n y_n \right) = \sum_{n=-\infty}^\infty a_n y_{n-1}, \quad a_n \in \mathbb{C} \text{ for all } n\in \mathbb{Z},
\]
is not bounded. He also proved that the inverse map $S^{-1}: \X \to \X$ defined by 
\[
S^{-1}\left(\sum_{n=-\infty}^\infty a_n y_n \right) = \sum_{n=-\infty}^\infty a_n y_{n-1}, \quad a_n \in \mathbb{C} \text{ for all } n\in \mathbb{Z},
\]
is a bounded linear map. Also, a reader is referred to Example \ref{Example:new-051}, where we provide an example of a bilateral shift which is also a Banach space unitary.

\medskip

\noindent \textbf{$\bullet$ Wold isometry on a Banach space:} Every Hilbert space isometry admits a Wold decomposition, which is to say that if $V$ is an isometry on a Hilbert space $\HS$, then $\HS$ admits an orthogonal decomposition $\HS=\HS_0 \oplus \HS_1$ into reducing subspaces of $V$ such that $V|_{\HS_0}$ is a unitary and $V|_{\HS_1}$ is a pure isometry. A \textit{pure isometry} $V_1$ on a Hilbert space $\widetilde{\HS}$ is an isometry that has no unitary part, i.e. there is no nonzero closed subspace of $\widetilde{\HS}$ which is reduced by $V_1$ and $V_1$ restricted to that subspace is a unitary. A pure isometry on a Hilbert space is unitary equivalent to a unilateral shift. An analogue of this result does not always hold in Banach spaces. In fact, the range of a Banach space isometry may not be even a complemented subspace, a Wold decomposition is too far for all Banach space isometries. For example, it was shown in \cite{Ditor} that the range of an isometry on $C[0,1]$ may not be complemented. Indeed, the question when the range of an isometry is complemented remains an open problem till date. We shall answer this question in a later section of this article.

\smallskip

In \cite{Faulkner Huneycutt}, Faulkner and Huneycutt studied the Wold decomposition of a Banach space isometry. By a Wold isometry on a Banach space one naturally means an isometry that admits a Wold-type decomposition and consequently Wold isometry in Banach space setting was defined in the following way, see \cite{CFS, CFG, Faulkner Huneycutt, R 1} for details.

\begin{defn}
Let $V$ be an isometry on a Banach space $\mathbb{X}$. Then $V$ is said to be a \textit{Wold isometry}, if $\X$ admits a decomposition $\X=\X_1\oplus \X_2$ into invariant subspaces $\X_1, \X_2$ of $V$ such that $V|_{\X_1}$ is a unitary (i.e. a surjective isometry) and $V|_{\X_2}$ is a pure isometry. Here $\X= \X_1 \oplus \X_2$ means that every element $x\in \X$ can be uniquely expressed as $x= x_1 + x_2 $ for some $x_1 \in \X_1$ and $x_2 \in \X_2$. A Wold isometry $V$ is written as $V=V|_{\X_1} \oplus V|_{\X_2}$, which means $Vx=V|_{\X_1}x_1 \oplus V|_{\X_2}x_2 \ $, and this expression is called the \textit{Wold decomposition} of $V$.
\end{defn}

Note that for a Wold isometry $V$ on $\X$, the spaces $\X_1, X_2$ are uniquely determined as
\begin{equation}\label{Wold decomposition Banach spaces}
\X_1= \bigcap_{n= 0}^{\infty} V^n\mathbb{X}, \quad \X_2= \bigoplus_{n=0}^{\infty}V^n\mathcal{L},
\end{equation}
where $\mathcal L$ is vector space complement of $V\X$.
Unlike pure isometries on a Hilbert space, a pure Banach space isometry may not always be a unilateral shift as was shown in \cite{PAF}. Interestingly, if the underlying Banach space is reflexive, then every isometry with right-complemented range admits a Wold type decomposition and hence is a Wold isometry. This was proved in \cite[Corollary 1]{CFS}, which we state below.
\begin{thm} \label{thm:Wold Isometry-001}
Every isometry on a reflexive Banach space whose range is right-complemented is a Wold isometry.
\end{thm}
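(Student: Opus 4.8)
The plan is to realise the Wold decomposition explicitly from a norm-one projection onto $V\X$ together with a norm-one left inverse of $V$. Since $V\X$ is right-complemented, Proposition~\ref{Range and kernel of norm one Projection}(i) provides a norm-one projection $P$ of $\X$ onto $V\X$ whose kernel is the given closed complement $\mathcal{L}$ of $V\X$. As $V$ is injective with range $V\X$, there is a unique linear map $S:\X\to\X$ with $VS=P$ and $SV=I$; then $\|Sx\|=\|VSx\|=\|Px\|\le\|x\|$, so $\|S\|\le 1$, and $Pl=\mathbf 0$ for $l\in\mathcal{L}$ forces $Sl=\mathbf 0$ for all $l\in\mathcal{L}$. (This $S$ is precisely the norm-one left inverse appearing in Theorem~\ref{Left Inverse of Wold Isometry}.) Put $R_N:=V^NS^N$. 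Using $S^NV^N=I$ one checks that $R_N$ is an idempotent with $\|R_N\|\le 1$, that its range is $V^N\X$, that
\[
 x-R_Nx=\sum_{n=0}^{N-1}V^n\big((I-P)S^n x\big)\in \overline{span}\{V^n\mathcal{L}:\,0\le n<N\},
\]
and that $R_NR_{N+1}=R_{N+1}R_N=R_{N+1}$, so $(R_N)$ is a decreasing sequence of idempotents.

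Next I would set $\X_1:=\bigcap_{n\ge 0}V^n\X$ and $\X_2:=\overline{span}\{V^n\mathcal{L}:\,n\ge 0\}$ and verify the four facts required by the definition of a Wold isometry. \emph{Invariance}: $V\X_2\subseteq\X_2$ by continuity of $V$, while $V\X_1=\bigcap_{n\ge 0}V^{n+1}\X=\X_1$ since injectivity of $V$ lets one pull the intersection across $V$; hence $V|_{\X_1}$ is a surjective isometry, i.e.\ a Banach space unitary, and $V|_{\X_2}$ is an isometry of $\X_2$. \emph{$\X=\X_1+\X_2$}: for $x\in\X$ the sequence $(R_Nx)$ is bounded, so reflexivity of $\X$ yields a subsequence $R_{N_k}x$ converging weakly to some $y$; each $V^n\X$ is norm-closed, hence weakly closed, and $R_Nx\in V^n\X$ for $N\ge n$, so $y\in\bigcap_n V^n\X=\X_1$; since $x-R_{N_k}x\in\overline{span}\{V^n\mathcal{L}\}=\X_2$ and $\X_2$ is weakly closed, $x-y\in\X_2$, whence $x=y+(x-y)\in\X_1+\X_2$.

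\emph{Directness}: let $z\in\X_1\cap\X_2$. From $z\in V^n\X$ for every $n$, together with $Pz=z$ and $Sz\in\X_1$, one obtains $(I-P)S^k z=\mathbf 0$ for all $k\ge 0$, so the displayed identity above collapses to $z=R_Nz$ for every $N$. On the other hand $Sl=\mathbf 0$ for $l\in\mathcal{L}$ gives $R_N\equiv 0$ on $span\{V^n\mathcal{L}:\,0\le n\le K\}$ once $N>K$; approximating an arbitrary element of $\X_2$ by such finite sums and using $\|R_N\|\le 1$ shows $R_Nw\to\mathbf 0$ for every $w\in\X_2$. Taking $w=z$ gives $z=R_Nz\to\mathbf 0$, so $z=\mathbf 0$ and $\X=\X_1\oplus\X_2$. \emph{Purity of $V|_{\X_2}$}: if $\W\subseteq\X_2$ were a nonzero closed subspace with $V\W=\W$, then $\W=V^n\W\subseteq V^n\X$ for all $n$, so $\W\subseteq\X_1\cap\X_2=\{\mathbf 0\}$, a contradiction. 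Thus $V=V|_{\X_1}\oplus V|_{\X_2}$ is the desired Wold decomposition.

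The step I expect to carry the real weight is showing $\X=\X_1+\X_2$ — that the tails $V^NS^Nx$ of the formal expansion genuinely accumulate in the unitary part — and this is exactly where reflexivity cannot be dispensed with: it replaces the norm convergence available in the Hilbert setting by weak subsequential convergence, while the bound $\|R_N\|\le 1$ keeps the argument afloat. Once the left inverse $S$, the idempotents $R_N$, and the identity $x=\sum_{n<N}V^n(I-P)S^n x+R_Nx$ are in place, directness and purity follow as above with only routine estimates.
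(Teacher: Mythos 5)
Your proof is correct. The paper does not actually prove this statement but simply quotes it from \cite[Corollary 1]{CFS}, and your argument --- the norm-one left inverse $S$ with $VS=P$, the decreasing norm-one idempotents $R_N=V^NS^N$ with the telescoping identity $x=\sum_{n<N}V^n(I-P)S^nx+R_Nx$, and the extraction of weak subsequential limits of the bounded sequence $(R_Nx)$ via reflexivity to land in $\bigcap_n V^n\X$ --- is a complete and faithful reconstruction of the standard proof from that reference, with the directness and purity checks carried out correctly.
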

The range of any isometry on $\mathcal{L}_p$ space with $1\leq p \leq \infty$ is right-complemented and thus, any isometry on $\mathcal{L}_p$  is a Wold isometry, e.g. see \cite{Ando, Moslehian}.

\vspace{0.2cm}

\section{More on complemented subspaces in a Banach space} \label{sec:03}

\vspace{0.2cm}

\noindent The results of this Section are for reflexive, strictly convex and smooth Banach spaces. In Section \ref{sec:02}, we had a general discussion on complemented subspaces (both right and left-complemented subspaces) of a Banach space. Note that right or left complement of a complemented subspace in Banach space may not always be unique unlike Hilbert spaces. However, under the assumptions of reflexivity, strict convexity and smoothness, a (left or right) complemented subspace of a Banach space becomes unique. In this Section, we study in detail the left and right-complemented subspaces of a Banach space. We find several characterizations for a right-complemented (or left-complemented) subspace of a Banach space $\X$ in terms of linearity of Hahn Banach extension operator and the duality map (as in \ref{eqn:new-001}). Then we find explicit description of the right-complement (or the left-complement) of a right-complemented (or left-complemented) subspace. We begin with a duality relation between right-complemented and left-complemented subspaces in a Banach space. 

\begin{prop}\label{Characterization complemented: Corollary}
Let $\X$ be a Banach space and $\Y$ be a closed proper subspace of $\X$. Then the following hold.
\begin{itemize}
    \item[(i)] If $\Y$ is right-complemented in $\X$, then $\Ann(\Y)$ as in $(\ref{eqn:new-0003})$ is left-complemented in $\X^*$.\\
    \item[(ii)] If $\Y$ is left-complemented in $\X$, then $\Ann(\Y)$ is right-complemented in $\X^*$.
\end{itemize}
\end{prop}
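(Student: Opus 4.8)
The plan is to transport the problem to the dual space via the Banach adjoints of norm-one projections. The crucial bridge is Proposition \ref{Range and kernel of norm one Projection}: a closed proper subspace is right-complemented exactly when it is the range of a norm-one projection, and left-complemented exactly when it is the kernel of a norm-one projection. So for (i) I want to produce a norm-one projection on $\X^*$ whose kernel is $\Ann(\Y)$, and for (ii) a norm-one projection on $\X^*$ whose range is $\Ann(\Y)$. The one general fact I would invoke at the outset is that the Banach adjoint $P^\times$ of a norm-one projection $P$ on $\X$ is again a norm-one projection on $\X^*$, which is immediate from $(P^\times)^2 = (P^2)^\times = P^\times$ together with $\|P^\times\| = \|P\| = 1$.

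For part (i): by Proposition \ref{Range and kernel of norm one Projection}(i), write $\Y = \mathrm{Ran}\,P$ for a norm-one projection $P$ on $\X$. Then I would check that $\ker P^\times = \Ann(\Y)$: for $g\in \X^*$ we have $P^\times g = g\circ P$, and $g\circ P$ vanishes precisely when $g$ vanishes on $\mathrm{Ran}\,P = \Y$, i.e. precisely when $g\in \Ann(\Y)$. Hence $\Ann(\Y)$ is the kernel of the norm-one projection $P^\times$ on $\X^*$, and Proposition \ref{Range and kernel of norm one Projection}(ii) yields that $\Ann(\Y)$ is left-complemented in $\X^*$.

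For part (ii): by Proposition \ref{Range and kernel of norm one Projection}(ii), write $\Y = \ker P$ for a norm-one projection $P$ on $\X$, so that $\mathrm{Ran}(I-P) = \ker P = \Y$. Since $P^\times$ is a projection, $\mathrm{Ran}\,P^\times = \ker(I-P^\times) = \ker (I-P)^\times$; and $(I-P)^\times g = g\circ(I-P)$ vanishes precisely when $g$ vanishes on $\mathrm{Ran}(I-P) = \Y$, i.e. precisely when $g\in \Ann(\Y)$. Thus $\mathrm{Ran}\,P^\times = \Ann(\Y)$, so $\Ann(\Y)$ is the range of the norm-one projection $P^\times$ on $\X^*$, and Proposition \ref{Range and kernel of norm one Projection}(i) gives that $\Ann(\Y)$ is right-complemented in $\X^*$.

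There is no genuine obstacle here — the argument is a routine duality computation — and, notably, it uses none of the standing hypotheses of reflexivity, smoothness or strict convexity. The only points deserving a line of verification are that $P^\times$ is a norm-one projection and the standard identities $\ker P^\times = \Ann(\mathrm{Ran}\,P)$ and $\mathrm{Ran}\,P^\times = \Ann(\ker P)$; one should also note that, $\Y$ being a nonzero proper closed subspace, $\Ann(\Y)$ is a proper closed subspace of $\X^*$, so that Proposition \ref{Range and kernel of norm one Projection} applies to $\Ann(\Y)$ as required.
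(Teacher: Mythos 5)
Your argument is correct and is exactly the route the paper takes: the paper's proof is the single observation that the Banach adjoint of a norm-one projection on $\X$ is again a norm-one projection on $\X^*$, and your write-up simply supplies the routine verifications ($\ker P^\times = \Ann(\mathrm{Ran}\,P)$ and $\mathrm{Ran}\,P^\times = \Ann(\ker P)$) together with the appeal to Proposition \ref{Range and kernel of norm one Projection} that the paper leaves implicit. No gaps.
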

A proof to this result follows from the fact that Banach adjoint of a norm-one projection on a Banach space $\X$ is again a norm-one projection on $\X^*$.

\smallskip

A fundamental result due to Calvert \cite[Theorem 1]{Calvert} states that a closed proper subspace $\Y$ of a reflexive, smooth and strictly convex Banach space $\X$ is right-complemented if and only if $\J_\X(\Y)$ is a subspace of $\X^*$. In our next result Theorem \ref{Characterization of Right-complemented}, we present an equivalent formulation of right-complemented subspaces in $\X$ which shows that $\J_\X(\Y)$ is isometrically isomorphic to $\Y^*$ via the Hahn-Banach extension operator. Thus, if $\Y$ is right-complemented in $\X$, then $\Y^*$ is isometrically isomorphic to a subspace of $\X^*$. Note that dual of a closed subspace of a Banach space need not be isometrically isomorphic to a subspace of its dual in general. We need a couple of lemmas for this purpose. We begin with the following lemma which can also be derived alternatively from a more general characterization of smoothness of multilinear maps proved in Theorem 3.1 of \cite{Saikat Roy}.

\begin{lem}\label{Lemma: Smoothness}
Let $\X$ be a reflexive Banach space. For any nonzero $f\in \X^*$, $f$ is smooth if and only if $M_f=\{\mu x_0:~|\mu|=1\}$ for some fixed $x_0\in S_\X$.
\end{lem}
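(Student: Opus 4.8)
The plan is to use James's characterization (Theorem~\ref{James Characterization}) together with Lemma~\ref{Orthogonality of functionals} to translate the smoothness of $f$ into a statement about the norm attainment set $M_f$. Recall that $f \in \X^*$ is smooth precisely when $J(f) \subseteq \X^{**}$ is a singleton; and by Theorem~\ref{Orthogonality = Right-additivity}, $f$ is smooth if and only if Birkhoff-James orthogonality is right-additive at $f$, i.e. $f \perp_B g_1$ and $f \perp_B g_2$ imply $f \perp_B (g_1+g_2)$. Since $\X$ is reflexive, Lemma~\ref{Orthogonality of functionals} tells us that $f \perp_B g$ is equivalent to $M_f \cap \ker g \neq \emptyset$. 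So the whole argument reduces to understanding the interplay between the set $M_f$ and the kernels of functionals at which $f$ is orthogonal.

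For the direction ($\Leftarrow$): suppose $M_f = \{\mu x_0 : |\mu|=1\}$ for some fixed $x_0 \in S_\X$. I would show right-additivity directly. If $f \perp_B g_1$ and $f \perp_B g_2$, then by Lemma~\ref{Orthogonality of functionals} there exist unimodular scalars $\mu_1, \mu_2$ with $g_1(\mu_1 x_0) = 0$ and $g_2(\mu_2 x_0) = 0$; but $g_i(\mu_i x_0) = \mu_i g_i(x_0)$, so in fact $g_1(x_0) = g_2(x_0) = 0$, hence $(g_1+g_2)(x_0) = 0$, i.e. $x_0 \in M_f \cap \ker(g_1+g_2)$, so $f \perp_B (g_1 + g_2)$. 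By Theorem~\ref{Orthogonality = Right-additivity}, $f$ is smooth.

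For the direction ($\Rightarrow$): suppose $f$ is smooth and nonzero. First normalize: $M_f$ always consists of points of the form $u$ with $|f(u)| = \|f\|$, $\|u\|=1$, and the set $M_f$ is invariant under multiplication by unimodular scalars (replacing $u$ by $\lambda u$ with $|\lambda|=1$ does not change $\|u\|$ or $|f(u)|$), so it suffices to show that if $u_1, u_2 \in M_f$ with $f(u_1) = f(u_2) = \|f\|$ (achievable after a unimodular rotation), then $u_1 = u_2$. Here is where I expect the main obstacle: I must rule out the possibility of two genuinely distinct norm-attaining directions. The strategy is to produce, via Hahn-Banach, functionals that separate $u_1$ and $u_2$ and contradict right-additivity. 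Concretely, if $u_1 \neq u_2$ then $u_1 - u_2 \neq 0$, and I would pick $g_1, g_2 \in \X^*$ with $g_1(u_1) = 0$, $g_1(u_2) \neq 0$ and $g_2(u_2) = 0$, $g_2(u_1) \neq 0$ (possible since $u_1 \notin \ker g_1$ can be arranged whenever $u_1, u_2$ are linearly independent; the linearly dependent case $u_2 = \mu u_1$ forces $|\mu| = 1$ and is exactly the conclusion). Then $f \perp_B g_1$ (witnessed by $u_1$) and $f \perp_B g_2$ (witnessed by $u_2$), but I would argue $f \not\perp_B (g_1+g_2)$: any witness $w \in M_f$ would need $(g_1+g_2)(w) = 0$, and by examining the values $g_1, g_2$ take on the (at most one-dimensional, by smoothness $\Leftrightarrow$ uniqueness of the support functional applied in $\X^{**}$) relevant set one derives a contradiction. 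The cleanest route is probably to invoke reflexivity to pass to $\X^{**}$ and use that the support functional $\pi_{\X^*}$-image is the single exposed point; but care is needed because smoothness of $f \in \X^*$ corresponds to strict convexity of $\X$ at the relevant point only after identifying $\X = \X^{**}$, so I would instead keep everything in $\X$ and lean on Theorem~\ref{Orthogonality = Right-additivity} as the working characterization of smoothness throughout, using Lemma~\ref{Orthogonality of functionals} to convert each orthogonality relation into a nonempty intersection $M_f \cap \ker(\cdot)$, and deduce that $M_f$ cannot contain two linearly independent points nor two dependent points whose ratio is not unimodular.
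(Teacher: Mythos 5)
Your backward implication is correct and is essentially the paper's argument: any witness of $f\perp_B g_i$ supplied by Lemma \ref{Orthogonality of functionals} must be a unimodular multiple of $x_0$, so $x_0$ itself witnesses $f\perp_B(g_1+g_2)$, and Theorem \ref{Orthogonality = Right-additivity} finishes. The gap is in the forward implication. Having fixed linearly independent $u_1,u_2\in M_f$ and chosen $g_1,g_2$ with $g_1(u_1)=0$, $g_1(u_2)\neq 0$, $g_2(u_2)=0$, $g_2(u_1)\neq 0$, you still have to show $f\not\perp_B(g_1+g_2)$, i.e.\ that $M_f\cap\ker(g_1+g_2)=\emptyset$. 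Your construction only controls $g_1+g_2$ at the two points $u_1,u_2$; when $f$ is not smooth, $M_f$ can be an entire face of the unit sphere, and nothing prevents some other $w\in M_f$ from lying in $\ker(g_1+g_2)$. The parenthetical appeal to the ``relevant set'' being at most one-dimensional presupposes exactly the conclusion you are trying to prove, so as written the contradiction is not forced.

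The repair is to choose the two functionals so that their sum has a kernel that visibly misses $M_f$. The paper takes $p\in\X^*$ vanishing at $y_0$ with $p(x_0)=f(x_0)$ (Hahn--Banach, using the linear independence of $x_0,y_0$) and applies right-additivity to the pair $g_1=p$, $g_2=f-p$: the first orthogonality is witnessed by $y_0\in M_f\cap\ker p$, the second by $x_0\in M_f\cap\ker(f-p)$, and $g_1+g_2=f$, so smoothness would force $f\perp_B f$, which is absurd for $f\neq 0$. Alternatively, the direct route you mention and then abandon does work without any of this: for $u\in M_f$ the functional $\overline{f(u)}\,\pi_\X(u)$ lies in $J(f)$, so if $J(f)$ is a singleton the injectivity of $\pi_\X$ forces any two points of $M_f$ to be unimodular multiples of one another; only reflexivity is needed, not any identification of smoothness of $f$ with strict convexity. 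Either of these closes the gap.
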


\begin{proof}
Suppose that $M_f$ is of the stated form. Let $g_1,g_2$ be distinct nonzero members of $\X^*$ with $f\perp_B g_1$ and $f\perp_B g_2$. Then by James characterization (Theorem \ref{James Characterization}), there exist $\phi_1, \phi_2$ in $\X^{**}$ such that 
\[
    \phi_1(f) = \|\phi_1\|\|f\|, \quad \phi_2(f) = \|\phi_2\|\|f\|,\quad \phi_1(g_1)=\phi_2(g_2)=0.
\]
Since $\X$ is reflexive, $\phi_1=\pi_{\X}(x_1)$ and $\phi_2=\pi_{\X}(x_2)$, for some $x_1$ and $x_2$ in $\X$. Since $M_f=\{\mu x_0:~|\mu|=1\}$, this in particular shows that $x_1=x_2= x_0$. Thus, $\phi_1=\phi_2=\phi_0$, where $\phi_0=\pi_{\X}(x_0)$. Also, since $\phi_0(g_1+g_2)=0$, by James characterization, we have $f\perp_B (g_1+g_2)$. Therefore, $f$ is smooth by Theorem \ref{Orthogonality = Right-additivity}.

Conversely, suppose $f\in \X^*$ is smooth. Let if possible $\{x_0,y_0\}\subseteq M_f$, and $x_0\neq \mu y_0$, for any unimodular scalar $\mu$. By Hahn-Banach Theorem, we can find $p\in \X^*$ such that $p(x_0)=\|f\|\|x_0\|$ and $p(y_0)=0$. Clearly,
\[
   \|f+\lambda p\|\geq |f(y_0)+ \lambda p(y_0)|=|f(y_0)|=\|f\|,
\]
for all scalars $\lambda$. Thus, $f\perp_B p$. Again, since $f(x_0)-p(x_0)=0$, we have $f\perp_B (f-p)$ by similar arguments. Then, by Theorem \ref{Orthogonality = Right-additivity}, we have $f\perp_B f$, which is a contradiction. This completes the proof.
\end{proof}

%\medskip

\begin{lem}\label{Lemma: I}
Let $\X$ be a reflexive Banach space and $\Y$ be a closed proper subspace of $\X$. Then there is $x\in \X$ such that $x\perp_B \Y$.
\end{lem}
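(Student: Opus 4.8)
The statement asserts that for a closed proper subspace $\Y$ of a reflexive Banach space $\X$, there is a nonzero $x \in \X$ with $x \perp_B \Y$. The plan is to produce such an $x$ from a norm-attaining functional vanishing on $\Y$ and then invoke James's characterization (Theorem \ref{James Characterization}). First I would pick any $x_1 \in \X \setminus \Y$; since $\Y$ is closed, the Hahn-Banach separation theorem yields $f \in \X^*$ with $f|_\Y = \mathbf{0}$ and $f(x_1) \neq 0$, so in particular $f \neq \mathbf{0}$. The next step is to replace $f$ by a point where it attains its norm: because $\X$ is reflexive, the James theorem on reflexivity (equivalently, weak compactness of the closed unit ball together with weak lower semicontinuity of the norm) guarantees that every $f \in \X^*$ attains its norm on $S_\X$, i.e. $M_f \neq \emptyset$.

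Now choose $x \in M_f$, so $\|x\| = 1$ and $|f(x)| = \|f\|$; after multiplying $x$ by a suitable unimodular scalar we may assume $f(x) = \|f\|$. I claim $x \perp_B \Y$. Indeed, for any $y \in \Y$ and any scalar $\lambda$ we have $f(y) = 0$, hence
\[
\|x + \lambda y\| \;\geq\; \frac{|f(x + \lambda y)|}{\|f\|} \;=\; \frac{|f(x)|}{\|f\|} \;=\; 1 \;=\; \|x\|.
\]
Since this holds for all $\lambda \in \mathbb{C}$ and all $y \in \Y$, we get $x \perp_B y$ for every $y \in \Y$, which is precisely $x \perp_B \Y$. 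Alternatively, one can phrase this via Lemma \ref{Orthogonality of functionals} applied in $\X^*$ or directly via Theorem \ref{James Characterization}: the support functional $\|x\| f / \|f\| \in J(x)$ vanishes on $\Y$.

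The only genuine input beyond elementary manipulation is the fact that reflexivity forces every bounded functional to attain its norm; this is the standard half of James's reflexivity theorem and is the step I would flag as the conceptual core, though in the reflexive setting it is routine (weak-$*$ and weak topologies coincide on $\X^*$, or apply the canonical embedding $\pi_\X$ together with Goldstine-type reasoning). Everything else — separation to produce $f$, normalization, and the final chain of inequalities — is immediate. I would write the argument in three short lines essentially as above.
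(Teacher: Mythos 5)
Your proposal is correct and follows essentially the same route as the paper: both produce a functional annihilating $\Y$ via Hahn--Banach separation, use reflexivity to conclude that this functional attains its norm at some unit vector, and then verify $x\perp_B \Y$ by the same chain of inequalities $\|x+\lambda y\|\geq |f(x+\lambda y)|/\|f\|=\|x\|$. The only difference is cosmetic: you spell out why reflexivity forces norm attainment, which the paper takes for granted.
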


\begin{proof}

Let $x$ be a nonzero vector in $\X\setminus \Y$. Then, the one-point set $\{x\}$ is compact and is disjoint from $\Y$. Therefore, there is a convex neighbourhood $V$ of $\{\bf 0\}$ such that $(\Y+V)\cap (x+V)=\emptyset$. Now, an easy application of Geometric Hahn-Banach Theorem ensures the existence of a continuous linear functional $p$ such that $p(y)=0$ for all $y\in \Y$. Therefore, $p\in \Ann(\Y)$. Also, since $\X$ is reflexive, $M_p\neq \emptyset$. Let $x_0\in M_p$. Then for any $y\in \Y$, we have
\[
   \|x_0+\lambda y\|\geq \frac{1}{\|p\|}|p(x_0+\lambda y)|=\frac{1}{\|p\|}|p(x_0)|=\|x_0\|,
\]
for all scalars $\lambda$. Consequently, $x_0\perp_B \Y$.
\end{proof}

Now we are in a position to present the main result of this Section that gives various characterizations of a right-complemented subspace in a Banach space. Note that the equivalence of parts $(i)$ \& $(iv)$ of the following theorem was mentioned in \cite{SY} as `Observation (1)' and `Observation (2)' in case of a reflexive real Banach space $\X$. However, we deal with complex Banach spaces here.

\begin{thm}\label{Characterization of Right-complemented}
Let $\X$ be a reflexive, smooth and strictly convex Banach space and let $\Y$ be a closed proper subspace of $\X$. Then the following are equivalent.

\smallskip

\begin{itemize}
    \item[(i)] $\Y$ is right-complemented in $\X$.
    
    \smallskip
    
    \item[(ii)] The closed linear subspace 
   ${\displaystyle \bigcap_{f\in \X^*}\{\ker~f:~M_f\subseteq S_\Y\} } 
    $
    is a vector space complement of $\Y$.
    
    \item[(iii)] $\J_\X(\Y)$ is isometrically isomorphic to $\Y^*$.
    
    \smallskip

    \item[(iv)] The Hahn-Banach extension operator $\Psi:\Y^*\to \X^*$ is linear. 
\end{itemize}
\end{thm}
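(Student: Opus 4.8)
The plan is to prove $(i)\Leftrightarrow(ii)$ separately and to close the remaining conditions through the chain $(i)\Rightarrow(iv)\Rightarrow(iii)\Rightarrow(i)$; Calvert's theorem recalled above (that $\Y$ is right-complemented exactly when $\J_\X(\Y)$ is a subspace of $\X^*$) will be invoked only for $(iii)\Rightarrow(i)$.

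I would first record the ambient duality facts. Under the standing hypotheses, $\X^*$ is again reflexive, smooth and strictly convex, and every closed subspace of $\X$ --- in particular $\Y$ --- is reflexive, smooth and strictly convex (smoothness of a subspace because every Hahn--Banach extension of one of its support functionals must coincide with $\J_\X$); hence $\J_\X,\J_\Y$ are single-valued and the Hahn--Banach extension operator $\Psi\colon\Y^*\to\X^*$ is well-defined and an injective isometry, injectivity because $\Psi(f)|_\Y=f$. For nonzero $x\in\Y$, the functional $\J_\X(x)|_\Y$ has norm $\|x\|$ and value $\|x\|^2$ at $x$, so smoothness of $\Y$ gives $\J_\X(x)|_\Y=\J_\Y(x)$, and uniqueness of norm-preserving extensions (strict convexity of $\X^*$) yields the identity $\Psi\circ\J_\Y=\J_\X|_\Y$. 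Since $\Y$ is reflexive, James' norm-attainment theorem makes $\J_\Y\colon\Y\to\Y^*$ surjective, and strict convexity of $\Y$ makes it injective; consequently $\Psi(\Y^*)=\J_\X(\Y)$.

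For $(i)\Leftrightarrow(ii)$, let $\mathcal Z$ denote the closed subspace $\bigcap\{\ker f:f\in\X^*,\ M_f\subseteq S_\Y\}$. The crux is the identification $\mathcal Z=\{z\in\X:\Y\perp_B z\}$. Since $\X^*$ is smooth, every nonzero $f\in\X^*$ is a smooth point, so Lemma~\ref{Lemma: Smoothness} gives $M_f=\{\mu x_f:|\mu|=1\}$ for a single $x_f\in S_\X$; smoothness of $\X$ then forces $f$ to be a positive scalar multiple of $\J_\X(x_f)$, and $M_f\subseteq S_\Y$ holds precisely when $x_f\in\Y$. Thus the functionals in the intersection are exactly the positive multiples of $\J_\X(x)$, $x\in\Y\setminus\{\mathbf 0\}$, and James' characterisation (Theorem~\ref{James Characterization}) together with smoothness identifies $\ker\J_\X(x)$ with $\{z:x\perp_B z\}$, proving the claim. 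Given this, if $(i)$ holds with $\X=\Y\bigoplus_\SR\widetilde\Y$, then $\widetilde\Y\subseteq\mathcal Z$, $\mathcal Z\cap\Y=\{\mathbf 0\}$ (a nonzero common vector $y$ would satisfy $y\perp_B y$), and $\mathcal Z+\Y\supseteq\widetilde\Y+\Y=\X$, so $\mathcal Z$ is a vector-space complement of $\Y$; conversely, if $(ii)$ holds then $\mathcal Z$ is a closed vector-space complement of $\Y$ with $\Y\perp_B\mathcal Z$ by construction, hence $\X=\Y\bigoplus_\SR\mathcal Z$ and $\Y$ is right-complemented.

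For the remaining chain: if $\Y$ is right-complemented, Proposition~\ref{Range and kernel of norm one Projection} provides a norm-one projection $P_\Y$ of $\X$ onto $\Y$; for $f\in\Y^*$ the functional $f\circ P_\Y$ is a norm-preserving extension of $f$, so by uniqueness $\Psi(f)=f\circ P_\Y$, which is linear in $f$ --- this is $(iv)$. If $\Psi$ is linear, then $\J_\X(\Y)=\Psi(\Y^*)$ is a linear subspace of $\X^*$ and $\Psi\colon\Y^*\to\J_\X(\Y)$ is a linear, isometric bijection, so $\J_\X(\Y)$ is isometrically isomorphic to $\Y^*$ --- this is $(iii)$. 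Finally, if $(iii)$ holds, the isometric isomorphism onto $\J_\X(\Y)$ is in particular linear with image $\J_\X(\Y)$, so $\J_\X(\Y)$ is a subspace of $\X^*$, and Calvert's theorem gives $(i)$. The only step I expect to require genuine work is $(i)\Leftrightarrow(ii)$, precisely the identification of $\mathcal Z$, since it rests on the exact shape of the norm-attainment sets in the smooth dual $\X^*$ and on translating between kernels of duality functionals and Birkhoff--James orthogonality; once the identity $\Psi\circ\J_\Y=\J_\X|_\Y$ and the norm-one projection are in hand, the other implications are short.
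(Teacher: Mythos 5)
Your proof is correct, but it reorganizes the equivalences and replaces the paper's central construction with a shorter one. The paper runs the single cycle (i)$\Rightarrow$(ii)$\Rightarrow$(iii)$\Rightarrow$(iv)$\Rightarrow$(i): its (ii)$\Rightarrow$(iii) step first establishes $\Ann(\widetilde{\Y})=\J_\X(\Y)$ and then builds the isometric isomorphism as the composite of $l\mapsto l+\Ann(\Y)$ with the canonical isometry $\X^*/\Ann(\Y)\to\Y^*$, and its (iii)$\Rightarrow$(iv) step is a separate orthogonality argument comparing $\|h\|$ with $\|\widetilde{f}+\widetilde{g}\|$. You instead prove (i)$\Leftrightarrow$(ii) in one stroke via the identification of the intersection with $\{z\in\X:\Y\perp_B z\}$, which hands you the converse direction (ii)$\Rightarrow$(i) directly (the paper only recovers it by going the long way around the cycle), and you close the remaining loop with two observations the paper never isolates: that $\Psi(f)=f\circ P_\Y$ once a norm-one projection onto $\Y$ exists, which makes (i)$\Rightarrow$(iv) immediate, and that $\Psi\circ\J_\Y=\J_\X|_\Y$, hence $\Psi(\Y^*)=\J_\X(\Y)$, which makes (iv)$\Rightarrow$(iii) a one-line consequence and exhibits the isometric isomorphism of (iii) as $\Psi$ itself rather than a quotient-space composite. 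Both routes invoke Calvert's theorem exactly once to return to (i), and both read (iii) as asserting in particular that $\J_\X(\Y)$ is a linear subspace of $\X^*$. What the paper's longer computation buys is the identity $\Ann(\widetilde{\Y})=\J_\X(\Y)$, which it reuses in the proof of Theorem \ref{Characterization of left-complemented}; what your route buys is brevity and the structural fact that the Hahn--Banach extension operator intertwines the duality maps of $\Y$ and $\X$, at the modest cost of first checking that $\Y$ inherits smoothness and reflexivity so that $\J_\Y$ is a bijection onto $\Y^*$ --- checks you carry out correctly.
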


\begin{proof}
(i)$\implies$(ii). Since $\Y$ is right-complemented in $\X$, there is a subspace $\widetilde{\Y}$ of $\X$ such that $\X=\Y \bigoplus \widetilde{\Y}$ and $\Y\perp_B \widetilde{\Y}$. We show that $\widetilde{\Y}=\bigcap_{f\in \X^*}\{\ker~f:~M_f\subseteq S_\Y\}$. Let $z\in \widetilde{\Y}$ be arbitrary. Let $l\in \X^*$ with $M_l\subseteq S_\Y$. Note that
\begin{equation}\label{Support functional and norm attainment}
    \J_\X(\Y) = \{f\in \X^*:~M_f\subseteq S_\Y\}\cup \{\mathbf{0}\}.
\end{equation}
Thus, $l=\J_\X(y_0)$ for some $y_0\in \Y$. Since $\X$ is smooth, we have $J(y_0)=\{l\}$. Also, since $y_0\perp_B \widetilde{\Y}$, by Theorem \ref{James Characterization}, we have $ l(z)=0$ for all $z\in \widetilde{\Y}$. Consequently, ${ \displaystyle \widetilde{\Y}\subseteq \bigcap_{f\in \X^*}\{\ker~f:~M_f\subseteq S_\Y\} }$.

\smallskip

Conversely, let ${\displaystyle u_0\in \bigcap_{f\in \X^*}\{\ker~f:~M_f\subseteq S_\Y\} }$ be arbitrary. We first show that $\Y\perp_B u_0$. Let $y\in \Y$ be any nonzero vector. Consider the unique support functional $f_{y}$ at $y$. Then $f_y= \J_\X(y)$. It follows from (\ref{Support functional and norm attainment}) and the hypothesis that $u_0\in \ker~f_y$. Thus, by Theorem \ref{James Characterization}, we have $y\perp_B u_0$. Since $y\in \Y$ was nonzero and chosen arbitrarily, we have $\Y\perp_B u_0$. Evidently, $u_0=y_0+z_0$ for unique $y_0\in \Y$ and $z_0\in \widetilde{\Y}$. Since $y_0\perp_B z_0$ and $y_0\perp_B (y_0+z_0)$, applying homogeneity and right-additivity of Birkhoff-James orthogonality, we have $y_0\perp_B y_0$. Therefore, we have $y_0={\bf 0}$ and $u_0=z_0\in \widetilde{\Y}$. Altogether, we get $\widetilde{\Y}=\bigcap_{f\in \X^*}\{\ker~f:~M_f\subseteq S_\Y\}$. Also, it follows from (i) that $\widetilde{\Y}$ is a vector space complement of $\Y.$

\smallskip

\noindent (ii)$\implies$(iii).  Let 
$ {\displaystyle 
\widetilde{\Y} = \bigcap_{f\in \X^*}\{\ker~f:~M_f\subseteq S_\Y\}}$.
We first show that 
\begin{equation}\label{Equality of norm attainment with annihilator}
 \Ann(\widetilde{\Y}) = \J_\X(\Y). 
\end{equation} 
It follows from (\ref{Support functional and norm attainment}) that ${\displaystyle \widetilde{\Y} = \bigcap_{f\in \J_\X(\Y)}\ker~f}$. Therefore, $\Ann(\widetilde{\Y})\supseteq \J_\X(\Y)$. To prove the other side of the inclusion, consider any $g\in \Ann(\widetilde{\Y})$. Since $\X$ is reflexive and strictly convex, $g$ attains its norm at a unique (up to unimodular scalar multiple) vector $x_0$ of $S_\X$. Now, $x_0$ can be uniquely expressed as $x_0=y_0+z_0$ for some $y_0\in \mathbb{Y}$, $z_0\in \widetilde{\Y}$. Therefore,
$
   |g(y_0+z_0)| = |g(y_0)| = \|g\|.
$
Evidently, $\|y_0\|\geq 1$. Consider the unique support functional $f_{y_0}$ at $y_0$. Then by Lemma \ref{Lemma: Smoothness}, we have  
\[
 M_{f_{y_0}} = \left\{\mu\frac{y_0}{\|y_0\|}:~|\mu|=1\right\}\subseteq S_\Y.
\]
It now follows from the hypothesis that $f_{y_0}(z_0)=0$. Thus, $y_0\perp_B z_0$ and we have
$
  1=\|y_0+z_0\| \geq \|y_0\|.
$
This shows that $\|y_0\|=1$. By the strict convexity of $\X$, we have $x_0=y_0$. Consequently, by (\ref{Support functional and norm attainment}), we have $g\in \J_\X(\Y)$ and thus, $\J_\X(\Y)\subseteq \Ann(\widetilde{\Y})$.
Now, consider the map $\phi: \Ann(\widetilde{\Y})\to \X^*/\Ann(\Y)$ defined by
\[
   \phi(l)=l+\Ann(\Y), \qquad l\in \Ann(\widetilde{\Y}).
\]
It follows from  (\ref{Support functional and norm attainment}) and (\ref{Equality of norm attainment with annihilator}) that every $l\in \Ann(\widetilde{\Y})$ attains its norm at some $\widetilde{y}\in S_\mathbb{Y}$. Thus, for any $l'\in \Ann(\mathbb{Y})$, we have
$
   \ker l'\cap M_{l}\neq \emptyset.
$
Since $l'\in \Ann(\Y)$ is arbitrary, it follows from Lemma \ref{Orthogonality of functionals} that $l\perp_B \Ann(\Y).$ Therefore,
$
   \|l+g\|\geq \|l\|$ for all $g\in \Ann(\mathbb{Y})$. This shows that
\[
  \|l\|\geq \inf\{\|l+f\|:~f\in \Ann(\mathbb{Y})\} =\|l+\Ann(\mathbb{Y})\|\geq \|l\|.
\] 
Consequently, $\phi$ is an isometry.

\smallskip

Next, we show that $\phi$ is surjective. Let $p+\Ann(\Y)\in \X^*/\Ann(\Y)$ be arbitrary. Then $p=f_0+g_0$ for some unique $f_0\in \Ann(\widetilde{\Y})$ and $g_0\in \Ann(\mathbb{Y})$ and this happens because,
$
   \X^*=\Ann(\widetilde{\Y})\bigoplus \Ann(\mathbb{Y}),
$
by virtue of $\X = \mathbb{Y} \bigoplus \widetilde{\Y}$. Thus,
\[
  \phi(f_0)=f_0+\Ann(\mathbb{Y}) = p+\Ann(\mathbb{Y}).
\]
Consequently, $\phi$ is a surjective isometry as desired. Let $\eta:\X^*/\Ann(\mathbb{Y})\to \mathbb{Y}^*$ be the canonical isometric isomorphism. Therefore, $\eta\circ \phi$ induces an isometric isomorphism from $\J_\X(\Y)$ to $\mathbb{Y}^*$, which is to say that the following diagram commutes.
\begin{displaymath}
  \xymatrix
  {
    {\Ann(\widetilde{\Y})=\J_\X(\Y)}\ar[rr]^{\phi} \ar@{.>}[dd]_{\eta\circ \phi} & & {\X^*/\Ann\mathbb{Y} \ar[ddll]^{\eta}} \\ \\
    {\mathbb{Y}^* } & &  \\
  }
\end{displaymath}

\bigskip

\noindent (iii)$\implies$(iv). Evidently, $\J_\X(\Y)$ is a linear subspace of $\X^*$. Let $f,g\in\mathbb{Y}^*$ and let $\widetilde{f},\widetilde{g}$ be the Hahn-Banach extensions of $f, g$ respectively. Note that Hahn-Banach extension is unique because, $\X^*$ is strictly convex as $\X$ is reflexive and smooth. Let $h$ be the Hahn-Banach extension of $f+g.$ We show that $h=\widetilde{f}+\widetilde{g}.$ By the uniqueness of Hahn-Banach extension, it is enough to show that $\|h\|=\|\widetilde{f}+\widetilde{g}\|.$ Clearly, $h$ attains its norm in $\Y$ as $(f+g)$ does so. Moreover, the fact that
\[
   h(y)=f(y)+g(y)=\widetilde{f}(y)+\widetilde{g}(y), \qquad y\in \mathbb{Y},
\]
shows that $M_{h}\cap\ker \left(h-\left(\widetilde{f}+\widetilde{g}\right)\right)\neq\emptyset.$
Therefore, $h\perp_B (h-(\widetilde{f}+\widetilde{g}))$, and hence

\begin{equation}\label{Equation 1}
       \|h\|\leq \left\|\widetilde{f}+\widetilde{g}\right\|.
\end{equation}
On the other hand, since $\J_\X(\Y)$ is a linear subspace and $\widetilde{f},\widetilde{g}\in 
\J_\X(\Y),$ we have $\widetilde{f}+\widetilde{g} \in \J_\X(\Y).$ Let 
$\widetilde{f}+\widetilde{g}$ attain its norm at $y_0\in\mathbb{Y}.$ Then,

\begin{equation}\label{Equation 2}
      \left\|\widetilde{f}+\widetilde{g}\right\|=\left|(\widetilde{f}+\widetilde{g})(y_0)\right|=|h(y_0)|\leq\|h\|.
\end{equation}
Therefore, by (\ref{Equation 1}) and (\ref{Equation 2}), we have 
$\|h\|=\|\widetilde{f}+\widetilde{g}\|$. Consequently, $\Psi(f+g)=h=\widetilde{f}+\widetilde{g}.$ Also, we have $\Psi(\alpha f)=\alpha \widetilde{f}$, for every scalar $\alpha$. Thus, $\Psi$ is a linear map.

\medskip

\noindent (iv)$\implies$(i). We only show that $\J_\X(\Y)$ is a linear subspace of $\X^*$ and the rest of the proof follows from \cite[Theorem 1]{Calvert}. To this end, consider $\widetilde{f},\widetilde{g}\in \J_\X(\Y)$ and $\widetilde{y_{0}},\widetilde{y_{1}}\in S_{\mathbb{Y}}$ such that 
$\widetilde{f}(\widetilde{y_{0}})=\|\widetilde{f}\|,~ \widetilde{g}(\widetilde{y_{1}})=\|\widetilde{g}\|.$ Let $f=\widetilde{f}|_{\mathbb{Y}}$ and $g=\widetilde{g}|_{\mathbb{Y}}.$ Then $f,g\in\mathbb{Y}^*$ and
\[
\|f\|\leq\left\|\widetilde{f}\right\|=\widetilde{f}(\widetilde{y}_0)=f(\widetilde{y}_0)\leq\|f\|.
\]
Thus, $\widetilde{f}$ is a Hahn-Banach extension of $f$. Moreover, $\widetilde{f}$ is unique because $\X^*$ is strictly convex. Consequently, $\Psi(f)=\widetilde{f}$. Similarly, we have $\Psi(g)=\widetilde{g}$ and thus the linearity of $\Psi$ shows that $\widetilde{f}+\widetilde{g}$ is the unique Hahn-Banach extension of $f+g.$ Therefore, $\widetilde{f}+\widetilde{g}$ attains its norm in $\Y$ and it follows from (\ref{Support functional and norm attainment}) that $\widetilde{f}+\widetilde{g} \in \J_\X(\Y).$ Also, $\alpha f\in\J_\X(\Y)$ for any scalar $\alpha$ and $f\in\J_\X(\Y).$ Consequently, $\J_\X(\Y)$ is a linear subspace of $\X^*$.
This completes the proof.
\end{proof}

Naturally, we are led to find an analogue of Theorem \ref{Characterization of Right-complemented} for a left-complemented subspace in a Banach space what we present below.
\begin{thm}\label{Characterization of left-complemented}
Let $\X$ be a reflexive, smooth and strictly convex Banach space and let $\Y$ be a closed proper subspace of $\X$. Then the following are equivalent.

\smallskip

\begin{itemize}
    \item[(i)] $\Y$ is left-complemented in $\X$.
    
    \smallskip
    
    \item[(ii)] There is a non-trivial subspace $\widetilde{\Y}$ in $\X$ such that
    \[
      S_{\widetilde{\Y}} = \bigcup_{f\in \Ann(\Y)\setminus \{\mathbf{0}\}} M_f.
    \]
    \item[(iii)] There is a closed subspace $\widetilde{\Y}_1$ of $\X$ such that the restriction map $f\mapsto f|_{\widetilde{\Y}_1}$ from $\Ann(\Y)$ to $\widetilde{\Y}_1^*$ is a surjective isometry.
    
    \smallskip
    
    \item[(iv)] There is a subspace $\widehat{\Y}_1$ in $\X$ such that every $f\in \Ann(\Y)$ attains its norm in $\widehat{\Y}_1$ and the linear map $\Theta:\X^*\to \widehat{\Y}_1^*$ defined by
    $
     \Theta(f) = f|_{\widehat{\Y}_1}
    $
    has norm one, and the norm attainment set $M_\Theta$ of $\Theta$ is given by
    $
     M_\Theta =\{f\in S_{\X^*}:~f\in \Ann(\Y)\}.
    $
\end{itemize}
\end{thm}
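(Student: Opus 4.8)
The plan is to prove the cycle $(i)\Rightarrow(iii)\Rightarrow(ii)\Rightarrow(i)$ and, in addition, $(i)\Rightarrow(iv)\Rightarrow(ii)$, which makes all four statements equivalent. Three facts will be used repeatedly: $\X^*$ is again reflexive, smooth and strictly convex; every closed subspace of a reflexive space is reflexive, so functionals restricted to it attain their norm there; and the strict convexity of $\X$ makes $M_f$ a single unimodular ray for each nonzero $f\in\X^*$. We may assume $\Y\neq\{\mathbf 0\}$, the zero case being trivial, so that $\Ann(\Y)\neq\{\mathbf 0\}$ by Hahn--Banach.

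For $(i)\Rightarrow(iii)$ and $(i)\Rightarrow(iv)$, take $\widetilde\Y_1=\widehat\Y_1$ to be the left complement of $\Y$, so $\X=\widetilde\Y_1\bigoplus_\SL\Y$ with $\widetilde\Y_1\perp_B\Y$. The key inequality is $\|z\|\le\|z+y\|$ for $z\in\widetilde\Y_1$, $y\in\Y$, coming from $\widetilde\Y_1\perp_B\Y$. Decomposing a unit vector of $\X$ as $z+y$, it shows that the restriction map $r\colon\Ann(\Y)\to(\widetilde\Y_1)^*$, $r(f)=f|_{\widetilde\Y_1}$, is isometric, and the same inequality shows $\psi\mapsto\bigl(z+y\mapsto\psi(z)\bigr)$ is a norm-preserving right inverse into $\Ann(\Y)$; this gives $(iii)$. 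For $(iv)$, with $\Theta(f)=f|_{\widehat\Y_1}$ one gets $\|\Theta\|=1$ and, by reflexivity of $\widehat\Y_1$, every $f\in\Ann(\Y)$ attains its norm on $\widehat\Y_1$; the inclusion $S_{\X^*}\cap\Ann(\Y)\subseteq M_\Theta$ is immediate from $\|\Theta(f)\|=\|f\|$, while conversely if $f\in M_\Theta$ then $f$ attains its norm at some $z_0\in S_{\widehat\Y_1}$, so after a unimodular rotation $f$ is the unique support functional at $z_0$ (smoothness), and $z_0\perp_B\Y$ with Theorem~\ref{James Characterization} forces $f\in\Ann(\Y)$.

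For $(iii)\Rightarrow(ii)$ and $(iv)\Rightarrow(ii)$ it suffices, in each case, to check that the subspace at hand is non-trivial and has unit sphere $\bigcup_{f\in\Ann(\Y)\setminus\{\mathbf 0\}}M_f$, after which $(ii)$ holds with $\widetilde\Y$ that subspace. Non-triviality follows because otherwise $r$, resp.\ $\Theta$, would vanish, contradicting injectivity, resp.\ $\|\Theta\|=1$. For ``$\supseteq$'', reflexivity makes each $f\in\Ann(\Y)$ attain its norm at a point of the subspace, so $M_f$ meets its unit sphere, and since $M_f$ is a single ray and the subspace is linear, $M_f$ lies inside that unit sphere. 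For ``$\subseteq$'', for a unit vector $z$ of the subspace one produces a functional in $\Ann(\Y)$ having $z$ in its norm-attainment set: in case $(iii)$, pull a norm-one support functional of $z$ on the subspace back through the surjective isometry $r$ and recognise it as $\J_\X(z)$ by smoothness; in case $(iv)$, observe that $\J_\X(z)$ attains its norm on $\widehat\Y_1$ and hence lies in $M_\Theta=S_{\X^*}\cap\Ann(\Y)$. Either way $z$ lies in $M_f$ for this $f\in\Ann(\Y)\setminus\{\mathbf 0\}$.

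The step $(ii)\Rightarrow(i)$ is the crux, and the main obstacle is converting the geometric equality $S_{\widetilde\Y}=\bigcup_{f\in\Ann(\Y)\setminus\{\mathbf 0\}}M_f$ into a genuine topological–algebraic complementation of $\Y$. That equality at once gives $\widetilde\Y\perp_B\Y$ (a unit $z\in\widetilde\Y$ lies in some $M_f$ with $f\in\Ann(\Y)$, so the corresponding support functional vanishes on $\Y$ and witnesses $z\perp_B\Y$), hence $\widetilde\Y\cap\Y=\{\mathbf 0\}$. Closedness of $\widetilde\Y$ comes from a weak-compactness argument: if $z_n\to z\neq\mathbf 0$ with $z_n\in\widetilde\Y$, pick $f_n\in S_{\Ann(\Y)}$ with $z_n/\|z_n\|\in M_{f_n}$, pass to a weakly convergent subsequence $f_{n_k}\rightharpoonup f\in\Ann(\Y)$ (bounded sequences in the reflexive space $\X^*$ have weakly convergent subsequences and $\Ann(\Y)$ is weakly closed), and combine weak convergence of $(f_{n_k})$ with norm convergence of $(z_{n_k}/\|z_{n_k}\|)$ to get $|f(z/\|z\|)|=1=\|f\|$, so $z\in\widetilde\Y$. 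Then $\widetilde\Y+\Y$ is closed, because $\widetilde\Y\perp_B\Y$ makes $z+y\mapsto z$ a contraction on the algebraic sum, so a Cauchy sequence $z_n+y_n$ forces $(z_n)$, hence $(y_n)$, Cauchy. Finally $\widetilde\Y+\Y=\X$, for otherwise Hahn--Banach produces a nonzero $g$ vanishing on $\widetilde\Y+\Y$, whence $g\in\Ann(\Y)\setminus\{\mathbf 0\}$ and $M_g$ is a non-empty subset of $S_{\widetilde\Y}$ by reflexivity, contradicting $g|_{\widetilde\Y}\equiv 0$; thus $\X=\widetilde\Y\bigoplus_\SL\Y$. (A shorter route: the same equality identifies $\J_{\X^*}(\Ann(\Y))$, computed via $\pi_\X$ and the strict convexity of $\X$, with the subspace $\widetilde\Y$, which is therefore linear, so Calvert's theorem \cite[Theorem~1]{Calvert} applied to $\X^*$ makes $\Ann(\Y)$ right-complemented in $\X^*$, and then Proposition~\ref{Characterization complemented: Corollary} together with $\Ann_{\X^{**}}(\Ann(\Y))\cong\Y$ yields that $\Y$ is left-complemented in $\X$.)
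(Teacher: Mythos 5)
Your proof is correct, but it is organized quite differently from the paper's. The paper runs the single cycle (i)$\Rightarrow$(ii)$\Rightarrow$(iii)$\Rightarrow$(iv)$\Rightarrow$(i), and its opening step (i)$\Rightarrow$(ii) is the heaviest: it passes to the dual, uses Proposition \ref{Characterization complemented: Corollary} to get $\X^*=\Ann(\Y)\bigoplus_\SR \Ann(\widetilde{\Y})$, and then invokes the identity $\J_{\X^*}(\Ann(\Y))=\Ann(\Ann(\widetilde{\Y}))=\pi_\X(\widetilde{\Y})$ borrowed from the proof of Theorem \ref{Characterization of Right-complemented}. You avoid that machinery entirely: your (i)$\Rightarrow$(iii) and (i)$\Rightarrow$(iv) rest only on the elementary inequality $\|z\|\leq\|z+y\|$ coming from $\widetilde{\Y}_1\perp_B\Y$, which gives the isometry of the restriction map and a norm-preserving extension operator directly; this is more self-contained. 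Your derivations of (ii) from (iii) and (iv) are essentially reversals of the paper's forward implications and use the same ingredients (reflexivity for norm attainment, $M_f$ being a single ray, uniqueness of support functionals). In the crux (ii)$\Rightarrow$(i), the paper establishes closedness of $\widetilde{\Y}$ by first proving $\widetilde{\Y}=\{x:\,x\perp_B\Y\}$ (via a Hahn--Banach extension of the functional $\alpha x+y\mapsto\alpha\|x\|$) and then appealing to norm continuity, and it closes the surjectivity gap with Lemma \ref{Lemma: I}; you instead prove closedness by a weak-compactness/lower-semicontinuity argument on the supporting functionals $f_n$, and get surjectivity by applying the hypothesis (ii) directly to a Hahn--Banach functional annihilating $\widetilde{\Y}+\Y$. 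Both routes are valid; the paper's is marginally shorter at that step, while yours makes the dependence on reflexivity more explicit and keeps the whole argument free of the quotient-space isometries $\X^*/\Ann(\widetilde{\Y})\cong\widetilde{\Y}^*$ that the paper routes through.
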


\begin{proof}
(i)$\implies$(ii). Since $\Y$ is left-complemented in $\X$, there exists a closed linear subspace $\widetilde{\Y}$ of $\X$ such that $\X=\widetilde{\Y}\bigoplus \Y$ and $\widetilde{\Y}\perp_B \Y$. We show that $S_{\widetilde{\Y}} = \bigcup_{f\in \Ann(\Y)\setminus\{\bf 0\}} M_f.$ Since $\X=\widetilde{\Y}\bigoplus_\SL \Y$, by Proposition \ref{Characterization complemented: Corollary}, we have $\X^*=\Ann(\Y)\bigoplus_\SR \Ann(\widetilde{\Y})$. Therefore, it follows from (\ref{Equality of norm attainment with annihilator}) that
\begin{equation}\label{Equality of norm attainment with annihilator: Dual}
\J_{\X^*}(\Ann(\Y)) = \Ann(\Ann(\widetilde{\Y})).   
\end{equation}
Since $\widetilde{\Y}$ is closed, by an easy application of Geometric Hahn-Banach Theorem, we have
\[
 \Ann(\Ann(\widetilde{\Y})) = \{\pi_\X(x)\in \X^{**}:~f(x)=0,~f\in \Ann(\widetilde{\Y})\} = \pi_\X(\widetilde{\Y}).
\]
Let $f\in \Ann(\Y)$ be arbitrary and let $J(f)=\pi_\X(y)$ for some $y\in \X$. Then by (\ref{Equality of norm attainment with annihilator: Dual}), we have $\pi_\X(y)\in \pi_\X(\widetilde{\Y})$ and so $y\in \widetilde{\Y}$. Also,
\[
 f\left(\frac{y}{\|y\|}\right) = \frac{1}{\|y\|}\pi_\X(y)(f) = \frac{1}{\|y\|}\|f\|^2 = \|f\|.
\]
So, we have $M_f\subseteq S_{\widetilde{\Y}}$. On the other hand, for $z\in S_{\widetilde{\Y}}$, we have $\pi_\X(z)\in \J_{\X^*}(\Ann(\Y))$ by (\ref{Equality of norm attainment with annihilator: Dual}). Let $\pi_\X(z)=\J_{\X^*}(g)$ for some $g\in \Ann(\Y).$ Since $\J_{\X^*}$ is norm preserving, we have
\[
 1 = \|z\|^2 = \|g\|^2 = \pi_\X(z)(g) = g(z).
\]
Therefore, $z\in M_g$ and the desired equality follows.

\medskip

\noindent (ii)$\implies$(iii). Let $\widetilde{\Y}=\widetilde{\Y}_1$. We first show that $\widetilde{\Y}$ is closed. We claim that
$
  \widetilde{\Y}=\{x\in \X:~x\perp_B \Y\}.
$
Let $z\in \widetilde{\Y}$ be arbitrary. It follows from the hypotheses that $\dfrac{z}{\|z\|}\in M_f$ for some $f\in \Ann(\Y)$. Then for any $y\in \Y$, we have
\[
  \|f\|\|z+\lambda y\|\geq |f(z+\lambda y)|=|f(z)|=\|f\|\|z\|,
\]
for all scalars $\lambda$. Thus, $z\perp_B y$ for any $y\in \Y$. Since $z\in \widetilde{\Y}$ is arbitrary, we have $\widetilde{\Y}\perp_B \Y$. By forgoing arguments $\widetilde{\Y}\subseteq \{x\in \X:~x\perp_B \Y\}$. To prove the other side of the inclusion, consider any $x\in \X$ such that $x\perp_B \Y$. Define $g:\mathrm{span}\{x, \Y\}\to \mathbb{C}$ by
$
   g(\alpha x+y)=\alpha\|x\|.
$
It is not difficult to see that the Hahn-Banach extension $\widetilde{g}$ of $g$ annihilates $\Y$ and attains norm at ${x}/{\|x\|}$. Therefore, it follows from the hypotheses that ${x}/{\|x\|}\in S_{\widetilde{\Y}}$. Consequently, $x\in \widetilde{\Y}$ as desired. Now, we consider any sequence $(z_n)$ in $\widetilde{\Y}$ that converges to some $z_0\in \X$. Then for any $y\in \Y$ and $\lambda \in \C$, we have
$
  \|z_n+\lambda y\|\geq \|z_n\|$.
By continuity of the norm, we have $\|z_0+\lambda y\|\geq \|z_0\|$, for all scalars $\lambda$. Since $y\in \Y$ is arbitrary, we have $z_0\perp_B \Y$. Therefore, $z_0\in \widetilde{\Y}$, i.e., $\widetilde{\Y}$ is closed, and the claim is proved.

\smallskip

Next, consider the map $\phi: \Ann(\Y)\to \X^*/ \Ann(\widetilde{\Y})$, defined by
$
 \phi(f) = f+\Ann(\widetilde{\Y})$. By the hypotheses, we have $M_f\subseteq S_{\widetilde{\Y}}$ for every $f\in\Ann(\Y).$ Thus, it follows from Lemma \ref{Orthogonality of functionals} that $f\perp_B \Ann(\widetilde{\Y})$. Now, by an argument similar to that in the proof of (ii)$\implies$(iii) of Theorem \ref{Characterization of Right-complemented}, we can show that $\phi$ is an isometry. Let $\widetilde{\eta}: \X^*/\Ann(\widetilde{\Y})\to \widetilde{\Y}^*$ be the canonical isometry. Thus, $\widetilde{\eta}\circ \phi:\Ann(\Y)\to \widetilde{\Y}^*$ given by
$
  \widetilde{\eta}\circ \phi(f) =f|_{\widetilde{\Y}}$
induces an isometry. So, the following diagram commutes.
\begin{displaymath}
    \xymatrix
    {
     {\Ann(\Y)}\ar[rr]^\phi \ar@{.>}[dd]_{\widetilde{\eta}\circ\phi} & & {\X^*/\Ann(\widetilde{\Y})} \ar[ddll]^{\widetilde{\eta}} \\\\
     \widetilde{\Y}^* & &
    }
\end{displaymath}

\bigskip

\noindent (iii)$\implies$(iv). Let $\widehat{\Y}_1=\widetilde{\Y}_1$. Consider any $f\in \Ann(\Y)$ and let $\hat{f} = f|_{\widehat{\Y}_1}$. Evidently, $\widehat{\Y}_1$ is reflexive and $\hat{f}$ attains its norm at some unit vector $z_0\in S_{\widehat{\Y}_1}$. However, by the hypothesis
\[
 \|f\| = \|f|_{\widehat{\Y}_1}\| = \|\hat{f}\| = |\hat{f}(z_0)| = |f(z_0)|.
\]
Therefore, $z_0\in M_f$. Consequently, any member $f\in \Ann(\Y)$ attains its norm in $\widehat{\Y}_1$. To prove the next part, first observe that
$
 \|\Theta(g)\| = \|g|_{\widehat{\Y}_1}\| \leq \|g\|$ for every $g\in \X^*$ and for any unit vector $f\in \Ann(\Y)$
we have $
 \|\Theta(f)\| = \|f|_{\widehat{\Y}_1}\| = \|f\| = 1.
$
Thus, $\|\Theta\|=1$. Moreover, $\{f\in S_{\X^*}:~f\in \Ann(\Y)\}\subseteq M_\Theta$. Also, for any $l\in M_\Theta$ we have
$
 \|l\|= 1 = \|\Theta(l)\| = \|l|_{\widehat{\Y}_1}\|.
$
Therefore, $l$ is a Hahn-Banach extension of $l|_{\widehat{\Y}_1}$.  Let $\hat{l} = l|_{\widehat{\Y}_1}.$ Then $\hat{l}\in \widehat{\Y}_1^*$ and by the hypotheses there exists $l'\in \Ann(\Y)$ such that $l'|_{\widehat{\Y}_1}=\hat{l}$ with $\|l'\| = \|\hat{l}\|$. Therefore, $l'$ is also a Hahn-Banach extension of $l$. By the uniqueness of the Hahn-Banach extension, we have $l=l'$. Consequently, $l\in \Ann(\Y)$ and $M_\Theta \subseteq \{f\in S_{\X^*}:~f\in \Ann(\Y)\}$, and the desired equality is proved.

\smallskip

\noindent (iv)$\implies$(i). We first show that $\widehat{\Y}_1=\{x\in \X:~x\perp_B \Y\}.$ Let $x_0 \in \X$ be such that $x_0\perp_B \Y$. Then the unique support functional $f_{x_0}$ at $x_0$ vanishes on $\Y$. Thus, by the hypotheses we have $x_0\in \widehat{\Y}_1$. On the other hand, for any $z_0\in \widehat{\Y}_1$, we have
\[
 \frac{1}{\|z_0\|}\|f_{z_0}\|\geq \left\|\Theta\left(\frac{1}{\|z_0\|}f_{z_0}\right)\right\| = \frac{1}{\|z_0\|} \left\|f_{z_0}|_{\widehat{\Y}_1}\right\| \geq \frac{1}{\|z_0\|^2}\left|f_{z_0}|_{\widehat{\Y}_1} (z_0)\right| = \frac{1}{\|z_0\|^2}|f_{z_0} (z_0)| = \frac{1}{\|z_0\|}\|f_{z_0}\|.
\]
Thus, $\frac{1}{\|z_0\|}f_{z_0}\in M_\Theta$ and consequently $f_{z_0}\in \Ann(\Y)$. So, $z_0\perp_B \Y$. It now follows from the proof of (i)$\implies$(ii) that $\widehat{\Y}_1$ is a closed subspace of $\X$. Observe that $\widehat{\Y}_1\bigoplus \Y=\{z+y:~z\in \widehat{\Y}_1,~y\in \Y\}$ is a closed subspace of $\X$. Indeed, for any sequence $(z_n+y_n)$ in $\widehat{\Y}_1\bigoplus \Y$ that converges to some $x_0\in \X$, the sequence $(z_n)$ is Cauchy, because 
\[
   \|z_n+y_n-z_m-y_m\|\geq \|z_n-z_m\|, \qquad n\in \mathbb{N}.
\]
Since $\widehat{\Y}_1$ is closed, $(z_n)$ converges to $z_0$, for some $z_0\in \widehat{\Y}_1$. Consequently, $(y_n)$ converges to $(x_0-z_0)$ and $(x_0-z_0)\in \Y$. Therefore, $x_0\in \widehat{\Y}_1\bigoplus \Y$. Now, if $\widehat{\Y}_1\bigoplus \Y$ is a proper subspace of $\X$, then by Lemma \ref{Lemma: I}, there is a nonzero $u_0\in \X$ such that $u_0\perp_B (\widehat{\Y}_1\bigoplus \Y)$. In particular, $u_0\perp_B \Y$. Then $u_0\in \widehat{\Y}_1$, which is a contradiction. Thus, $\widehat{\Y}_1$ is a vector space complement of $\Y$ and $\X=\widehat{\Y}_1\bigoplus_\SL \Y$. The proof is now complete.
\end{proof}

It is well-known (e.g. see \cite{Bohenblust, Kakutani}) that if $\X$ is a Banach space with $dim~\X \geq 3$, then every closed subspace of $\X$ is right-complemented if and only if $\X$ is a Hilbert space. Also, every one-dimensional subspace in a Banach space is left-complemented if and only if the space is a Hilbert space, see \cite[Theorem 5]{James 2}. However, by James characterization \cite{James 1}, every one-dimensional subspace is right-complemented. Thus, it follows from Proposition \ref{Characterization complemented: Corollary} that right-complemented (or left-complemented) subspaces of a Banach space are not necessarily left-complemented (or right-complemented).
The following example shows that there are closed linear subspaces which are both right-complemented and left-complemented in a Banach space. 

\begin{eg}
Let $(\Omega,\Sigma, \mu)$ be a measure space and $\X$ be a Banach space over $\mathbb{C}$. For $1\leq p < \infty$, consider the Banach space
\[\mathcal{L}_{p}(\Omega,\Sigma,\mu):=\left\{f:\Omega\to\X\;|\; f\mbox{ measurable and }\int_{\Omega}\|f(x)\|^pd\mu<\infty\right\},\]
equipped with the norm
\[
  \|f\|_p=(\int_{\Omega}\|f(x)\|^pd\mu)^{\frac{1}{p}}, \qquad f\in \mathcal{L}_{p}(\Omega,\Sigma,\mu).
\]
Let $A\in\Sigma$ with $\mu(A)\neq 0.$ Then the set $B=\Omega\setminus A$ belongs to $\Sigma$ too. Consider the subspace
\[
  \mathbb{Y}:=\{\chi_{A}f ~ :~ f\in\mathcal{L}_{p}(\Omega,\Sigma,\mu)\},
\]
consisting of elements $f\in\mathcal{L}_{p}(\Omega,\Sigma,\mu)$ with support inside $A$ and the subspace
\[
  \widetilde{\Y}:=\{\chi_B g ~ : ~g\in\mathcal{L}_{p}(\Omega,\Sigma,\mu)\},
\] 
consisting of elements $g\in\mathcal{L}_{p}(\Omega,\Sigma,\mu)$ with support inside $B=\Omega\setminus A.$
Evidently, both of $\mathbb{Y}$ and $\widetilde{\Y}$ are closed subspaces of $\mathcal{L}_{p}(\Omega,\Sigma,\mu).$
Note that every element $f\in \mathcal{L}_{p}(\Omega,\Sigma,\mu)$ can be uniquely written as $f = f_1 + f_2$ with $f_1\in\mathbb{Y}$ and $f_2\in\widetilde{\Y}$, where $f_1 =\chi_A f$ and $f_2 = \chi_B f.$ 
In particular, we get $\mathbb{Y}\oplus\widetilde{\Y}=\mathcal{L}_{p}(\Omega,\Sigma,\mu).$ Now, for $f\in\mathbb{Y},g\in\widetilde{\Y}$ and $\lambda\in\mathbb{C}
$ we have,
\begin{align*}
\|f+\lambda g\|_{p}^{p} & =\int_{\Omega}\|f(x)+\lambda g(x)\|^pd\mu(x)\\
                &=\int_{A}\|f(x)+\lambda g(x)\|^pd\mu(x)+\int_{B}\|f(x)+\lambda g(x)\|^pd\mu(x)\\
                &=\int_{A}\|f(x)\|^pd\mu(x) + \int_{B}\|\lambda g(x)\|^pd\mu(x)\\
                &=\|f\|_{p}^{p}+\vert\lambda\vert^p\|g\|_{p}^{p}\\
                &\geq \|f\|_{p}^{p}.
\end{align*}
Similarly, we have $\|g+\lambda f\|_{p}^{p}\geq \|g\|_{p}^{p}$ for all scalars $\lambda\in \mathbb{C}$.
Thus, the vector space decomposition is also consistent with the Birkhoff-James orthogonality, i.e.
\[
  \mathcal{L}_{p}(\Omega,\Sigma,\mu)=\mathbb{Y}\bigoplus_\SL \widetilde{\Y}=\widetilde{\Y}\bigoplus_\SR\mathbb{Y}.
\]  \qed
\end{eg}

Right-complement (or left-complement) of a closed subspace may not be unique as the following example shows.

\begin{eg} \label{exm:new-001}

Let $\X$ be the $3$-dimensional Banach space $(\mathbb{C}^3,\|\cdot\|_\infty)$, where
\[
 \|(x,y,z)\|_\infty = \max\{|x|,|y|,|z|\}.
\]
Let $\Y = span\{(1,1,1)\}.$ Then $\widetilde{\Y}:=span\{(1,0,0),(0,1,0)\}$ is a vector space complement of $\Y$. Moreover,
\[\|(1+\lambda, 1+\mu, 1)\|_\infty \geq 1 = \|(1,1,1)\|_\infty, \quad \lambda,\mu\in \mathbb{C}.\]
Consequently, $\widetilde{\Y}$ is a right-complement of $\Y$ in $\X$. Similarly, it can be shown that the subspace $\Z':=span\{(0,1,0),(0,0,1)\}$ is a right-complement of $\Y$. Also, by Proposition \ref{Characterization complemented: Corollary} both $\Ann(\widetilde{\Y})$ and $\Ann(\Z')$ are left-complements of $\Ann(\Y)$ in $\X^*$.
\qed 
\end{eg} 

\begin{rem}
Example \ref{exm:new-001} shows that the right-complemented (or left-complemented) subspaces of a Banach space may not be the range (or kernel) of a unique norm one projection. However, it is unique if $\X$ is reflexive, smooth and strictly convex as was shown in \cite[Lemma 2]{Calvert}.
\end{rem}

\vspace{0.05cm}

\section{Range of an isometry and Wold isometry} \label{sec:04}

\vspace{0.2cm}

\noindent
The range of a Banach space isometry may not be a right-complemented (or $1$-complemented) subspace as was shown in \cite{Ditor} by an example. In \cite{Faulkner Huneycutt}, Faulkner and Huneycutt asked when the range of a Banach space isometry is right-complemented, see Question 2 in \cite{Faulkner Huneycutt}. Here we obtain a characterization for an isometry on a reflexive Banach space with right-complemented range and thus answer the question posed in \cite{Faulkner Huneycutt} for reflexive Banach spaces. Again, Theorem \ref{thm:Wold Isometry-001} tells us that an isometry on a reflexive Banach space with right-complemented range is a Wold isometry. So, in this way we characterize a Wold isometry too on a reflexive Banach space. Also, we apply the results of Section \ref{sec:03} to find several characterizations for an isometry with right-complemented range that acts on a reflexive, smooth and strictly convex Banach space. These characterizations give rise to a few sufficient conditions such that an isometry on a reflexive, smooth and strictly convex Banach space becomes a Wold isometry. We begin with an elementary lemma.

\begin{lem} \label{lem:new-041}
Let $V$ be a Wold isometry on a Banach space $\X$ with Wold decomposition $V=V|_{\X_1} \oplus V|_{\X_2}$, where $\X_1$ is the maximal invariant subspace of $V$ such that $V|_{\X_1}$ is a unitary and $V|_{\X_2}$ is a pure isometry. If the range of $V$ is right-complemented, then $V|_{\X_2}$ is a unilateral shift.
\end{lem}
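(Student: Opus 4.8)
The plan is to build a generating subspace for the pure part $V|_{\X_2}$ out of a right complement of the range of $V$. Since $V\X$ is right-complemented, by definition there is a closed subspace $\mathcal L\subseteq\X$ which is a vector space complement of $V\X$ with $V\X\perp_B\mathcal L$; equivalently, by Proposition \ref{Range and kernel of norm one Projection}, $\mathcal L=\ker P$ for a norm-one projection $P$ of $\X$ onto $V\X$. The two things to check are then that $\mathcal L$ is a wandering subspace for $V$ and that $\X_2=\bigoplus_{n=0}^{\infty}V^n\mathcal L$; together with the definition of a unilateral shift these give the conclusion, with $\mathcal L$ as generating subspace.

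The wandering property is the easy half. An isometry preserves Birkhoff-James orthogonality, so applying $V^m$ to $V\X\perp_B\mathcal L$ yields $V^{m+1}\X\perp_B V^m\mathcal L$ for every $m\ge 0$; and for $n>m$ we have $V^n\mathcal L=V^m(V^{\,n-m}\mathcal L)\subseteq V^m(V\X)=V^{m+1}\X$, whence $V^n\mathcal L\perp_B V^m\mathcal L$. By Lemma \ref{Wandering Space}, $\mathcal L$ is wandering for $V$.

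For the identification with $\X_2$, I would first iterate $\X=V\X\oplus\mathcal L$ to get $\X=V^N\X\oplus\bigoplus_{k=0}^{N-1}V^k\mathcal L$ for all $N\ge 1$; conjugating $P$ by the isometries $V^k\colon\X\to V^k\X$ and composing produces norm-one projections $P_N$ of $\X$ onto $V^N\X$ with kernel $\bigoplus_{k<N}V^k\mathcal L$. In particular $V^N\X\perp_B\bigoplus_{k<N}V^k\mathcal L$, which is precisely the orthogonality relation (\ref{Vn_perpendicular_to_all_lowerpowers}) that smoothness supplied in the proof of Lemma \ref{span closure equals to direct sum}; hence that argument applies verbatim and shows that $\overline{span}\{V^n\mathcal L:n\ge 0\}=\bigoplus_{n=0}^{\infty}V^n\mathcal L$, the sums being unique. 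Then $V$ restricted to this closed invariant subspace is a unilateral shift, and one recognises it as $V|_{\X_2}$ either by invoking the description (\ref{Wold decomposition Banach spaces}) of the Wold summands, or directly by checking, using that $\|P_Nx\|$ is non-increasing and $\bigcap_nV^n\X=\X_1$, that $\X=\X_1\oplus\overline{span}\{V^n\mathcal L:n\ge 0\}$ is a Wold-type splitting and appealing to its uniqueness.

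The step I expect to be the main obstacle is this last identification: that the complement $\mathcal L$ produced by the norm-one projection onto $V\X$ actually lies inside $\X_2$ (equivalently, that $\overline{span}\{V^n\mathcal L:n\ge 0\}$ is the whole pure summand and not a proper invariant piece), together with justifying the direct-sum expansion of the closed span in the absence of the smoothness assumption of Lemma \ref{span closure equals to direct sum}. The norm-one projections $P_N$ are exactly what substitutes for smoothness in the latter, and the uniqueness of the Wold decomposition (or the formula (\ref{Wold decomposition Banach spaces})) handles the former; everything else in the proof is the short orthogonality bookkeeping above.
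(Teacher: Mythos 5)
Your proof is correct, and its first half is exactly the paper's entire argument: the paper's proof consists of observing that $V^n\mathcal L\subseteq V(\X)\perp_B\mathcal L$ for $n\geq 1$ and invoking Lemma \ref{Wandering Space}, after which it declares the conclusion to follow. The point of divergence is your second half. The paper never re-derives the identity $\X_2=\bigoplus_{n=0}^{\infty}V^n\mathcal L$; it simply reads it off from the stated form $(\ref{Wold decomposition Banach spaces})$ of the Wold decomposition, in which $\X_2$ is \emph{defined} to be that direct sum for a vector space complement $\mathcal L$ of $V\X$ (and a right-complement is in particular such a complement). So the verification you flag as ``the main obstacle'' is, from the paper's point of view, part of the hypotheses rather than something to prove. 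That said, your extra work is sound and not without value: the construction of the norm-one projections $P_N$ onto $V^N\X$ with kernel $\bigoplus_{k<N}V^k\mathcal L$ is a correct way to obtain the orthogonality relation $(\ref{Vn_perpendicular_to_all_lowerpowers})$ for an isometry with right-complemented range \emph{without} the smoothness hypothesis of Lemma \ref{span closure equals to direct sum}, and it makes the equality $\overline{span}\{V^n\mathcal L:n\geq 0\}=\bigoplus_{n=0}^{\infty}V^n\mathcal L$ self-contained where the paper relies on the conventions built into its definition of a Wold isometry. In short: same core argument, plus a careful (and correct, if strictly unnecessary for the lemma as stated) justification of a step the paper treats as given.
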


\begin{proof}
Let $\mathcal{L}$ be the right-complement of $V(\X)$ in $\X$. Then for all $n\in \mathbb{N}$, $V^n \mathcal{L} \perp_B \mathcal{L}$  as $V^n \mathcal{L} \subseteq V(\X)$. So, by Lemma \ref{Wandering Space} the conclusion follows.
\end{proof}

We have already discussed in Section \ref{sec:02} that the range of an isometry is not necessarily right-complemented. Even the range of a Wold isometry may not always be right complemented. Indeed, every unilateral shift is a pure isometry and hence is a Wold isometry, but one can have an example of a unilateral shift whose range is not right-complemented.
\begin{eg} \label{exm:new-041}
Let us consider the disk algebra $\mathcal A$ consisting of the continuous functions $f:\overline{\mathbb{D}}\to \mathbb{C}$ that are analytic on $\mathbb D$ and are equipped with the supremum norm. For the function $\phi(z)=z$, consider the multiplication operator $M_\phi$ on $\mathcal A$ defined by 
$
M_\phi f(z) = \phi(z)f(z)=zf(z).
$
Let $\mathcal{L}$ be the linear subspace of $\mathcal A$ consisting of constant functions. We show the following:
\begin{itemize}
    \item [(i)] $M_{\phi}$ is an isometry ;
    \item[(ii)] ${\displaystyle \mathcal A=\bigoplus_{n=0}^\infty \, M_\phi^n \mathcal{L}}$ ;
    \item[(iii)] $M_\phi ^n \mathcal{L}\perp_B M_\phi^m \mathcal{L}$ for every $n>m\geq 0$ ;
    \item[(iv)] the range of $M_\phi$ is not right-complemented. 
\end{itemize}
This is same as showing that $M_\phi$ is a unilateral shift whose range is not right-complemented.

\medskip

\noindent (i) For any $f\in \mathcal A$, we have
\[
\|M_\phi f\|=\sup_{|z|\leq 1}|z.f(z)|= \sup_{|z|= 1}|z||f(z)|=\|f\|_{\infty, \overline{\mathbb D}}=\|f\|
\]
and hence $M_{\phi}$ is an isometry.

\smallskip

\noindent (ii) Given any $f\in \mathcal A$, we consider its Taylor series which is given by
\[
f(z)=\sum_{n=0}^\infty \frac{f^n(0)}{n!}z^n=\sum_{n=0}^\infty M_\phi^n \left(\frac{f^n(0)}{n!}\right)=\sum_{n=0}^\infty M_\phi^nl_n,
\]
where $l_n$ are the constant functions $l_n(z)=\frac{f^n(0)}{n!}$ for all $z$. Therefore, we have
$
{ \displaystyle \mathcal A=\bigoplus_{n=0}^\infty M_\phi^n \mathcal{L}.}
$

\smallskip

\noindent (iii) Since Birkhoff-James orthogonality is homogeneous, it is enough to show that $z^n\perp_B z^m$ for every $n> m \geq 0$. We use a characterization of Birkhoff-James orthogonality of continuous functions proved in \cite[Theorem 2.2]{RB22}. Let $p_n(z)=z^n$ and $p_m(z)=z^m$. Then by \cite[Theorem 2.2]{RB22}, we have
\[
p_n(z)\perp_B p_m(z) \iff 0\in \text{Convex hull of the set} \left\{p_n(x)\overline{P_m(x)}:~|x|\leq 1,~|p_n(x)|=\|p_n\|=1\right\}.
\]
However, since 
\[
\left\{p_n(x)\overline{P_m(x)}:~|x|\leq 1,~|p_n(x)|=\|p_n\|=1\right\}=\left\{p_n(x)\overline{P_m(x)}:~|x|= 1,~|p_n(x)|=\|p_n\|=1\right\},
\]
we have
\[
\left\{p_n(x)\overline{P_m(x)}:~|x|= 1,~|p_n(x)|=\|p_n\|=1\right\} = \{z^{n-m}:~|z|=1\} = \mathbb T,
\]
where $\mathbb T$ is the unit circle. Note that convex hull of $\mathbb T$ contains $0$. Consequently, $p_n(z)\perp_B p_m(z)$ for every $n>m\geq 0$.

\medskip

\noindent (iv) Again, we apply Theorem 2.2 in \cite{RB22}. Consider $f(z)=z+z^2 =M_\phi(1+z)\in M_\phi(\mathcal A)$. We show that $f\not\perp_B {\bf 1}$, where ${\bf 1}$ denotes the constant function $1$. By \cite[Theorem 2.2]{RB22}, we have
\begin{align*}
f\perp_B {\bf 1} & \iff 0\in \text{Convex hull of the set} \left\{f(z)\overline{{\bf 1}(z)}:~|z|\leq 1,~|f(z)|=\|f\|\right\}\\
 i.e., \, \,  f\perp_B {\bf 1} & \iff 0\in \text{Convex hull of the set}\left\{f(z):~|z|\leq 1,~|f(z)|=\|f\|\right\}.
\end{align*}
It is not difficult to see that $|f(z)|\leq 2$ and equality holds if and only if $z=1$. Thus, we have
\[
\text{Convex hull of the set}\left\{f(z):~|z|\leq 1,~|f(z)|=\|f\|\right\} = \{2\}.
\]
Consequently, $f\not \perp_B {\bf 1}$. If the range of $M_\phi$ were right-complemented, then there would exist a norm-one projection $P$ onto the range of $M_{\phi}$ such that $\text{range}(P)\perp_B \text{range}(I-P)$. However, $\text{range}(I-P)$ only consists of the constant functions and we already have $f\not \perp_B {\bf 1}$. So, the range of $M_{\phi}$ is not right-complemented. \qed

\end{eg} 

We learn from Theorem \ref{thm:Wold Isometry-001} that an isometry on a reflexive Banach space whose range is right-complemented range is a Wold isometry. This gives rise to an immediate question: is the range of an isometry on a reflexive Banach space always right-complemented ? It is well-known that in general the range of an isometry is not right-complemented. However, it makes sense to ask the question in the reflexive-Banach space setting. In \cite{P-B}, Pelczar-Barwacz constructed a reflexive Banach space $\X$ that has a proper closed linear subspace $\X_1$ such that $\X_1$ is isometrically isomorphic with $\X$ and $\X_1$ is not right-complement. In fact $\X_1$ is not complemented in $\X$. Thus, the inclusion map from $\X_1$ into $\X$ is an isometry without a right-complemented range in reflexive Banach space setting. Below we characterize an isometry on a reflexive Banach space whose range is right-complemented.

\begin{thm} \label{Left Inverse of Wold Isometry}
Let $\mathbb{X}$ be a reflexive Banach space and let $V$ be an isometry on $\X$. Then the following are equivalent:
\item[(i)] $V$ has right-complemented range ;

\item[(ii)] There is an operator $T: \mathbb{X} \to \mathbb{X}$ with $\|T\| =1$ such that $TV = I$.
\end{thm}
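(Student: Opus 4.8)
The plan is to establish the two implications separately, with the direction (i)$\implies$(ii) being the substantive one. For (ii)$\implies$(i), suppose there is a norm-one operator $T$ with $TV=I$. Then $P:=VT$ is idempotent, since $P^2 = VTVT = V(TV)T = VT = P$, and its range is exactly $V(\X)$: clearly $\mathrm{Ran}(P)\subseteq V(\X)$, and for $x\in\X$ we have $P(Vx)=VTVx=Vx$, so $V(\X)\subseteq\mathrm{Ran}(P)$. Moreover $\|P\|=\|VT\|\leq\|V\|\|T\|=1$, while $\|Px\| = \|Px\|$ for $x$ in the range forces $\|P\|\geq 1$; hence $\|P\|=1$. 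Thus $V(\X)$ is the range of a norm-one projection, i.e. it is $1$-complemented, which by Proposition \ref{Range and kernel of norm one Projection} is the same as being right-complemented. (Strictly, Proposition \ref{Range and kernel of norm one Projection} requires $V(\X)$ to be a proper subspace; if $V$ is surjective the statement is trivial with $T=V^{-1}$.)

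For (i)$\implies$(ii), assume $V(\X)$ is right-complemented, so there is a closed subspace $\widetilde{\Y}$ with $\X = V(\X)\oplus\widetilde{\Y}$ and $V(\X)\perp_B\widetilde{\Y}$; by Proposition \ref{Range and kernel of norm one Projection} again, let $P$ be the associated norm-one projection of $\X$ onto $V(\X)$. Since $V$ is an isometry, it is injective, so $\widehat{V}:=V:\X\to V(\X)$ is a surjective isometry, hence invertible with isometric inverse $\widehat{V}^{-1}:V(\X)\to\X$. Define $T:=\widehat{V}^{-1}\circ P:\X\to\X$. Then for any $x\in\X$, $P(Vx)=Vx$ since $Vx\in V(\X)$, so $TVx = \widehat{V}^{-1}(P(Vx)) = \widehat{V}^{-1}(Vx) = x$, giving $TV=I$. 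For the norm, $\|Tx\| = \|\widehat{V}^{-1}(Px)\| = \|Px\|\leq\|x\|$ because $\widehat{V}^{-1}$ is an isometry and $\|P\|=1$; hence $\|T\|\leq 1$. On the other hand $TV=I$ forces $\|T\|\geq 1$ (apply to any unit vector in $V(\X)$, or just note $1 = \|I\| = \|TV\|\leq\|T\|\|V\| = \|T\|$). Therefore $\|T\|=1$.

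I do not expect a serious obstacle here: the argument is essentially the observation that $1$-complementedness of the range of an isometry is equivalent to the existence of a norm-one left inverse, which is close to a formal consequence of the definitions once Proposition \ref{Range and kernel of norm one Projection} (the identification of right-complemented subspaces with ranges of norm-one projections) is in hand. The only point requiring a little care is the bookkeeping of norms — verifying both $\|P\|=1\Rightarrow\|T\|\leq 1$ and that $TV=I$ forces $\|T\|\geq 1$, and handling the degenerate surjective case — and the fact that reflexivity is not actually needed for this particular equivalence, though it is used elsewhere (e.g. via Theorem \ref{thm:Wold Isometry-001}) to conclude such a $V$ is a Wold isometry.
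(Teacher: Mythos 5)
Your proof is correct, and your $(ii)\implies(i)$ direction is essentially identical to the paper's (take $P=VT$, check idempotence, range, and norm, then invoke Proposition \ref{Range and kernel of norm one Projection}). Your $(i)\implies(ii)$ direction, however, takes a genuinely different and more economical route. The paper first invokes Theorem \ref{thm:Wold Isometry-001} to conclude that $V$ is a Wold isometry, splits $\X=\X_1\oplus\X_2$ into the unitary part and the unilateral shift part (using Lemma \ref{lem:new-041}), defines $T$ piecewise as $T_1^{-1}$ on $\X_1$ and a backward shift on $\X_2=\bigoplus_n V^n\mathcal{L}$, and then verifies $\|T\|\leq 1$ by an explicit computation with the series representation and the orthogonality $V\X\perp_B\mathcal{L}$. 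You instead define $T=\widehat{V}^{-1}\circ P$ directly, where $P$ is the norm-one projection onto $V(\X)$ supplied by Proposition \ref{Range and kernel of norm one Projection} and $\widehat{V}^{-1}:V(\X)\to\X$ is the isometric inverse of $V$ onto its (closed) range; the identities $TV=I$ and $\|Tx\|=\|Px\|\leq\|x\|$ are then immediate. One can check that the two operators coincide, but your description bypasses the Wold decomposition entirely and — as you correctly observe — shows that reflexivity is not needed for this equivalence (it is needed only if one wants to further conclude that such a $V$ is a Wold isometry). The one blemish is the garbled sentence "$\|Px\|=\|Px\|$ for $x$ in the range"; you mean $\|Px\|=\|x\|$ for $x\in V(\X)$, and in any case only $\|P\|\leq 1$ is needed there. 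Your handling of the degenerate surjective case is also appropriate, since Proposition \ref{Range and kernel of norm one Projection} is stated for proper subspaces.
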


\begin{proof}
$(i) \implies (ii)$. Let $\mathcal{L}$ be a right-complement of $V(\mathbb{X})$ in $\mathbb{X}$. Since the range of $V$ is right-complemented, then by Theorem \ref{thm:Wold Isometry-001} we have that $V$ is a Wold isometry. Suppose
$ \X=\mathbb{X}_1 \oplus \X_2$,
where $\X_1$ and $\X_2$ are given by (\ref{Wold decomposition Banach spaces}).
Now, both $\mathbb{X}_1$ and $\mathbb{X}_2$ are invariant under $V$ and $V|_{\mathbb{X}_1}$ is a unitary and $V|_{\mathbb{X}_2}$ is a unilateral shift by Lemma \ref{lem:new-041}. Let $T_1$ be the linear operator $V|_{\mathbb{X}_1}:\mathbb{X}_1\to \mathbb{X}_1$ and define $T_2:\mathbb{X}_2\to \mathbb{X}_2$ by
\[
T_2\left(\sum_{n=0}^\infty V^nl_n\right)=\sum_{n=0}^\infty V^nl_{n+1}.
\] 
Define $T:\mathbb{X}\to \mathbb{X}$ by
$
T(x_1+x_2)=T_1^{-1}x_1+T_2x_2$ for $x_1\in \mathbb{X}_1,~x_2\in \mathbb{X}_2$. Evidently, $x_1=T_1y_1$ for some $y_1\in \mathbb{X}_1$ and $x_2=\sum_{n=0}^\infty V^nl_n$ for some sequence $(l_n) \subseteq \mathcal{L}$. Therefore,
\begin{align*}
\|T(x_1+x_2)\|  = \|T_1^{-1}x_1+T_2x_2\|
& = \left\|y_1+\sum_{n=0}^\infty V^nl_{n+1}\right\| \\
& = \left\|V(y_1+\sum_{n=0}^\infty V^nl_{n+1})\right\| \\
& \leq \left\|l_0+V(y_1+\sum_{n=0}^\infty V^nl_{n+1})\right\| \qquad [~V\mathbb{X}\perp_B \mathcal{L}~]; \\
& =\left\|T_1y_1+\sum_{n=0}^\infty V^nl_n\right\| \\
& = \left\|x_1+x_2\right\|.
\end{align*}
Therefore, $\|T\|\leq 1$. Also, since any $x\in \X$ can be uniquely expressed as $x=x_1+x_2$, for $x_1\in \X_1$ and $x_2\in \X_2$, we have that
\[
TV(x)=TV(x_1+x_2) = TVx_1 + TVx_2 = T_1^{-1}(Vx_1) + T_2\left(\sum_{n=0}^\infty V^{n+1}l_n\right) = x_1+x_2 =x.
\]
Consequently, $TV=I$ and $\|T\|=1.$

\medskip

\noindent $(ii) \implies (i)$. Consider the operator $P= VT$. Then $P$ and $V$ have the same range. Indeed, $P(\mathbb{X}) \subseteq V(\mathbb{X})$, and  for any $y\in V(\mathbb{X})$ with inverse image $x$, we have $Py = VTVx =Vx =y$. In addition, $P^2 = (VT) (VT) = V(TV) T = P$ and $\|P\| \leq \|V\| \|T\| =1$. Thus, $P$ is a norm-one projection on the range of $V$. Consequently, the range of $V$ is right-complemented by Proposition \ref{Range and kernel of norm one Projection}.
\end{proof}

Now we move to consider isometries acting mostly on reflexive, smooth and strictly convex Banach spaces. We start with two basic results which will be used in the sequel and are also of independent interests.

\begin{prop}\label{V cross f_Vx}
Let $\X$ be a reflexive, smooth and strictly convex Banach space and $V$ be an isometry on $\X.$ Then the map $\Phi:\J_\X\left(V\left(\X\right)\right)\to\X^*$ defined by the restriction of the Banach adjoint $V^\times$ on $\J_\X(V(\X))$, is a bijective map.
\end{prop}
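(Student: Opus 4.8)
The plan is to show that $\Phi$ is both injective and surjective onto $\J_\X(\X) = \J_\X(V(\X))$... wait, let me think about what the target actually is. The claim is that $\Phi : \J_\X(V(\X)) \to \X^*$, given by $f \mapsto V^\times f$, is bijective. Since $\J_\X(V(\X)) = \{\J_\X(Vx) : x \in \X\}$ and $V$ is an isometry, the natural guess is that $\Phi$ is a bijection onto $\J_\X(\X)$ (equivalently, that for each $x$, $V^\times \J_\X(Vx) = \J_\X(x)$, possibly up to the normalization forced by $\|Vx\| = \|x\|$). So the first step is to pin down the image: I would compute, for $x \in \X$, that $V^\times f_{Vx}$ lands in $J(x)$ where $f_{Vx} = \J_\X(Vx)$. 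Indeed $(V^\times f_{Vx})(x) = f_{Vx}(Vx) = \|Vx\|^2 = \|x\|^2$, and $\|V^\times f_{Vx}\| \le \|f_{Vx}\| = \|Vx\| = \|x\|$; combined with the lower bound from evaluating at $x$, this gives $\|V^\times f_{Vx}\| = \|x\|$ and hence $V^\times f_{Vx} \in J(x)$. Since $\X$ is smooth, $J(x) = \{\J_\X(x)\}$, so $\Phi(f_{Vx}) = \J_\X(x)$. Thus $\Phi$ maps $\J_\X(V(\X))$ onto $\J_\X(\X)$.

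Surjectivity onto $\J_\X(\X)$ is then immediate from the displayed computation: given $\J_\X(x) \in \J_\X(\X)$, it equals $\Phi(\J_\X(Vx))$. For injectivity, suppose $\Phi(\J_\X(Vx_1)) = \Phi(\J_\X(Vx_2))$, i.e. $\J_\X(x_1) = \J_\X(x_2)$ as elements of $\X^*$. Here I would use reflexivity and strict convexity of $\X$: these force the duality map $\J_\X$ to be injective (a functional $g \in \X^*$ attains its norm, because $\X$ is reflexive, at a point that is unique up to unimodular scalar by strict convexity, and one recovers $x$ from $\J_\X(x)$ as $\|\J_\X(x)\|$ times that norm-attaining point — more carefully, $\J_\X(x_1) = \J_\X(x_2)$ has equal norms so $\|x_1\| = \|x_2\|$, and then both $x_1/\|x_1\|$ and $x_2/\|x_2\|$ lie in the norm attainment set of the common functional, which is a single unimodular orbit, and since $\J_\X(\mu x) = \bar\mu \J_\X(x)$ one checks the scalar must be $1$). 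Hence $x_1 = x_2$, so $\J_\X(Vx_1) = \J_\X(Vx_2)$, giving injectivity of $\Phi$.

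The remaining point is that the codomain in the statement is written as $\X^*$ rather than $\J_\X(\X)$, so strictly speaking "bijective" should be read as "bijective onto its range $\J_\X(\X)$" (equivalently, $\Phi$ is injective with the indicated image); I would state this explicitly. The main obstacle is the injectivity step: it is the only place where all three hypotheses (reflexive, smooth, strictly convex) are genuinely used, and care is needed with the unimodular-scalar bookkeeping in the duality map, which is conjugate-linear in the relevant normalization. Everything else is a short Hahn–Banach / norm-estimate computation of the type already appearing in Lemma \ref{Lemma: Smoothness} and the proof of Theorem \ref{Characterization of Right-complemented}, and I would lean on those for the facts that $M_g$ is a single unimodular orbit when $g$ is smooth (here applied in $\X$, using that $\X$ is smooth and reflexive) and that norm attainment sets in strictly convex spaces are singletons up to unimodular scalars.
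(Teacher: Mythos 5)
Your core computation and your injectivity argument coincide with the paper's proof: you establish $V^{\times}f_{Vx}=f_x$ by the same two norm estimates and you deduce $x=y$ from $f_x=f_y$ using reflexivity (norm attainment) and strict convexity (the norm attainment set of a functional is a single unimodular orbit), with the unimodular scalar eliminated by conjugate-homogeneity of $\J_\X$ — the paper instead evaluates $f_x$ at $y/\|y\|$ to force the scalar to be $1$, but the two devices are interchangeable.

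The genuine gap is in your final paragraph, where you conclude that the codomain $\X^*$ must be reinterpreted as $\J_\X(\X)$ and ``bijective'' weakened to ``bijective onto its range.'' In fact $\J_\X(\X)=\X^*$ under the standing hypotheses, so the proposition is correct exactly as stated and your proof stops one observation short of it. The missing step — which is precisely how the paper closes the argument — is that every nonzero $f\in\X^*$ is of the form $f_x$ for some $x\in\X$: by reflexivity $f$ attains its norm at some $u\in S_\X$, say $f(u)=e^{i\theta}\|f\|$; putting $x=e^{-i\theta}\|f\|\,u$ gives $\|x\|=\|f\|$ and $f(x)=\|f\|^2=\|x\|^2$, so $f\in J(x)$, and smoothness of $\X$ forces $f=f_x=\J_\X(x)$. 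With this, your surjectivity onto $\J_\X(\X)$ is surjectivity onto all of $\X^*$, and no reinterpretation of the statement is needed. You should add this observation rather than hedge the conclusion.
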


\begin{proof}

 We show that
\begin{equation}\label{V cross f_Vx equals f_x}
  V^{\times}\left(f_{Vx}\right)=f_{x},\qquad x\in\X.
\end{equation}
For every $y\in S_{\X}$ we have
\[
  \vert (V^{\times}(f_{Vx}))(y)\vert=\vert f_{Vx}(Vy)\vert\leq\|f_{Vx}\|\|Vy\|=\|Vx\|=\|x\|.
\]
On the other hand 
\[
V^{\times}(f_{Vx})\left(\frac{x}{\|x\|}\right)=(f_{Vx})\left(\frac{Vx}{\|x\|}\right)=\frac{\|Vx\|^2}{\|x\|}=\|x\|.
\]
Therefore,
$
   \|V^{\times}(f_{Vx})\|=\|x\|$ and $V^{\times}(f_{Vx})(x)=\|Vx\|^2=\|x\|^2$. Thus, $V^\times(f_{Vx})$ is a support functional at $x$. Again, since $\X$ is smooth, we have that
$
   V^{\times}(f_{Vx})=f_x$. So, $\Phi$ can be described as 
\begin{equation}\label{Definition of Phi}
  \Phi(f_{Vx})=V^\times (f_{Vx})=f_x,\quad x\in\X.
\end{equation}
The map $\Phi$ is injective. Indeed, for $f_{Vx},f_{Vy}\in \J_\X\left(V\left(\X\right)\right)$ and $\Phi(f_{Vx})=\Phi(f_{Vy}),$ we have $f_x=f_y.$ Therefore,
$
    \|x\|=\|f_x\|=\|f_y\|=\|y\|.
$ 
Note that,
\[
  f_x\left(\frac{y}{\|y\|}\right)=f_y\left(\frac{y}{\|y\|}\right)=\|f_y\|=\|f_x\|=f_x\left(\frac{x}{\|x\|}\right).
\]
Since $\X$ is strictly convex, by Lemma \ref{Lemma: Smoothness}, $f_x$ attains its norm at a unique point (up to unimodular scalar multiple). So, it follows that $x=y$. Consequently, $Vx=Vy$ and $f_{Vx}=f_{Vy}$.
The map $\Phi$ is surjective, because by the reflexivity of $\X$, any nonzero element $f\in \X^*$ is of the form $f_x$. Thus, $\Phi(f_{Vx})=V^\times (f_{Vx})=f_x.$ This completes the proof.

\end{proof}

\begin{lem}\label{Norm attainment of V cross}
Let $\X$ be a reflexive Banach space and let $V$ be an isometry on $\X$. Then $\|V^{\times}(f)\|=\|f\|$ if and only if $f\in\J_\X\left(V\left(\X\right)\right)$.
\end{lem}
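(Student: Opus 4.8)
The statement to prove is: for $V$ an isometry on a reflexive Banach space $\X$ and $f \in \X^*$, we have $\|V^\times(f)\| = \|f\|$ if and only if $f \in \J_\X(V(\X))$. The plan is to prove the two implications separately, with the forward direction being the substantive one.

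First I would dispose of the easy direction. Suppose $f = f_{Vx} \in \J_\X(V(\X))$ for some $x \in \X$. Then by equation $(\ref{V cross f_Vx equals f_x})$ of Proposition~\ref{V cross f_Vx}, we have $V^\times(f_{Vx}) = f_x$, and hence $\|V^\times(f)\| = \|f_x\| = \|x\| = \|Vx\| = \|f_{Vx}\| = \|f\|$, using that $V$ is an isometry and the defining norm relation for support functionals. (One should also note the trivial case $f = \mathbf 0$.) So this direction is immediate from the preceding proposition.

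For the converse, assume $\|V^\times(f)\| = \|f\|$; I want to produce $w \in \X$ with $f = f_{Vw}$. The idea is: since $V^\times$ does not decrease the norm of $f$, the functional $V^\times f$ "sees" the full norm of $f$ on the subspace $V(\X)$, which should force $f$ to attain its norm on $V(\X)$, and a norm-attaining point there will be precisely the point whose support functional is $f$. Concretely, since $\X$ is reflexive, $\X^*$ is a reflexive Banach space as well, so $V^\times f \in \X^*$ attains its norm: there is a unit vector $u \in S_\X$ with $|(V^\times f)(u)| = \|V^\times f\| = \|f\|$. But $(V^\times f)(u) = f(Vu)$ and $\|Vu\| = 1$, so $Vu$ is a unit vector at which $|f(Vu)| = \|f\|$, i.e. $Vu \in M_f$. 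After multiplying $u$ by a unimodular scalar I may assume $f(Vu) = \|f\|$. Now set $w = \|f\| \, u$; then $f(Vw) = \|f\| \cdot \|f\| = \|f\|^2$ and $\|f\| = \|Vw\|$ (since $\|Vw\| = \|w\| = \|f\|$), so $f$ is a support functional at $Vw$. Smoothness is not assumed here, but that is fine: being \emph{a} support functional at the point $Vw$ is exactly the statement $f = f_{Vw} \in J(Vw)$, hence $f \in \J_\X(V(\X))$ once we fix the duality selector to send $Vw$ to this particular $f$ — or, more cleanly, I would phrase the conclusion as "$f$ is a support functional at some point of $V(\X)$", which is what membership in $\J_\X(V(\X))$ amounts to when one is careful about the selector. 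If the paper's convention requires a fixed duality map, I would instead invoke Lemma~\ref{Lemma: Smoothness}-type reasoning only when smoothness is in force; absent it, I record that $f \in J(Vw)$ and that this is the intended reading of $\J_\X(V(\X))$ in this lemma.

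The main obstacle is a bookkeeping one rather than a deep one: making sure the "reflexivity implies norm-attainment in $\X^*$" step is legitimate (it is, since $\X$ reflexive $\Rightarrow$ $\X^*$ reflexive, and by James's theorem every functional on a reflexive space attains its norm — here applied to the element $\pi_{\X}(u)$-style pairing, or simply to $V^\times f$ viewed via the canonical identification), and handling the unimodular-scalar normalization so that $f(Vu)$ is genuinely $\|f\|$ rather than merely having modulus $\|f\|$. Once the norm-attaining point $Vu$ is in hand, everything reduces to the algebra of support functionals already set up in $(\ref{eqn:new-021})$ and Proposition~\ref{V cross f_Vx}. I would also double-check the boundary case $f = \mathbf 0$ separately, where both sides of the equivalence hold trivially.
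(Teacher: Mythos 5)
Your argument is correct and follows essentially the same route as the paper: reflexivity of $\X$ yields a unit vector at which $V^\times f$ attains its norm, whence $f$ attains its norm on $S_{V(\X)}$ and so lies in $\J_\X(V(\X))$, while the converse is the elementary norm computation. The only cosmetic difference is that you route the easy direction through Proposition \ref{V cross f_Vx}, which is stated under smoothness and strict convexity hypotheses absent from this lemma; the identity $\|V^\times(f_{Vx})\|=\|f_{Vx}\|$ you actually need holds without those hypotheses, exactly as in the paper's one-line sufficiency argument.
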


\begin{proof}
We first prove the necessity. Let $f\in\X^*$ be arbitrary. Since $\X$ is reflexive, $V^\times f$ attains its norm at a unit vector, say $x_0$. Then
\[
   \left\|V^\times f\right\|=\sup_{\|x\|=1} | f(V(x)) |=|f(V(x_0))|\leq \|f\| =\left\|V^\times f\right\|.
\]
Therefore, $f$ attains its norm at $V(x_0)$ and hence $f\in\J_\X(V(\X))$, see (\ref{Support functional and norm attainment}). We now prove the sufficiency. Since $f\in\J_\X(V(\X))$, $f$ attains its norm at $V(y_0)$ for some unit vector $y_0.$ Therefore,
$
   \|f\|=\left\|f|_{V(\X)}\right\|=\| V^\times f\|
$ and the proof is complete.
\end{proof}

Now, we characterize the isometries acting on reflexive, smooth and strictly convex Banach spaces whose ranges are right-complemented.

\begin{thm}\label{Isometry Range}
Let $\X$ be a reflexive, smooth and strictly convex Banach space and $V$ be a non-surjective isometry on $\X.$ Then the  following are equivalent:

\begin{itemize}
\item[(i)] $V(\X)$ is right-complemented ;

\smallskip

\item[(ii)] $M_{V^\times}=S_{\mathbb{W}}$ for some subspace $\mathbb{W}$ of $\X^*$ ;

\smallskip

\item[(iii)] $\|f_x+f_y\|=\|f_{Vx}+f_{Vy}\|\quad \text{for all}~x,y\in\X$ ;

\smallskip

\item[(iv)] The map $\Phi^{-1}:\X^*\to\J_\X\left(V\left(\X\right)\right)$ is linear, where $\Phi$ is as in $(\ref{Definition of Phi})$.

\end{itemize}
\end{thm}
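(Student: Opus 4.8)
The strategy is to establish the cycle $(i)\Rightarrow(iii)\Rightarrow(iv)\Rightarrow(i)$ together with $(i)\Leftrightarrow(ii)$, leaning throughout on Proposition \ref{V cross f_Vx}, Lemma \ref{Norm attainment of V cross}, and the structural results of Section \ref{sec:03} (especially Theorem \ref{Characterization of Right-complemented} and Lemma \ref{Orthogonality of functionals}). The key identity in hand is $V^\times(f_{Vx})=f_x$ from \eqref{V cross f_Vx equals f_x}, and $\J_\X(V(\X))$ is exactly the norm attainment set $M_{V^\times}$ by Lemma \ref{Norm attainment of V cross}.

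First I would do $(i)\Leftrightarrow(ii)$. By Proposition \ref{Characterization complemented: Corollary}, $V(\X)$ is right-complemented in $\X$ if and only if $\Ann(V(\X))$ is left-complemented in $\X^*$; but $\X^*$ is reflexive, smooth and strictly convex since $\X$ is, so Theorem \ref{Characterization of left-complemented} applies to the pair $(\X^*,\Ann(V(\X)))$. Concretely, $\Ann(V(\X))$ is left-complemented in $\X^*$ iff there is a subspace $\mathbb{W}\subseteq\X^*$ with $S_{\mathbb{W}}=\bigcup_{F\in\Ann(\Ann(V(\X)))\setminus\{0\}}M_F$; using reflexivity to identify $\Ann(\Ann(V(\X)))$ with $\pi_\X(V(\X))$ and noting that for $\widehat{Vx}=\pi_\X(Vx)$ the norm attainment set $M_{\widehat{Vx}}$ inside $\X^*$ is precisely the set of $f$ with $|f(Vx)|=\|f\|\,\|Vx\|$, i.e.\ the support functionals at $Vx$ up to scaling, one sees $\bigcup M_{\widehat{Vx}}=\J_\X(V(\X))\cup(\text{scalings})=M_{V^\times}$ by Lemma \ref{Norm attainment of V cross}. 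Thus $(ii)$ is just the translation of $(i)$ through this duality; the bookkeeping about unimodular scalars and the cone generated by $\J_\X(V(\X))$ is the only delicate point, and it is routine given Lemma \ref{Lemma: Smoothness} (which pins down norm attainment sets in the reflexive strictly convex setting).

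Next, $(i)\Leftrightarrow(iv)$ I would get from Theorem \ref{Characterization of Right-complemented}: $V(\X)$ is right-complemented iff $\J_\X(V(\X))$ is a linear subspace of $\X^*$ (this is Calvert's criterion, already invoked in the proof of Theorem \ref{Characterization of Right-complemented}). By Proposition \ref{V cross f_Vx}, $\Phi=V^\times|_{\J_\X(V(\X))}:\J_\X(V(\X))\to\X^*$ is a bijection with $\Phi(f_{Vx})=f_x$, so $\Phi^{-1}(f_x)=f_{Vx}$. If $\J_\X(V(\X))$ is a subspace, then since $x\mapsto f_{Vx}$ has image exactly $\J_\X(V(\X))$ and $\Phi$ is the (automatically linear on its linear domain) restriction of the linear map $V^\times$, its inverse $\Phi^{-1}$ is linear. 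Conversely if $\Phi^{-1}:\X^*\to\J_\X(V(\X))$ is linear, then $\J_\X(V(\X))=\Phi^{-1}(\X^*)$ is a linear subspace, and Calvert's criterion gives $(i)$. For $(i)\Leftrightarrow(iii)$: linearity of $\Phi^{-1}$ means $\Phi^{-1}(f_x+f_y)=\Phi^{-1}(f_x)+\Phi^{-1}(f_y)=f_{Vx}+f_{Vy}$; since $\Phi$ is an isometry onto, hence so is $\Phi^{-1}$, this forces $\|f_x+f_y\|=\|f_{Vx}+f_{Vy}\|$. Conversely, if that norm equality holds for all $x,y$, then because $\Phi^{-1}$ is norm-preserving one shows the map $f\mapsto\Phi^{-1}(f)$ is additive — the standard trick being that an isometric bijection between strictly convex spaces that preserves norms of sums is linear (Mazur–Ulam-type, or directly: in a strictly convex space $\|a+b\|=\|a\|+\|b\|$ pins down the segment, and here one checks midpoints), and smoothness/strict convexity of $\X^*$ makes the argument clean.

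The part I expect to be the main obstacle is $(iii)\Rightarrow(iv)$ (equivalently, extracting genuine linearity of $\Phi^{-1}$ from the single scalar identity $\|f_x+f_y\|=\|f_{Vx}+f_{Vy}\|$): one must upgrade a norm-preservation statement to additivity. The clean route is to observe that $\Phi^{-1}$ is already a bijective isometry by Proposition \ref{V cross f_Vx}, and that $\J_\X(V(\X))=\Phi^{-1}(\X^*)$; then $(iii)$ says $\Phi^{-1}$ preserves norms of binary sums, and combined with homogeneity (immediate from $f_{\alpha x}=\overline{\alpha}|\alpha|\cdots$ — more simply $f_{\lambda x}=\lambda\|x\|\cdot(\text{scaled }f_x)$ for real $\lambda>0$, and the unimodular case handled separately) one concludes $\J_\X(V(\X))$ is closed under sums, hence a subspace, and then Calvert again. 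I would be careful to treat the complex-scalar subtleties in the definition of support functional ($f_x(x)=\|x\|^2$) consistently, since $f_{\lambda x}=|\lambda|^2\|x\|^{-2}\cdot$ is not quite $\lambda f_x$ but rather $\bar\lambda^{-1}|\lambda|^2$-type scaling; tracking this correctly is where the routine-looking computation can bite.
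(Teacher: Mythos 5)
Your overall architecture is sound and in places more economical than the paper's: the paper proves the cycle (i)$\Rightarrow$(ii)$\Rightarrow$(iii)$\Rightarrow$(iv)$\Rightarrow$(i), with (iii)$\Rightarrow$(iv) running through uniqueness of Hahn--Banach extensions (writing $f_{Vz}=\alpha f_{Vx}+\beta f_{Vy}+g$ with $g\in\ker V^{\times}$ and showing $g=\mathbf{0}$), whereas you reduce everything to Calvert's criterion \cite{Calvert} that $V(\X)$ is right-complemented iff $\J_\X(V(\X))$ is a linear subspace. Your observation that (i)$\Rightarrow$(iv) is then immediate --- a bijection obtained by restricting the linear map $V^{\times}$ to a linear subspace has a linear inverse --- is a genuine shortcut compared with the paper's detour through (ii) and (iii).

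However, the justification you offer at the pivotal step (iii)$\Rightarrow$(iv) does not work as written. You want to conclude that $\J_\X(V(\X))$ is closed under addition, and you appeal to a Mazur--Ulam-type principle (``an isometric bijection that preserves norms of sums is linear'', ``$\|a+b\|=\|a\|+\|b\|$ pins down the segment''). Neither applies: condition (iii) reads $\|f_x+f_y\|=\|f_{Vx}+f_{Vy}\|$, not $\|a+b\|=\|a\|+\|b\|$, and a norm-preserving bijection that preserves norms of binary sums need not be additive --- knowing $\|\Phi^{-1}(f_x+f_y)\|=\|f_{Vx}+f_{Vy}\|$ does not identify $\Phi^{-1}(f_x+f_y)$ with $f_{Vx}+f_{Vy}$, since the latter is not a priori in the range of $\Phi^{-1}$ at all. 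The correct mechanism is the lemma you quote in your opening paragraph but do not deploy here: by Lemma \ref{Norm attainment of V cross}, $g\in\J_\X(V(\X))$ iff $\|V^{\times}g\|=\|g\|$; applying this to $g=f_{Vx}+f_{Vy}$ and using $V^{\times}(f_{Vx}+f_{Vy})=f_x+f_y$ from (\ref{V cross f_Vx equals f_x}) together with (iii) puts $f_{Vx}+f_{Vy}$ in $\J_\X(V(\X))$ at once (this is essentially the computation the paper performs, with general scalars absorbed via $\lambda f_{Vx}=f_{V(\bar{\lambda}x)}$). A secondary point: your route to (i)$\Leftrightarrow$(ii) through Proposition \ref{Characterization complemented: Corollary} and Theorem \ref{Characterization of left-complemented} invokes the converse of Proposition \ref{Characterization complemented: Corollary}, which the paper does not state and which requires an extra bidual argument; it is also unnecessarily heavy, since $M_{V^{\times}}=\J_\X(V(\X))\cap S_{\X^*}$ by Lemma \ref{Norm attainment of V cross} and $\J_\X(V(\X))$ is closed under scalar multiples, so $M_{V^{\times}}=S_{\mathbb{W}}$ forces $\J_\X(V(\X))=\mathbb{W}$ and Calvert's criterion settles both directions directly.
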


\begin{proof}
(i)$\implies$(ii). Since $V(\X)$ is right-complemented, $\J_\X(V\X)$ is a linear subspace of $\X^*$ by \cite[Theorem 1]{Calvert}. Let $\mathbb{W}=\J_\X\left(V\left(\X\right)\right).$ Then by Lemma \ref{Norm attainment of V cross} we have that $M_{V^\times}=S_{\mathbb{W}}.$

\medskip

\noindent (ii)$\implies$(iii). Let $x$ and $y$ be any two elements of $\X$. Since $f_{Vx}$ and $f_{Vy}$ are members of $\J_\X\left(V\left(\X\right)\right)$, by Lemma \ref{Norm attainment of V cross} we have that
$
   \|V^\times(f_{Vx})\|=\|f_{Vx}\|$ and $\|V^\times(f_{Vy})\|=\|f_{Vy}\|$. Therefore, 
\[
   \left\{\frac{f_{Vx}}{\|f_{Vx}\|}, \frac{f_{Vx}}{\|f_{Vx}\|}\right\}\subseteq M_{V^\times}\subseteq \mathbb{W}.
\]
Since $\mathbb{W}$ is a linear subspace, we have that $(f_{Vx}+f_{Vy})\in\mathbb{W}.$ Therefore, by the hypothesis 
\[
  \|V^\times\left(f_{Vx}+f_{Vy}\right)\|=\|\left(f_{Vx}+f_{Vy}\right)\|.
\] 
It now follows from (\ref{V cross f_Vx equals f_x}) that $\|f_x+f_y\|=\|f_{Vx}+f_{Vy}\|$.

\medskip

\noindent (iii)$\implies$(iv). Let $f_x,f_y$ be any two elements of $\X^*$ and let $\alpha,\beta \in\mathbb{C}$. Since $\Phi$ is surjective, there is $z\in\X$ such that
$
   \Phi(f_{Vz}) = \alpha f_x + \beta f_y.
$
It follows from (\ref{Definition of Phi}) that
$
V^\times (f_{Vz})=V^\times(\alpha f_{Vx}+\beta f_{Vy}).
$
Therefore, there is $g\in\ker(V^\times)$ such that
$
   f_{Vz}=\alpha f_{Vx}+\beta f_{Vy}+g.
$ 
Since $M_{f_{Vz}}=\left\{\mu\frac{Vz}{\|Vz\|}:~|\mu|=1\right\}$ (by Lemma \ref{Lemma: Smoothness}) and $g$ vanishes on $V(\X)$, we have
\[
   (\alpha f_{Vx}+\beta f_{Vy}+g)|_{V(\X)}=(\alpha f_{Vx}+\beta f_{Vy})|_{V(\X)},
\]
and 
\[
   \left\|(\alpha f_{Vx}+\beta f_{Vy}+g)\right\|=\left\|(\alpha f_{Vx}+\beta f_{Vy}+g)|_{V(\X)}\right\|=\left\|(\alpha f_{Vx}+\beta f_{Vy})|_{V(\X)}\right\|.
\]
Therefore, $(\alpha f_{Vx}+\beta f_{Vy}+g)$ is a Hahn-Banach extension of $(\alpha f_{Vx}+\beta f_{Vy})|_{V(\X)}.$ Thus, we have 
\begin{align*}
\left\|\alpha f_{Vx}+\beta f_{Vy}\right\|
=\left\|f_{\overline{\alpha} V(x)}+f_{\overline{\beta} V(y)}\right\| = \left\|f_{V\left(\overline{\alpha} x\right)}+f_{V\left(\overline{\beta} y\right)}\right\|
& =\left\|f_{\overline{\alpha} x}+f_{\overline{\beta} y}\right\|\\
& =\|\alpha f_x+\beta f_y\|\\
& =\left\|\alpha V^\times(f_{Vx})+\beta V^\times(f_{Vy})\right\|\\
& =\|V^\times(\alpha f_{Vx}+\beta f_{Vy})\|,
\end{align*} 
where the third equality follows from the hypothesis.
Thus, by Lemma \ref{Norm attainment of V cross}, we have $(\alpha f_{Vx}+\beta f_{Vy})\in\J_\X\left(V\left(\X\right)\right).$ Suppose $(\alpha f_{Vx}+ \beta f_{Vy})$ attains its norm at $Vz_0$ for some $z_0\in S_{\X}$. Then
\[
   \left\|(\alpha f_{Vx}+\beta f_{Vy})|_{V(\X)}\right\|  \leq \left\|\alpha f_{Vx}+ \beta f_{Vy}\right\| =\left|(\alpha f_{Vx}+ \beta f_{Vy})(Vz_0)\right| \leq \left\|(\alpha f_{Vx}+\beta f_{Vy})|_{V(\X)}\right\|.
\]
Therefore, we have
$
    \left\|(\alpha f_{Vx}+\beta f_{Vy})|_{V(\X)}\right\|=\left\|(\alpha f_{Vx}+\beta f_{Vy})\right\|.
$
This shows that $(\alpha f_{Vx}+\beta f_{Vy})$ is also a Hahn-Banach extension of $(\alpha f_{Vx}+\beta f_{Vy})|_{V(\X)}.$ Therefore, by the uniqueness of Hahn-Banach extension, we have
$
    (\alpha f_{Vx}+\beta f_{Vy}+g)=(\alpha f_{Vx}+\beta f_{Vy}).
$
Consequently,
\[
    \Phi^{-1}(\alpha f_x+\beta f_y)=\alpha f_{Vx}+\beta f_{Vy}+g=\alpha f_{Vx}+\beta f_{Vy}=\alpha\Phi^{-1}(f_x)+\beta\Phi^{-1}(f_y),
\]
and $\Phi^{-1}$ is a linear map.

\medskip

\noindent (iv)$\implies$(i). The hypothesis indicates that $\J_\X(V(\X))$ is a linear subspace of $\X^*.$ Therefore, by \cite[Theorem 1]{Calvert} $V(\X)$ is right-complemented in $\X.$ This completes the proof.
\end{proof}

Combining Theorems \ref{thm:Wold Isometry-001} \& \ref{Isometry Range}, we have the following sufficient conditions such that a Banach space isometry becomes a Wold isometry.

\begin{thm}
Let $\mathbb{X}$ be a reflexive, smooth and strictly convex Banach space. Then each of the conditions below implies that $V$ is a Wold isometry.
\begin{itemize}
\item[(i)] $\|f_{Vx}+f_{Vy}\|=\|f_x+f_y\|\quad\mbox{ for all }x,y\in\mathbb{X}.$

\smallskip

\item[(ii)] The map $\Phi^{-1}:\mathbb{X}^*\to\J_\X\left(V\left(\mathbb{X}\right)\right)$ defined by $f_x\mapsto f_{Vx}$ is linear.

\smallskip

\item[(iii)] $M_{V^\times}=S_{\mathbb{W}}$ for some subspace $\mathbb{W}$ of $\mathbb{X}^*.$ 
\end{itemize}
\end{thm}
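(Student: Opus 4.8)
The plan is to obtain this as a direct consequence of Theorems \ref{thm:Wold Isometry-001} and \ref{Isometry Range}, since all the substantive work has already been done there. First I would dispose of the trivial case. If $V$ is surjective, then $V$ is a Banach space unitary, and taking $\X_1 = \X$ and $\X_2 = \{\mathbf 0\}$ exhibits $V$ as a Wold isometry: the restriction $V|_{\X_1}$ is a unitary and $V|_{\X_2}$ is (vacuously) a pure isometry with no unitary part. So from this point on I would assume $V$ is non-surjective, which is precisely the standing hypothesis of Theorem \ref{Isometry Range}.

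Next I would match the three conditions in the present statement with the list in Theorem \ref{Isometry Range}. Condition (i) here, namely $\|f_{Vx}+f_{Vy}\| = \|f_x+f_y\|$ for all $x,y\in\X$, is condition (iii) of Theorem \ref{Isometry Range} word for word. Condition (iii) here, namely $M_{V^\times}=S_{\mathbb{W}}$ for some subspace $\mathbb{W}\subseteq\X^*$, is condition (ii) of Theorem \ref{Isometry Range} word for word. For condition (ii) here I would recall from Proposition \ref{V cross f_Vx} (specifically from the formula $(\ref{Definition of Phi})$, i.e. $\Phi(f_{Vx}) = V^\times(f_{Vx}) = f_x$) that the map $f_x \mapsto f_{Vx}$ is exactly the inverse $\Phi^{-1}\colon \X^* \to \J_\X(V(\X))$, so that condition (ii) here is condition (iv) of Theorem \ref{Isometry Range}. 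In each of the three cases, Theorem \ref{Isometry Range} then gives that $V(\X)$ is right-complemented in $\X$.

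Finally, since $\X$ is reflexive and $V(\X)$ is right-complemented, Theorem \ref{thm:Wold Isometry-001} yields that $V$ is a Wold isometry, which would complete the proof. I do not expect any genuine obstacle: the mathematical content is entirely contained in the earlier results, and the only care needed is the bookkeeping that identifies the three hypotheses with the equivalent conditions of Theorem \ref{Isometry Range} and the separate (immediate) treatment of the surjective case, which falls outside the hypotheses of that theorem.
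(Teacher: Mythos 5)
Your proposal is correct and follows exactly the route the paper intends: the paper states this result as an immediate combination of Theorems \ref{thm:Wold Isometry-001} and \ref{Isometry Range}, matching each hypothesis to the corresponding equivalent condition for $V(\X)$ being right-complemented. Your explicit treatment of the surjective case (which falls outside the non-surjectivity hypothesis of Theorem \ref{Isometry Range}) is a small point of care that the paper leaves implicit, but it does not change the argument.
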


%\vspace{0.1cm}

\section{The \boldmath\texorpdfstring{$\sigma$-}  ~shift and extension of a Wold isometry} \label{sec:05}

\vspace{0.2cm}

\noindent
Recall from Section \ref{sec:02} that Wold decomposition of an isometry $V$ on a Hilbert space $\HS$ splits $\HS$ into two orthogonal parts $\HS=\HS_0 \oplus_2 \HS_1$ such that $\HS_0, \HS_1$ are reducing subspaces of $V$ and $V|_{\HS_0}$ is a unitary and $V|_{\HS_1}$ is a pure isometry, i.e. a unilateral shift. On the other hand, not all isometries on a Banach space possess a Wold decomposition. The Banach space isometries that admit Wold decomposition are called Wold isometries (see Section \ref{sec:02} for details). In \cite{Faulkner Huneycutt}, Faulkner and Huneycutt found a Wold-type decomposition for a Wold isometry which goes parallel with the Hilbert space Wold decomposition. In Hilbert space setting, the unilateral shift part $V|_{\HS_1}$ of an isometry $V$ can be extended to a bilateral shift say $B$ (which is a unitary) in a natural way. It follows that every Hilbert space isometry $V$ extends to a unitary $V|_{\HS_0} \oplus B$. However, a bilateral shift on a Banach space is not always a unitary, in fact it may not be even a bounded operator, e.g. see \cite{CFS, Gellar}. In this Section, we define $\sigma$-shift between Banach spaces $\X, \, \Y$ which is analogous to Hilbert space bilateral shift in the sense that it is a unitary and it coincides with bilateral shift when $\X$ is a Hilbert space. Then we show that every unilateral shift on a Banach space extends to a $\sigma$-shift. It does not follow from here that every Wold isometry extends to a Banach sapce unitary and the obvious reason is that the pure isometry part of a Wold isometry may not be a unilateral shift. So, in the later half of this Section we also prove that every Wold isometry on a Banach space extends to a Banach space unitary.

\smallskip

Let $V$ be a Wold isometry on $\mathbb{X}$ and $\mathbb{X}=V\left(\mathbb{X}\right)\bigoplus \mathcal{L}.$ Let $\mathbb{X}_1=\bigcap_{n=0}^\infty V^n\mathbb{X}$ and $\mathbb{X}_2=\bigoplus_{n=0}^\infty V^n\mathcal{L}.$ We denote by $\mathbb N_0$ the set of all non-negative integers, i.e. $\mathbb N \cup \{0\}$. The vector space of sequences
\begin{equation}\label{Sequence space}
      \mathcal{S}:=\left\{(l_n)_{n\in \mathbb{N}_0}:~l_n\in \mathcal{L} \, \, \, \text{ for all } \, \, n \in \mathbb N_0 \, \, \, \text{ and } \, \, \,  ~\sum_{n=0}^\infty V^nl_n\in\mathbb{X}_2\right\}
\end{equation}
that is equipped with the norm
\begin{equation}\label{Norm of sequence space}       
\left\|(l_n)_{n\in \mathbb{N}_0}\right\|_\mathcal{L}:=\left\|\sum_{n=0}^\infty V^nl_n\right\|,
\end{equation}
is a Banach space and it is isometrically isomorphic to $\X_2$. Therefore, for a unilateral shift $V$ on $\X$ we have the following identification. 

\begin{prop}\label{Unilateral sequence space}
Let $\mathbb{X}$ be a Banach space and $V$ be a unilateral shift on $\mathbb{X}$. Then $\mathbb{X}$ is isometrically isomorphic to $(\mathcal{S},\|\cdot\|_\mathcal{L})$.
\end{prop}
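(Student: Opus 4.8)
The plan is to write down the obvious identification map and check that it is a linear surjective isometry. Define $\Gamma : \mathcal{S} \to \mathbb{X}$ by $\Gamma\big((l_n)_{n\in\mathbb{N}_0}\big) = \sum_{n=0}^\infty V^n l_n$. By the very definition of $\mathcal{S}$ in $(\ref{Sequence space})$, a sequence $(l_n)$ belongs to $\mathcal{S}$ precisely when the series $\sum_{n=0}^\infty V^n l_n$ converges in $\mathbb{X}$ (its sum automatically lying in $\mathbb{X}_2=\mathbb{X}$ since $V$ is a unilateral shift), so $\Gamma$ is well defined, and it is linear because the vector-space operations on $\mathcal{S}$ are coordinatewise while addition and scalar multiplication in $\mathbb{X}$ are continuous. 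The isometry property $\|\Gamma((l_n))\| = \|(l_n)\|_{\mathcal{L}}$ is then immediate from the definition of the norm in $(\ref{Norm of sequence space})$.

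Next I would check that $\Gamma$ is a bijection. For surjectivity, recall that $V$ being a unilateral shift with generating subspace $\mathcal{L}$ means exactly that $\mathbb{X} = \bigoplus_{n=0}^\infty V^n \mathcal{L}$ in the sense of $(\ref{eqn:new-0222})$; hence every $x \in \mathbb{X}$ can be written as $x = \sum_{n=0}^\infty V^n l_n$ with $l_n \in \mathcal{L}$, the corresponding sequence $(l_n)$ lies in $\mathcal{S}$, and $\Gamma((l_n)) = x$. For injectivity, if $\Gamma((l_n)) = \mathbf{0}$ then $\sum_{n=0}^\infty V^n l_n = \mathbf{0}$, and the uniqueness clause in the definition of $\bigoplus_{n=0}^\infty V^n \mathcal{L}$ (equivalently Lemma \ref{span closure equals to direct sum}) forces $l_n = \mathbf{0}$ for every $n$.

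Finally, since $\Gamma$ is a surjective linear isometry onto the Banach space $\mathbb{X}$, it transports completeness back to $\mathcal{S}$, which simultaneously confirms that $(\mathcal{S}, \|\cdot\|_{\mathcal{L}})$ is a Banach space and that $\Gamma$ is the asserted isometric isomorphism. One may alternatively phrase the whole argument by observing that a unilateral shift is a Wold isometry with $\mathbb{X}_1 = \bigcap_{n\geq 0} V^n \mathbb{X} = \{\mathbf{0}\}$ and $\mathbb{X}_2 = \mathbb{X}$, so the proposition is just the special case $\mathbb{X}_2 = \mathbb{X}$ of the identification recorded immediately before it. There is no serious obstacle here: the only points deserving attention are that the norm $\|\cdot\|_{\mathcal{L}}$ is well defined — which rests on the uniqueness of the expansion $x = \sum_n V^n l_n$, part of the definition of a unilateral shift — and the convergence bookkeeping needed when verifying surjectivity.
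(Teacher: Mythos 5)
Your proposal is correct and is essentially the paper's argument run in the opposite direction: the paper defines $\Lambda:\X\to\mathcal S$, $\sum_n V^n l_n\mapsto (l_n)$, and declares it "evidently" a linear surjective isometry, while you define its inverse $\Gamma:\mathcal S\to\X$ and verify the same properties, with the existence and uniqueness of the expansion $x=\sum_n V^n l_n$ (built into the definition of a unilateral shift) doing the work in both cases. The extra bookkeeping you supply is harmless and accurate.
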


\begin{proof}
Evidently, the map $\Lambda: \X \to \mathcal{S}$ defined by 
\begin{equation}\label{Definition of lambda}
 \Lambda\left(\sum_{n=0}^\infty V^nl_n\right) =(l_n)_{n \in \mathbb{N}_0} 
\end{equation}
is a linear surjective isometry.
\end{proof}

We shall be consistent with the notations of Theorem \ref{Left Inverse of Wold Isometry} and Proposition \ref{Unilateral sequence space} in the following remark.

\begin{rem}
A unilateral shift $V$ on a Banach space $\X$ with the generating subspace $\mathcal{L}$ can be realized as a forward shift operator on $(\mathcal{S},\|\cdot\|_\mathcal{L})$. The isometry $\widehat{V}: (\mathcal{S},\|\cdot\|_\mathcal{L})\to (\mathcal{S},\|\cdot\|_\mathcal{L})$, defined by
\[
\widehat{V}(l_0,l_1,\dots) = \Lambda V\Lambda^{-1}(l_0,l_1, \dots) = (\mathbf{0}, l_0,l_1, \dots)
\]
is a forward shift operator. Thus, $\widehat{V}$ is a unilateral shift with generating subspace $\widehat{\mathcal{L}}$, where
\[
\widehat{\mathcal{L}} = \{(l,{\mathbf{0}},{\mathbf{0}}, \dots)\in \mathcal{S}:~l\in \mathcal{L}\}.
\]
This is because, for any $n\in \mathbb{N}$ and $l_1, l_2\in \mathcal{L}$ we have
\[
 \left\|\widehat{V}^n(l_1,\mathbf{0},\dots)+(l_2,{\mathbf{0}}, \dots)\right\|_\mathcal{L} = \| V^n l_1 + l_2\| \geq  \| V^n l_1\| = \| \widehat{V}^n (l_1, \mathbf{0}, \dotsc )\|_\mathcal{L}.
 \]
 Moreover, if the range of $V$ is right-complemented, then the left-inverse of $V$ as in Theorem \ref{Left Inverse of Wold Isometry} can be realized as a backward shift operator on $(\mathcal{S}, \|\cdot\|_\mathcal{L})$. The following commutative diagram is a gist of this entire discussion.
\begin{center}
\begin{equation}\label{Isometric copy of unilateral shift} 
\begin{gathered}
\xymatrix
    {
       \X \ar[rr]^V \ar[d]^\Lambda && \X \ar[d]_\Lambda \\
       \mathcal{S} \ar[rr]^{\widehat{V}} && \mathcal{S}
    }\\
\end{gathered}
\end{equation}
{\small{Isometric copy of unilateral shift}}
\end{center}
\end{rem} \qed

\bigskip

Below we present an example of a unilateral shift which will be used in sequel.

\begin{eg}
Recall that for a Banach space $\X$, the space $\ell_2(\X)$ consists of all $\X$-valued square summable sequences, i.e.
\[
\ell_2(\X) = \left\{(x_0,x_1,\dots):~\sum_{n=0}^\infty \|x_n\|^2 < \infty\right\}
\]
and is equipped with the norm
\[
 \|(x_0,x_1,\dots)\| = \left(\sum_{n=0}^\infty \|x_n\|^2\right)^\frac{1}{2}.
\]
The operator $M_z:\ell_2(\X)\to \ell_2(\X)$ defined by 
\begin{equation}\label{Forward shift}
 M_z(x_0,x_1, \dots) = (\mathbf{0},x_0,x_1,\dots),
\end{equation}
is a typical example of a unilateral shift with the generating subspace
$
 \{(x,\mathbf{0},\mathbf{0},\dots):~x\in \X\}.
$
We denote the left-inverse of $M_z$ by $\widehat{M}_{z}$ which is the backward shift on $\ell_2(\X)$, i.e.,
\begin{equation}\label{backward shift}
\widehat{M}_{z}(x_0,x_1, \dots) = (x_1,x_2, \dots).
\end{equation}
In particular when $\X$ is a Hilbert space, the backward shift $\widehat{M}_{z}$ coincides with $M_z^*$. \qed
\end{eg}

We now define $\sigma$-shift between Banach spaces and this is an analogue of bilateral shift on a Hilbert space.

\begin{defn}\label{Sigma Shift}
Let $\mathbb{Y}$ be a vector space and let $\|\cdot\|_\mathbb{Y}$, $\|\cdot\|_\sigma$ be two norms in $\mathbb{Y}$ such that both $\left(\mathbb{Y},\|\cdot\|_\sigma\right)$ and $\left(\mathbb{Y},\|\cdot\|_\mathbb{Y}\right)$ are Banach spaces. A surjective isometry $\widetilde{V}:\left(\mathbb{Y},\|\cdot\|_\sigma\right)\to\left(\mathbb{Y},\|\cdot\|_\mathbb{Y}\right)$ is said to be a \textit{$\sigma$-shift} if it satisfies the following properties.

\smallskip

\begin{itemize}

\item[(i)] There exists a closed linear subspace $\mathcal{I}\subseteq \left(\mathbb{Y},\|\cdot\|_\mathbb{Y}\right) \cap \left(\mathbb{Y},\|\cdot\|_\sigma\right)$ such that $\widetilde{V}^n\mathcal{I}\perp_B\mathcal{I}$ for $n=1, 2, \dots $ and $\mathcal{I}\perp_B \widetilde{V}^n\mathcal{I}$ for $n=-1, -2, \dots$ .

\item[(ii)] $\left(\mathbb{Y},\|\cdot\|_\mathbb{Y}\right)=\left(\mathbb{W},\|\cdot\|_\mathbb{W}\right),$
where $\mathbb{W}=\displaystyle\left\{w_1+w_2~:~w_1\in\bigoplus_{n= 0}^{\infty}\widetilde{V}^n\mathcal{I},~ w_2\in \bigoplus_{n = - \infty}^{-1}\widetilde{V}^n\mathcal{I}\right\}$
and $\|w_1+w_2\|_{\mathbb{W}}=\left(\|w_1\|_\mathbb{Y}^2+\|w_2\|_\mathbb{Y}^2\right)^{\frac{1}{2}}$.

\end{itemize}

The subspace $\mathcal{I}$ is called a \textit{generating subspace} and $dim(\mathcal{I})$ is called the \textit{multiplicity} of the $\sigma$-shift operator $\widetilde{V}.$
\end{defn}
Evidently, it trivially follows that a $\sigma$-shift is a bilateral shift between Banach spaces in the sense of the definition of ``Bilateral shift between Banach spaces" given in Section \ref{sec:02}. Below we give an example of a $\sigma$-shift for the convenience of the readers.

\begin{eg} \label{Example:new-051}
Let $\X$ be a Banach space and $\ell_2(\mathbb{Z}, \X)$ be the Banach space consists of all square summable bi-sequences with entries in $\X$, i.e.,
\[
  \ell_2(\mathbb{Z}, \X) =\{(x_n)_{n\in \mathbb{Z}}: x_n \in \X \text{ for all } n\in \mathbb{Z}, ~ \sum_{n=-\infty}^\infty \|x_n\|^2 < \infty\},
\]
and is equipped with the norm 
\[
  \|(x_n)_{n\in \mathbb{Z}}\| = \left( \sum_{n=-\infty}^\infty \|x_n\|^2 \right)^{\frac{1}{2}}.
\]
Consider the operator $U:\ell_2(\mathbb{Z}, \mathbb{X}) \to \ell_2(\mathbb{Z},\X)$ defined by
\[
 U(\dots, x_{-2}, x_{-1}, \boxed{x_0}, x_1, x_2, \dots)= (\dots, x_{-2}, \boxed{x_{-1}}, x_0, x_1, x_2, \dots), \quad (x_n)_{n\in\mathbb{Z}}\in \ell_2(\mathbb{Z},\mathbb{X}),
\]
where the box denotes the $0$-th position in the sequence. Then $U$ is a surjective isometry with inverse $U^{-1}: \ell_2(\mathbb{Z}, \X) \to \ell_2(\mathbb{Z}, \X)$ given by
\[
 U^{-1}(\dots, x_{-2}, x_{-1}, \boxed{x_0}, x_1, x_2, \dots)= (\dots, x_{-2}, x_{-1}, x_0, \boxed{x_1}, x_2, \dots), \quad (x_n)_{n\in\mathbb{Z}}\in \ell_2(\mathbb{Z},\mathbb{X}).
\]
Let $\mathcal{I} =\{(\dotsc, \mathbf{0}, \boxed{x}, \mathbf{0}, \dotsc): x\in \X\}$. Then $\mathcal{I}$ is a closed linear subspace of $\ell_2(\mathbb{Z},\X)$ and for arbitrary $\mathbf{x}_1$, $\mathbf{x}_2$ in $\mathcal{I}$, we have
\[
 \|U^n \mathbf{x}_1 + \mathbf{x}_2 \| \geq \|U^n \mathbf{x}_1\| \quad \text{and} \quad \| U^n\mathbf{x}_1 + \mathbf{x}_2 \| \geq \| \mathbf{x}_2 \|, \ \
\text{ for all } n\in \mathbb{Z}\setminus \{0\}.
\]
This shows that $U^n \mathcal{I} \perp_B \mathcal{I}$ for all $n\in \mathbb{N}$ and $\mathcal{I} \perp_B U^n \mathcal{I}$ for all negative integers. We also have that $\ell_2(\mathbb{Z}, \X) = \mathbb{W}_1 \oplus_2 \mathbb{W}_2$, where $\mathbb{W}_1= \displaystyle \bigoplus_{n=-\infty}^{-1} U^n \mathcal{I}$ and $\mathbb{W}_2= \displaystyle \bigoplus_{n=0}^\infty U^n \mathcal{I}$. Therefore, $U$ is a $\sigma$-shift operator with generating subspace $\mathcal{I}$. 
\end{eg} \qed

\begin{prop}\label{prop:011}
Let $\widetilde{V}: (\Y,\|\cdot\|_\sigma) \to (\Y,\|\cdot\|_\Y)$ be a $\sigma$-shift operator with generating subspace $\mathcal{I}$. If $(\Y,\|\cdot\|_\sigma)= (\Y,\|\cdot\|_\Y)$ and if it is a Hilbert space, then $\widetilde{V}$ is a bilateral shift with generating subspace $\mathcal{I}$.
\end{prop}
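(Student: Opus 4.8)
The plan is to verify directly the two defining requirements of a bilateral shift on a Hilbert space — that $\widetilde{V}$ is unitary with $\widetilde{V}^n\mathcal{I}\perp\mathcal{I}$ for every $n\in\mathbb{Z}\setminus\{0\}$, and that $\bigoplus_{n=-\infty}^{\infty}\widetilde{V}^n\mathcal{I}$ exhausts the whole space — by translating the Birkhoff--James data in Definition \ref{Sigma Shift} into inner-product language. Since $(\Y,\|\cdot\|_\sigma)=(\Y,\|\cdot\|_\Y)$ is a Hilbert space, $\widetilde{V}$ is a surjective linear isometry of a Hilbert space onto itself and hence is unitary (an isometry preserves inner products by polarization, and surjectivity makes it invertible); this already disposes of the unitarity requirement.

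First I would invoke the fact recalled in Section \ref{sec:02} that on a Hilbert space Birkhoff--James orthogonality coincides with inner-product orthogonality, which in particular is symmetric. Condition (i) of Definition \ref{Sigma Shift} then yields $\widetilde{V}^n\mathcal{I}\perp\mathcal{I}$ for $n\geq 1$ outright, while $\mathcal{I}\perp_B\widetilde{V}^n\mathcal{I}$ for $n\leq -1$ becomes $\widetilde{V}^n\mathcal{I}\perp\mathcal{I}$ after using symmetry. Thus $\widetilde{V}^n\mathcal{I}\perp\mathcal{I}$ for all $n\in\mathbb{Z}\setminus\{0\}$. Using that $\widetilde{V}$ is unitary, for $m\neq n$ and $a,b\in\mathcal{I}$ one gets $\langle\widetilde{V}^m a,\widetilde{V}^n b\rangle=\langle\widetilde{V}^{m-n}a,b\rangle=0$, so in fact the whole family $\{\widetilde{V}^n\mathcal{I}\}_{n\in\mathbb{Z}}$ is mutually orthogonal.

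Next I would identify the space $\mathbb{W}$ of Definition \ref{Sigma Shift}(ii) with the orthogonal direct sum $\bigoplus_{n\in\mathbb{Z}}\widetilde{V}^n\mathcal{I}$. Because the subspaces $\widetilde{V}^n\mathcal{I}$ with $n\geq 0$ are pairwise orthogonal, and likewise those with $n\leq -1$, the Banach-space direct sums $\bigoplus_{n=0}^{\infty}\widetilde{V}^n\mathcal{I}$ and $\bigoplus_{n=-\infty}^{-1}\widetilde{V}^n\mathcal{I}$ are the usual closed orthogonal direct sums; moreover the two are orthogonal to one another, so by the Pythagorean identity $\|w_1+w_2\|_\Y^2=\|w_1\|_\Y^2+\|w_2\|_\Y^2$, and the norm $\|\cdot\|_{\mathbb{W}}$ of Definition \ref{Sigma Shift} agrees with $\|\cdot\|_\Y$ on $\mathbb{W}$. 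Hence $\mathbb{W}$, as a normed space, is exactly $\bigoplus_{n=-\infty}^{\infty}\widetilde{V}^n\mathcal{I}$. The hypothesis $(\Y,\|\cdot\|_\Y)=(\mathbb{W},\|\cdot\|_{\mathbb{W}})$ then reads $\Y=\bigoplus_{n=-\infty}^{\infty}\widetilde{V}^n\mathcal{I}$, which together with the unitarity of $\widetilde{V}$ and the mutual orthogonality just established is precisely the statement that $\widetilde{V}$ is a bilateral shift with generating subspace $\mathcal{I}$.

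The only delicate point is the bookkeeping in the third step: one must check that the abstract unique-convergent-series direct sums in Definition \ref{Sigma Shift} genuinely coincide with the Hilbert-space orthogonal direct sums — so that closedness, density of finite sums, and uniqueness of expansions are all automatic — and that the orthogonality being used is the full-family version, which is exactly where unitarity of $\widetilde{V}$, and not merely its being an isometry, is needed.
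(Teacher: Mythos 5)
Your proposal is correct and follows essentially the same route as the paper's proof: invoke the coincidence of Birkhoff--James orthogonality with inner-product orthogonality (and its symmetry) on a Hilbert space to turn condition (i) of Definition \ref{Sigma Shift} into $\widetilde{V}^n\mathcal{I}\perp\mathcal{I}$ for all $n\in\mathbb{Z}\setminus\{0\}$, then read off the decomposition $\Y=\bigoplus_{n=-\infty}^{\infty}\widetilde{V}^n\mathcal{I}$ from condition (ii). You supply somewhat more detail than the paper does (the mutual orthogonality of the full family via unitarity and the Pythagorean identification of $\|\cdot\|_{\mathbb{W}}$ with $\|\cdot\|_\Y$), but this is elaboration of the same argument, not a different one.
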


\begin{proof}
Birkhoff-James orthogonality is equivalent to the inner product orthogonality on a Hilbert space. So, Birkhoff-James orthogonality is symmetric and both left-right additive on a Hilbert space. Thus, it follows from property-(i) in Definition \ref{Sigma Shift} that $\widetilde{V}^n \mathcal{I} \perp \mathcal{I}$ for all $n\in \mathbb{Z}\setminus \{0\}$. Consequently, $\mathcal{I}$ is a wandering subspace of $\widetilde{V}$. Property-(ii) in Definition \ref{Sigma Shift} shows that 
\[
(\Y,\|\cdot\|_\Y) = \left(\bigoplus_{n=-\infty}^{-1} \widetilde{V}^n \mathcal{I} \right) \oplus_2 \left(\bigoplus_{n=0}^\infty \widetilde{V}^n \mathcal{I}\right) = \bigoplus_{n=-\infty}^\infty \widetilde{V}^n \mathcal{I}.
\]
Hence $\widetilde{V}$ is a bilateral shift with generating subspace $\mathcal{I}$.
\end{proof}

\begin{thm}\label{Sigma shift extension}
Every unilateral shift on a Banach space can be extended to a \textit{$\sigma$-shift} of same multiplicity. 
\end{thm}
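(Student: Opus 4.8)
The plan is to realize the extension on a ``bilateral completion'' of the one-sided structure carried by the shift. Recall that a unilateral shift $V$ on $\X$ comes with a closed wandering (generating) subspace $\mathcal{L}$ satisfying $\X=\bigoplus_{n=0}^{\infty}V^{n}\mathcal{L}$, and (as in Lemma~\ref{span closure equals to direct sum}) $V(\X)=\bigoplus_{n=1}^{\infty}V^{n}\mathcal{L}$, so that $\X=\mathcal{L}\oplus V(\X)$ is a vector space direct sum of closed subspaces. I would first fix the bounded projection $P:\X\to\X$ of $\X$ onto $\mathcal{L}$ along $V(\X)$ (bounded because the two summands are closed and complementary), recording $P^{2}=P$, $PV=0$, $P|_{\mathcal{L}}=\mathrm{id}$, and that $V^{-1}$ is a well-defined isometry from the closed subspace $V(\X)=(I-P)\X$ onto $\X$. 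Then I would set $\Y=\X\oplus_2\X$, with $\|(a,b)\|_{\Y}=\left(\|a\|^{2}+\|b\|^{2}\right)^{1/2}$, let $\mathcal{I}=\{(l,\mathbf 0):l\in\mathcal{L}\}$, and define $\widetilde{V}:\Y\to\Y$ by
\[
\widetilde{V}(a,b)=\bigl(Pb+Va,\ V^{-1}(I-P)b\bigr).
\]
Morally, $\Y$ is the space of bi-sequences $(l_{n})_{n\in\mathbb{Z}}$ over $\mathcal{L}$ whose non-negative and negative tails both sum, identified with $\X\oplus_2\X$ via $(l_{n})\mapsto\bigl(\sum_{n\ge0}V^{n}l_{n},\ \sum_{n\ge1}V^{n-1}l_{-n}\bigr)$, and $\widetilde V$ is the right shift of bi-sequences. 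Crucially, I would then declare the second norm by pullback: $\|y\|_{\sigma}:=\|\widetilde{V}y\|_{\Y}$.

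Next I would verify that $\widetilde{V}$ is a linear bijection of $\Y$ with inverse $(a,b)\mapsto\bigl(V^{-1}(I-P)a,\ Pa+Vb\bigr)$; this is a short computation using $P^{2}=P$, $PV=0$, $(I-P)V=V$, and $VV^{-1}=\mathrm{id}$ on $V(\X)$. Since $P$ is bounded and $V$, $V^{-1}|_{V(\X)}$ are isometric, both $\widetilde{V}$ and $\widetilde{V}^{-1}$ are bounded on $(\Y,\|\cdot\|_{\Y})$; hence $\|\cdot\|_{\sigma}$ is a norm equivalent to $\|\cdot\|_{\Y}$, so $(\Y,\|\cdot\|_{\sigma})$ is a Banach space, and by its very definition $\widetilde{V}:(\Y,\|\cdot\|_{\sigma})\to(\Y,\|\cdot\|_{\Y})$ is a surjective isometry.

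It then remains to check the two axioms of Definition~\ref{Sigma Shift}, all Birkhoff--James relations being read in $\|\cdot\|_{\Y}$ (the norm for which $(\Y,\|\cdot\|_{\Y})=(\mathbb{W},\|\cdot\|_{\mathbb{W}})$). Iterating the formula gives $\widetilde{V}^{\,n}\mathcal{I}=\{(V^{n}l,\mathbf 0):l\in\mathcal{L}\}$ for $n\ge0$ and $\widetilde{V}^{\,-n}\mathcal{I}=\{(\mathbf 0,V^{\,n-1}l):l\in\mathcal{L}\}$ for $n\ge1$ (using $(I-P)l=\mathbf 0$ for $l\in\mathcal{L}$). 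For $n\ge1$, $\|(V^{n}l,\mathbf 0)+\lambda(m,\mathbf 0)\|_{\Y}=\|V^{n}l+\lambda m\|\ge\|V^{n}l\|$ by the wandering property of $\mathcal{L}$ (cf.\ Lemma~\ref{Wandering Space}), so $\widetilde{V}^{\,n}\mathcal{I}\perp_{B}\mathcal{I}$; and for $n\ge1$, $\|(m,\mathbf 0)+\lambda(\mathbf 0,V^{n-1}l)\|_{\Y}=\left(\|m\|^{2}+|\lambda|^{2}\|l\|^{2}\right)^{1/2}\ge\|m\|$, so $\mathcal{I}\perp_{B}\widetilde{V}^{\,-n}\mathcal{I}$; this is axiom~(i). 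For axiom~(ii), using $\X=\bigoplus_{n\ge0}V^{n}\mathcal{L}$ one gets $\bigoplus_{n\ge0}\widetilde{V}^{\,n}\mathcal{I}=\X\oplus_2\{\mathbf 0\}$ and $\bigoplus_{n\le-1}\widetilde{V}^{\,n}\mathcal{I}=\{\mathbf 0\}\oplus_2\X$, hence $\mathbb{W}=\Y$, and for $w_{1}\in\X\oplus_2\{\mathbf 0\}$, $w_{2}\in\{\mathbf 0\}\oplus_2\X$ one has $\|w_{1}+w_{2}\|_{\mathbb{W}}=\left(\|w_{1}\|_{\Y}^{2}+\|w_{2}\|_{\Y}^{2}\right)^{1/2}=\|w_{1}+w_{2}\|_{\Y}$. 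Thus $\widetilde{V}$ is a $\sigma$-shift with generating subspace $\mathcal{I}$. Finally, the isometry $\iota:a\mapsto(a,\mathbf 0)$ embeds $\X$ onto the $\widetilde V$-invariant subspace $\X\oplus_2\{\mathbf 0\}$, on which $\|\cdot\|_{\sigma}$ and $\|\cdot\|_{\Y}$ agree (as $\|(a,\mathbf 0)\|_{\sigma}=\|(Va,\mathbf 0)\|_{\Y}=\|a\|$) and $\widetilde V\iota=\iota V$; so $\widetilde V$ extends $V$ and carries the generating subspace $\iota(\mathcal{L})=\mathcal{I}$, and since $\dim\mathcal{I}=\dim\mathcal{L}$ it has the same multiplicity.

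I do not expect a genuinely hard step here: the one idea that makes it work is to define the domain norm $\|\cdot\|_{\sigma}$ as the $\widetilde V$-pullback of $\|\cdot\|_{\Y}$ rather than seeking it intrinsically --- this forces $\widetilde V$ to be isometric, and boundedness of $\widetilde V^{\pm1}$ then guarantees that $\|\cdot\|_{\sigma}$ is an equivalent, hence complete, Banach space norm. The only point needing mild care is that the operator $V^{-1}$ is defined only on the closed range $V(\X)=(I-P)\X$, which is precisely where it is applied in the definition of $\widetilde V$; everything else is routine verification.
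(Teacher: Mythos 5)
Your proof is correct and follows essentially the same route as the paper: both realize the extension on the doubled space ($\X\oplus_2\X$ in your parametrization, which is isometrically the paper's bi-sequence space $\Y\cong V\X\oplus_2\X$), take $\mathcal I$ to be the copy of $\mathcal L$ sitting in the non-negative half, and obtain the domain norm as the pullback of $\|\cdot\|_{\Y}$ along the bilateral shift $\widetilde V$. Your packaging of $\widetilde V$ through the bounded projection $P$ onto $\mathcal L$ along $V(\X)$, with completeness of $\|\cdot\|_{\sigma}$ deduced from equivalence of norms, merely replaces the paper's explicit series estimates by a cleaner verification of the same construction.
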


\begin{proof} 
Let $V$ be a unilateral shift on a Banach space $\left(\X, \|\cdot\|_\X\right)$ with the generating subspace $\mathcal{L}.$ It follows from Proposition \ref{Unilateral sequence space} that $\Lambda:(\X, \|\cdot\|_\X)\to(\mathcal{S},\|\cdot\|_\mathcal{L})$ is a surjective isometry (see (\ref{Sequence space}) and (\ref{Norm of sequence space}) for the definition of $(\mathcal{S},\|\cdot\|_\mathcal{L})$). Consider the vector space $\mathbb{Y}$ of all bi-sequences
\[
\mathbb{Y}:=\left\{(l_n)_{n\in \mathbb{Z}}:~l_n\in \mathcal{L} \, \, \, \text{ for all } \, \, \, n \in \mathbb Z \, \, \, \text{ and }\, \, \, ~\sum_{n=0}^\infty V^nl_n\in\mathbb{X},~\sum_{n=0}^\infty V^{|n|}l_n\in \mathbb{X}\right\},
\]
and the function $\|\cdot\|_\mathbb{Y}:\mathbb{Y}\to [0,\infty)$ defined by
\[
\left\|(l_n)_{n\in \mathbb{Z}}\right\|_\mathbb{Y} = \left(\left\|\sum_{n= -\infty}^{-1}V^{|n|}l_n\right\|_\mathbb{X}^2 + \left\|\sum_{n=0}^\infty V^nl_n\right\|_\mathbb{X}^2\right)^\frac{1}{2}.
\]
It is easy to see that $\|\cdot\|_\mathbb{Y}$ is a norm on $\mathbb{Y}$ and $(\Y, \|\cdot\|_\Y)$ is isometrically isomorphic to $(V\X\bigoplus_2 \X)$. Therefore, $\left(\mathbb{Y},\|\cdot\|_{\mathbb{Y}}\right)$ is a Banach space.

\medskip

Let $\sigma:\mathbb{Z}\to \mathbb{Z}$ be such that $\sigma(n)=n-1$. Define another function $\|\cdot\|_\sigma:\mathbb{Y}\to [0,\infty)$ by
\[
\left\|(l_n)_{n\in \mathbb{Z}}\right\|_\sigma = \left\|(l_{\sigma(n)})_{n\in \mathbb{Z}}\right\|_\mathbb{Y}.
\]
It is easy to show that the function $\|\cdot\|_\sigma$ satisfies the positivity and homogeneity. Also, given any $\mathbf{l}=(l_n)_{n\in \mathbb{Z}}$ and $\mathbf{r}=(r_n)_{n\in \mathbb{Z}}$ in $\mathbb{Y}$, we have

\begin{align*}
\|(\mathbf{l}+\mathbf{r})\|_\sigma & = \left(\left\|\sum_{n=-\infty}^{-1} V^{|n|}(l_{n-1}+r_{n-1})\right\|_\mathbb{X}^2 + \left\|\sum_{n=0}^\infty V^{n}(l_{n-1}+r_{n-1})\right\|_\mathbb{X}^2\right)^\frac{1}{2}\\
& = \left(\left\|\sum_{n=-\infty}^{-1} V^{|n|}l_{n-1}+\sum_{n=-\infty}^{-1} V^{|n|}r_{n-1}\right\|_\mathbb{X}^2 + \left\|\sum_{n=0}^\infty V^{n}l_{n-1}+\sum_{n=0}^\infty V^{n}r_{n-1}\right\|_\mathbb{X}^2\right)^\frac{1}{2}\\
& \leq \left[\left(\left\|\sum_{n=-\infty}^{-1} V^{|n|}l_{n-1}\right\|_\mathbb{X}+\left\|\sum_{n=-\infty}^{-1} V^{|n|}r_{n-1}\right\|_\mathbb{X}\right)^2 + \left(\left\|\sum_{n=0}^\infty V^{n}l_{n-1}\right\|_\mathbb{X}+\left\|\sum_{n=0}^\infty V^{n}r_{n-1}\right\|_\mathbb{X}\right)^2 \right]^\frac{1}{2}\\
& \leq \left(\left\|\sum_{n=-\infty}^{-1} V^{|n|}l_{n-1}\right\|_\mathbb{X}^2 + \left\|\sum_{n= 0}^\infty V^{n}l_{n-1}\right\|_\mathbb{X}^2\right)^\frac{1}{2} + \left(\left\|\sum_{n= - \infty}^{-1} V^{|n|}r_{n-1}\right\|_\mathbb{X}^2 + \left\|\sum_{n=0}^\infty V^{n}r_{n-1}\right\|_\mathbb{X}^2\right)^\frac{1}{2}\\
& = \|\mathbf{l}\|_\sigma + \|\mathbf{r}\|_\sigma. 
\end{align*}
Therefore, $\|\cdot\|_\sigma$ is a norm on $\mathbb{Y}$. It is straightforward to see that the map $\widetilde{V}:(\mathbb{Y},\|\cdot\|_\sigma)\to (\mathbb{Y},\|\cdot\|_\mathbb{Y})$, defined by
\[
\widetilde{V}\left((l_n)_{n\in\mathbb{Z}}\right) = (l_{n-1})_{n\in\mathbb{Z}}.
\]
is a surjective isometry. Thus, $(\mathbb{Y},\|\cdot\|_\sigma)$ is a Banach space. Now consider the vector subspace 
\[
\mathcal{I}=\{(\cdots,\mathbf{0},\mathbf{0},\boxed{l},\mathbf{0},\mathbf{0},\cdots):l\in\mathcal{L}\}
\]
of $\mathbb{Y}.$ To prove $\widetilde{V}:(\mathbb{Y},\|\cdot\|_\sigma)\to (\mathbb{Y},\|\cdot\|_\mathbb{Y})$ is a $\sigma$-shift, we show that $\mathcal{I}$ is a generating subspace for $\widetilde{V}$ in the following three steps.

\medskip

Step I: We show that $\mathcal{I}$ is a closed subspace of both $\left(\mathbb{Y},\|\cdot\|_\mathbb{Y}\right)$ and $\left(\mathbb{Y},\|\cdot\|_\sigma\right).$ First observe that $(\mathcal{S},\|\cdot\|_\mathcal{L})$ can be linearly embedded in both of $\left(\mathbb{Y},\|\cdot\|_\mathbb{Y}\right)$ and $\left(\mathbb{Y},\|\cdot\|_\sigma\right)$ by the isometry
\[
 \widetilde{\Lambda}\left(\left(l_n\right)_{n \in \mathbb{N}_0}\right)=(l_n')_{n\in \mathbb{Z}},\quad \text{where} \quad 
 l_n'=\begin{cases}
     l_n, & \mathrm{if}~n\geq 0,\\
     \mathbf{0}, & \mathrm{if}~n\leq -1.
     \end{cases}
\]
Also, note that $(\mathbb{X},\|\cdot\|_\mathbb{X})$ is isometrically embedded in both $(\mathbb{Y},\|\cdot\|_\sigma)$ and $(\mathbb{Y},\|\cdot\|_\mathbb{Y})$ via the isometry $\widetilde{\Lambda}\circ\Lambda$. Consequently, $\mathcal{I}$ is a closed subspace of both $(\mathbb{Y},\|\cdot\|_\sigma)$ and $(\mathbb{Y},\|\cdot\|_\mathbb{Y}),$ since  $\mathcal{L}$ is a closed subspace of $\left(\mathbb{X},\|\cdot\|_{\mathbb{X}}\right).$

\medskip

Step II: Let $\mathbf{l}_i=(\cdots,\mathbf{0},\mathbf{0},\boxed{l_i},\mathbf{0},\mathbf{0},\cdots)$ be any two members of $\mathcal{I}$ for some $l_i\in\mathcal{L},~i=1,2.$ Then for any scalar $\lambda$ we have
\begin{align*}
    \|\widetilde{V}^n\mathbf{l}_1+\lambda \mathbf{l}_2\|_\mathbb{Y} & = \| V^nl_1+\lambda l_2\|_\mathbb{X} \geq \|V^nl_1\|_\mathbb{X} = \|\widetilde{V}^n\mathbf{l}_1\|_\mathbb{Y},\qquad & \text{if}~ n\geq 1,\\
    \|\mathbf{l}_2+\lambda\widetilde{V}^n\mathbf{l}_1\|_\sigma & = \| Vl_2+\lambda l_1\|_\mathbb{X} \geq \|Vl_2\|_\mathbb{X} = \|\mathbf{l}_2\|_\sigma, \qquad & \text{for}~ n=-1, \\
    \|\mathbf{l}_2+\lambda\widetilde{V}^n\mathbf{l}_1\|_\sigma & = \left(\|\lambda V^{-n+1}l_1\|_\mathbb{X}^2+\| Vl_2\|_\mathbb{X}^2\right)^{\frac{1}{2}} \geq \|Vl_2\|_\mathbb{X} = \|\mathbf{l}_2\|_\sigma, \qquad & \text{if}~ n\leq -2,
\end{align*}
which is the property-(ii) in Definition \ref{Sigma Shift}.

\bigskip

Step III: We show that
$\left(\mathbb{Y},\|\cdot\|_\mathbb{Y}\right)=\left(\mathbb{W},\|\cdot\|_\mathbb{W}\right),$
where 
\[
\mathbb{W}=\displaystyle\left\{w_1+w_2~:~w_1\in\bigoplus_{n= 0}^\infty \widetilde{V}^n\mathcal{I},~ w_2\in \bigoplus_{n=-\infty}^{-1} \widetilde{V}^n\mathcal{I}\right\} \quad \mathrm{and} \quad \|w_1+w_2\|_{\mathbb{W}}=\left(\|w_1\|_\mathbb{Y}^2+\|w_2\|_\mathbb{Y}^2\right)^{\frac{1}{2}}.
\]
Let $(w_1+w_2)\in (\mathbb{W},\|\cdot\|_\mathbb{W})$ be arbitrary. Then we have the following:
\begin{align*}
& \text{(a)}~ w_1 =  \lim_n\sum_{k=0}^{n}\widetilde{V}^k \mathbf{l}_k = (\dots, \mathbf{0},\boxed{l_0},l_1,\dots,l_n,l_{n+1},\dots ),\\
& \text{(b)}~ w_2 =  \lim_n\sum_{k=-n}^{-1}\widetilde{V}^k \mathbf{l}_k = (\dots,l_{-n},l_{-n+1},\dots,l_{-1},\boxed{\mathbf{0}},\dots),
\end{align*}
where 
$
\mathbf{l}_{k} = (\dots, \mathbf{0}, \mathbf{0}, \boxed{l_{k}}, \mathbf{0}, \mathbf{0} \dots)$ with $k\in \mathbb{Z}$ and the limits in (a) and (b) are taken with respect to $\|\cdot\|_\Y$ and $ \|\cdot\|_\sigma$ on $\mathbb{Y}$ respectively. For $1\leq m < n,$ we have
\begin{align*}
& \left\|\sum_{k=-n}^{-(m+1)}V^{|k|} l_k\right\|_\mathbb{X}^2 + \left\|\sum_{k=m+1}^n V^kl_k\right\|_\mathbb{X}^2 \\
= & \left\|\sum_{k=-n}^{-(m+1)}V^{|k|-1} l_k\right\|_\mathbb{X}^2 + \left\|\sum_{k=m+1}^n V^kl_k\right\|_\mathbb{X}^2 \\
= & \| (\dots,\mathbf{0},\underbrace{~l_{-n}~}_{(-n+1)-th},{l_{-n+1}},\dots,\underbrace{l_{-m-1}}_{(-m)-th},\mathbf{0},\dots)\|_{_{\mathbb{Y}}}^2 + (\dots,\mathbf{0},l_{m+1},l_{m+2},\dots,l_n,\mathbf{0},\dots)\|_{_{\mathbb{Y}}}^2 \\
= & \|(\dots,\mathbf{0},\underbrace{l_{-n}}_{(-n)-th},{l_{-n+1}},\dots, \underbrace{l_{-m-1}}_{-(m+1)-th},\mathbf{0},\dots)\|_{_{\sigma}} ^2 + \|(\dots,\mathbf{0},l_{m+1},l_{m+2},\dots,l_n,\mathbf{0},\dots)\|_{_{\mathbb{Y}}}^2 \\
= & \left\|\sum_{k=-n}^{-(m+1)}\widetilde{V}^k \mathbf{l}_k\right\|_\sigma ^2 + \left\|\sum_{k=m+1}^{n}\widetilde{V}^k \mathbf{l}_k \right\|_{\mathbb{Y}}^2 \quad \longrightarrow 0 \quad \text{as} \, \,  ~n,m \to\infty.
\end{align*}

Thus, both the series $\sum_{n=-1}^\infty V^{|n|}l_n$ and $\sum_{n=0}^\infty V^n l_n$ converge in $\mathbb{X}.$ Therefore, the bi-sequence $(w_1+w_2)=(\dots, l_{-n},l_{-n+1},\dots,\boxed{l_0},l_1,\dots,l_n,\dots)$ is a member of the vector space $\Y$. Moreover,
\[
  \|(w_1+w_2)\|_\mathbb{W} = \|(\dots, l_{-1},\boxed{l_0},l_1,\dots)\|_\mathbb{Y},
\]
which shows that $\left(\mathbb{W},\|\cdot\|_\mathbb{W}\right)$ is contained in $\left(\mathbb{Y},\|\cdot\|_\mathbb{Y}\right)$ as a  normed subspace.

\smallskip

To prove the other direction, we first observe that
\begin{equation}\label{Relation between sigma and norm}
\|\mathbf{p}\|_{_\mathbb{Y}} \leq \sqrt{2}\|\mathbf{p}\|_\sigma, \quad \text{ for }\quad \mathbf{p}\in \bigoplus_{n=-\infty}^{-1}\widetilde{V}^{n}\mathcal{I}.   
\end{equation}
Let $\mathbf{p}=\sum_{n=-\infty}^{-1}\widetilde{V}^n\mathbf{l}_n$ with $\mathbf{l}_n=(\cdots, \mathbf{0}, \boxed{l_n},\mathbf{0}, \cdots),$ where $l_{-n} \in \mathcal{L}$ for all $n \in \mathbb N$. Then
\begin{align*}
  \|\mathbf{p}\|_{_{\mathbb{Y}}} = \|(\cdots, l_{-2}, l_{-1},\boxed{\mathbf{0}}, \cdots)\|_{_{\mathbb{Y}}} & = \left\|\sum_{n=1}^\infty V^n l_{-n} \right\|_\mathbb{X} \\
    & \leq \|Vl_{-1}\|_{\X} + \left\|\sum_{n=2}^\infty V^n l_{-n} \right\|_\mathbb{X} \\
    &= \|l_{-1}\|_{\X} + \left\|\sum_{n=1}^\infty V^{n}l_{-n-1} \right\|_\mathbb{X} \qquad \left[~\mbox{since $V$ is isometry}~\right] \\
    &= \sqrt{2} \left(\|l_{-1}\|_{\X}^2 + \left\|\sum_{n=1}^\infty V^{n}l_{-n-1} \right\|_\mathbb{X}^2 \right)^{\frac{1}{2}} \\
    & =\sqrt{2} \|\mathbf{p}\|_\sigma .
\end{align*}

Now, consider any $\mathbf{l}=(l_n)_{n\in\mathbb{Z}}\in\left(\mathbb{Y},\|\cdot\|_{\mathbb{Y}}\right)$. Then both the series $\sum_{n=-1}^\infty V^{|n|}l_n$ and $\sum_{n=0}^\infty V^nl_n$ are convergent in $\mathbb{X}.$ We show that $\mathbf{l}\in \mathbb{W},$ by proving $\mathbf{l}= \mathbf{p}+\mathbf{q}$, where
\[
\mathbf{p}=(\dots,l_{-2},l_{-1},\boxed{\mathbf{0}},\mathbf{0},\dots)\in \bigoplus_{n=-\infty}^{-1} \widetilde{V}^n \mathcal{I}, \quad \text{and}\quad \mathbf{q}=(\dots,\mathbf{0},\boxed{l_0},l_1,\dots)\in \bigoplus_{n=0}^\infty \widetilde{V}^n \mathcal{I}.
\]

\medskip

Denote $\mathbf{l}_i=(\cdots, \mathbf{0}, \boxed{l_i}, \mathbf{0}, \cdots ),~ i\geq 1$ and consider the sequences $(z_m)$ and $(w_m)$ defined by
\begin{align*}
& z_m = (\dots,\mathbf{0},l_{-m},l_{-m+1},\dots,l_{-1},\boxed{\mathbf{0}},\mathbf{0},\dots) = \sum_{i=-m}^{-1}\widetilde{V}^i\mathbf{l}_i ,\quad m\geq 1, \\
& w_m =(\dots,\mathbf{0},\boxed{l_{0}},l_{1},\dots,l_m,\mathbf{0},\mathbf{0},\dots)=\sum_{i=0}^{m}\widetilde{V}^i \mathbf{l}_i,\quad m\geq 1.
\end{align*}
Evidently,
\[
z_m\in\left(\bigoplus_{n=- \infty}^{-1} \widetilde{V}^n\mathcal{I},\|\cdot\|_{\sigma}\right), \quad \text{and} \quad w_m\in \left(\bigoplus_{n= 0}^\infty \widetilde{V}^n\mathcal{I},\|\cdot\|_{\mathbb{Y}}\right),\quad m\geq 1.
\]

\medskip

Now,
\begin{align*}
\|z_m - \mathbf{p} \|_\sigma = \|(\dots,l_{-m-2},\underbrace{l_{-(m+1)}}_{-(m+1)-th},\mathbf{0},\dots) \|_{_{\sigma}} = \left\| \sum_{i=-\infty}^{-m} V^{|i|} l_{i-1}\right\|_\mathbb{X} = & \left\| \sum_{i=-\infty}^{-m-1} V^{|i|} l_i\right\|_\mathbb{X} \\
& \longrightarrow 0 \, \, \, \text{as}~ m\to\infty,\\
 \left\|w_m-\mathbf{q}\right\|_{\mathbb{Y}} = \|(\dots,\mathbf{0},\underbrace{l_{m+1}}_{(m+1)-th},l_{m+2},\dots)\|_{_{\mathbb{Y}}}=\left\|\sum_{i=m+1}^\infty V^i l_i\right\|_\mathbb{X}\longrightarrow 0 & \, \, \, \text{ as } \, \, ~ m\to\infty.
\end{align*}
Therefore, using (\ref{Relation between sigma and norm}), we have
\[
\lim_{m \rightarrow \infty}z_m=\mathbf{p},\quad \lim_{m \rightarrow \infty}w_m=\mathbf{q},\quad \|\mathbf{p}\|_\mathbb{Y}\leq \sqrt{2}\|\mathbf{p}\|_\sigma<\infty,\quad \|\mathbf{q}\|_\mathbb{Y}<\infty .
\]
Consequently, $\mathbf{p}\in\left(\bigoplus_{n=-\infty}^{-1}\widetilde{V}^n\mathcal{I},\|\cdot\|_\mathbb{Y}\right)$ and $\mathbf{q}\in\left(\bigoplus_{n= 0}^\infty \widetilde{V}^n\mathcal{I},\|\cdot\|_\mathbb{Y}\right),$ as desired.

\smallskip

 Moreover,
$
  \|(\mathbf{p}+\mathbf{q})\|_\mathbb{W} =\|(l_n)_{n\in\mathbb{Z}}\|_\mathbb{Y},
$
which shows that $\left(\mathbb{Y},\|\cdot\|_\mathbb{Y}\right)$ is contained in $\left(\mathbb{W},\|\cdot\|_\mathbb{W}\right)$ as a  normed subspace.
Altogether, we conclude that $\widetilde{V}$ is a $\sigma$-shift with the generating subspace $\mathcal{I}$ and $\dim (\mathcal{I}) = \dim(\mathcal{L}).$

\smallskip

Finally, we show that $\widetilde{V}$ is an extension of $V.$ Recall that $(\X,\|\cdot\|_\X)$ is isometrically isomorphic to $(\mathcal{S},\|\cdot\|_\mathcal{L})$ via the map $\widetilde{\Lambda}\circ\Lambda$ and the unilateral shift $\widehat{V}$ on $(\mathcal{S},\|\cdot\|_\mathcal{L})$ is an isometric copy of $V$ (see the commutative diagram (\ref{Isometric copy of unilateral shift}) ). Therefore, the following diagram commutes: 

\begin{center}
\begin{equation*}
\xymatrix
{
\left(\mathbb{X},\|\cdot\|_{\mathbb{X}}\right) \ar[rrrrrr]^ V \ar[dd]_\Lambda &&&&&& \left(\mathbb{X},\|\cdot\|_{\mathbb{X}}\right) \ar[dd]^\Lambda \\\\
 {(\mathcal{S}, \|\cdot\|_\mathcal{L})}\ar[rrrrrr]^{\widehat{V}(l_0,l_1, \cdots)\mapsto(\cdots,\mathbf{0}, \boxed{\mathbf{0}},l_0,l_1,,\cdots)} \ar[dd]_{\widetilde{\Lambda}} &&&&&& {(\mathcal{S},\|\cdot\|_\mathcal{L})} \ar[dd]^{\widetilde{\Lambda}} \\\\
{\left(\mathbb{Y},\|\cdot\|_\sigma\right)}\ar[rrrrrr]^{\widetilde{V}\left((l_n)_{n \in \mathbb{Z}}\right)\mapsto (l_{n-1})_{n \in \mathbb{Z}}}  &&&&&& {\left(\mathbb{Y},\|\cdot\|_\mathbb{Y}\right)}
}
\end{equation*}
\end{center}
and consequently we have that $\widetilde{V}$ is an extension of $V$.
\end{proof}
Now the following well-known result of Hilbert space theory follows as a corollary of the previous theorem.

\begin{cor}
Let $V$ be a unilateral shift on a Banach space $\mathbb{\X}$. If $\X$ is a Hilbert space, then $V$ extends to a bilateral shift of same multiplicity .
\end{cor}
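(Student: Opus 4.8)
The plan is to derive the corollary immediately from Theorem~\ref{Sigma shift extension} together with Proposition~\ref{prop:011}. First I would apply Theorem~\ref{Sigma shift extension} to the given unilateral shift $V$ on $\X$ to obtain a $\sigma$-shift $\widetilde{V}:(\mathbb{Y},\|\cdot\|_\sigma)\to(\mathbb{Y},\|\cdot\|_\mathbb{Y})$ extending $V$ and having the same multiplicity, where $\mathbb{Y}$, the two norms $\|\cdot\|_\mathbb{Y}$, $\|\cdot\|_\sigma$, and the generating subspace $\mathcal{I}$ are precisely those produced in the proof of that theorem. By Proposition~\ref{prop:011}, to conclude that $\widetilde{V}$ is in fact a bilateral shift (with the same generating subspace) it then remains only to verify, under the hypothesis that $\X$ is a Hilbert space, that the two norms $\|\cdot\|_\sigma$ and $\|\cdot\|_\mathbb{Y}$ on $\mathbb{Y}$ coincide and that $(\mathbb{Y},\|\cdot\|_\mathbb{Y})$ is a Hilbert space.

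Both checks reduce to Parseval's identity. Since $\X$ is a Hilbert space, Birkhoff--James orthogonality is inner-product orthogonality, so the generating subspace of $V$ is $\mathcal{L}=\X\ominus V\X$, a closed subspace of $\X$ and hence a Hilbert space, and $\X=\bigoplus_{n\ge 0}V^n\mathcal{L}$ is an orthogonal decomposition. Consequently $\bigl\|\sum_{n\ge 0}V^n l_n\bigr\|_\X^2=\sum_{n\ge 0}\|l_n\|^2$ whenever the series converges, and plugging this into the defining formula for $\|\cdot\|_\mathbb{Y}$ from the proof of Theorem~\ref{Sigma shift extension} gives $\|(l_n)_{n\in\mathbb{Z}}\|_\mathbb{Y}^2=\sum_{n\in\mathbb{Z}}\|l_n\|^2$; thus $(\mathbb{Y},\|\cdot\|_\mathbb{Y})$ is isometrically isomorphic to $\ell_2(\mathbb{Z},\mathcal{L})$, which is a Hilbert space. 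For the norms, with $\sigma(n)=n-1$ the same formula yields
\[
\|(l_n)_{n\in\mathbb{Z}}\|_\sigma^2=\|(l_{n-1})_{n\in\mathbb{Z}}\|_\mathbb{Y}^2=\sum_{n\in\mathbb{Z}}\|l_{n-1}\|^2=\sum_{m\in\mathbb{Z}}\|l_m\|^2=\|(l_n)_{n\in\mathbb{Z}}\|_\mathbb{Y}^2,
\]
so $\|\cdot\|_\sigma=\|\cdot\|_\mathbb{Y}$ on $\mathbb{Y}$.

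With these two facts established, Proposition~\ref{prop:011} applies verbatim and exhibits $\widetilde{V}$ as a bilateral shift on the Hilbert space $(\mathbb{Y},\|\cdot\|_\mathbb{Y})$ with generating subspace $\mathcal{I}$. Since $\dim\mathcal{I}=\dim\mathcal{L}$ is the multiplicity of $V$ and $\widetilde{V}$ already extends $V$ by Theorem~\ref{Sigma shift extension}, this completes the argument. There is essentially no obstacle here: the only computation is the norm identity above, and it is a one-line consequence of Parseval once the definitions from the proof of Theorem~\ref{Sigma shift extension} are unwound. In effect the corollary is a consistency check that the $\sigma$-shift construction specializes to the classical bilateral-shift extension in the Hilbert space case.
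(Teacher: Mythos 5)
Your proposal is correct and follows essentially the same route as the paper: both invoke Theorem \ref{Sigma shift extension} to produce the $\sigma$-shift extension and then apply Proposition \ref{prop:011} after identifying $(\Y,\|\cdot\|_\Y)$ and $(\Y,\|\cdot\|_\sigma)$ with the Hilbert space $\ell_2(\mathbb{Z},\mathcal{L})$. The only difference is that you spell out the Parseval computation that the paper leaves implicit in its remark that $(\mathcal{S},\|\cdot\|_{\mathcal{L}})$ equals $\ell_2(\mathcal{L})$.
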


\begin{proof}
Since both $\X$ and $V\X$ are Hilbert spaces, $V\X \oplus_2 \X$ is also a Hilbert space. The spaces $(\Y,\|\cdot\|_\Y)$ and $(\Y,\|\cdot\|_\sigma)$ as constructed in Theorem \ref{Sigma shift extension} are identically equal to $\ell_2(\mathbb{Z},\mathcal{L})$, where $\mathcal{L}$ is the generating subspace of $V$. This is because the sequence space $(\mathcal{S},\|\cdot\|_\mathcal{L})$ (see (\ref{Sequence space}) and (\ref{Norm of sequence space})) is identically equal to $\ell_2(\mathcal{L})$. Therefore, it follows from Proposition \ref{prop:011} that the map $\widetilde{V}$ as constructed in Theorem \ref{Sigma shift extension} is a bilateral shift with generating subspace $\mathcal{L}$. This completes the proof.
\end{proof}

\subsection{Generalized extension}
\begin{thm} \label{thm:general-extension-01}
Every Wold isometry on a Banach space extends to a Banach space unitary.
\end{thm}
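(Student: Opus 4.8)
The plan is to build the unitary as the completion of an inductive limit of copies of $\X$ amalgamated along $V$; the Wold structure of $V$ will actually play no role in the construction, but it is worth noting that on the unitary part $V|_{\X_1}$ the procedure produces nothing new, while on the pure isometric part $V|_{\X_2}$ it carries out the ``bilateralisation'' that mirrors the passage from a unilateral to a bilateral shift in Hilbert space. I would \emph{not} try to reduce to Theorem~\ref{Sigma shift extension} by splitting off the pure part, because the generating subspace of a pure isometry need not be wandering and hence $V|_{\X_2}$ need not be a unilateral shift; keeping $V$ intact sidesteps this.

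Concretely, I would first form the vector space $\widetilde{\X}_0$ of equivalence classes $[(m,x)]$, $m\in\mathbb{N}_0$, $x\in\X$, where $(m,x)\sim(m',x')$ iff $V^{m'}x=V^mx'$ --- think of $[(m,x)]$ as the formal vector $V^{-m}x$; since $V$ is injective this is an equivalence relation. One sets $[(m,x)]+[(m',x')]:=[(m+m',\,V^{m'}x+V^mx')]$, $\lambda[(m,x)]:=[(m,\lambda x)]$, and $\bigl\|[(m,x)]\bigr\|:=\|x\|$; well-definedness and the norm axioms are immediate because $V$ is an isometry. Then $\iota\colon\X\to\widetilde{\X}_0$, $\iota(x):=[(0,x)]$, is a linear isometry, and $\widetilde V_0\colon\widetilde{\X}_0\to\widetilde{\X}_0$, $\widetilde V_0[(m,x)]:=[(m,Vx)]$, is a well-defined bijective isometry of $\widetilde{\X}_0$ (its inverse being $[(m,x)]\mapsto[(m+1,x)]$) satisfying $\widetilde V_0\circ\iota=\iota\circ V$.

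Next I would let $\widetilde{\X}$ be the completion of $(\widetilde{\X}_0,\|\cdot\|)$ --- a Banach space --- and let $\widetilde V$ be the continuous extension of $\widetilde V_0$. As an isometry whose range is closed and contains the dense subspace $\widetilde V_0(\widetilde{\X}_0)=\widetilde{\X}_0$, $\widetilde V$ is surjective, i.e. a Banach space unitary. Finally one identifies $\X$ with $\iota(\X)$; then $\widetilde V$ restricts to $V$ on $\X$, and $\widetilde{\X}=\overline{\bigcup_{n\ge0}\widetilde V^{-n}\X}$, giving a minimal unitary extension. The one point needing genuine care --- and the step I expect to be the main obstacle --- is that passing to the completion must not collapse or enlarge the isometric copy of $\X$: this is precisely the assertion that $\iota(\X)$ is \emph{closed} in $\widetilde{\X}$, which follows because a Cauchy sequence $\bigl([(0,x_n)]\bigr)_n$ forces $(x_n)_n$ to be Cauchy in $\X$ and hence to converge to some $[(0,x)]\in\iota(\X)$, so $\iota(\X)$ is complete. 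Everything else is routine verification.
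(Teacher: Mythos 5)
Your proof is correct, but it takes a genuinely different route from the paper's. The paper leans on the Wold decomposition $\X=\X_1\oplus\X_2$ with $\X_2=\bigoplus_{n\geq 0}V^n\mathcal{L}$: it forms the concrete bi-sequence space $\Y$ of pairs $\left(x_1,(l_n)_{n\in\Z}\right)$ over the generating subspace $\mathcal{L}$, equips it with two norms $\|\cdot\|_\Y$ and $\|\cdot\|_\sigma$ (so that $(\Y,\|\cdot\|_\Y)\cong V\X_2\oplus_2\X$), and exhibits the extension as the surjective isometry $U\left(x_1,(l_n)_{n\in\Z}\right)=\left(Vx_1,(l_{n-1})_{n\in\Z}\right)$ from $(\Y,\|\cdot\|_\sigma)$ onto $(\Y,\|\cdot\|_\Y)$ --- a unitary between two differently normed copies of the same vector space, in keeping with the $\sigma$-shift formalism of Section \ref{sec:05}. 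You instead take the abstract inductive limit of $\X$ along $V$ and complete; all of your verifications (well-definedness of the equivalence relation, of the operations and of the norm, bijectivity and isometry of $\widetilde V_0$, surjectivity of $\widetilde V$ via closed range plus dense image, and closedness of $\iota(\X)$ in the completion) check out. Your construction buys two things: it never uses the Wold hypothesis, so it in fact proves the stronger statement that \emph{every} Banach space isometry extends to a Banach space unitary, and it produces an automorphism of a single Banach space rather than a surjective isometry between two norms on one underlying vector space. What the paper's construction buys is explicitness: the extension space is identified up to isometric isomorphism with $V\X_2\oplus_2\X$, and the construction runs parallel to the $\sigma$-shift extension of Theorem \ref{Sigma shift extension}, which the authors want to keep in view. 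Your instinct not to reduce to Theorem \ref{Sigma shift extension} by splitting off the pure part is consistent with the paper: the authors likewise avoid that reduction (the pure part of a Wold isometry need not be a unilateral shift), but they handle it by a direct two-sided sequence construction on the whole space rather than by discarding the decomposition altogether, as you do.
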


\begin{proof}
Suppose $V$ is a Wold isometry on a Banach space $\X$ such that $\X = \X_1 \oplus \X_2$ (see (\ref{Wold decomposition Banach spaces})), where
\[
\X_1 = \bigcap_{n=0}^\infty V^n (\X), \qquad \X_2 = \bigoplus_{n=0}^\infty V^n \mathcal{L}, ~~\text{ for some closed subspace } \mathcal{L}\subseteq \X
\]
and $V|_{\X_1}$ is a Banach space unitary and $V|_{\X_2}$ is a pure Banach space isometry.
Consider the vector space 
\[
  \Y = \left\{ \left( x_1, (l_n)_{n\in \Z}\right): x_1\in \X_1, ~l_n\in \mathcal{L}, ~ \forall n\in \Z, \text{ such that }\sum_{n=0}^\infty V^n l_n,~~\sum_{n= -\infty}^{-1} V^{\vert n \vert} l_n \text{ converges in } \X_2 \right\},
\]
and the function $\|\cdot\|_\Y: \Y \to [0, \infty)$ defined by
\[
   \left\|\left( x_1, (l_n)_{n\in \Z} \right) \right\|_\Y = \left( \left\| x_1 + \sum_{n=0}^\infty V^n l_n \right\|_\X^2 + \left\| \sum_{n=-\infty}^{-1} V^{\vert n \vert} l_n \right\|_\X^2 \right)^{\frac{1}{2}}.
\]
We prove that the function $\|\cdot\|_\Y: \Y \to [0, \infty)$ defines a norm on $\Y$. The absolute homogeneity is trivial.

\medskip

$\bullet$ Positivity: For any $\left( x_1, (l_n)_{n\in \Z} \right) \in \Y$, $\left\|\left( x_1, (l_n)_{n\in \Z} \right) \right\|_\Y = 0$ if and only if $x_1 + \sum_{n=0}^\infty V^n l_n =\mathbf{0} $ and $\sum_{n=-\infty}^{-1} V^{\vert n \vert}l_n =\mathbf{0}.$ Since $x_1\in \X_1$ and $\sum_{n=0}^\infty V^n l_n\in \X_2,$ we have $x_1 = \mathbf{0} = \sum_{n=0}^\infty V^n l_n = \sum_{n=-\infty}^{-1} V^{\vert n \vert}l_n.$ Now, it follows from the uniqueness of every element in $\bigoplus_{n=0}^\infty V^n \mathcal{L}$ that $x_1=\mathbf{0} = l_n$ for all $n\in \mathbb{Z}.$ Consequently, $\left( x_1, (l_n)_{n\in \Z} \right) = (\mathbf{0},\mathbf{0})$.

\medskip

$\bullet$ Triangle inequality: For any $\left( x_1, (l_n)_{n\in \Z} \right)$, $\left( y_1, (r_n)_{n\in \Z} \right) \in \Y$, we have
\begin{align*}
  & \left\|\left( x_1, (l_n)_{n\in \Z} \right)  + \left( y_1, (r_n)_{n\in \Z} \right) \right\|_\Y \\
= & \left( \left\| x_1 + y_1 + \sum_{n=0}^\infty V^n (l_n + r_n) \right\|_\X^2 + \left\| \sum_{n=-\infty}^{-1} V^{\vert n \vert} ( l_n + r_n ) \right\|_\X^2 \right)^{\frac{1}{2}} \\
\leq & \left( \left( \left\| x_1 + \sum_{n=0}^\infty V^n l_n \right\|_\X +  \left\| y_1 + \sum_{n=0}^\infty V^n l_n \right\|_\X \right)^2 + \left(\left\| \sum_{n=-\infty}^{-1} V^{\vert n \vert} l_n \right\|_\X + \left\| \sum_{n=-\infty}^{-1} V^{\vert n \vert} r_n \right\|_\X \right)^2 \right)^{\frac{1}{2}} \\
\leq & \left( \left\| x_1 + \sum_{n=0}^\infty V^n l_n \right\|_\X^2 + \left\| \sum_{n=-\infty}^{-1} V^{\vert n \vert} l_n \right\|_\X^2 \right)^{\frac{1}{2}} + \left( \left\| y_1 + \sum_{n=0}^\infty V^n r_n \right\|_\X^2 + \left\| \sum_{n=-\infty}^{-1} V^{\vert n \vert} r_n \right\|_\X^2 \right)^{\frac{1}{2}} \\
= & \left\|\left( x_1, (l_n)_{n\in \Z} \right) \right\|_\Y + \left\|\left( y_1, (r_n)_{n\in \Z} \right) \right\|_\Y.
\end{align*}

Also, $\left(\Y,\|\cdot\|_\Y\right)$ is a Banach space, since $\left(\Y,\|\cdot\|_\Y\right) \cong V\X_2 \bigoplus_2 \X.$ The function $\|\cdot\|_\sigma : \Y \to [0,\infty)$ defined by
\[
  \left\|\left( x_1, (l_n)_{n\in \Z} \right) \right\|_\sigma = \left( \left\| \sum_{n=-\infty}^{-1} V^{\vert n \vert} l_{n-1} \right\|_\X^2 + \left\| Vx_1 + \sum_{n=0}^\infty V^n l_{n-1} \right\|_\X^2 \right)^{\frac{1}{2}} = \left\|\left( Vx_1, (l_{n-1})_{n\in \Z} \right) \right\|_\Y
\]
is again a norm on $\Y$. The map $U: \left(\Y, \|\cdot\|_\sigma \right) \to \left(\Y, \|\cdot\|_\Y \right)$ defined by 
\[
  U\left(\left( x_1 , (l_n)_{n\in \Z} \right) \right) = (Vx_1, (l_{n-1})_{n\in \Z} ), \qquad \left( x_1 , (l_n)_{n\in \Z} \right) \in \left(\Y, \|\cdot\|_\sigma \right),
\]
is a surjective isometry with inverse $U^{-1}: \left(\Y, \|\cdot\|_\Y \right) \to \left(\Y, \|\cdot\|_\sigma \right) $ given by
\[
U^{-1}\left(\left( x_1 , (l_n)_{n\in \Z} \right) \right) = ((V|_{\X_1})^{-1}x_1, (l_{n+1})_{n\in \Z} ), \qquad \left( x_1 , (l_n)_{n\in \Z} \right) \in \left(\Y, \|\cdot\|_\Y \right).
\]
Thus $\left( \Y, \|\cdot\|_\sigma \right)$ is also a Banach space. The space $\X$ can be embedded into both $\left( \Y, \|\cdot\|_\sigma \right)$ and $\left( \Y, \|\cdot\|_\Y \right)$ by the isometry
\[
   \Lambda_v ( (x_1, \sum_{n=0}^\infty V^n l_n ) ) = \left( x_1, (l_n')_{n\in \Z} \right), \qquad l_n'= \begin{cases}
                                     l_n & \text{ if } n \geq 0 \\
                                     \mathbf{0} & \text{ if } n \leq -1,
                                     \end{cases}
\]
because,
\[
   \left\| \left( x_1, (l_n')_{n\in \Z} \right) \right\|_\sigma = \left\| Vx_1 + \sum_{n=0}^\infty V^{n+1}l_n \right\|_\X = \left\| x_1 + \sum_{n=0}^\infty V^n l_n \right\|_\X = \left\| \left( x_1 (l_n')_{n\in \Z} \right) \right\|_\Y.
\]
Moreover, the surjective isometry $U$ is an extension of $V$, since $U\Lambda_v =\Lambda_v V$. This completes the proof.
\end{proof}

\subsection{The spectrum of a \boldmath\texorpdfstring{$\sigma$}~-shift}

We intend to determine the spectrum of a $\sigma$-shift acting on a Banach space $\X$. When $\X$ is a Hilbert space, the spectrums of unilateral shift and bilateral shift are the closed unit disk $\overline{\mathbb D}$ and unit circle $\mathbb T$, respectively. It was proved in \cite{RMC} that the spectrum of a Banach space isometry that is not invertible is equal to $\overline{\mathbb D}$. Thus, $\overline{\mathbb D}$ is the spectrum of a unilateral shift on a Banach space. In this Subsection, we find the spectrum of a $\sigma$-shift acting on a smooth Banach space.

\begin{prop}\label{Spectrum of sigma shift operator} The spectrum of a $\sigma$-shift acting on a smooth Banach space is the whole unit circle $\mathbb T.$
\end{prop}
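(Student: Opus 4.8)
The plan is to show both inclusions $\mathbb{T}\subseteq \sigma(\widetilde V)$ and $\sigma(\widetilde V)\subseteq \mathbb{T}$, where $\widetilde V:(\mathbb Y,\|\cdot\|_\sigma)\to(\mathbb Y,\|\cdot\|_\mathbb Y)$ is the given $\sigma$-shift with generating subspace $\mathcal I$. A preliminary remark is needed: although $\widetilde V$ is, strictly speaking, a map between two (a priori different) normed structures on the same vector space $\mathbb Y$, the spectrum is computed by regarding $\widetilde V$ as an invertible element of $\mathcal B\big((\mathbb Y,\|\cdot\|_\mathbb Y)\big)$; indeed $\widetilde V$ is a surjective isometry and $\widetilde V^{-1}$ is bounded, and the combined estimate $\|w_1+w_2\|_\mathbb Y$ versus the $\ell_2$-norm of the pieces shows $\|\cdot\|_\sigma$ and $\|\cdot\|_\mathbb Y$ are equivalent on $\mathbb Y$, so the two Banach-space structures are isomorphic and the spectral picture is unambiguous. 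In particular, since $\widetilde V$ is a Banach-space unitary, $\|\widetilde V\|=\|\widetilde V^{-1}\|=1$, hence $\sigma(\widetilde V)\subseteq \overline{\mathbb D}$ and, applying the same to $\widetilde V^{-1}$ and using the spectral mapping for inverses, $\sigma(\widetilde V)\subseteq \{\lambda:|\lambda|=1\}=\mathbb T$. This gives one inclusion cheaply.

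For the reverse inclusion $\mathbb T\subseteq\sigma(\widetilde V)$, fix $\lambda\in\mathbb T$ and suppose for contradiction that $\widetilde V-\lambda I$ is invertible in $\mathcal B(\mathbb Y,\|\cdot\|_\mathbb Y)$. The idea is to test against a single nonzero vector $e\in\mathcal I$, $\|e\|_\mathbb Y=1$, and exhibit approximate eigenvectors. Consider the truncated sums $x_N=\sum_{n=0}^{N}\lambda^{-n}\widetilde V^{\,n}e$. Using the wandering/orthogonality property of $\mathcal I$ (property (i) in Definition \ref{Sigma Shift}) together with Lemma \ref{Wandering Space} and the right-additivity of Birkhoff--James orthogonality in a smooth space (Theorem \ref{Orthogonality = Right-additivity}), one gets, exactly as in the proof of Lemma \ref{span closure equals to direct sum}, the lower bound $\|x_N\|_\mathbb Y\ge \|\widetilde V^{\,N}e\|_\mathbb Y=1$, while the telescoping identity $(\widetilde V-\lambda I)x_N=\lambda^{-N}\widetilde V^{\,N+1}e-\lambda e$ gives $\|(\widetilde V-\lambda I)x_N\|_\mathbb Y\le \|\widetilde V^{\,N+1}e\|_\mathbb Y+\|e\|_\mathbb Y=2$. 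This by itself is not enough, so the key point — and the step I expect to be the main obstacle — is to prove that $\|x_N\|_\mathbb Y\to\infty$. Here smoothness is essential: by repeated use of right-additivity, $\widetilde V^{\,N}\mathcal I\perp_B(\mathcal I\oplus\widetilde V\mathcal I\oplus\cdots\oplus\widetilde V^{\,N-1}\mathcal I)$, and one shows inductively that $\|x_N\|_\mathbb Y$ is nondecreasing and, in fact, that $\|x_N\|_\mathbb Y^2$ grows without bound — the cleanest route is to exploit that on the part $\bigoplus_{n\ge 0}\widetilde V^n\mathcal I$ the structure is an $\ell_2$-type direct sum (property (ii) of Definition \ref{Sigma Shift} glues the positive and negative halves with an $\ell_2$-norm, and an analogue of Lemma \ref{span closure equals to direct sum} shows the positive half is itself a genuine direct sum on which a Pythagoras-type lower estimate $\|x_N\|_\mathbb Y^2\ge \sum_{n=0}^N\|\widetilde V^n e\|^2$-flavored bound can be extracted from iterated Birkhoff--James orthogonality). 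Once $\|x_N\|_\mathbb Y\to\infty$ while $\|(\widetilde V-\lambda I)x_N\|_\mathbb Y$ stays bounded, the sequence $y_N=x_N/\|x_N\|_\mathbb Y$ satisfies $\|y_N\|_\mathbb Y=1$ and $(\widetilde V-\lambda I)y_N\to 0$, so $\widetilde V-\lambda I$ is not bounded below, contradicting invertibility. Hence $\lambda\in\sigma(\widetilde V)$.

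Combining the two inclusions yields $\sigma(\widetilde V)=\mathbb T$. I would present the argument in the order: (1) equivalence of the two norms and $\sigma(\widetilde V)\subseteq\mathbb T$; (2) construction of the vectors $x_N$ and the telescoping identity; (3) the growth estimate $\|x_N\|_\mathbb Y\to\infty$ via smoothness and iterated Birkhoff--James right-additivity (the crux); (4) the approximate-eigenvector conclusion and the contradiction. The only subtlety worth flagging in the write-up is that one must choose $e$ in $\mathcal I$ (not an arbitrary vector) so that the wandering property applies verbatim, and that smoothness is used precisely to pass from pairwise orthogonality $\widetilde V^n\mathcal I\perp_B\widetilde V^m\mathcal I$ to orthogonality against the whole finite span, which is what powers the norm-growth estimate.
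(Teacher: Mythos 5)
Your first inclusion $\sigma(\widetilde V)\subseteq\mathbb T$ matches the paper's argument, but the second half of your plan has a genuine gap. The whole contradiction rests on proving $\|x_N\|_{\mathbb Y}\to\infty$ for $x_N=\sum_{n=0}^N\lambda^{-n}\widetilde V^{\,n}e$, and the mechanism you propose for this --- a Pythagoras-type bound $\|x_N\|^2\gtrsim\sum_{n=0}^N\|\widetilde V^{\,n}e\|^2$ extracted from an ``$\ell_2$-type direct sum'' structure on $\bigoplus_{n\geq 0}\widetilde V^{\,n}\mathcal I$ --- is not available. Property (ii) of Definition \ref{Sigma Shift} imposes an $\ell_2$-coupling only \emph{between} the two halves $\bigoplus_{n\geq 0}\widetilde V^{\,n}\mathcal I$ and $\bigoplus_{n\leq -1}\widetilde V^{\,n}\mathcal I$; inside the nonnegative half the norm is just the ambient Banach norm (in the construction of Theorem \ref{Sigma shift extension} that half is isometric to $\X$ itself), and iterated Birkhoff--James right-additivity in a smooth space yields only $\|x_N\|_{\mathbb Y}\geq\|\widetilde V^{\,N}e\|_{\mathbb Y}=1$, a lower bound that does not grow. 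With $\|x_N\|_{\mathbb Y}$ possibly bounded and $\|(\widetilde V-\lambda I)x_N\|_{\mathbb Y}\leq 2$, the normalized vectors $y_N$ carry no information, so the approximate-eigenvector argument does not close.

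The paper avoids this entirely by showing that $\widetilde V-\lambda I$ is not \emph{surjective}: assuming $(\widetilde V-\lambda I)z=\xi$ for a nonzero $\xi\in\mathcal I$ and expanding $z$ through the unique series representation $\sum_n\widetilde V^{\,n}x_n$ with $x_n\in\mathcal I$, the coefficient recursion forces $x_n=-\lambda^{-n}\xi$ for all $n\geq 1$ (or $x_{-n}=\lambda^{n}x_0$ for all $n\geq 1$ in the complementary case), and smoothness is then used only to bound the blocks $\bigl\|\sum_{k=m}^{n}\widetilde V^{\,k}x_k\bigr\|_{\mathbb Y}$ from below by a fixed positive constant, so the series violates the Cauchy criterion and cannot converge. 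That argument needs exactly the estimate you do have (a uniform positive lower bound on tail sums from right-additivity), not the unboundedness of partial sums that your route requires. To repair your proof you would either have to establish the growth of $\|x_N\|_{\mathbb Y}$ from the definition of a $\sigma$-shift alone (which I do not see how to do) or switch to the non-surjectivity argument.
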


\begin{proof} Let $(\Y,\|\cdot\|_\Y)$ be a smooth Banach space and let $\widetilde{V}$ on $\Y$ be a $\sigma$-shift. Evidently, $0\notin \sigma(\widetilde{V})$ and $\sigma(\widetilde{V})\subseteq\overline{\mathbb{D}}.$ Again, $\lambda\in\sigma(\widetilde{V})$ implies that $\lambda^{-1}\in\sigma(\widetilde{V}^{-1}),$ which shows $|\lambda|=1,$ as both $\widetilde V$ and $\widetilde V^{-1}$ are isometries. Therefore, $\sigma(\widetilde{V})\subseteq \mathbb T.$ We now show that $(\widetilde{V}-\lambda I)$ is not surjective for $\lambda\in \mathbb T.$ Let $\mathcal{I}$ be the generating subspace of $\widetilde{V}$ and $\mathbf{\xi}\in\mathcal{I}$ be a nonzero vector. We show that $\mathbf{\xi}\in \Y$ has no inverse image under $(\widetilde{V}-\lambda I).$ Suppose on the contrary that $(\widetilde{V}-\lambda I)z = \mathbf{\xi}$ for some $z \in \Y.$ Let $\left( \sum_{n=-\infty}^\infty \widetilde{V}^nx_n \right)\in \Y$ be the inverse image of $z$ under $\widetilde{V}^{-1}$ i.e.,
\[
  z= \widetilde{V}^{-1}\left( \sum_{n=-\infty}^\infty \widetilde{V}^nx_n \right),\quad x_n\in\mathcal{I}, \; n\in\mathbb{Z}.
\]
Now, $(\widetilde{V}-\lambda I)z=\mathbf{\xi}$ gives
$
  x_0 - \lambda x_1 = \mathbf{\xi},~ x_n - \lambda x_{n+1}=\mathbf{0}$ for $n\in\mathbb{Z}\setminus\{0\}$. We now consider the following two cases:
  
  \smallskip

Case 1: Let $x_0=\mathbf{0}$. Then for every natural number $n$, $x_n = -\frac{\mathbf{\xi}}{\lambda^n}$. The series $\sum_{n=1}^\infty \widetilde{V}^n x_n$ is a convergent series in $\Y$ as $\sum_{n=-\infty}^\infty \widetilde{V}^n x_n$ is convergent in $\Y.$
This leads to a contradiction as for $n> m\geq 0$, we have
\begin{align*}
    \left\| \sum_{k=m}^n\widetilde{V}^kx_k\right\|_\mathbb{Y} & =\left\| \widetilde{V}^m\left(\frac{\mathbf{\xi}}{\lambda^m}\right) + \widetilde{V}^{(m+1)}\left(\frac{\mathbf{\xi}}{\lambda^{m+1}}\right) + \cdots + \widetilde{V}^n\left(\frac{\mathbf{\xi}}{\lambda^n}\right) \right\|_\mathbb{Y} \\
    & = \left\| \mathbf{\xi} + \widetilde{V}\left(\frac{\mathbf{\xi}}{\lambda} \right) + \cdots + \widetilde{V}^{(n-m)}\left(\frac{\mathbf{\xi}}{\lambda^{n-m}}\right) \right\|_\mathbb{Y} \\
    & \geq \left \| \widetilde{V}^{(n-m)}\left(\frac{\mathbf{\xi}}{\lambda^{n-m}}\right) \right\|_\mathbb{Y} = \frac{\|\mathbf{\xi}\|_\Y}{|\lambda|^{n-m}} > 0,
\end{align*}
where the last inequality follows from the fact that $\Y$ is smooth, which is to say that the Birkhoff-James orthogonality is right-additive in $\Y$.

\smallskip

Case 2: Let $x_0\neq \mathbf{0}$. Then for every natural number $n$, $
x_{-n}=\lambda^n x_0$. Evidently, $\sum_{n=-\infty}^0 \widetilde{V}^n(\lambda^{-n} x_0)$ is convergent. However, this is again a contradiction as for $n> m\geq 0$, we have
\begin{align*}
  \left\| \sum_{k=-n}^{-m}\widetilde{V}^kx_k\right\|_\mathbb{Y} & = \left\| \widetilde{V}^{-n}\left(\lambda^n x_0\right) + \widetilde{V}^{(-n+1)}\left(\lambda^{n-1} x_0\right) + \cdots + \widetilde{V}^{-m}\left(\lambda^m x_0\right) \right\|_\mathbb{Y} \\
  & = \left\| x_0 + \widetilde{V}^{-1}\left(\lambda x_0 \right) + \cdots + \widetilde{V}^{(m-n)}\left(\lambda^{n-m}x_0\right) \right\|_\mathbb{Y} \\
  & \geq \left \| x_0 \right\|_\mathbb{Y} > 0,
\end{align*}
where the last inequality follows from the smoothness of $\Y$. Consequently, $\mathbb T \subseteq \sigma(\widetilde{V})$ and the proof is complete.
\end{proof}

%\vspace{0.1cm}

\section{Dilations in Banach spaces and characterizations of Hilbert spaces} \label{sec:07}

\vspace{0.3cm}

\noindent In this Section, we shall find a necessary and sufficient condition such that a strict Banach space contraction dilates to a Banach space isometry. Then we show by example that there are Banach space contractions that do not dilate to any Banach space isometries. Even more is true; we show that all strict Banach space contractions dilate to Banach space isometries if and only if the Banach space is a Hilbert space. Not only we characterize a Hilbert space in terms of isometric dilation, but also we find a set of conditions each of which is necessary and sufficient for a Banach space to become Hilbert space. In the entire program, a pivotal role is played by the following non-negative real-valued function associated with a contraction $T$ on a Banach space $\X$:
\begin{equation}\label{Equivalent norm function}
A_T(x) = \left(\|x\|^2 -\|Tx\|^2\right)^{\frac{1}{2}}, \qquad x\in \X.  
\end{equation}

Indeed, we shall see that a strict contraction $T$ on a Banach space $\X$ dilates to a Banach space isometry if and only if $A_T$ defines a norm on $\X$. Before proceeding further with this function, let us recall the definition of isometric dilation of a Banach space contraction.
\begin{defn}\label{Definition of Dilation of contractions on Banach space}
Suppose $T$ is a contraction acting on a Banach space $\mathbb{X}$. An isometry $V$ on a Banach space $\widetilde{\mathbb{X}}$ is said to be an {isometric dilation} of $T$ if there is an isometry $W:\mathbb{X}\to \widetilde{\mathbb{X}}$ and a closed linear subspace $\BL$ of $\widetilde{\X}$ such that $\widetilde{\mathbb{X}}$ is isomorphic with $ W(\mathbb{X}) \oplus_2 \BL$ and the operator $\widehat{T}:= \widehat{W} T \widehat{W}^{-1}: W(\X) \to W(\X)$ satisfies
\[
q(\widehat{T})= P_{_{W(\mathbb{X})}} q(V)|_{W(\X)}
\]
for all polynomials $q\in \mathbb{C}[z]$, where $\widehat{W}$ is the unitary (i.e. surjective isometry) $\widehat{W}:= W:\mathbb{X}\to W(\mathbb{X})$ and $P_{_{W(\mathbb{X})}}$ is the norm-one projection of $\widetilde{\X}$ onto $W(\mathbb{X})$. Moreover, such an isometric dilation is called {minimal} if
\[
 \widetilde{\X} = \bigvee_{n=0}^\infty V^n W(\X) = \overline{span}\{V^nWx:~x\in \X,~n\geq 0\}.
\]
\end{defn}

\medskip

The first notable fact about the function $A_T$ is that if it defines a norm on a Banach space $\X$ when $\|T\|<1$, then the space $(\X, A_T)$ is again a Banach space. Surprisingly, if $\|T\|=1$ and if yet $A_T$ defines a norm on a Banach space $\X$, then $(X, A_T)$ is no longer a Banach space. Also, $(\X, A_T)$ is always a Hilbert space when $\X$ is a Hilbert space. However, $(\X, A_T)$ may not be a Hilbert space if we start with a non-Hilbert Banach space $\X$, which we shall show by an example. 

\begin{lem}\label{lem: Equv. of norm}
Let $\X$ be a Banach space and $T \in \mathcal{B}(\X)$ be a strict contraction such that the function $A_T$ as in $(\ref{Equivalent norm function})$ defines a norm on $\X$. Then $\X_0 = (\X, A_T)$ is a Banach space and $A_T$ is equivalent to the original norm on $\X$.
\end{lem}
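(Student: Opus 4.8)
The plan is to verify directly that $A_T$ is equivalent to $\|\cdot\|$, from which completeness of $(\X, A_T)$ follows immediately, since equivalent norms induce the same Cauchy sequences and the same limits, and $(\X,\|\cdot\|)$ is complete. So the whole matter reduces to producing two constants $m, M>0$ with $m\|x\| \le A_T(x) \le M\|x\|$ for all $x\in\X$.

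The upper bound is the easy half. Since $\|Tx\|^2 \ge 0$, we have $A_T(x) = (\|x\|^2 - \|Tx\|^2)^{\frac12} \le \|x\|$, so $M=1$ works. For the lower bound, I would use the hypothesis $\|T\| < 1$ crucially: writing $c = \|T\| < 1$, we get $\|Tx\| \le c\|x\|$, hence
\[
A_T(x)^2 = \|x\|^2 - \|Tx\|^2 \ge \|x\|^2 - c^2\|x\|^2 = (1-c^2)\|x\|^2,
\]
so $A_T(x) \ge \sqrt{1-c^2}\,\|x\|$ and $m = \sqrt{1-c^2} > 0$. This is where the strictness of the contraction is indispensable — if $\|T\|=1$ the lower estimate degenerates and, as the surrounding discussion notes, completeness genuinely fails.

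Having established $\sqrt{1-c^2}\,\|x\| \le A_T(x) \le \|x\|$, I would then argue that $\X_0 = (\X, A_T)$ is a Banach space: any $A_T$-Cauchy sequence is $\|\cdot\|$-Cauchy (by the lower bound), hence $\|\cdot\|$-converges to some $x\in\X$ (completeness of $\X$), hence $A_T$-converges to the same $x$ (by the upper bound). I expect no real obstacle here; the only things to be slightly careful about are that $A_T$ is already assumed to be a norm (so positive-definiteness, homogeneity, and the triangle inequality are granted by hypothesis and need not be re-proved) and that $\|T\|$ being the operator norm really does give the pointwise bound $\|Tx\| \le \|T\|\,\|x\|$. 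The "hard part," such as it is, is simply recognizing that equivalence of norms is the right formulation and that strictness of $T$ is exactly what makes the lower bound nontrivial.
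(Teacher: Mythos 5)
Your proof is correct, and it is in fact cleaner than the paper's. The key computation is the same in both: from $\|Tx\|\le\|T\|\,\|x\|$ with $\|T\|<1$ one gets $A_T(x)^2\ge(1-\|T\|^2)\|x\|^2$. But you use this to write down the explicit two-sided bound $\sqrt{1-\|T\|^2}\,\|x\|\le A_T(x)\le\|x\|$ up front, from which both the equivalence of norms and the completeness of $(\X,A_T)$ follow at once. The paper instead buries that same inequality inside a Cauchy-sequence argument to prove completeness first, and then obtains the equivalence constant indirectly by applying the open mapping theorem to the identity map $I_d:\X\to\X_0$. Your route is more elementary (no open mapping theorem) and yields the explicit constant $k=(1-\|T\|^2)^{-1/2}$ rather than the non-constructive $k=\|I_d^{-1}\|$; the paper's route has the mild advantage of setting up the open-mapping template that it reuses in the companion Lemma for the case $\|T\|=1$, where the direct lower bound is unavailable. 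Both arguments are complete and correct.
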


\begin{proof}
Let $\|.\|$ be the original norm on $\X$. Suppose $(x_n)$ is a Cauchy sequence in $\X_0$. Then for every $\epsilon >0$, there is $n_0 \in \mathbb{N}$ such that
\begin{align*}
    & \left( \|x_n-x_m\|^2 - \|T(x_n - x_m) \|^2 \right)^{ 1\slash 2} < \epsilon \left(1 - \|T\|^2\right)^{1 \slash 2},\quad n, ~m \geq n_0, \\
    & \Rightarrow \|x_n-x_m\|^2 < \epsilon^2 (1 - \|T\|^2) + \|T(x_n-x_m)\|^2, \quad n, ~ m \geq n_0,  \\
    &  \Rightarrow \|x_n-x_m\|^2 < \epsilon^2 (1 - \|T\|^2) + \|T\|^2 \|x_n-x_m\|^2, \quad n,~ m \geq n_0,  \\
    & \Rightarrow \|x_n-x_m\|^2 (1-\|T\|^2) < \epsilon^2 (1 - \|T\|^2), \quad n, ~ m \geq n_0,  \\
    &  \Rightarrow \|x_n-x_m \| < \epsilon, \quad n, ~ m \geq n_0, \qquad \left[ \because(1- \|T\|^2) >0 \right]
\end{align*}
which shows that $(x_n)$ is a Cauchy sequence in $(\X, \|.\|)$. Since $(\X, \|.\|)$ is Banach space, there is $x\in \X$ such that $x_n \to x$ with respect to $\|.\|$ as $n\to \infty$. We now show that the Cauchy sequence $(x_n)$ converges to $x$ in $\X_0$. By the continuity of $T$ and the norm, we have that
\[
  A_T(x_n -x) = \left( \|x_n-x\|^2 - \|T(x_n -x) \|^2 \right)^{ 1\slash 2} \longrightarrow 0, \quad \text{as} \quad n \to \infty.
\]
Thus, $\X_0$ is a Banach space. Moreover, we have $A_T(x) \leq \|x\|$ for all $x\in \X$, which shows that the identity map $I_d: \X \to \X_0$ is a bounded bijective linear map. Therefore, by open mapping theorem, $I_d^{-1}: \X_0 \to \X$ is also a bounded linear map. Therefore, there exists a constant $k >0$ namely $k=\|I_d^{-1}\|$ such that 
\[
    A_T(x) \leq \|x\| \leq k~  A_T(x), \quad \text{ for all }x\in \X.
\]
Thus, the norm $A_T$ on $\X$ is equivalent to the original norm $\|.\|$ on $\X$ and the proof is complete.

\end{proof}

\begin{lem} \label{lem:new-0122}

Let $\X$ be a Banach space and let $T\in \mathcal{B}(\X)$ be such that $A_T$ defines a norm on $\X$. If $\|T\|=1$ then $A_T$ is not equivalent to the original norm on $\X$. Moreover, the normed linear space $\X_0= (X, A_T)$ is not a Banach space.

\end{lem}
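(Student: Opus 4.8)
The plan is to exhibit, for a norm-one operator $T$ with $A_T$ a norm, a sequence that is $A_T$-Cauchy but fails to be $\|\cdot\|$-Cauchy, thereby showing simultaneously that the two norms are inequivalent and that $(\X, A_T)$ is incomplete. Since $\|T\| = 1$, for each $n \in \mathbb{N}$ there is a unit vector $u_n \in \X$ with $\|Tu_n\| > 1 - \tfrac{1}{n}$, hence $A_T(u_n)^2 = 1 - \|Tu_n\|^2 < \tfrac{2}{n} - \tfrac{1}{n^2} \to 0$. This already shows $A_T$ cannot be equivalent to $\|\cdot\|$: an equivalence $A_T(x) \ge c\|x\|$ with $c > 0$ would force $A_T(u_n) \ge c$ for all $n$, contradicting $A_T(u_n) \to 0$. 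So the first part of the statement is immediate.

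For the second part, I would leverage this failure of equivalence together with the fact (already established in Lemma \ref{lem: Equv. of norm}) that whenever $(\X, A_T)$ is complete the two norms must be equivalent, via the open mapping theorem applied to the identity map $I_d : \X \to \X_0$. Concretely, the inequality $A_T(x) \le \|x\|$ always holds, so $I_d : (\X, \|\cdot\|) \to (\X, A_T)$ is bounded and bijective. If $(\X, A_T)$ were a Banach space, then both $(\X,\|\cdot\|)$ and $(\X, A_T)$ would be Banach spaces and the open mapping theorem would give boundedness of $I_d^{-1}$, i.e. an equivalence of the two norms — contradicting what we just proved. Hence $\X_0 = (\X, A_T)$ is not a Banach space. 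This is the cleanest route: it reuses the open-mapping argument already present in the previous lemma rather than hand-constructing a non-convergent Cauchy sequence.

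If one instead wanted a direct construction (which the authors may prefer for concreteness), the idea would be to take the unit vectors $u_n$ above, pass to a subsequence with $A_T(u_{n_{k+1}}) \le 2^{-k}$, and consider the partial sums $s_N = \sum_{k=1}^N \varepsilon_k u_{n_k}$ for a suitable scalar sequence $(\varepsilon_k)$ with $\sum |\varepsilon_k| A_T(u_{n_k}) < \infty$ but $\sum |\varepsilon_k| = \infty$; then $(s_N)$ is $A_T$-Cauchy, and one argues it cannot converge in $A_T$ by showing that any $A_T$-limit would have to be a $\|\cdot\|$-limit of a divergent series. The main obstacle in this direct approach is controlling the $\|\cdot\|$-behaviour of the tails, since $A_T$-smallness of $u_{n_k}$ gives no control on $\|u_{n_k}\| = 1$; this is precisely why the open-mapping argument of the first paragraph is the safer path, and I would present that as the proof.
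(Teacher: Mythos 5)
Your proof is correct and follows essentially the same route as the paper: the sequence of unit vectors with $\|Tu_n\|\to 1$ forces $A_T(u_n)\to 0$ and rules out equivalence, and the open mapping theorem applied to the bounded bijection $I_d:\X\to\X_0$ then shows $\X_0$ cannot be complete. No gaps.
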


\begin{proof}

Let $\|.\|$ be the original norm on $\X$. It follows from the definition of $A_T$ (as in (\ref{Equivalent norm function})) that $A_T(x) \leq \|x\|$ for all $x \in \X$. Let if possible there be $c>0$ be such that
\begin{equation}\label{Eq: Defn. of Equiv. norm}
  c\|x\| \leq A_T(x) \leq \|x\|, \qquad x\in \X.
\end{equation}
Since $\|T\|=1$, there is a sequence $(x_n)$ in $S_{\X}$ such that ${\displaystyle \lim_{n \rightarrow \infty} \|Tx_n\| =1 }$. Consequently,
\[
  A_T(x_n) = \left( 1 - \|Tx_n\|^2 \right)^{1 \slash 2} \longrightarrow 0 \quad  \text{ as } \quad n \to \infty. 
\]
However, $(x_n)$ does not converge to $\mathbf{0}$ in $\X$ with respect to the original norm, which is a contradiction to (\ref{Eq: Defn. of Equiv. norm}). So $A_T$ is not equivalent to the original norm.

\smallskip

From the right inequality in (\ref{Eq: Defn. of Equiv. norm}), it follows that the identity map $I_d: \X \to \X_0$ is a bounded bijective linear map. If $\X_0$ were a Banach space, the open mapping theorem would imply that $I_d^{-1}$ is also a bounded linear map, which would in turn imply  the existence of a scalar $ c > 0 $, e.g. $c=\|I_d^{-1}\|$, satisfying (\ref{Eq: Defn. of Equiv. norm}). However, this contradicts the first part of the proof and we are done.
 
\end{proof}

\begin{lem}
Let $\mathcal{H}$ be a Hilbert space and $T$ be a strict contraction on $\mathcal{H}$. Then the function $A_T$ as in $(\ref{Equivalent norm function})$ defines a norm on $\mathcal{H}$ and $\mathcal{H}_0 = (\mathcal{H}, A_T)$ is also a Hilbert space.
\end{lem}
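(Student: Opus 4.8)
The plan is to exploit the Hilbert space adjoint, which is available here, and realize $A_T$ as the norm coming from the defect operator of $T$. Since $T^*T$ is a positive self-adjoint operator with $\|T^*T\|=\|T\|^2<1$, the operator $I-T^*T$ is positive, self-adjoint, and boundedly invertible (its spectrum lies in $[1-\|T\|^2,\,1]\subseteq(0,\infty)$). Hence, by the continuous functional calculus, the defect operator $D_T:=(I-T^*T)^{1/2}$ is a positive, self-adjoint, boundedly invertible operator on $\mathcal H$. The key identity is
\[
A_T(h)^2=\|h\|^2-\|Th\|^2=\langle h,h\rangle-\langle T^*Th,h\rangle=\langle (I-T^*T)h,h\rangle=\langle D_T^2h,h\rangle=\|D_Th\|^2 ,
\]
so that $A_T(h)=\|D_Th\|$ for every $h\in\mathcal H$.

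First I would use this identity to see that $A_T$ is a norm: absolute homogeneity and the triangle inequality are inherited from $\|\cdot\|$ together with the linearity of $D_T$, while positive definiteness follows because $D_T$ is injective (being invertible), so $A_T(h)=\|D_Th\|=0$ forces $h=\mathbf 0$. Next I would exhibit the inner product: define
\[
\langle h,k\rangle_0:=\langle D_Th,D_Tk\rangle,\qquad h,k\in\mathcal H .
\]
Sesquilinearity is immediate from sesquilinearity of $\langle\cdot,\cdot\rangle$ and linearity of $D_T$; positivity and definiteness again follow from injectivity of $D_T$; and $\langle h,h\rangle_0=\|D_Th\|^2=A_T(h)^2$, so $\langle\cdot,\cdot\rangle_0$ induces the norm $A_T$. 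Thus $\mathcal H_0=(\mathcal H,A_T)$ is an inner product space.

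It remains to check completeness. Since $T$ is a \emph{strict} contraction, Lemma~\ref{lem: Equv. of norm} already guarantees that $A_T$ is equivalent to the original norm on $\mathcal H$ and that $\mathcal H_0$ is a Banach space; being also an inner product space, $\mathcal H_0$ is a Hilbert space. (Alternatively, completeness is direct: if $(h_n)$ is $A_T$-Cauchy, then $(D_Th_n)$ is $\|\cdot\|$-Cauchy, hence converges to some $g\in\mathcal H$, and since $D_T^{-1}$ is bounded, $h_n=D_T^{-1}(D_Th_n)\to D_T^{-1}g$ in $\mathcal H$, whence $A_T(h_n-D_T^{-1}g)=\|D_Th_n-g\|\to 0$.) I do not expect any genuine obstacle in this argument; the only points requiring a little care are the justification that $D_T$ is well defined and invertible via the functional calculus, and the identification $A_T(h)=\|D_Th\|$, after which the statement follows by routine verification.
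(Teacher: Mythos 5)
Your proposal is correct and follows essentially the same route as the paper: both realize $A_T(h)=\|D_Th\|$ via the defect operator $D_T=(I-T^*T)^{1/2}$, define the inner product $\langle h,k\rangle_0=\langle D_Th,D_Tk\rangle$, and invoke Lemma~\ref{lem: Equv. of norm} for completeness. Your added details (invertibility of $D_T$ via the spectrum of $I-T^*T$) are sound but not needed beyond what the paper records.
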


\begin{proof}
The defect operator $D_T$ is a linear map that satisfies $\|D_T(h)\|^2 = A_T(h)$ for all $h\in \mathcal{H}$. It follows from Lemma \ref{lem: Equv. of norm} that $\mathcal{H}_0$ is a Banach space. Moreover, linearity of $D_T$ implies that the map $\langle \cdot , \cdot \rangle_{_0} : \mathcal{H} \times \mathcal{H} \to \mathbb{C}$ defined by $\langle x, y\rangle_{_0} = \langle D_T (x), D_T (y) \rangle$ is an inner product on $\mathcal{H} \times \mathcal{H}$ and induces the norm $A_T$. This completes the proof.
\end{proof}

However, in general the space $\X_0$ may not be a Hilbert space if the initial space $\X$ is a non-Hilbert Banach space. The following example illustrates this.

\begin{eg}
Let $(\X, \|.\|)$ be a non-Hilbert Banach space and let $0 < c < 1$. Consider the strict contraction $T= cI$. Then $A_T(x) = (1-c^2)^{\frac{1}{2}} \|x\|$ for all $x\in \X$. Then $\X_0$ is not a Hilbert space, because, if $A_T$ is induced by an inner product $\langle \cdot, \cdot \rangle : \mathcal{H} \times \mathcal{H} \to \mathbb{C}$, then $\|\cdot\|$ is induced by the inner product $\langle x, y\rangle_{_0}= \langle rx, ry \rangle$, for all $x,y \in \X$ and $r= \frac{1}{\sqrt{1-c^2}}$, which contradicts the assumption. \qed
\end{eg}

More precisely, if $\X$ is a non-reflexive Banach space and if $T\in \mathcal{B}(\X)$ be such that $A_T$ defines a norm on $\X$, then $\X_0$ is not a Hilbert space. Indeed, if $\X_0$ is a Hilbert space then $\X_0$ is a reflexive Banach space, which implies $\X$ is reflexive as reflexivity is preserved under equivalent norms. Our next result gives a necessary and sufficient condition such that the function $A_T$ defines a norm when $T$ is a strict contraction on a Banach space.

\begin{prop}
Let $\mathbb{X}$ be a Banach space and let $T$ be a strict contraction on $\mathbb{X}$. Then the function $A_T: \mathbb{X}\to [0, \infty)$ as in $(\ref{Equivalent norm function})$ defines a norm on $\mathbb{X}$ if and only if the following inequality holds:
\begin{equation}\label{Triangle inequality}
\|T(x+y)\|^2 + (\|x\|^2 +\|y\|^2) -\|x+y\|^2 \geq \|Tx\|^2 +\|Ty\|^2 - 2 A_T(x)A_T(y),\quad x,y \in \mathbb{X}. 
\end{equation}
\end{prop}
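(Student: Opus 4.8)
The plan is to verify that the only non-trivial normed-space axiom for $A_T$ is the triangle inequality, and then to show by a reversible algebraic manipulation that this triangle inequality is exactly the inequality (\ref{Triangle inequality}).

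First I would dispose of the easy axioms. Absolute homogeneity is immediate, since for every scalar $\lambda$ one has $A_T(\lambda x) = \left(|\lambda|^2\|x\|^2 - |\lambda|^2\|Tx\|^2\right)^{1/2} = |\lambda|\,A_T(x)$. For positive definiteness I would use the hypothesis that $T$ is a strict contraction: then $\|Tx\| \le \|T\|\,\|x\| < \|x\|$ for every nonzero $x$, so $A_T(x) > 0$ whenever $x \neq \mathbf 0$, while $A_T(\mathbf 0)=\mathbf 0$. This is precisely the place where $\|T\|<1$ is needed, and it is what guarantees that $A_T$ is a genuine norm and not merely a seminorm once the triangle inequality is known. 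Hence $A_T$ defines a norm on $\mathbb X$ if and only if $A_T(x+y) \le A_T(x)+A_T(y)$ for all $x,y \in \mathbb X$.

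It then remains to identify this last inequality with (\ref{Triangle inequality}). Since both sides of $A_T(x+y) \le A_T(x)+A_T(y)$ are non-negative, the inequality holds if and only if it holds after squaring both sides, i.e.\ if and only if $A_T(x+y)^2 \le A_T(x)^2 + A_T(y)^2 + 2A_T(x)A_T(y)$. Substituting $A_T(z)^2 = \|z\|^2 - \|Tz\|^2$ and rearranging the terms — moving $\|T(x+y)\|^2$ and $\|x\|^2+\|y\|^2$ across the inequality and isolating $\|Tx\|^2+\|Ty\|^2-2A_T(x)A_T(y)$ on one side — transforms it into exactly (\ref{Triangle inequality}). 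Running this chain of equivalences in the opposite direction recovers the triangle inequality from (\ref{Triangle inequality}), completing the proof.

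I do not expect any real obstacle here: the argument is a direct equivalence whose only delicate points are (i) observing that the squaring step is reversible because both sides are non-negative, and (ii) carrying out the rearrangement of the (several) squared-norm terms without sign errors. The one conceptual remark worth making explicit in the write-up is where strict contractivity enters, namely in ensuring that $A_T$ is positive on all nonzero vectors, which is the reason the statement is restricted to strict contractions.
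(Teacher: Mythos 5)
Your proposal is correct and follows essentially the same route as the paper's proof: verify positivity and absolute homogeneity directly from strict contractivity and linearity of $T$, then observe that the triangle inequality for $A_T$ is, after squaring and rearranging, precisely the displayed inequality. Your explicit remark that the squaring step is reversible because both sides are non-negative is a worthwhile point that the paper leaves implicit.
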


\begin{proof}
Since $T$ is a strict contraction, $A_T(x)\geq 0$ for all $x\in \X$ and the equality holds if and only if $x=\mathbf{0}$. Also, $A_T$ is absolute homogeneous. In addition, for all $x,y\in \X$, the triangle inequality $A_T(x+y)\leq A_T(x)+A_T(y)$ is equivalent to 
\[
  \|x+y\|^2 -\|T(x+y)\|^2 \leq \|x\|^2 +\|y\|^2 -(\|Tx\|^2 + \|Ty\|^2) + 2A_T(x)A_T(y).
\]
Consequently, we have
\[
 \|T(x+y)\|^2 +(\|x\|^2+\|y\|^2) - \|x+y\|^2 \geq \|Tx\|^2 + \|Ty\|^2 - 2 A_T(x)A_T(y).
\]
This completes the proof.
\end{proof}

We are on our way of proving that a Banach space $\X$ is a Hilbert space if and only if $A_T$ defines a norm on $\X$ for every strict contraction $T \in \mathcal B(\X)$. We prove another lemma before that.

\begin{lem}\label{BJ Pythagorean orthogonality}

Let $\mathbb{X}$ be a Banach space and let the function $A_T: \mathbb{X}\to [0,\infty)$ as in $(\ref{Equivalent norm function})$ be a norm on $\X$ for every strict contraction $T$ on $\X$. Then for every $x,~y\in \mathbb{X}$ with $x\perp_B y$ the following holds:
\[
   \|x+y\|^2 = \|x\|^2 +\|y\|^2 = \|x-y\|^2.
\]
\end{lem}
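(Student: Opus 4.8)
The strategy is to exploit a well-chosen family of strict contractions to convert the Birkhoff–James orthogonality $x\perp_B y$ into a Pythagorean-type norm identity. The natural candidates are the one-dimensional "projection-type" maps. Fix $x,y\in\X$ with $x\perp_B y$; we may assume both are nonzero, else the statement is trivial. By Theorem \ref{James Characterization} there is a support functional $f_x\in J(x)$ with $f_x(y)=0$. Normalize and set $g=f_x/\|x\|^2$, so $g(x)=1$, $\|g\|=1/\|x\|$, and $g(y)=0$. For a scalar $c$ with $0<|c|<1$, consider the rank-one operator $T_c z = c\,g(z)\,x$. Then $\|T_c z\| = |c|\,|g(z)|\,\|x\| \le |c|\,\|g\|\,\|z\|\,\|x\| = |c|\,\|z\| < \|z\|$ for $z\neq 0$, so $T_c$ is a strict contraction, and by hypothesis $A_{T_c}$ is a norm on $\X$.

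\emph{Key computation.} For this $T_c$ one computes $A_{T_c}(z)^2 = \|z\|^2 - |c|^2|g(z)|^2\|x\|^2$. In particular, since $g(y)=0$ we get $A_{T_c}(y) = \|y\|$; since $g(x)=1$ we get $A_{T_c}(x)^2 = \|x\|^2(1-|c|^2)$; and since $g(x+y)=1$ we get $A_{T_c}(x+y)^2 = \|x+y\|^2 - |c|^2\|x\|^2$, and similarly $A_{T_c}(x-y)^2 = \|x-y\|^2 - |c|^2\|x\|^2$. Now apply the triangle inequality for the norm $A_{T_c}$, in the form $A_{T_c}(x+y)\le A_{T_c}(x)+A_{T_c}(y)$ and also $A_{T_c}(y) = A_{T_c}\big((x+y)-x\big) \le A_{T_c}(x+y)+A_{T_c}(x)$, i.e. a two-sided estimate. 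Squaring and rearranging, the first gives
\[
\|x+y\|^2 - |c|^2\|x\|^2 \;\le\; \|x\|^2(1-|c|^2) + \|y\|^2 + 2\|y\|\,\|x\|\sqrt{1-|c|^2},
\]
so $\|x+y\|^2 \le \|x\|^2 + \|y\|^2 + 2\|y\|\,\|x\|\sqrt{1-|c|^2}$. The plan is to let $|c|\to 1^-$, which yields $\|x+y\|^2 \le \|x\|^2+\|y\|^2$. The reverse inequality, $\|x+y\|^2 \ge \|x\|^2+\|y\|^2$, should come from the other triangle inequality $A_{T_c}(x)\le A_{T_c}(x+y)+A_{T_c}(y)$: squaring gives $\|x\|^2(1-|c|^2) \le \|x+y\|^2 - |c|^2\|x\|^2 + \|y\|^2 + 2\|y\|\sqrt{\|x+y\|^2-|c|^2\|x\|^2}$; letting $|c|\to 1^-$ and simplifying yields $\|x\|^2 \le \|x+y\|^2 + \|y\|^2 + 2\|y\|\sqrt{\|x+y\|^2-\|x\|^2}$, which by itself is not quite enough, so I would instead extract the reverse bound more carefully — perhaps by applying the same argument with $y$ replaced by $-y$ (noting $x\perp_B y \iff x\perp_B(-y)$ trivially) and combining, or by using that Birkhoff–James orthogonality gives $\|x+\lambda y\|\ge\|x\|$ directly, hence $\|x+y\|\ge\|x\|$ always, together with a limiting refinement of the $A_{T_c}$-inequalities. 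The identity $\|x+y\|=\|x-y\|$ then follows by symmetry, since the whole argument applies verbatim with $y$ replaced by $-y$.

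\emph{Main obstacle.} The delicate point is squeezing out the \emph{reverse} inequality $\|x\|^2+\|y\|^2 \le \|x+y\|^2$ from triangle inequalities alone, since a single triangle inequality only gives one direction cleanly after the limit. The cleanest route is likely to run the above argument for both $+y$ and $-y$ simultaneously: applying $A_{T_c}(x+y)\le A_{T_c}(x)+A_{T_c}(y)$ and $A_{T_c}(x-y)\le A_{T_c}(x)+A_{T_c}(y)$ gives, in the limit $|c|\to1^-$, both $\|x+y\|^2\le\|x\|^2+\|y\|^2+2\|x\|\|y\|\cdot 0^+ $ — wait, more care is needed because $\sqrt{1-|c|^2}\to 0$ only in the term $A_{T_c}(x)\to 0$; the surviving inequalities are $\|x\pm y\|^2\le \|x\|^2+\|y\|^2$. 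Then use Birkhoff–James orthogonality directly: $\|x\|\le\|x+\lambda y\|$ for all $\lambda$, together with the parallelogram-type consequence $2\|x\|^2+2\|y\|^2 = $ hmm — rather, average: from $\|x+y\|^2\le\|x\|^2+\|y\|^2$ and $\|x-y\|^2\le\|x\|^2+\|y\|^2$ and the inequality $2\|x\|^2 \le \|x+y\|^2+\|x-y\|^2$ is false in general in a Banach space, so I would instead argue: $\|x+y\|\ge\|x\|$ (Birkhoff–James), and symmetrically one can show $\|x+y\|\ge\|y\|$? No. The honest resolution: apply the triangle inequality $A_{T_c}(y)\le A_{T_c}(x+y)+A_{T_c}(-x) = A_{T_c}(x+y)+A_{T_c}(x)$ giving in the limit $\|y\|^2 \le \|x+y\|^2 - \|x\|^2$ once one checks the cross term vanishes — indeed $A_{T_c}(x)\to 0$, so $\|y\|^2\le \|x+y\|^2-\|x\|^2$, i.e. $\|x\|^2+\|y\|^2\le\|x+y\|^2$, completing the proof. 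I will double-check this limiting step carefully, as it is the crux: the term $2A_{T_c}(x+y)A_{T_c}(x)$ tends to $0$ because $A_{T_c}(x)\to 0$ while $A_{T_c}(x+y)\to\sqrt{\|x+y\|^2-\|x\|^2}$ stays bounded. With both inequalities in hand we obtain $\|x+y\|^2=\|x\|^2+\|y\|^2$, and replacing $y$ by $-y$ gives $\|x-y\|^2=\|x\|^2+\|y\|^2$ as well.
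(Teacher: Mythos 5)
Your proposal is correct and follows essentially the same route as the paper: both build rank-one strict contractions from the support functional $f_x$ (with $f_x(y)=0$), apply the triangle inequality for $A_T$ to the pairs $(x,y)$ and to the decomposition of $y$ as a difference, and let the contraction parameter tend to $1$ so that $A_T(x)\to 0$ while $A_T(y)=\|y\|$ survives. The only cosmetic difference is that you reuse a single family $T_c$ ranging in the direction of $x$ and obtain $\|x+y\|^2\ge\|x\|^2+\|y\|^2$ directly, whereas the paper uses a second operator $B$ ranging in the direction of $y-x$ to get the lower bound for $\|x-y\|^2$ and then invokes the $y\mapsto -y$ symmetry; since only the norms $\|Tz\|$ enter the computation, the two are equivalent.
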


\begin{proof}

The result follows if any of $x,y$ is zero. Let $x,~y\in \mathbb{X}$ be nonzero with $x\perp_B y.$ By Theorem \ref{James Characterization}, there is a support functional $f_x$ at $x$ such that $f_x(y)=0.$ Let $0<r<1$ be arbitrary and consider the operator $T:\mathbb{X}\to \mathbb{X}$ defined by 
\[
  T(z): = \frac{r}{\|x\|~\|y\|}f_x(z)~y \ ,\qquad z\in \mathbb{X}.
\]
Now $T$ is a strict contraction as $\|T\| =r <1$. Also, we have
\[
  T(y)=\mathbf 0,~ \|T(x)\| = r\|x\|,~ A_T(x)= \|x\|(1-r^2)^{\frac{1}{2}}, ~ A_T(y) = \|y\|.
\]
Since $A_T$ is a norm on $\mathbb{X}$, by (\ref{Triangle inequality}), we have
\[
  \|T(x+y)\|^2 + \|x\|^2 +\|y\|^2 -\|x+y\|^2 \geq \|Tx\|^2 + \|Ty\|^2 -2 A_T(x)A_T(y),
\]
which on simplification gives
\[
   \|x+y\|^2 \leq \|x\|^2 +\|y\|^2 + 2\|x\|\|y\|(1-r^2)^{\frac{1}{2}},~\quad 0<r< 1.
\]
Now, taking limit as $r\to 1$ we have
\begin{align}\label{Equation 9}
  \|x+y\|^2 \leq \|x\|^2 +\|y\|^2.
\end{align}
Next, consider another strict contraction $B:\mathbb{X} \to \mathbb{X}$ defined by 
\[
B(z): = \frac{r}{\|x\|~\|y-x\|}f_x(z)(y-x),\qquad  z\in \mathbb{X}.
\]
Since $A_B$ is a norm on $\mathbb{X}$, by (\ref{Triangle inequality}) we have
\[
  \|B(y-x +x)\|^2 + \|y-x\|^2 +\|x\|^2 -\|y-x+x\|^2 \geq \|B(y-x)\|^2 + \|Bx\|^2 -2 A_B(x)A_B(y-x),
\]
which on simplification gives
\[
   \|y-x\|^2 + \|x\|^2 \geq \|y\|^2 + 2r^2\|x\|^2 - 2\|x\|\sqrt{1-r^2}~A_B(y-x),~\quad 0<r< 1.
\]
Again, taking limit as $r\to 1$ we have
\begin{align}\label{Equation 10}
    \|y-x\|^2 \geq \|y\|^2 + \|x\|^2.
\end{align}
Combining (\ref{Equation 9}) and (\ref{Equation 10}) we have our desired identity
$
    \|x-y\|^2 =\|x\|^2 + \|y\|^2 = \|x+y\|^2.
$
The proof is now complete.
\end{proof}

\begin{thm}\label{Norm if and only if Hilbert space}
Let $\mathbb{X}$ be a Banach space. Then, $\mathbb{X}$ is a Hilbert space if and only if for every strict contraction $T\in \mathcal{B}(\mathbb{X}),$ the function $A_T: \mathbb{X}\to [0,\infty)$ as in $(\ref{Equivalent norm function})$ defines a norm on $\mathbb{X}.$
\end{thm}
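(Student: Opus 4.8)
The plan is to prove both directions. The forward direction is easy: if $\X$ is a Hilbert space and $T$ is a strict contraction, then the defect operator $D_T=(I-T^*T)^{1/2}$ is a bounded linear operator with $\|D_T x\|^2 = \|x\|^2-\|Tx\|^2 = A_T(x)^2$, so $A_T(x)=\|D_T x\|$ is a seminorm; since $\|T\|<1$ we have $\|D_T x\|^2 \geq (1-\|T\|^2)\|x\|^2 > 0$ for $x\neq \mathbf 0$, hence $A_T$ is a genuine norm. This is essentially already observed in the discussion preceding the theorem, so I would dispatch it in one or two lines.

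For the converse, the heart of the matter is the hypothesis that $A_T$ is a norm for \emph{every} strict contraction $T$, which by Lemma \ref{BJ Pythagorean orthogonality} forces the Pythagorean-type identity $\|x+y\|^2=\|x\|^2+\|y\|^2=\|x-y\|^2$ whenever $x\perp_B y$. The strategy is to upgrade this to the full parallelogram law (equivalently, to the symmetry of Birkhoff--James orthogonality) and then invoke the Jordan--von Neumann theorem, or directly James's theorem (Theorem \ref{Symmetry of Birkhoff-James orthogonality}) once $\dim \X \geq 3$. First I would dispose of the low-dimensional cases: if $\dim\X \leq 1$ there is nothing to prove, and if $\dim\X=2$ I would argue separately (for instance, the identity from Lemma \ref{BJ Pythagorean orthogonality} applied to a James-orthogonal pair $x\perp_B y$ spanning the plane, combined with a homogeneity/continuity argument, pins down the norm to be Euclidean; alternatively one notes $\ell_2^2$ is characterized among $2$-dimensional spaces by the property that $x\perp_B y \iff y\perp_B x$, and the Pythagorean identity in both $x\perp_B y$ and the symmetric statement can be extracted). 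For $\dim \X \geq 3$, the cleanest route is to show Birkhoff--James orthogonality is symmetric: given $x\perp_B y$, I want $y\perp_B x$.

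To get symmetry, I would use the identity $\|x-y\|^2=\|x\|^2+\|y\|^2=\|x+y\|^2$ from Lemma \ref{BJ Pythagorean orthogonality}. The idea is that this ``isosceles'' plus ``Pythagorean'' behavior is rigid enough to force orthogonality to be symmetric. One concrete approach: fix $x\perp_B y$ with both nonzero; I want to show $\|y+\lambda x\|\geq\|y\|$ for all scalars $\lambda$. Using the support functional $f_x$ with $f_x(y)=0$ one builds, as in the proof of Lemma \ref{BJ Pythagorean orthogonality}, rank-one strict contractions adapted to various vectors in $\mathrm{span}\{x,y\}$, applies inequality (\ref{Triangle inequality}), and lets the contraction constant tend to $1$; iterating this for the vectors $y+\lambda x$ in place of $x$ should yield $\|y+\lambda x\|^2 \geq \|y\|^2 + (\text{something}\geq 0)$. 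The key leverage is that for \emph{any} nonzero $u\in\X$ and any $v$ with a support functional of $u$ vanishing on it, the construction in Lemma \ref{BJ Pythagorean orthogonality} is available. I expect the main obstacle to be exactly this step: turning the one-sided Pythagorean identity into symmetry (or directly into the parallelogram law) without circularity, since a priori we only control pairs that are already Birkhoff--James orthogonal. The trick will be to run the rank-one-contraction argument flexibly enough — choosing the functional and the target vector cleverly — to reach vectors not a priori orthogonal, e.g. by writing an arbitrary pair $u,v$ via $u \perp_B (v - \frac{f_u(v)}{\|u\|^2}u)$ and carefully tracking norms.

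Once symmetry of Birkhoff--James orthogonality is established for $\dim\X\geq 3$, Theorem \ref{Symmetry of Birkhoff-James orthogonality} immediately gives that $\X$ is a Hilbert space, completing the proof; the $\dim\X=2$ case is handled by the separate argument above, and $\dim\X\leq 1$ is trivial. I would present the proof in this order: forward direction (one line), reduction to proving $\X$ is Hilbert via symmetry of $\perp_B$, the key lemma promoting Lemma \ref{BJ Pythagorean orthogonality} to symmetry (the technical core), the two-dimensional case, and finally the appeal to James's theorem.
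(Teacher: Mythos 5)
Your architecture coincides with the paper's: forward direction via the defect operator, converse via Lemma \ref{BJ Pythagorean orthogonality}, then symmetry of Birkhoff--James orthogonality and Theorem \ref{Symmetry of Birkhoff-James orthogonality} for $\dim \X\geq 3$, with a separate two-dimensional computation. The one place you stop short --- promoting the Pythagorean identity to symmetry, which you flag as the main obstacle and propose to attack by iterating the rank-one construction and by somehow reaching pairs that are not a priori orthogonal --- is in fact immediate, and no such extension is needed. Birkhoff--James orthogonality is homogeneous, so $x\perp_B y$ gives $\lambda x\perp_B y$ for every scalar $\lambda$; Lemma \ref{BJ Pythagorean orthogonality} applied to the pair $(\lambda x, y)$ then yields $\|y+\lambda x\|^2=|\lambda|^2\|x\|^2+\|y\|^2\geq\|y\|^2$, which is precisely the statement $y\perp_B x$. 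You never need to control non-orthogonal pairs and you do not need the parallelogram law: symmetry alone is what Theorem \ref{Symmetry of Birkhoff-James orthogonality} consumes. So your plan closes, but in one line rather than by the detour you sketch; the worry about circularity is unfounded.

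For comparison, the paper closes this same step by a mild variant: assuming some support functional $f_y\in J(y)$ has $f_y(x)\neq 0$ (with $\|x\|=\|y\|=1$), it applies the Pythagorean identity to $\delta e^{i\theta}x+y$ with $e^{i\theta}=\overline{f_y(x)}/|f_y(x)|$ and $0<\delta<|f_y(x)|$, getting norm $(1+\delta^2)^{1/2}$ on one side while $|f_y(\delta e^{i\theta}x+y)|=1+\delta|f_y(x)|>1+\delta^2$ on the other, a contradiction; the mechanism --- apply the lemma to a scalar multiple of $x$ added to $y$ --- is the same as the one-line argument above, and it concludes via Theorem \ref{James Characterization}. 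Your two-dimensional case is exactly the paper's: pick unit vectors $u\perp_B v$ spanning the plane, expand $x=au+bv$, and use homogeneity together with the lemma to get $\|x\|^2=|a|^2+|b|^2$, identifying $\X$ with Euclidean $\mathbb{C}^2$.
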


\begin{proof}
We first prove the necessary part. The positivity and absolute homogeneity of $A_T$ follow from $\|Tx\| <\|x\|$ ($x\in \X$) and linearity of $T$ respectively. The triangle inequality of $A_T$ follows from the linearity of the defect operator $D_T$ and $A_T(x) = \|D_T(x)\|$ for all $x\in \X$. Indeed,
\[
 A_T(x+y) = \|D_T(x+y)\| \leq \|D_T(x)\| + \|D_T(y)\| = A_T(x) + A_T(y), \qquad x,y \in \X.
\] 
Now, we prove the sufficient part by dividing the proof in the following two cases.

\medskip

 Case 1: Let $dim(\mathbb{X}) \geq 3.$ We show that Birkhoff-James orthogonality is symmetric on $\mathbb{X}.$ Let $x,~y\in \mathbb{X}$ be nonzero vectors with $x\perp_B y.$ Without loss of generality, we may assume that $\|x\|=1=\|y\|.$ We claim that $f_y(x)=0$ for every support functional $f_y$ at $y.$ If possible, let $f_y(x)\neq 0$ for some support functional $f_y$ at $y.$ Then there is a $\delta >0$ such that $0< \delta < \vert f_y(x) \vert \leq 1.$ Choose $\frac{1}{(1 + \delta^2)^{\frac{1}{2}}}< r < 1$ and consider the element $(\delta e^{i\theta}x +y)$ in $\mathbb{X}$, where $e^{i\theta}=sgn(f_y(x)) = \frac{\overline{f_y(x)}}{\vert f_y(x)\vert}.$ Since $\delta e^{i\theta}x \perp_B y$ and $r < 1,$ by Lemma \ref{BJ Pythagorean orthogonality} we have
\[
  (1+\delta^2)^{\frac{1}{2}} = \|\delta e^{i\theta}x+y\| > rf_y(\delta e^{i\theta}x+y) = r\left(\delta \vert f_y(x)\vert +1\right) > r\left( \delta^2 +1 \right),
\]
which is a contradiction as $ r> \frac{1}{(1+\delta^2)^{\frac{1}{2}}}.$
Consequently, $f_y(x)=0$ for every support functional $f_y$ at $y$ and $y\perp_B x$, as desired. Therefore, $\X$ is Hilbert space by Theorem \ref{Symmetry of Birkhoff-James orthogonality}.

\medskip

 Case 2: Let $dim(\mathbb{X}) \leq 2.$ If $\dim(X)=1$, then the proof is trivial. Now, assume that $dim(\mathbb{X})=2.$ Let $u,~v$ be two unit vectors in $\X$ with $u\perp_B v$. Then, any $x\in\mathbb{X}$ can be uniquely expressed as $x=au+bv$ for some $a,b\in \mathbb{C}.$ Since Birkhoff-James orthogonality is homogeneous, by Lemma \ref{BJ Pythagorean orthogonality}, we have
\begin{align}\label{Equation 11}
   \|x\|^2= \|au+bv\|^2 = \|au\|^2 +\|bv\|^2 = \vert a\vert ^2 + \vert b\vert ^2.
\end{align}
Now, it is easy to see that the identification $x\mapsto (a,b)$ is an isometric isomorphism from $\X$ to $\mathbb{C}^2$ and hence $\X$ is a Hilbert space. This completes the proof.
\end{proof}
The following corollary is a straight-forward application of Theorem \ref{Norm if and only if Hilbert space}.

\begin{cor}\label{Norm on X and its dual}
Suppose $\mathbb{X}$ is a Banach space. Then the following are equivalent.
\begin{enumerate}
  \item[(i)] For every strict contraction $T$ on $\mathbb{X},$ the function $A_T:\mathbb{X}\to [0,\infty)$ as in $(\ref{Equivalent norm function})$ is a norm on $\mathbb{X}.$
  
\item[(ii)] For every strict contraction $S$ on $\mathbb{X}^*,$ the function $A_{S}:\mathbb{X}^*\to [0,\infty)$ defined by $(\ref{Equivalent norm function})$
is a norm on $\mathbb{X}^*.$
\end{enumerate}
\end{cor}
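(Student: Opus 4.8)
The plan is to reduce both statements to the assertion that a particular Banach space carries a Hilbert space structure, and then invoke the elementary fact that the Hilbert space property is self-dual. Concretely, Theorem \ref{Norm if and only if Hilbert space} applied to $\X$ shows that condition (i) is equivalent to $\X$ being a Hilbert space, and the very same theorem applied with $\X^*$ playing the role of the ambient Banach space shows that condition (ii) is equivalent to $\X^*$ being a Hilbert space. Hence the corollary follows once we verify that $\X$ is a Hilbert space if and only if $\X^*$ is.

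For the forward direction, if $\X$ is a Hilbert space then, by the Riesz representation theorem, $\X^*$ is (conjugate-)isometrically isomorphic to $\X$; since being generated by an inner product is preserved under isometric isomorphisms (by the Jordan--von Neumann parallelogram-law characterization), $\X^*$ is a Hilbert space. For the converse, suppose $\X^*$ is a Hilbert space. Applying the forward direction to $\X^*$ yields that $\X^{**}=(\X^*)^*$ is a Hilbert space. The canonical embedding $\pi_\X:\X\to\X^{**}$ of (\ref{Cano. embed. X**}) is a linear isometry, so $\pi_\X(\X)$ is a closed linear subspace of the Hilbert space $\X^{**}$ and is therefore itself a Hilbert space; since $\X$ is isometrically isomorphic to $\pi_\X(\X)$, it follows that $\X$ is a Hilbert space.

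There is no genuine obstacle in this argument: beyond Theorem \ref{Norm if and only if Hilbert space}, the only ingredients are that the inner-product-space property is an isometric invariant and that a closed subspace of a Hilbert space is a Hilbert space, both of which are standard. The one point worth stating carefully is that the equivalence of (i) and (ii) does \emph{not} require reflexivity of $\X$ as a hypothesis — it is obtained for free, since either of (i), (ii) already forces $\X$ (resp.\ $\X^*$) to be a Hilbert space and hence reflexive.
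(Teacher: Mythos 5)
Your proof is correct and follows essentially the same route as the paper: both reduce (i) and (ii) to the Hilbert-space characterization of Theorem \ref{Norm if and only if Hilbert space} and then exploit the self-duality of the Hilbert property. The only minor variation is in the direction (ii)$\implies$(i), where the paper deduces reflexivity of $\X$ from reflexivity of $\X^*$ and identifies $\X$ with $(\X^*)^*$, while you realize $\X$ isometrically as a closed subspace of the Hilbert space $\X^{**}$ via $\pi_\X$; both are valid, and yours sidesteps the duality theorem for reflexivity.
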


\begin{proof}
If $A_T$ defines a norm on $\X$ for every strict contraction $T$ on $\X$, then by Theorem \ref{Norm if and only if Hilbert space}, $\X$ is a Hilbert space. Thus, $\X^*$ is a Hilbert space. Then, for every strict contraction $S$ on $\X^*$ and for any $x^* \in \X^*$, we have $A_S(x^*)=\|D_S(x^*)\|$. So, $A_S$ is a norm on $\X^*$. On the other hand, if $A_S$ defines a norm on $\X^*$ for every strict contraction $S$ on $\X^*$, then again by Theorem \ref{Norm if and only if Hilbert space}, $\X^*$ is Hilbert and hence $\X^*$ is reflexive. Consequently, $\X$ is isometrically isomorphic to $(\X^*)^*$ which is a Hilbert space. This completes the proof.
\end{proof}

Recall from (\ref{Forward shift}) \& (\ref{backward shift}) the forward and backward shift operators $M_z, \, \widehat{M}_{z}$ on $\ell_2(\X)$ for any Banach space $\X$. We have already seen that $M_z$ is a unilateral shift. The next theorem establishes a connection between backward shift operator $\widehat{M}_{z}$ on $\ell_2(\X)$ and the function $A_T$.

\begin{thm}\label{Norm and Backward shift}
Let $\mathbb{X}$ be a Banach space. Then the following are equivalent.
   \begin{enumerate}
   \item[(i)] For every strict contraction $T\in \mathcal{B}(\mathbb{X}),$ the function $A_T: \mathbb{X}\to [0,\infty)$ as in $(\ref{Equivalent norm function})$ is a norm on $\X$.
   
   \item[(ii)] For every strict contraction $T$ on $\mathbb{X},$ there exists an isometry $W: \mathbb{X}\to \ell_2(\mathbb{X})$ such that 
\[
 \widehat{M}_{z} W(x)= W(Tx), \qquad \text{for every } \, x\in \mathbb{X}.
\]
   \end{enumerate}
\end{thm}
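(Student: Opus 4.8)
The plan is to prove the two implications separately, with the bulk of the work lying in $(i)\Rightarrow(ii)$. For $(i)\Rightarrow(ii)$, fix a strict contraction $T$ on $\X$. By hypothesis $A_T$ is a norm on $\X$, and by Lemma \ref{lem: Equv. of norm} the space $\X_0=(\X,A_T)$ is a Banach space with $A_T$ equivalent to the original norm. The natural candidate for $W$ is the ``defect sequence'' map
\[
W(x)=\bigl(x,\; A\!\cdot\!\text{-data of }Tx,\; A\!\cdot\!\text{-data of }T^2x,\;\dots\bigr),
\]
i.e. I would set $W(x)=\bigl(x, \, \iota(Tx), \, \iota(T^2x), \, \dots\bigr)$ where the first slot lands in $\X$ and each subsequent slot records $T^n x$ measured in the $\X_0$-norm, with $\iota:\X\to\X_0$ the identity-on-vectors map. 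The key computation is that, for a strict contraction, a telescoping identity gives
\[
\|x\|^2=\sum_{n=0}^{N-1} A_T(T^n x)^2 + \|T^N x\|^2,
\]
and since $\|T\|<1$ the tail $\|T^N x\|^2\to 0$, so $\sum_{n\ge 0}A_T(T^n x)^2=\|x\|^2<\infty$. This shows simultaneously that $W$ maps into $\ell_2$ (after identifying the relevant summands correctly) and that $\|W(x)\|_{\ell_2(\X)}=\|x\|_\X$, i.e. $W$ is an isometry. The intertwining relation $\widehat M_z W(x)=W(Tx)$ is then immediate from the form of $W$: applying the backward shift deletes the first coordinate and shifts the rest, which is exactly $W(Tx)$.

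For $(ii)\Rightarrow(i)$, suppose such an isometry $W$ exists for a given strict contraction $T$. The idea is to extract $A_T$ from the norms of the coordinates of $W(x)$. Writing $W(x)=(w_0(x),w_1(x),\dots)$, the intertwining relation $\widehat M_z W(x)=W(Tx)$ forces $w_{n+1}(x)=w_n(Tx)$, so $W(x)=(w_0(x),w_0(Tx),w_0(T^2 x),\dots)$, and the isometry property gives $\|x\|^2=\sum_{n\ge 0}\|w_0(T^n x)\|^2$. Applying the same identity to $Tx$ and subtracting yields $\|x\|^2-\|Tx\|^2=\|w_0(x)\|^2$, so $A_T(x)=\|w_0(x)\|$. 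Since $x\mapsto w_0(x)$ is the composition of the linear map $W$ with the (linear) projection onto the first coordinate, it is linear, and therefore $A_T$, being $x\mapsto\|w_0(x)\|$, satisfies the triangle inequality; positivity and homogeneity being clear, $A_T$ is a norm.

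\textbf{Main obstacle.} The delicate point is the bookkeeping in $(i)\Rightarrow(ii)$: the coordinates $x$, $Tx$, $T^2x,\dots$ naturally live in $\X$ with its \emph{original} norm, but the quantity that is summable and yields an isometry is $\sum A_T(T^n x)^2$, which is the $\X_0$-norm. One must therefore place the $n$-th defect datum in a copy of $\X_0$ rather than $\X$ and check that the resulting target space is (isometrically) $\ell_2(\X)$ — this uses that $\X_0$ and $\X$ are equivalent as Banach spaces (Lemma \ref{lem: Equv. of norm}), though one should be careful that equivalence of norms does not by itself give an \emph{isometric} identification of $\ell_2(\X_0)$ with $\ell_2(\X)$. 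The cleanest fix is to build $W$ with values in $\X\oplus_2\ell_2(\X_0)$ — exactly the minimal dilation space of Theorem \ref{thm:main-02} — and then invoke that this space is isometrically $\ell_2(\X)$ precisely when the relevant norm identities hold; alternatively one rescales so that the first coordinate genuinely carries $\|x\|$ and the tail carries the defects, matching the classical Hilbert-space construction where $W h=(h, D_T Th, D_T T^2 h,\dots)$. Verifying this identification carefully is where the real care is needed; the intertwining identity and the telescoping norm identity are routine once the spaces are set up correctly.
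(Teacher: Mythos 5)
Your argument for (ii)$\implies$(i) is correct and is essentially the paper's: the intertwining relation forces $W(x)=(w_0(x),w_0(Tx),w_0(T^2x),\dots)$, the isometry identity telescopes to $\|x\|^2-\|Tx\|^2=\|w_0(x)\|^2$, and linearity of $w_0=\pi_0\circ W$ gives the triangle inequality for $A_T$. The problem is in (i)$\implies$(ii), where the ``main obstacle'' you flag is not a bookkeeping issue that careful setup resolves --- it is the actual content of the implication, and neither of your proposed fixes closes it. Your construction, for a \emph{fixed} strict contraction $T$, produces an isometry of $\X$ into $\ell_2(\X_0)$ with $\X_0=(\X,A_T)$; but the theorem demands an isometry into $\ell_2(\X)$ carrying the \emph{original} norm in every coordinate. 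Lemma \ref{lem: Equv. of norm} only gives that $\X_0$ and $\X$ are isomorphic, not isometrically isomorphic, so $\ell_2(\X_0)$ cannot be identified isometrically with $\ell_2(\X)$. Your ``cleanest fix'' --- mapping into $\X\oplus_2\ell_2(\X_0)$ and ``invoking that this space is isometrically $\ell_2(\X)$'' --- asserts something false in general: $\X\oplus_2\ell_2(\X_0)$ is exactly the minimal dilation space of Theorem \ref{Necessary and sufficient condition for a contraction to be dilated 1}, and for a non-Hilbert $\X$ there is no reason it should be isometric to $\ell_2(\X)$. (Also, as written, your first formula $W(x)=(x,\iota(Tx),\dots)$ is not an isometry, since the leading slot already contributes $\|x\|^2$; the defect data must start at $n=0$, not the vector $x$ itself.)

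The missing idea is that hypothesis (i) quantifies over \emph{all} strict contractions, and this global hypothesis must be used before constructing $W$. By Theorem \ref{Norm if and only if Hilbert space}, (i) forces $\X$ to be a Hilbert space. Only then does the defect operator $D_T=(I-T^*T)^{1/2}$ exist as a bounded operator from $\X$ to $\X$ with $\|D_Tx\|=A_T(x)$, and the paper's map $W(x)=(D_Tx,D_T(Tx),D_T(T^2x),\dots)$ lands in $\ell_2(\X)$ genuinely, with $\|W(x)\|^2=\sum_{n\ge 0}\|D_T(T^nx)\|^2=\|x\|^2-\lim_n\|T^nx\|^2=\|x\|^2$ and $\widehat{M}_zW(x)=W(Tx)$ immediate. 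Your fixed-$T$ strategy is attempting to prove a pointwise statement (``$A_T$ is a norm for this $T$ $\implies$ this $T$ admits such a $W$ into $\ell_2(\X)$'') that the theorem does not assert and that the paper's own Theorem \ref{Classification I} suggests requires the stronger hypothesis of a linear $A:\X\to\ell_2(\X)$ realizing $A_T$, not merely $A_T$ being a norm.
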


\begin{proof}
(i)$\implies$(ii). It follows from Theorem \ref{Norm if and only if Hilbert space} that $\mathbb{X}$ is a Hilbert space. For any strict contraction $T$ on $\mathbb{X},$ consider the operator $W: \mathbb{X}\to \ell_2(\mathbb{X})$ defined by 
\[
   W(x)= (D_T(x), D_T(Tx), D_T(T^2x), \cdots), \qquad x\in \mathbb{X}.
\]
Observe that 
\[
   \| W(x) \|^2 = \lim_n \sum_{i=0}^n \| D_T (T^ix) \| = \|x\|^2 - \lim_n \|T^n x\|^2 = \|x\|^2, 
\]
where the last equality follows from the fact that $T$ is a strict contraction. Thus, $W$ is an isometry and we also have
\[
\widehat{M}_{z} W(x) =( D_T(Tx), D_T(T^2x), \cdots)= W(Tx), \qquad x\in \mathbb{X}.
\]

\medskip

\noindent (ii)$\implies$(i). Let $T$ be a strict contraction on $\mathbb{X}$ and let there be an isometry $W: \X \to \ell_2(\mathbb{X})$ satisfying the stated condition. Let 
\[
W(x)= (W_1(x), W_2(x), \dots, ), \qquad x\in \mathbb{X}.
\]
Evidently, $\|W_i\|\leq 1$, for each $i=1,2,\dots$. It follows from the hypothesis that 
$
\widehat M_{z}^ kW = WT^k$ for all $k\geq 1,$
and this shows that
\[
  ( W_{k+1}(x),W_{k+2}(x),\dots ) = (W_1(T^kx),W_2(T^kx),\dots ),\quad x\in\mathbb{X},\quad k\geq 1.
\]
Thus, for each $k\geq 1$ we have
$
W_{k+1}(x)= W_1(T^kx)$ for every $x\in \X$. Consequently, the isometry $W$ is of the following form:
\[
  W(x) = \left( W_1(x), W_1(Tx), W_1(T^2x),\cdots \right), \qquad x\in \mathbb{X}.
\]
So, we have
\[
  \|x\|^2 - \|Tx\|^2 =\| W(x)\|^2 - \|W(Tx)\|^2 = \|W_1(x)\|^2 ,\qquad x\in \mathbb{X},
\]
and the linearity of $W_1$ shows that the function $A_T$ as in (\ref{Equivalent norm function}) satisfies the triangle inequality. The positivity of $A_T$ follows from that fact that $\|Tx\| <\|x\|$ for all $x\in \X$ and the absolute homogeneity of $A_T$ follows from the linearity of $T$. The proof is now complete.
\end{proof}

Recall that $\mathsf{Aut}(\mathbb{D})$ denotes the automorphism group of the unit disk $\mathbb D$ and that any automorphism of $\mathbb D$ is of the form
\[
 \phi_\lambda(z) = e^{i\theta} \frac{z-\lambda}{1-\overline{\lambda}z},  \qquad z\in \mathbb{D}
\]
for some $\lambda\in \mathbb{D}$ and $\theta\in [0,2\pi)$. We learn from the literature (e.g. see \cite[Chapter 1, Section 4]{Nagy Foias}) that $\phi_\lambda(T) = (T-\lambda I) (I - \bar{\lambda}T)^{-1}$ is also a contraction for every strict contraction $T$ on a Hilbert space. Our next result shows that the converse is also true.

\begin{thm}\label{Automorphism of unit disc and contraction}
A Banach space $\mathbb{X}$ is a Hilbert space if and only if $\phi_\lambda(T)$ is a contraction for every strict contraction $T\in \mathcal{B}(\mathbb{X})$ and every automorphism $\phi_\lambda$ of the unit disk $\mathbb D$.
\end{thm}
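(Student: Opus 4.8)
The plan is as follows. The forward implication is classical: on a Hilbert space every strict contraction $T$ satisfies the identity
\[
I-\phi_\lambda(T)^*\phi_\lambda(T)=(1-|\lambda|^2)\,(I-\bar\lambda T^*)^{-1}(I-T^*T)(I-\bar\lambda T)^{-1}\ \ge\ 0,
\]
so $\|\phi_\lambda(T)\|\le 1$ (see \cite[Chapter 1, Section 4]{Nagy Foias}). Thus only the converse needs work, and here is how I would carry it out.

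First, reformulate the hypothesis. For a strict contraction $T$ and $\lambda\in\mathbb{D}$ one has $\|\bar\lambda T\|<1$, so $I-\bar\lambda T$ is invertible and $u\mapsto(I-\bar\lambda T)u$ is a bijection of $\X$; substituting $v=(I-\bar\lambda T)u$ into $\|\phi_\lambda(T)v\|\le\|v\|$ shows that the hypothesis is equivalent to
\[
\|Tu-\lambda u\|\ \le\ \|u-\bar\lambda Tu\|\qquad(u\in\X,\ \lambda\in\mathbb{D}),
\]
for every strict contraction $T$. Applying this to the rank-one strict contractions $T(u)=\|z\|^{-2}f_z(u)\,w$ with $f_z\in J(z)$ and $0<\|w\|<\|z\|$ (so $\|T\|\le\|w\|/\|z\|<1$ and $Tz=w$), and taking $u=z$, gives $\|w-\lambda z\|\le\|z-\bar\lambda w\|$; a limiting argument extends this to all $z,w$ with $\|w\|\le\|z\|$ and all $\lambda\in\overline{\mathbb{D}}$. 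When $\|z\|=\|w\|$, playing this against the inequality obtained by exchanging $z$ and $w$ and replacing $\lambda$ by $\bar\lambda$ yields the equality $\|w-\lambda z\|=\|z-\bar\lambda w\|$ for $|\lambda|\le 1$, and then for all $\lambda\in\mathbb{C}$ by homogeneity of the norm. This equality makes Birkhoff--James orthogonality symmetric: if $x\perp_B y$, normalize so that $\|x\|=\|y\|=1$, pick $f_x\in J(x)$ with $f_x(y)=0$ by Theorem \ref{James Characterization}, observe that $\|x-\bar\lambda y\|\ge|f_x(x-\bar\lambda y)|=1$ for every $\lambda$, and conclude $\|y-\lambda x\|=\|x-\bar\lambda y\|\ge 1=\|y\|$ for all $\lambda$, i.e.\ $y\perp_B x$. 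For $\dim\X\ge 3$ this finishes the proof, by James's theorem (Theorem \ref{Symmetry of Birkhoff-James orthogonality}).

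The case $\dim\X\le 2$ needs one more idea; the one-dimensional case being trivial, take $\dim\X=2$. Given $x\perp_B y$ with $\|x\|=\|y\|=1$ and $f_x\in J(x)$ with $f_x(y)=0$, I would feed the reformulated hypothesis the rank-one strict contractions $T_r(u)=r\,f_x(u)\,y$---for which $T_r^2=0$, hence $\phi_\lambda(T_r)=(1-|\lambda|^2)T_r-\lambda I$---together with the companion map built from $y-x$ as in the proof of Lemma \ref{BJ Pythagorean orthogonality}, and after letting $r\to1$ extract the Pythagorean identity
\[
\|x+y\|^2=\|x\|^2+\|y\|^2=\|x-y\|^2,\qquad x\perp_B y .
\]
This is exactly the conclusion of Lemma \ref{BJ Pythagorean orthogonality}, so the argument in the proof of Theorem \ref{Norm if and only if Hilbert space} (its Case~2) then identifies $\X$ isometrically with a two-dimensional Hilbert space, completing the proof. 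I expect this last extraction to be the main obstacle: the hypothesis yields only the one-sided inequality $\|w-\lambda z\|\le\|z-\bar\lambda w\|$ for $\|w\|\le\|z\|$, whereas the underlying Hilbert-space fact is the sharp identity $\|z-\bar\lambda w\|^2-\|w-\lambda z\|^2=(1-|\lambda|^2)(\|z\|^2-\|w\|^2)$; the work is to recover equality by balancing this inequality against its reverse (valid when $\|z\|\le\|w\|$) and choosing the auxiliary rank-one operators exactly as in Lemma \ref{BJ Pythagorean orthogonality}.
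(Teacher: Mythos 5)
Your derivation of the inequality $\|w-\lambda z\|\le\|z-\bar\lambda w\|$ from the rank-one strict contractions, and of the resulting identity $\|w-\lambda z\|=\|z-\bar\lambda w\|$ when $\|w\|=\|z\|$, is correct and is essentially the same computation the paper performs (it tests $\phi_\lambda(T)$ on $T(u)=rf_x(u)y$ at $u=x$ and lets $r\to1$). The divergence, and the problem, is in your endgame. For $\dim\X\ge 3$ the passage through symmetry of Birkhoff--James orthogonality and Theorem \ref{Symmetry of Birkhoff-James orthogonality} is sound. For $\dim\X=2$, however, your plan has a genuine gap, which your own closing sentence concedes. Lemma \ref{BJ Pythagorean orthogonality} extracts the Pythagorean identity from the \emph{subadditivity of $A_{T_r}$}, i.e. from inequality (\ref{Triangle inequality}); under the present hypothesis you know only that $\phi_\lambda(T_r)$ is a contraction, and no implication from that to the triangle inequality for $A_{T_r}$ is available for an individual operator (the two properties are only known to coincide, for \emph{all} strict contractions simultaneously, when $\X$ is already a Hilbert space -- which is what is to be proved). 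Nor do the direct substitutions work: writing $\phi_\lambda(T_r)=(1-|\lambda|^2)T_r-\lambda I$ and testing $\|\phi_\lambda(T_r)u\|\le\|u\|$ on vectors $u\in\mathrm{span}\{x,y\}$ yields, after $r\to 1$ and with $\lambda$ real, inequalities such as $\|(1-\lambda-\lambda^2)y-\lambda x\|\le\|x+y\|$, and optimizing these over $\lambda\in(-1,1)$ (using $x\perp_B y$ and $y\perp_B x$) produces lower bounds like $\|x+y\|\ge 5/4$ but never the required $\|x\pm y\|^2=\|x\|^2+\|y\|^2$. So the two-dimensional case is not established.

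The repair is already contained in what you proved. Specializing your identity $\|w-\lambda z\|=\|z-\bar\lambda w\|$ (for $\|w\|=\|z\|$) to real $\lambda$ and invoking homogeneity gives $\|aw-bz\|=\|az-bw\|$ for all real $a,b$ and all equal-norm vectors $z,w$, which is exactly the hypothesis of Ficken's criterion \cite[Corollary 2]{Ficken}. That criterion holds in every dimension, including $2$, and yields at once that $\X$ admits an inner product. This is precisely how the paper concludes, and adopting it makes both the James step and the separate two-dimensional analysis unnecessary.
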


\begin{proof}
As mentioned above, the `necessary part' of this theorem follows from \cite[Chapter 1, Section 4]{Nagy Foias}. We prove here the `sufficiency part' and shall follow the techniques of the proof of Theorem 1.9 in \cite{Pisier}, which was originally proved by C. Foias. Let $r \in (0,1)$ and let $x,~y\in \mathbb{X}$ be any two unit vectors. Then the operator $T: \mathbb{X}\to \mathbb{X}$ defined by $T(z)= r f_x(z)y, ~ z\in \mathbb{X}$ is a strict contraction. It follows from the hypothesis that $\phi_\lambda(T)$ is a contraction for every $\phi_\lambda\in \mathsf{Aut}(\mathbb{D}).$ Thus, $\|(T-\lambda I) (I - \bar{\lambda}T)^{-1}z\| \leq \|z\|$ for any $z\in \mathbb{X}$. This gives $\|(T-\lambda I)z\| \leq \|(I - \bar{\lambda}T)z\|$ for all $z\in \X$. Choosing $z=x$ we have $T(x)=ry$ and hence $\|ry-\lambda x\| \leq \| x- r\bar{\lambda}y\|$. Taking limit as $r\to 1$, we have
\[
   \|y-\lambda x\| \leq \| x-\bar{\lambda}y \|, \quad \lambda\in \mathbb{D}.
\]
Interchanging $x$ and $y$ in the definition of $T$ and also in the above inequality, we have
\[
    \|x-\bar{\lambda} y\| \leq \| y-\lambda x \|, \quad \lambda\in \mathbb{D}.
\]
So, we have $\|y-\lambda x\| = \| x-\bar{\lambda}y \|$ for all $\lambda \in \mathbb D$ and thus in particular $\|y-\xi x\| = \| x- \xi y \|$ for any $\xi \in (-1,1)$. So, for any $\xi \in (-1,1)$ we have
\[
 \left\|\frac{1}{\xi}y-x\right\| = \left\|\frac{1}{\xi}x-y\right\|.
\]
It follows from here that $\|ax-by\| =\|bx-ay\|,$ for any real numbers $a,b$ and any vectors $x,y \in \X$ with $\|x\|=\|y\|$. We now apply Corollary 2 of \cite{Ficken} which states the following:

\smallskip

``In order that a normed linear space with complex scalars may permit the definition of a scalar product, it is necessary and sufficient that, whenever $\|P\| = \|Q\|$, and $a$ and $b$ are real scalars,
\[
   \| aP + bQ \| = \| bQ + aP \|."
\]
So, $\X$ is a Hilbert space by \cite[Corollary 2]{Ficken}. This completes the proof.
\end{proof}

Being armed with all these results we are now going to present one of the main results of this paper.

\begin{thm}\label{Characterisation of Hilbert spaces in terms of Dilation on Banach space}
Let $\mathbb{X}$ be a complex Banach space. Then the following are equivalent.

\smallskip

\begin{enumerate}

\item[(i)] $\mathbb{X}$ is a Hilbert space.

\smallskip

\item[(ii)] Every strict contraction $T$ on $\X$ dilates to the unilateral shift $M_z$ on $\ell_2(\mathbb{X}).$

\smallskip

\item[(iii)] Every strict contraction $T$ on $\X$ dilates to an isometry.

\smallskip

\item[(iv)]  For every strict contraction $T\in \mathcal{B}(\mathbb{X}),$ the function $A_T: \mathbb{X}\to [0,\infty)$ given by 
\[
  A_T(x) = \left( \| x\|^2 - \|Tx\|^2 \right)^{\frac{1}{2}}, \quad x\in \mathbb{X},
\]
defines a norm on $\mathbb{X}.$

\smallskip

\item[(v)] For every strict contraction $T\in \mathcal{B}(\mathbb{X}),$ there exists an isometry $W: \mathbb{X}\to \ell_2(\mathbb{X})$ such that 
\[
 \widehat{M}_{z} W(x)= W(Tx), \quad x\in \mathbb{X},
\]
where $\widehat{M}_{z}$ is the backward shift operator.

\smallskip

\item[(vi)] For every strict contraction $T \in \mathcal B(\X)$ and for every automorphism $\phi_{\lambda}$ of the unit disk $\mathbb D$, the operator $\phi_\lambda(T)$ is a contraction.

\smallskip

\item[(vii)] For every strict contraction $S$ on the dual space $\mathbb{X}^*,$ the function $A_{S}:\mathbb{X}^*\to [0,\infty)$ given by 
\[
   A_S(x^*) = \left( \|x^*\|^2 - \|S(x^*)\|^2 \right)^{\frac{1}{2}},\quad x^*\in \mathbb{X}.
\]
defines a norm on $\mathbb{X}^*.$

\smallskip

\item[(viii)] $\mathbb{X}$ is reflexive and the Banach adjoint $T^\times$ of $T$ dilates to $M_z$ on $\ell_2(\mathbb{X}^*)$ for every strict contraction $T$ on $\mathbb{X}$.

\smallskip

\item[(ix)] The operator $\phi_\alpha(U)= (U-\alpha I)(I-\bar{\alpha}U)^{-1}$ is a contraction for every automorphism $\phi_\alpha$ of the unit disk $ \mathbb{D}$, $U$ being the bilateral shift operator on $\ell_2(\mathbb{Z},\mathbb{X})$ defined by
\[
U((\dots, x_{-2}, x_{-1}, \boxed{x_0}, x_1, x_2, \dots))= (\dots, x_{-2}, \boxed{x_{-1}}, x_0, x_1, x_2, \dots),
\]
where the box on either side indicates the $0$-th position.
\end{enumerate}

\end{thm}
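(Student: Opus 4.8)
The plan is to obtain most of the equivalences by invoking the results already proved in this section, and then to supply the three genuinely new pieces: the implication $(i)\Rightarrow(ii)$, the two implications involving $(viii)$, and the two involving $(ix)$. For the free part I would record that $(i)\Leftrightarrow(iv)$ is Theorem~\ref{Norm if and only if Hilbert space}, that $(iv)\Leftrightarrow(v)$ is Theorem~\ref{Norm and Backward shift}, that $(iv)\Leftrightarrow(vii)$ is Corollary~\ref{Norm on X and its dual}, and that $(i)\Leftrightarrow(vi)$ is Theorem~\ref{Automorphism of unit disc and contraction}. Quantifying Theorem~\ref{thm:main-02} over all strict contractions gives $(iii)\Leftrightarrow(iv)$: a strict contraction $T$ dilates to an isometry exactly when $A_T$ is a norm, so ``every strict contraction dilates to an isometry'' is literally ``$A_T$ is a norm for every strict contraction''. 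Finally $(ii)\Rightarrow(iii)$ is immediate because $M_z$ on $\ell_2(\X)$ is an isometry. Hence only $(i)\Rightarrow(ii)$, condition $(viii)$, and condition $(ix)$ remain.

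For $(i)\Rightarrow(ii)$, suppose $\X$ is a Hilbert space and $T$ a strict contraction. Then $A_T=\|D_T(\cdot)\|$ is a norm, so by Theorem~\ref{thm:main-02} the minimal isometric dilation space of $T$ is $\X\oplus_2\ell_2(\X_0)$ with $\X_0=(\X,A_T)$. Since $T$ is strict, $D_T$ is invertible, so $\X_0$ is a Hilbert space isometrically isomorphic to $\X$; therefore $\X\oplus_2\ell_2(\X_0)\cong\X\oplus_2\ell_2(\X)=\ell_2(\X)$ isometrically. On the other hand, the minimal isometric dilation of a strict Hilbert-space contraction is, by the classical theory \cite{Nagy Foias}, unitarily equivalent to the unilateral shift $M_z$ on $\ell_2(\X)$ (there is no unitary part because $\|T\|<1$). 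Transporting the embedding $W$ of Theorem~\ref{thm:main-02} through the isometric isomorphism then exhibits a dilation of $T$ to $M_z$ on $\ell_2(\X)$ in the sense of Definition~\ref{Definition of Dilation of contractions on Banach space}.

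For condition $(viii)$, the forward direction follows by applying the just-proved $(i)\Rightarrow(ii)$ to the Hilbert space $\X^{*}$: a Hilbert space is reflexive and $T^{\times}$ is a strict contraction on $\X^{*}$, so $T^{\times}$ dilates to $M_z$ on $\ell_2(\X^{*})$. For $(viii)\Rightarrow(i)$, reflexivity makes $T\mapsto T^{\times}$ a norm-preserving bijection of $\mathcal B(\X)$ onto $\mathcal B(\X^{*})$ (its inverse being $S\mapsto S^{\times}$ read through $\X^{**}=\X$), so every strict contraction on $\X^{*}$ is of the form $T^{\times}$ and hence dilates to $M_z$ on $\ell_2(\X^{*})$; that is precisely condition $(ii)$ for the space $\X^{*}$, so $\X^{*}$ is a Hilbert space by the equivalence $(ii)\Leftrightarrow(i)$ applied to $\X^{*}$, and then $\X\cong\X^{**}$ is a Hilbert space.

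Condition $(ix)$ is the main obstacle. The implication $(i)\Rightarrow(ix)$ is routine: if $\X$ is a Hilbert space then so is $\ell_2(\Z,\X)$, $U$ is a unitary with $\sigma(U)\subseteq\mathbb T$, $I-\bar\alpha U$ is invertible for $\alpha\in\mathbb D$, and $\phi_\alpha$ maps $\mathbb T$ onto $\mathbb T$, so $\phi_\alpha(U)$ is again a unitary, in particular a contraction. The work is in $(ix)\Rightarrow(i)$. First I would upgrade ``contraction'' to ``surjective isometry'': $\phi_\alpha(U)$ is a contraction by hypothesis, and its inverse is $m_{\bar\alpha}(U^{-1})$, which via the coordinate-reversal isometry $(x_n)_n\mapsto(x_{-n})_n$ is unitarily equivalent to $m_{\bar\alpha}(U)=\phi_{\bar\alpha}(U)$, again a contraction by hypothesis; hence $\|v\|=\|\phi_\alpha(U)^{-1}\phi_\alpha(U)v\|\le\|\phi_\alpha(U)v\|\le\|v\|$, so $\phi_\alpha(U)$ preserves the norm. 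Next I would test this on two-coordinate vectors: for $x,y\in\X$ set $v_\lambda=\xi+\lambda\eta$ with $\xi$ carrying $x$ in position $0$ and $\eta$ carrying $y$ in position $1$; expanding $(I-\bar\alpha U)^{-1}$ as a geometric series, applying $U-\alpha I$, and summing the resulting $\ell_2$-series yields
\[
\|(1-|\alpha|^2)\,x-\lambda\alpha\,y\|^2+(1-|\alpha|^2)\,\|\bar\alpha\,x+\lambda\,y\|^2=(1-|\alpha|^2)\,\|x\|^2+|\lambda|^2\,\|y\|^2 .
\]
Specializing the parameters (take $\alpha$ real with $\alpha^2=\tfrac12$ and choose $\lambda$ suitably) collapses this to the parallelogram law
\[
\|x+y\|^2+\|x-y\|^2=2\|x\|^2+2\|y\|^2 ,
\]
so $\X$ is a Hilbert space by the Jordan--von Neumann theorem. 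I expect the delicate points to be exactly these: one genuinely needs the \emph{equality} above (the contraction inequality alone does not force the parallelogram law, as $\ell^1$ shows), which is why the surjective-isometry step is essential, and the geometric-series bookkeeping behind the displayed identity has to be carried out carefully.
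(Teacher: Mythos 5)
Your proposal is correct, and for the bulk of the equivalences it follows the same architecture as the paper: both defer $(i)\Leftrightarrow(iv)$, $(iv)\Leftrightarrow(v)$, $(iv)\Leftrightarrow(vii)$ and $(i)\Leftrightarrow(vi)$ to Theorems \ref{Norm if and only if Hilbert space}, \ref{Norm and Backward shift}, Corollary \ref{Norm on X and its dual} and Theorem \ref{Automorphism of unit disc and contraction}, and both treat $(ii)\Rightarrow(iii)$ as immediate. The differences are these. For $(i)\Rightarrow(ii)$ the paper writes down the dilation explicitly via $W(x)=(D_{T^*}x,D_{T^*}T^*x,\dots)$ and checks $T^*=W^*M_z^*W$, whereas you route through Theorem \ref{Necessary and sufficient condition for a contraction to be dilated 1} plus the classical uniqueness of the minimal isometric dilation and the fact that for $\|T\|<1$ the multiplicity is $\dim\mathcal{D}_{T^*}=\dim\X$; both work, the paper's being more self-contained. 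For $(viii)$ the paper proves $(vii)\Leftrightarrow(viii)$ while you close the loop through $(i)$ and $(ii)$ for $\X^*$, but the essential step --- that reflexivity makes every strict contraction on $\X^*$ a Banach adjoint $T^\times$ via $T=\pi_\X^{-1}S^\times\pi_\X$ --- is identical. The genuinely different piece is $(ix)\Rightarrow(i)$: the paper only uses the contraction \emph{inequality} $\|(U-\alpha I)\underline{x}\|\le\|(U^{-1}-\bar{\alpha}I)\underline{x}\|$, tests it on two-coordinate vectors with $\|x\|=\|y\|$, gets $\|x-\alpha y\|=\|y-\bar{\alpha}x\|$ by swapping $x$ and $y$, and invokes Ficken's criterion; you instead upgrade $\phi_\alpha(U)$ to a surjective isometry (its inverse $\phi_{\bar{\alpha}}(U^{-1})$ being conjugate to $\phi_{\bar{\alpha}}(U)$ under coordinate reversal), extract the exact two-coordinate identity, and specialize $\alpha=\lambda=1/\sqrt{2}$ to land on the parallelogram law and Jordan--von Neumann. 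I verified your identity (positions $0$, $1$ and the geometric tail $n\ge 2$ of $\phi_\alpha(U)v$ contribute $|\alpha|^2\|x\|^2$, $\|(1-|\alpha|^2)x-\alpha\lambda y\|^2$ and $(1-|\alpha|^2)\|\bar{\alpha}x+\lambda y\|^2$ respectively) and the specialization; the argument is sound, applies to all $x,y$ without the normalization $\|x\|=\|y\|$, and has the merit of avoiding Ficken's theorem for this implication --- though Ficken still enters your overall proof through the cited Theorem \ref{Automorphism of unit disc and contraction} for $(i)\Leftrightarrow(vi)$.
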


\begin{proof}
The equivalences (i)$\iff$(iv), (iv)$\iff$(v), (i)$\iff$(vi) and (iv)$\iff$(vii) follow from Theorem \ref{Norm if and only if Hilbert space}, Theorem \ref{Norm and Backward shift}, Theorem \ref{Automorphism of unit disc and contraction} and Corollary \ref{Norm on X and its dual} respectively. We now prove (i)$\implies$(ii)$\implies$(iii) $\implies$ (iv), (vii)$\iff$(viii) and (i) $\iff$ (ix).

\medskip

\noindent (i)$\implies$(ii). A proof to this can be found in the literature, e.g. see \cite{Nagy Foias}. However, for the sake of completeness, we briefly outline a proof here. Let $T$ be a strict contraction on $\mathbb{X}$ and let $T^*$ be its adjoint. Let $W: \mathbb{X}\to \ell_2(\mathbb{X})$ be defined by 
\[
   W(x)= ( D_{T^*}x, D_{T^*}T^*x, D_{T^*}T^{*2}x, \cdots),\qquad x\in \mathbb{X}.
\]
Evidently, $W$ is linear map, and for each $x\in \X$ we have
\[
   \| W(x)\|^2 = \lim _{n \rightarrow \infty} \sum_{k=0}^n \| D_{T^*}T^{*k}x \|^2 = \lim_n \sum_{k=0}^n \left(\|T^{*k}x\|^2 - \|T^{*{k+1}}x\|^2 \right) = \|x\|^2 - \lim_n \|T^{*n}x\|^2 = \|x\|^2,
\]
which shows that $W$ is an isometry. Therefore, $W W^* = P_{_{W(\X)}}$, the orthogonal projection on $\text{Ran}(W)$ and $\ell_2(\mathbb{X})= \text{Ran}(W)\oplus_2 \ker( W W^*)$. It is evident that $M_z^* W(x) = W(T^*x)$ for all $x\in \X$, where $M_z^*=\widehat M_{z}$ is the backward shift operator on $\ell_2(\X)$ defined as in (\ref{backward shift}). Also, since $W^*W= I_\mathbb{X}$, we have $T^*=W^* M_z^* W$, that is, $M_z^*$ is a co-isometric extension of $T^*$. Thus, $M_z$ on $\ell_2(\X)$ is an isometric dilation of $T$.

\medskip

\noindent (ii)$\implies$(iii). Obvious.

\medskip

\noindent (iii)$\implies$(iv). Every strict contraction dilates to isometry. Suppose $T$ is a strict contraction that dilates to an isometry $V$.  Since $T$ is a strict contraction, $A_T$ satisfies positivity and absolute homogeneity. It remains to show that $A_T$ satisfies the triangle inequality. Being consistent with the notations of Definition \ref{Definition of Dilation of contractions on Banach space}, we have
\[
   \|x\|^2 - \|Tx\|^2 = \|V Wx\|^2 - \| P_{_{W(\mathbb{X})}}V W x\|^2 = \|(I-P_{_{W(\mathbb{X})}})V W x\|^2, \qquad x\in \mathbb{X}.
\]
Therefore, 
\[
  A_T(x) = \|(I-P_{_{W(\mathbb{X})}})V W x\|, \qquad x\in \mathbb{X},
\]
which shows $A_T(x+y)\leq A_T(x)+A_T(y)$ for all $x,~y\in \X$, as desired.

\medskip

\noindent (vii)$\implies$(viii). It follows from the proof of (iv)$\implies$(i) that $\X^*$ is a Hilbert space and thus $\X$ is reflexive. For every strict contraction $T$ on $\X$, $T^\times$ on $\X^*$ is also a strict contraction. Therefore, $T^\times$ dilates to $M_z$ on $\ell_2(\X^*)$ by (i)$\implies$(ii).

\medskip

\noindent (viii)$\implies$(vii): We first show that given any strict contraction $S$ on $\X^*$, $S=T^\times$ for some strict contraction $T\in \mathcal{B}(\mathbb{X}).$ For any strict contraction $S$ on $\X^*$, define $T: \mathbb{X} \to \mathbb{X}$ by 
\[
   T(x)= \pi_\X^{-1} S^\times \pi_\X (x), \qquad x\in \mathbb{X},
\]
where $\pi_\X: \X \to \X^{**}$ is the canonical embedding defined by $\pi_\X (x) = \hat{x}$ with $\hat{x}(x^*) = x^*(x)$ for all $x^*\in \X^*$. Evidently, $\|T\| = \|S^\times \| = \|S\| < 1.$ Let $f\in \mathbb{X}^*$ and $y\in \mathbb{X}$ be arbitrary. Thus,
\[
  T^\times (f)(y) = f(Ty) = f( \pi_\X^{-1} S^\times \pi_\X(y)).
\]
Observe that $S^\times\pi_\X(y)$ is a member of $\X^{**}$ which we denote by $\pi_\X(z)$ for some $z\in \X$. Therefore,
\[
  f(z) = \pi_\X(z)f= (S^\times\pi_\X(y) ) (f) = \pi_\X(y)( Sf) = Sf(y),
\]
which shows that $T^\times = S$. By hypothesis the contraction $S$ dilates to $M_z$ on $\ell_2(\X^*)$. The rest of the the proof follows from the proof of the implication (iii)$\implies$(iv) by replacing $V$ and $\X$ by $M_z$ and $X^*$ respectively, where $V$ is the isometric dilation as in the proof of (iii)$\implies$(iv).

\smallskip

\noindent (i)$\iff$(ix): If $\X$ is a Hilbert space, then $\phi_\alpha(U)$ is obviously a contraction as the bilateral shift $U$ is a contraction. 

\smallskip

Now, we prove the sufficiency part. Suppose that $\phi_\alpha(U)= (U-\alpha I)(I-\bar{\alpha}U)^{-1}$ is a contraction for every $\alpha \in \mathbb D$. Let $\underline{x}\in \ell_2(\mathbb Z, \X)$ be arbitrary and set $\underline{y}= (I-\bar{\alpha} U)(\underline{x})$. Then $\|\phi_\alpha(U)\underline{y}\|\leq \|\underline{y}\|$, which further implies that 
\[
\|(U-\alpha I)\underline{x}\| \leq \|(I-\bar{\alpha} U)\underline{x}\| = \|U^{-1}\left(I -\bar{\alpha}U\right)\underline{x}\| = \|\left(U^{-1} - \bar{\alpha}I\right)\underline{x}\|.
\]
Now, let $x,y\in \mathbb{X}$ are arbitrary with $\|x\|=\|y\|$. Consider the vector $\underline{\mathbf{x}}= (\cdots, \mathbf{0},\boxed{x}, y, \mathbf{0}, \cdots)\in \ell_2(\mathbb{Z},\mathbb{X}).$ Then for every $\alpha\in \mathbb{D}$, we have 
\begin{align*}
  \|(U-\alpha I)\underline{\mathbf{x}}\|= \|y\|^2 + \vert \alpha\vert ^2 \|x\|^2 + \|x-\alpha y\|^2 & \leq \| (U^{-1}- \bar{\alpha} I)\underline{\mathbf{x}} \| \\
  & = \|x\|^2 + \vert \alpha\vert ^2 \|y\|^2 + \|y- \bar{\alpha} x\|^2 ,
\end{align*}
which gives $\|x-\alpha y\|\leq \|y-\bar{\alpha}x\|$. Interchanging the roles of $x,y$, we have $\|y-\bar{\alpha} x\| \leq \|x - \alpha y\|$ which is same as saying that $\|y-\alpha x\| = \| x-\bar{\alpha}y \|$ for any $\alpha \in \mathbb D$. In particular $\|y-\xi x\| = \| x- \xi y \|$ for any $\xi \in (-1,1)$.
It follows from here that $\|ax-by\| =\|bx-ay\|,$ for any real numbers $a,b$ and any vectors $x,y \in \X$ with $\|x\|=\|y\|$. So, $\X$ is a Hilbert space by \cite[Corollary 2]{Ficken}.

\smallskip

The proof of the theorem is now complete.
\end{proof}

A natural question arises: for a (non-Hilbert) Banach space $\X$ what are the strict contractions in $\mathcal B(\X)$ that dilate to isometries? We now characterize such strict contractions as well as find explicit isometric dilations for them.

\begin{thm}\label{Necessary and sufficient condition for a contraction to be dilated 1}  Suppose $\X$ is a complex Banach space. Then a strict contraction $T$ on $\X$ dilates to an isometry if and only if the function $A_T: \mathbb{X}\to [0,\infty)$ given by $
  A_T(x) = \left( \| x\|^2 - \|Tx\|^2 \right)^{\frac{1}{2}}$
defines a norm on $\mathbb{X}.$ Moreover, the minimal isometric dilation space of $T$ is isometrically isomorphic to $\mathbb{X}\oplus_2 \ell_2(\mathbb{X}_0)$, where $\mathbb{X}_0$ is the Banach space $(\mathbb{X}, A_T)$.
\end{thm}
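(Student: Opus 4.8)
The plan is to treat the ``only if'' direction as essentially already contained in the excerpt, and to devote the real work to the ``if'' direction, which I would prove by an explicit Sz.-Nagy--type construction in which the role of the defect operator is played by the identity map of $\mathbb{X}$ into $(\mathbb{X},A_T)$.

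\emph{Necessity.} Suppose the strict contraction $T$ admits an isometric dilation $V$ on $\widetilde{\mathbb{X}}$, with isometry $W\colon\mathbb{X}\to\widetilde{\mathbb{X}}$, closed subspace $\BL$, and norm-one projection $P=P_{W(\mathbb{X})}$ as in Definition~\ref{Definition of Dilation of contractions on Banach space}. Specialising the dilation identity to $q(z)=z$ gives $W(Tx)=PV(Wx)$, and since $\widetilde{\mathbb{X}}=W(\mathbb{X})\oplus_2\BL$ with $P$ the associated norm-one projection,
\[
\|x\|^2-\|Tx\|^2=\|VWx\|^2-\|PVWx\|^2=\|(I-P)VWx\|^2 .
\]
Hence $A_T(\cdot)=\|\Delta(\cdot)\|$ for the bounded \emph{linear} map $\Delta:=(I-P)VW\colon\mathbb{X}\to\BL$, so the triangle inequality for $A_T$ follows from linearity of $\Delta$, while positivity (using $\|Tx\|<\|x\|$ for $x\neq\mathbf{0}$) and absolute homogeneity are clear; thus $A_T$ is a norm. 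This is precisely the argument used for the implication (iii)$\Rightarrow$(iv) of Theorem~\ref{Characterisation of Hilbert spaces in terms of Dilation on Banach space}, which I would simply cite.

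\emph{Sufficiency.} Assume $A_T$ is a norm. By Lemma~\ref{lem: Equv. of norm}, $\mathbb{X}_0:=(\mathbb{X},A_T)$ is a Banach space with norm equivalent to the original one, and the identity map $d\colon\mathbb{X}\to\mathbb{X}_0$ is a bounded linear bijection with $\|d(x)\|_{\mathbb{X}_0}=A_T(x)$; this $d$ is the ``defect operator''. I would set $\widetilde{\mathbb{X}}:=\mathbb{X}\oplus_2\ell_2(\mathbb{X}_0)$, $\BL:=\{\mathbf{0}\}\oplus_2\ell_2(\mathbb{X}_0)$, let $W\colon\mathbb{X}\to\widetilde{\mathbb{X}}$ be $Wx=(x,\mathbf{0})$ (an isometry, with $\widetilde{\mathbb{X}}=W(\mathbb{X})\oplus_2\BL$ and $P_{W(\mathbb{X})}$ the evident norm-one projection), and define $V\colon\widetilde{\mathbb{X}}\to\widetilde{\mathbb{X}}$ by $V\bigl(x,(y_0,y_1,\dots)\bigr)=\bigl(Tx,(d(x),y_0,y_1,\dots)\bigr)$. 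Then: (a) $V$ is an isometry, since $\|d(x)\|_{\mathbb{X}_0}^2=\|x\|^2-\|Tx\|^2$ gives $\|V(x,(y_n))\|^2=\|Tx\|^2+(\|x\|^2-\|Tx\|^2)+\sum_n\|y_n\|_{\mathbb{X}_0}^2=\|(x,(y_n))\|^2$; (b) an immediate induction yields $V^n(Wx)=\bigl(T^nx,(d(T^{n-1}x),\dots,d(x),\mathbf{0})\bigr)$, whence $P_{W(\mathbb{X})}V^n(Wx)=W(T^nx)=\widehat{T}^n(Wx)$, and reading this off coefficientwise shows $q(\widehat{T})=P_{W(\mathbb{X})}q(V)|_{W(\mathbb{X})}$ for every polynomial $q$, so $V$ is an isometric dilation of $T$ on $\mathbb{X}\oplus_2\ell_2(\mathbb{X}_0)$; (c) for minimality, from $V^k(Wx)-W(T^kx)=(\mathbf{0},(d(T^{k-1}x),\dots,d(x),\mathbf{0}))$ one subtracts the lower-order terms already shown to lie in $\bigvee_{n\ge0}V^nW(\mathbb{X})$ and uses surjectivity of $d$ onto $\mathbb{X}_0$ to place, inductively, every single-coordinate-supported sequence in $\bigvee_{n\ge0}V^nW(\mathbb{X})$, hence all finitely supported ones, which are dense in $\ell_2(\mathbb{X}_0)$; with $W(\mathbb{X})=V^0W(\mathbb{X})$ this exhausts $\widetilde{\mathbb{X}}$. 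This construction also delivers the asserted identification of the minimal dilation space.

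\emph{Main obstacle.} The genuine content here is conceptual rather than computational: recognising that ``$A_T$ is a norm'' is the correct Banach-space substitute for ``$I-T^{*}T\succeq 0$'', and that $(\mathbb{X},A_T)$ together with the identity map $d$ is the right analogue of the defect space $\mathcal{D}_T$ with $D_T$. Once this is in place, the construction is essentially forced and the verifications are routine; the only steps I expect to require some care are the minimality bookkeeping (ensuring the shifted copies of $d(\mathbb{X})=\mathbb{X}_0$ truly exhaust $\ell_2(\mathbb{X}_0)$) and confirming that $\widetilde{\mathbb{X}}=W(\mathbb{X})\oplus_2\BL$ with its norm-one projection literally meets the hypotheses of Definition~\ref{Definition of Dilation of contractions on Banach space}.
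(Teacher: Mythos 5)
Your proposal is correct and follows essentially the same route as the paper: necessity is the (iii)$\Rightarrow$(iv) argument of Theorem \ref{Characterisation of Hilbert spaces in terms of Dilation on Banach space} verbatim, and sufficiency builds the same Sch\"affer/Sz.-Nagy-type dilation in which the identity map $d\colon\mathbb{X}\to(\mathbb{X},A_T)$ plays the role of the defect operator. The only difference is cosmetic: the paper defines $V$ on the larger space $\ell_2(\mathbb{X}\oplus_2\mathbb{X}_0)$ via the block operators $\mathbf{T},\mathbf{D}$ and then cuts down to the minimal dilation space $\mathbb{X}\oplus_2\ell_2(\mathbb{X}_0)$, whereas you construct the isometry directly on that minimal space, and your inductive minimality bookkeeping is exactly the paper's one-line identity $V^nWx-V^{n-1}W(Tx)=(\mathbf{0},\dots,\mathbf{0},d(x),\mathbf{0},\dots)$.
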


\begin{proof}
We prove the sufficiency first. Let $\mathcal{K}= \ell_2(\mathbb{X}\oplus_2\mathbb{X}_0)$, where $\X_0$ is the Banach space $(\X,A_T).$ Let $\mathbf{D},~\mathbf{T}: \mathbb{X}\oplus_2\mathbb{X}_0 \to \mathbb{X}\oplus_2\mathbb{X}_0$ be defined by the operator block matrices
\[
\begin{bmatrix}
0 & 0 \\
I & 0
\end{bmatrix}
\quad \text{ and }\quad
\begin{bmatrix}
T & 0 \\
0 & I
\end{bmatrix}\quad \text{respectively.}
\]
Evidently, we have
$
  \left\|\mathbf{T}\underline{x}\right\|^2 + \left\|\mathbf{D}\underline{x}\right\|^2 = \left\|\underline{x}\right\|^2$ for all $\underline x\in \mathbb{X}\oplus_2\mathbb{X}_0$.
Now, consider the linear map $V:\mathcal{K}\to\mathcal{K}$ defined by
\begin{equation} \label{eqn:new-061}
  V\left(\underline{x_0},\underline{x_1},\dots\right) = \left(\mathbf{T}\underline{x_0}, \mathbf{D}\underline{x_0}, \underline{x_1}, \underline{x_2},\dots\right),\qquad \left(\underline{x_0},\underline{x_1},\dots\right)\in \mathcal{K}.
\end{equation}
Then for each $\left(\underline{x_0},\underline{x_1},\dotsc\right)\in \mathcal{K},$ we have
\begin{align*}
 \left\| V(\underline{x_0},\underline{x_1},\dots) \right\|^2 & = \left\|\mathbf{T}\underline{x_0}\|^2 + \| \mathbf{D}\underline{x_0}\right\|^2 + \sum _{i=1}^\infty \left\| \underline{x_i} \right\|^2 = \sum_{n=0}^\infty \left\|\underline{x_i}\right\|^2.    
\end{align*}

Consequently, $V$ is an isometry. We now show that $V$ is a dilation of $T.$ To this end, we embed the space $\mathbb{X}$ into $\mathcal{K}$ by the natural inclusion map $W: \mathbb{X} \to \mathcal{K}$ defined by 
\[
 W(x)= ((x,\mathbf{0}),\mathbf{0},\mathbf{0}, \dots), \qquad x\in \mathbb{X}.
\]
Then
\[
  \mathcal{K}= W(\mathbb{X}) \oplus_2 \widetilde{\mathbb{Y}},\quad \text{where} \quad \widetilde{\mathbb{Y}}=\left\{(\mathbf{D}~\underline{x_0}, \underline{x_1},\dots ):~ (\underline{x_0},\underline{x_1},\dots)\in \mathcal{K}\right\}.
\]
Consider the surjective isometry $\widehat{W}:=W: \mathbb{X} \to W(\mathbb{X})$ and $\widehat{T}:= \widehat{W} T \widehat{W}^{-1} : W(\X)\to W(\X).$ It is not difficult to see that for $n\geq 1,$ we have
\[
V^n \left(\underline{x_0},\underline{x_1},\dots\right) = \left(\mathbf{T}^n\underline{x_0}, \mathbf{D}\mathbf{T}^{n-1}\underline{x_0}, \mathbf{D}\mathbf{T}^{n-2}\underline{x_0}, \dots, \mathbf{D}\underline{x_0}, \underline{x_1},\underline{x_2},\dots\right),\quad \left(\underline{x_0}, \underline{x_1},\dots\right)\in \mathcal{K}.
\]
Consequently, for each natural number $n$ and for any $x\in X$ we have
\begin{align*}
  P_{_{W(\mathbb{X})}}V^n (W x) 
   & = P_{_{W(\mathbb{X})}}\left(\mathbf{T}^n\underline{x}, \mathbf{D}~\mathbf{T}^{n-1}\underline{x}, \mathbf{D}~\mathbf{T}^{n-2}\underline{x}, \dots, \mathbf{D}\underline{x},\mathbf{0},\mathbf{0},\dots \right), \qquad \left[ ~\underline{x}= (x,\mathbf{0}) ~\right]\\
   & = ((T^nx,\mathbf{0}), \mathbf{0},\mathbf{0},\dots ) \\
   & = W (T^n x) = \widehat{T}^n Wx.
\end{align*}
Since $W(\X)$ is right-complemented in $\mathcal{K},$ we have that $\|P_{_{W(\X)}}\|=1.$ Therefore, $V$ on $\mathcal{K}$ is an isometric dilation of $T$. Moreover, the minimal isometric dilation space is
\[
  \overline{span}\{ V^n Wx : ~ n\geq 0,~ x\in \mathbb{X}\}.
\]
Indeed, for every $n\geq 1$ we have
 \[
 V^n Wx - V^{n-1}W(Tx) = (\mathbf{0}, \mathbf{0}, \cdots, \underbrace{~\mathbf{D}~\underline{x}~}_{n-th}, \mathbf{0}, \mathbf{0}, \cdots),\qquad \underline{x}=(x,\mathbf{0}),
\]
which further implies that
\[
  W(\mathbb{X})\oplus_2 \ell_2\left(\mathbf{D}\left( \mathbb{X}\oplus_2\mathbb{X}_0 \right)\right) \subseteq \overline{span}\{ V^n Wx : ~n\geq 0,~x\in \mathbb{X}\} \subseteq W(\mathbb{X})\oplus_2 \ell_2\left(\mathbf{D}\left( \mathbb{X}\oplus_2 \mathbb{X}_0 \right)\right).
\]
Therefore, the above inclusions are equalities. Since $W( \mathbb{X} )\oplus_2 \ell_2\left(\mathbf{D}\left( \mathbb{X}\oplus_2 \mathbb{X}_0 \right)\right)$ is isometrically isomorphic to $\mathbb{X}\oplus_2 \ell_2\left( \mathbb{X}_0 \right),$ the minimal isometric dilation space is isometrically isomorphic to $\mathbb{X}\oplus_2 \ell_2(\mathbb{X}_0)$.

\medskip

The proof of the `necessary'-part can be imitated from the proof of (iii)$\implies$(iv) of Theorem \ref{Characterisation of Hilbert spaces in terms of Dilation on Banach space}. The proof is now complete.
\end{proof}

\begin{cor} \label{cor:main-02}
Let $T\in \mathcal{B}(\X)$ be a strict contraction such that $A_T$ defines a norm on $\X$. Then $A_{T^n}$ also defines a norm on $\X$ for all $n\geq 2$.
\end{cor}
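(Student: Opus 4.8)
The plan is to leverage Theorem~\ref{Necessary and sufficient condition for a contraction to be dilated 1}, which characterizes, for a strict contraction, the property ``$A_T$ is a norm on $\X$'' as equivalent to ``$T$ dilates to a Banach space isometry''. So the first step is to use the hypothesis to produce an isometry $V$ on a Banach space $\widetilde{\X}$ (isometrically isomorphic to $W(\X)\oplus_2\BL$ for some closed subspace $\BL$), together with an isometry $W\colon\X\to\widetilde{\X}$, such that $q(\widehat{T})=P_{_{W(\X)}}q(V)|_{W(\X)}$ for all $q\in\C[z]$, where $\widehat{T}=\widehat{W}T\widehat{W}^{-1}$.

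The second step is to verify that $V^{n}$ is an isometric dilation of $T^{n}$, with respect to the same data $W$ and $\BL$. Since $\|T^{n}\|\le\|T\|^{n}<1$, the operator $T^{n}$ is again a strict contraction, and $\widehat{T^{n}}=\widehat{W}T^{n}\widehat{W}^{-1}=\widehat{T}^{\,n}$. Substituting the monomial $q(z)=z^{m}$ into the dilation identity for $V$ gives $P_{_{W(\X)}}V^{m}|_{W(\X)}=\widehat{T}^{\,m}$ for every $m\ge0$; hence for each $k\ge0$,
\[
P_{_{W(\X)}}(V^{n})^{k}\big|_{W(\X)}
=P_{_{W(\X)}}V^{nk}\big|_{W(\X)}
=\widehat{T}^{\,nk}
=\bigl(\widehat{T}^{\,n}\bigr)^{k}
=\bigl(\widehat{T^{n}}\bigr)^{k},
\]
and extending by linearity yields $q(\widehat{T^{n}})=P_{_{W(\X)}}q(V^{n})|_{W(\X)}$ for all polynomials $q$. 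As $V^{n}$ is an isometry, this shows $V^{n}$ is an isometric dilation of the strict contraction $T^{n}$.

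The final step is to apply the ``only if'' direction of Theorem~\ref{Necessary and sufficient condition for a contraction to be dilated 1} to $T^{n}$: since $T^{n}$ dilates to an isometry, $A_{T^{n}}$ defines a norm on $\X$, which is precisely the assertion.

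I do not anticipate a genuine obstacle; the only step requiring a little care is promoting the compression identity from $V$ to $V^{n}$, which reduces, as above, to the identity $V^{nk}=(V^{n})^{k}$ together with linearity of $q\mapsto P_{_{W(\X)}}q(\cdot)|_{W(\X)}$. As a remark, one could also argue directly without invoking dilations: the telescoping identity $A_{T^{n}}(x)^{2}=\sum_{k=0}^{n-1}A_{T}(T^{k}x)^{2}$ exhibits $A_{T^{n}}$ as the composition of the linear map $x\mapsto(x,Tx,\dots,T^{n-1}x)$ into the $\ell_{2}$-sum of $n$ copies of the normed space $\X_{0}=(\X,A_{T})$ with that space's norm, hence a seminorm; positivity is immediate from $A_{T^{n}}(x)\ge(1-\|T\|^{2n})^{1/2}\|x\|$.
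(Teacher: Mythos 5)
Your main argument is correct and is essentially the paper's proof: the paper likewise takes the isometric dilation $V$ of $T$ from Theorem \ref{Necessary and sufficient condition for a contraction to be dilated 1} and observes that $A_{T^n}(x)=\|(I-P_{_{W(\X)}})V^nWx\|$, which is exactly the computation hiding inside your appeal to the necessity direction once you have checked that $V^n$ dilates $T^n$. Your closing remark --- the telescoping identity $A_{T^n}(x)^2=\sum_{k=0}^{n-1}A_T(T^kx)^2$, which realizes $A_{T^n}$ as the pullback of the norm of the $2$-direct sum of $n$ copies of $(\X,A_T)$ under the linear map $x\mapsto(x,Tx,\dots,T^{n-1}x)$ --- is also correct and yields a dilation-free proof, though it is not the route the paper takes.
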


\begin{proof}

It suffices to show that $A_{T^n}$ satisfies the triangle inequality, since positivity and absolute homogeneity of $A_{T^n}$ follow from the fact that $T^n$ is also a strict contraction if $T$ is so. Let $V$ be the isometric dilation of $T$ as constructed in the sufficiency part of Theorem \ref{Necessary and sufficient condition for a contraction to be dilated 1}. Then for every $n\geq 2$, we have 
\[
   \|x\|^2 - \|T^nx\|^2 = \|V^n Wx\|^2 - \| P_{_{W(\mathbb{X})}} V^n W x\|^2 = \|(I-P_{_{W(\mathbb{X})}})V^n W x\|^2, \qquad x\in \mathbb{X}.
\]
Therefore, 
\[
  A_{T^n}(x) = \|(I-P_{_{W(\mathbb{X})}})V^n W x\|, \qquad x\in \mathbb{X},
\]
which shows $A_{T^n}(x+y)\leq A_{T^n}(x) + A_{T^n}(y)$ for all $x,~y\in \X$, as desired.
\end{proof}

The next result gives another necessary and sufficient condition such that the function $A_T$ (as in (\ref{Equivalent norm function})) gives a norm on a Banach space.

\begin{prop} \label{cor:main-01}
Suppose $(\mathbb{X}, \|.\|)$ is a Banach space and suppose $T$ is a strict contraction on $\mathbb{X}$. Then the function $A_T:\mathbb{X}\to[0,\infty)$ as in $(\ref{Equivalent norm function})$ is a norm on $\mathbb{X}$ if and only if there exists a Banach space $\mathbb{Y}$ and a bounded linear operator $A\in \mathcal{B}\left(\mathbb{X},\mathbb{Y}\right)$ such that $\|A(x)\|=A_T(x)$ for all $x\in\mathbb{X}.$
\end{prop}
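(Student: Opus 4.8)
The plan is to prove the two implications separately, with the nontrivial direction resting on Lemma \ref{lem: Equv. of norm}.

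First I would dispose of the ``if'' direction. Suppose there is a Banach space $\mathbb{Y}$ and $A\in\mathcal{B}(\mathbb{X},\mathbb{Y})$ with $\|A(x)\|=A_T(x)$ for all $x\in\mathbb{X}$. Then $A_T$ inherits absolute homogeneity and the triangle inequality directly from the norm of $\mathbb{Y}$ together with the linearity of $A$, namely $A_T(x+y)=\|A(x+y)\|=\|A(x)+A(y)\|\leq\|A(x)\|+\|A(y)\|=A_T(x)+A_T(y)$. For positivity, note that $T$ is a strict contraction, so $\|Tx\|<\|x\|$ whenever $x\neq\mathbf{0}$, whence $A_T(x)=(\|x\|^2-\|Tx\|^2)^{1/2}>0$ for $x\neq\mathbf 0$, and of course $A_T(\mathbf 0)=0$. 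Thus $A_T$ is a norm on $\mathbb{X}$.

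For the ``only if'' direction, assume $A_T$ is a norm on $\mathbb{X}$. By Lemma \ref{lem: Equv. of norm}, the normed space $\mathbb{X}_0=(\mathbb{X},A_T)$ is a Banach space. I would simply take $\mathbb{Y}=\mathbb{X}_0$ and let $A=I_d:\mathbb{X}\to\mathbb{X}_0$ be the identity map, which is clearly linear and bijective and satisfies $\|A(x)\|_{\mathbb{Y}}=A_T(x)$ for all $x\in\mathbb{X}$ by construction. It remains to check that $A$ is bounded: since $\|Tx\|\geq 0$, we have $A_T(x)=(\|x\|^2-\|Tx\|^2)^{1/2}\leq\|x\|$ for every $x\in\mathbb{X}$, so $\|A\|\leq 1$. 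Hence $A\in\mathcal{B}(\mathbb{X},\mathbb{Y})$ with the required property, and the proof is complete.

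There is essentially no serious obstacle here; the only real input is the completeness of $(\mathbb{X},A_T)$, which is precisely the content of Lemma \ref{lem: Equv. of norm} and uses that $\|T\|<1$ in an essential way (the same statement fails for norm-one contractions, cf.\ Lemma \ref{lem:new-0122}). Everything else is a routine verification of the norm axioms and of boundedness via the elementary estimate $A_T(x)\leq\|x\|$.
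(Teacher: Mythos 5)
Your proof is correct, but the ``only if'' direction takes a genuinely different and more elementary route than the paper's. The paper obtains the operator $A$ from dilation theory: it invokes Theorem \ref{Necessary and sufficient condition for a contraction to be dilated 1} to get an isometric dilation $V$ of $T$ on $\mathcal{K}$ with isometric embedding $W:\mathbb{X}\to\mathcal{K}$, and then sets $A(x)=(I-P_{_{W(\mathbb{X})}})VW(x)$, checking that $\|A(x)\|=(\|x\|^2-\|Tx\|^2)^{1/2}$ because $(I-P_{_{W(\mathbb{X})}})VWx$ lies in the orthogonal ($\oplus_2$) complement of $W(\mathbb{X})$. You instead take $\mathbb{Y}=(\mathbb{X},A_T)$ and let $A$ be the identity map, relying only on Lemma \ref{lem: Equv. of norm} for completeness of the target and on the trivial estimate $A_T(x)\leq\|x\|$ for boundedness; this is shorter and avoids the dilation machinery entirely (one could even bypass the lemma by passing to the completion of $(\mathbb{X},A_T)$). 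What the paper's heavier construction buys is an $A$ with operator-theoretic content: it realizes $A$ as the Banach-space analogue of the defect operator $D_T$, the ``lost'' component of the dilation, which is the object reused in Corollary \ref{cor:main-02} and Theorem \ref{thm:623}. Your ``if'' direction matches the paper's argument essentially verbatim.
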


\begin{proof}

First we prove the sufficiency part. The positivity and homogeneity of $A_T$ follow from the facts that $\|T\| < 1$ and $T$ is linear. The triangle inequality follows from linearity of the map $A: \mathbb{X} \to \mathbb{Y}$. Indeed, for $x,y\in \X$ we have 
\[
  A_T(x+y) = \|A(x+y)\| = \| Ax + Ay\| \leq \|Ax \| + \| Ay\| = A_T(x) + A_T(y).
\]

For the necessity part, we have that $T$ dilates to an isometry by Theorem \ref{Necessary and sufficient condition for a contraction to be dilated 1}. Consider the isometric dilation $V: \mathcal{K} \to \mathcal{K}$ of $T$ as constructed in Theorem $\ref{Necessary and sufficient condition for a contraction to be dilated 1}$, and the linear map $A: \X \to \mathcal{K}$ defined by $A(x) = (I - P_{_{W(\X)}})VW(x)$ for all $x\in \X$, where $W: \X \to \mathcal{K}$ is the isometric embedding. Then $\|A\| =1$ and it follows from proof of Corollary \ref{cor:main-02} that
\[
   A_T(x) = \left(\|x\|^2 -\|Tx\|^2 \right)^{\frac{1}{2}} = \|(I - P_{_{W(\X)}})VW(x) \| = \| A(x)\|, \qquad x\in \X.
\]
This completes the proof.
\end{proof}

So, we learn from Proposition \ref{cor:main-01} that whenever the function $A_T$ gives a norm on a Banach space $\X$, there is actually an operator $A$ such that $\|A(x)\|=A_T(x)$ for all $x\in \X$. Our next result shows that the existence of a bounded linear map $A: \X \rightarrow \ell_2(X)$ satisfying $\|A(x)\|=\left( \|x\|^2 - \|Tx\|^2 \right)^{\frac{1}{2}}$ is necessary and sufficient for the function $A_T$ to define a norm, which is to say that it is also necessary and sufficient for a strict contraction $T$ to admit an isometric dilation. Moreover, we construct such an isometric dilation which is different from the one provided in Theorem \ref{Necessary and sufficient condition for a contraction to be dilated 1}.

\begin{thm}\label{Classification I}
Let $\X$ be Banach space. Then a strict contraction $T$ on $\X$ dilates to an isometry $V$ on $\ell_2(\mathbb{X})$ if and only if there is a bounded linear map $A: \mathbb{X}\to \ell_2(\mathbb{X})$ such that $\|A(x)\| =\left( \|x\|^2 - \|Tx\|^2 \right)^{\frac{1}{2}}$, for each $x\in \mathbb{X}.$
\end{thm}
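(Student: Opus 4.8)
The plan is to prove both directions by routing everything through the function $A_T$ and the characterisations already established. For the ``only if'' direction, suppose $T$ dilates to an isometry $V$ on $\ell_2(\X)$. Being consistent with the notation of Definition \ref{Definition of Dilation of contractions on Banach space}, there is an isometry $W:\X\to\ell_2(\X)$ and a norm-one projection $P_{_{W(\X)}}$ of $\ell_2(\X)$ onto $W(\X)$ with $\widehat{T}=P_{_{W(\X)}}V|_{W(\X)}$, where $\widehat{T}=\widehat{W}T\widehat{W}^{-1}$. Then for every $x\in\X$,
\[
\|x\|^2-\|Tx\|^2 = \|VWx\|^2 - \|P_{_{W(\X)}}VWx\|^2 = \|(I-P_{_{W(\X)}})VWx\|^2,
\]
since the dilation space decomposes as $W(\X)\oplus_2\BL$. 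Hence the map $A:\X\to\ell_2(\X)$ given by $A(x)=(I-P_{_{W(\X)}})VW(x)$ is a bounded linear operator (it is a composition of bounded linear maps, with $\|A\|\le 1$) satisfying $\|A(x)\|=(\|x\|^2-\|Tx\|^2)^{1/2}$ for all $x\in\X$. This is exactly the required operator; here I use that $\ell_2(\X)\cong W(\X)\oplus_2\BL$ so that the complement of $W(\X)$ sits inside $\ell_2(\X)$ and the norm splits orthogonally, which is the whole point of Definition \ref{Definition of Dilation of contractions on Banach space}.

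For the ``if'' direction, suppose there is a bounded linear $A:\X\to\ell_2(\X)$ with $\|A(x)\|=(\|x\|^2-\|Tx\|^2)^{1/2}=A_T(x)$ for all $x\in\X$. Then $A_T$ is a norm on $\X$: positivity follows from $\|Tx\|<\|x\|$ for $x\ne\mathbf0$ (as $T$ is a strict contraction), absolute homogeneity from linearity of $T$, and the triangle inequality from the linearity of $A$ via
\[
A_T(x+y)=\|A(x+y)\|=\|Ax+Ay\|\le\|Ax\|+\|Ay\|=A_T(x)+A_T(y).
\]
(This is precisely the sufficiency half of Proposition \ref{cor:main-01} with $\Y=\ell_2(\X)$.) Now invoke Theorem \ref{Necessary and sufficient condition for a contraction to be dilated 1}: since $A_T$ defines a norm on $\X$, the strict contraction $T$ dilates to an isometry, and the minimal isometric dilation space is isometrically isomorphic to $\X\oplus_2\ell_2(\X_0)$ with $\X_0=(\X,A_T)$. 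By Lemma \ref{lem: Equv. of norm}, $A_T$ is equivalent to the original norm, so $\X_0$ is linearly homeomorphic to $\X$; therefore $\X\oplus_2\ell_2(\X_0)$ is linearly homeomorphic to $\X\oplus_2\ell_2(\X)$, and one checks this is in turn linearly homeomorphic to $\ell_2(\X)$ (absorb the single extra copy of $\X$ into the sequence space). Transporting the dilation $V$ through this isomorphism, one obtains an isometry on $\ell_2(\X)$ that dilates $T$ — but here one must be careful, because a mere linear homeomorphism will not preserve the isometric and norm-one-projection structure required by Definition \ref{Definition of Dilation of contractions on Banach space}.

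The main obstacle, and the step requiring real care, is exactly this last point: realising the dilation \emph{isometrically} on $\ell_2(\X)$ rather than merely on a space isomorphic to it. The clean route is not to transport $V$ but to build the dilation directly on $\ell_2(\X)$, mimicking the construction in Theorem \ref{Necessary and sufficient condition for a contraction to be dilated 1} but feeding in the given operator $A$ instead of an abstract identification of $(\X,A_T)$. Concretely, I would set $\mathbf D=A:\X\to\ell_2(\X)$, note $\|Tx\|^2+\|A x\|^2=\|x\|^2$, and define $V$ on $\ell_2(\X)\cong\X\oplus_2\ell_2(\ell_2(\X))$ (or on a suitably re-indexed copy of $\ell_2(\X)$, using that $\ell_2$ of a countable orthogonal sum of copies of $\X$ is again isometrically $\ell_2(\X)$) by the block formula $V(x_0,x_1,x_2,\dots)=(Tx_0,\,Ax_0,\,x_1,\,x_2,\dots)$. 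The computation $\|V(x_0,x_1,\dots)\|^2=\|Tx_0\|^2+\|Ax_0\|^2+\sum_{i\ge1}\|x_i\|^2=\sum_{i\ge0}\|x_i\|^2$ shows $V$ is an isometry, the embedding $W(x)=((x,\mathbf0,\mathbf0,\dots))$ is an isometry with right-complemented (hence norm-one-projected) range, and $P_{_{W(\X)}}V^nW x=W(T^nx)$ follows by the same induction as in Theorem \ref{Necessary and sufficient condition for a contraction to be dilated 1}. The only genuine bookkeeping is the fixed isometric identification $\ell_2(\X)\cong\X\oplus_2\ell_2(\ell_2(\X))\cong\ell_2(\X)$, which is routine. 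I would therefore present the ``if'' direction via this explicit construction rather than via abstract transport, and cite Proposition \ref{cor:main-01} and Theorem \ref{Necessary and sufficient condition for a contraction to be dilated 1} only to motivate why $A$ exists and why $A_T$ being a norm is the right hypothesis.
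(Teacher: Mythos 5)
Your proposal is correct and follows essentially the same route as the paper: the necessity direction is identical (take $A=(I-P_{_{W(\X)}})VW$), and for sufficiency you, like the paper, build an explicit block-operator isometry with $T$ in the $(1,1)$ corner and $A$ supplying the defect, after correctly discarding the idea of transporting the abstract dilation through a mere linear homeomorphism. The only difference is bookkeeping: the paper decomposes $A$ into its components $A_n=\pi_n\circ A$ and interleaves them with the shifted coordinates so that $V$ acts directly on $\ell_2(\X)$, whereas you place all of $Ax_0\in\ell_2(\X)$ into one slot of $\X\oplus_2\ell_2(\ell_2(\X))$ and then invoke the (genuinely routine, isometric) re-indexing $\X\oplus_2\ell_2(\ell_2(\X))\cong\ell_2(\X)$ — both are valid.
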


\begin{proof}
We prove the sufficiency first. For each $n\geq 1$, consider the linear operator $A_n:\X\to \X$ defined by 
\[
    A_n(x)= \pi_n(Ax),\qquad x\in \mathbb{X},
\]
where $\pi_n$ is the projection of $\ell_2(\mathbb{X})=\X \oplus_2 \X \oplus_2 \X\oplus_2\dots$ onto its $n$-th component.
Thus,
\[
    \|Ax\|^2 = \sum_{n=1}^\infty \|A_nx\|^2, \qquad x\in \mathbb{X}.
\]
Let $W: \mathbb{X}\to \ell_2(\mathbb{X})$ be defined by
\[
W(x)= (x,\mathbf{0},\mathbf{0}, \cdots),\qquad x\in \mathbb{X}.
\]
Then $\text{Ran}(W)$ is a closed subspace of $\ell_2(\mathbb{X})$ and $\ell_2(\X)$ can be expressed as $\ell_2(\mathbb{X})= W(\mathbb{X}) \oplus_2 \widetilde{\mathbb{Y}}$, where $\widetilde{\mathbb{Y}}= \{(\mathbf{0},x_1, x_2, \cdots): ~ x_n\in \mathbb{X},~ n\geq 1\}$. Let $V: \ell_2(\mathbb{X}) \to \ell_2(\mathbb{X})$ be the operator defined by the operator block matrix
\[
V = \begin{bmatrix}
     T & 0 & 0 & 0 & 0 & \cdots \\
     A_0 & 0 & 0 & 0 & 0 & \cdots\\
     0 & I & 0 & 0 & 0 & \cdots\\
     A_1 & 0 & 0 & 0 & 0 & \cdots\\
     0 & 0 & I & 0 & 0 & \cdots\\
     \vdots & \vdots & \vdots & \vdots & \vdots & \ddots.
    \end{bmatrix}.
\]
For any $(x_0,x_1,\dots)\in \ell_2(\X)$, we have that
\begin{align*}
    \|V(x_0,x_1,\dots)\|^2  & = \|(Tx_0, A_0x_0, x_1, A_1x_0, x_2, A_2x_0, \dots)\|^2\\
    & = \|Tx_0\|^2 + \sum_{n=0}^\infty \|A_nx_0\|^2 + \sum_{n=1}^\infty \|x_n\|^2\\
    & = \|Tx_0\|^2 + \|Ax_0\|^2 + \sum_{n=1}^\infty \|x_n\|^2.
\end{align*}
It follows from the hypothesis that $\|x_0\|^2=\|Tx_0\|^2 + \|Ax_0\|^2$. Therefore,
\[
 \|V(x_0,x_1,\dots)\|^2 = \sum_{n=0}^\infty \|x_n\|^2 = \|x\|^2
\]
and consequently $V$ is an isometry. The block matrix representation of $V$ is lower triangular. Now, the product $C_{ij}$ of two lower triangular block matrices, say $A_{ij}$ and $B_{ij}$ is again a lower triangular block matrix with $C_{ii}= A_{ii}B_{ii}.$ Thus, the $(1,1)$ entry of $V^n$ is $T^n$ which is to say that
\[
   P_{_{W(\X)}}V^n W(x)= W(T^nx), \quad x\in \mathbb{X}, \quad n \geq 1,
\]
where $P_{_{W(\mathbb{X})}}$ is projection of $\ell_2(\mathbb{X})$ onto $W(\mathbb{X}).$ Evidently, $W(\X)$ is right-complemented and therefore, $\|P_{_{W(\X)}}\|=1$. Consequently, $V$ is an isometric dilation of $T$.

\medskip

To prove the necessity, suppose there is an isometric embedding $W: \mathbb{X}\to \ell_2(\mathbb{X})$ such that $\ell_2(\mathbb{X})= W(\mathbb{X}) \oplus_2 \widetilde{\mathbb{Y}}$ for some closed subspace $\widetilde{\mathbb{Y}}\subseteq \ell_2(\mathbb{X})$ and that $V$ on $\ell_2(\mathbb{X})$ is an isometric dilation of $T$ with respect to the isometric embedding $W$. Let us consider the linear operator $A: \mathbb{X}\to \ell_2(\mathbb{X})$ defined by 
\[
   A(x) = (I-P_{_{W(\mathbb{X})}}) VW(x),\qquad x\in \mathbb{X}.
\]
Then for each $x\in \X$, we have
$
 VWx = P_{_{W(\X)}} VWx + (I-P_{_{W(\X)}})VWx.
$
Since $(I-P_{_{W(\X)}})VWx\in \widetilde{\mathbb{Y}}$ for every $x\in \X$, we have that
\[
 \|Ax\| = (\|VWx\|^2 - \|P_{_{W(\X)}} VWx\|^2)^\frac{1}{2} = (\|x\|^2 - \|Tx\|^2)^\frac{1}{2}
\]
and the proof is complete.
\end{proof}

For a strict contraction $T$ on a Hilbert space $\mathcal{H}$, the function $A_T$ as in (\ref{Equivalent norm function}) is always a norm, since for each $x\in \mathcal{H}$, $A_T(x) = \|D_T(x)\|$, where $D_T=(I-T^*T)^{\frac{1}{2}}$, the defect operator of $T$. Also, in that case the minimal isometric dilation space is given by 
\[
\ell_2(\mathcal{H})= \mathcal{H}\oplus_2 \ell_2(\mathcal{D}_T), \quad \text{ where } \quad \mathcal{D}_T=\overline{\text{Ran}} \ D_T.
\]
Therefore, Theorem \ref{Necessary and sufficient condition for a contraction to be dilated 1} is a generalization in Banach space setting of the isometric dilation theorem of a Hilbert space contraction due to Sz. Nagy and Foias which is stated below.

\begin{thm}\cite[Chapter I, Theorem 4.2]{Nagy Foias}
For every contraction $T$ on a Hilbert space $\mathcal{H}$ there exists an isometric dilation $V$ on some Hilbert space $\mathcal{K}\supseteq {\mathcal{H}}$, which is moreover minimal in the sense that $\mathcal{K} = \overline{span}\{V^nh: n\geq 0, ~ h\in \mathcal{H}\}$. This minimal isometric dilation of $T$ is unique up to isomorphism.
\end{thm}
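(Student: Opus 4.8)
The plan is to obtain the existence of a minimal isometric dilation from the Banach space machinery already developed, and then to settle uniqueness by the classical moment-matching argument.

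First I would record that on a Hilbert space the defect operator $D_T=(I-T^*T)^{1/2}$ is a bounded linear operator, so the function $A_T$ of $(\ref{Equivalent norm function})$ obeys $A_T(x)=\|D_Tx\|$ for every $x$; when $\|T\|<1$ it is therefore automatically a norm on $\mathcal{H}$. Hence Theorem $\ref{Necessary and sufficient condition for a contraction to be dilated 1}$ applies directly and produces an isometric dilation of $T$ whose minimal dilation space is isometrically isomorphic to $\mathcal{H}\oplus_2\ell_2(\mathcal{H}_0)$; since $\mathcal{H}_0=(\mathcal{H},A_T)$ is here just $\mathcal{H}$ renormed by $D_T$, this is the classical space $\mathcal{H}\oplus_2\ell_2(\mathcal{D}_T)$ with $\mathcal{D}_T=\overline{\text{Ran}}\,D_T$.

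For a general contraction, including the boundary case $\|T\|=1$, I would carry out the same recipe by hand. Put $\mathcal{K}=\mathcal{H}\oplus_2\ell_2(\mathcal{D}_T)$, identify $\mathcal{H}$ with $\{(h,\mathbf{0}):h\in\mathcal{H}\}\subseteq\mathcal{K}$, and define $V(h,(d_n)_{n\ge0})=(Th,(D_Th,d_0,d_1,\dots))$. The one computation that matters is $\|Th\|^2+\|D_Th\|^2=\|Th\|^2+\langle(I-T^*T)h,h\rangle=\|h\|^2$, valid for every contraction, which forces $V$ to be an isometry. A short induction then gives $V^nh=(T^nh,D_TT^{n-1}h,\dots,D_Th,\mathbf{0},\dots)$ for $h\in\mathcal{H}$, so $P_{\mathcal{H}}V^n|_{\mathcal{H}}=T^n$ and hence $q(T)=P_{\mathcal{H}}q(V)|_{\mathcal{H}}$ for all $q\in\mathbb{C}[z]$; thus $V$ is an isometric dilation of $T$. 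Minimality is then immediate: for $n\ge1$ the telescoped vector $V^nh-V^{n-1}(Th)$ equals $(\mathbf{0},\dots,\mathbf{0},D_Th,\mathbf{0},\dots)$ sitting in the $n$-th coordinate copy of $\mathcal{D}_T$, so the closed cyclic span of $\mathcal{H}$ contains every coordinate copy of $\mathcal{D}_T$ and is therefore all of $\mathcal{K}$.

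The delicate point, and the one I expect to be the main obstacle, is uniqueness. Given two minimal isometric dilations $V_1$ on $\mathcal{K}_1$ and $V_2$ on $\mathcal{K}_2$, with $P_i$ the orthogonal projection of $\mathcal{K}_i$ onto $\mathcal{H}$, I would compute for $h,k\in\mathcal{H}$ and $m\le n$ that $\langle V_i^mh,V_i^nk\rangle=\langle h,V_i^{n-m}k\rangle=\langle h,P_iV_i^{n-m}k\rangle=\langle h,T^{n-m}k\rangle$, an expression independent of $i$. From this equality of Gram matrices the assignment $\sum_nV_1^nh_n\mapsto\sum_nV_2^nh_n$ is well defined and norm preserving on the dense linear manifold $\mathrm{span}\{V_1^nh:n\ge0,\ h\in\mathcal{H}\}$ of $\mathcal{K}_1$, and by minimality its range is dense in $\mathcal{K}_2$; it therefore extends to a surjective isometry $\Phi:\mathcal{K}_1\to\mathcal{K}_2$ that fixes $\mathcal{H}$ pointwise and satisfies $\Phi V_1=V_2\Phi$. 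The care needed is precisely in the well-definedness step — one has to argue that a finite sum $\sum_nV_1^nh_n$ vanishing in $\mathcal{K}_1$ forces $\sum_nV_2^nh_n$ to vanish in $\mathcal{K}_2$ — which is exactly what the moment computation furnishes, and after that the argument is routine.
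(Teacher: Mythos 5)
Your proposal is correct. Note that the paper itself offers no proof of this statement: it is quoted verbatim from Sz.-Nagy--Foias as the Hilbert-space special case that Theorem \ref{Necessary and sufficient condition for a contraction to be dilated 1} generalizes, so there is no in-paper argument to match against. Your existence construction is essentially the Sch\"affer-type block construction that the paper does use in the sufficiency part of Theorem \ref{Necessary and sufficient condition for a contraction to be dilated 1}: your $V(h,(d_n)_{n\ge 0})=(Th,(D_Th,d_0,d_1,\dots))$ on $\mathcal H\oplus_2\ell_2(\mathcal D_T)$ is exactly the restriction of the paper's isometry $V(\underline{x_0},\underline{x_1},\dots)=(\mathbf T\underline{x_0},\mathbf D\underline{x_0},\underline{x_1},\dots)$ to the minimal cyclic subspace, with $A_T(x)=\|D_Tx\|$ replacing the abstract norm hypothesis; and unlike the outline the paper gives for (i)$\implies$(ii) of Theorem \ref{Characterisation of Hilbert spaces in terms of Dilation on Banach space} (a co-isometric extension of $T^*$, which needs $\|T^{*n}x\|\to 0$ and hence strictness), your direct computation $\|Th\|^2+\|D_Th\|^2=\|h\|^2$ handles the boundary case $\|T\|=1$ as well, which is needed for the full statement. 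The uniqueness half does not appear anywhere in the paper; your moment-matching argument $\langle V_i^mh,V_i^nk\rangle=\langle h,T^{n-m}k\rangle$ for $m\le n$, followed by the well-definedness and density checks, is the standard and complete way to obtain the intertwining unitary $\Phi$ fixing $\mathcal H$.
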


We now construct an explicit example of a contraction $T$ on a Banach space $\X$, for which $A_T$ as in (\ref{Equivalent norm function}) defines a norm on $\X$.

\begin{eg}
Let $\X$ be a Banach space. Consider $T_\lambda: \ell_2(\X) \to \ell_2(\X)$ defined by $T_\lambda(x_n) = (\lambda_n x_n)$, where $\lambda_n = \frac{n}{2(n+2)}$ for all $n\in \mathbb{N}$. Then $T$ is a bounded linear map and $\|T\| = \displaystyle \sup_{n\in \mathbb{N}} \vert \lambda_n \vert = 1\slash 2< 1$. So, the function $A_{T_\lambda}$ satisfies the positivity and absolute homogeneity. Let $\mu_n =  (1-\lambda_n^2)^{1\slash 2}$ for all $n\in \mathbb{N}$. Then for every $(x_n)\in \ell_2(\X)$, we have 
\[
A_{T_\lambda}(x_n) = \left( \sum_{n=1}^\infty \|x_n\|^2 - \sum_{n=1}^\infty \|\lambda_n x_n\|^2 \right)^{1\slash 2} = \|T_\mu (x_n)\|,
\]
where $T_\mu: \ell_2(\X) \to \ell_2(\X)$ is defined by $T_\mu(x_n) = (\mu_n x_n)$. 
Consequently, the triangle inequality for $A_{T_\lambda}$ follows from the linearity of $T_\mu$. \qed
\end{eg}

Next we construct a contraction $T$ on $\mathcal{L}_p$ ($1\leq p\leq \infty$, $p\neq 2$) for which $A_T$ (as in (\ref{Equivalent norm function})) is not a norm on $\mathcal{L}_p$. We need the following lemma before that.

\begin{lem}\label{Inequality for p greater than or equal 1}
(i). $\sqrt{\frac{4-4^{\frac{1}{p}}}{4}} < 2^{\frac{1}{p}-1}$ for $p\in (1,2).$ (ii). $ \sqrt{1-4^{-\frac{1}{p}}} < 2^{-\frac{1}{p}}$ for $p\in (2,\infty).$
\end{lem}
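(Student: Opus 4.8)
The statement to prove is Lemma~\ref{Inequality for p greater than or equal 1}, consisting of two elementary numerical inequalities. My plan is to reduce each to a monotonicity statement for a single-variable function on $(0,1)$ after the substitution $t = 1/p$, and then verify the resulting polynomial/algebraic inequality directly.

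\medskip

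\textbf{Part (i).} I want to show $\sqrt{(4-4^{1/p})/4} < 2^{1/p-1}$ for $p \in (1,2)$. Both sides are positive, so squaring is an equivalence: the claim becomes $(4 - 4^{1/p})/4 < 2^{2/p-2} = 4^{1/p}/4$, i.e. $4 - 4^{1/p} < 4^{1/p}$, i.e. $4 < 2 \cdot 4^{1/p}$, i.e. $2 < 4^{1/p}$. Writing $4^{1/p} = 2^{2/p}$, this is $2 < 2^{2/p}$, equivalently $1 < 2/p$, equivalently $p < 2$. Since $p \in (1,2)$ this holds, and every step was reversible, so the inequality follows. (One should note that for the left-hand side to be real we need $4 - 4^{1/p} \geq 0$, i.e. $4^{1/p} \leq 4$, i.e. $p \geq 1$, which holds on the given range; this is worth a remark but causes no trouble.)

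\medskip

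\textbf{Part (ii).} I want $\sqrt{1 - 4^{-1/p}} < 2^{-1/p}$ for $p \in (2,\infty)$. Again both sides are positive and the left side is real since $4^{-1/p} < 1$, so squaring gives the equivalent statement $1 - 4^{-1/p} < 2^{-2/p} = 4^{-1/p}$, i.e. $1 < 2 \cdot 4^{-1/p}$, i.e. $4^{1/p} < 2$, i.e. $2^{2/p} < 2$, i.e. $2/p < 1$, i.e. $p > 2$. This holds on the given range, completing the proof.

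\medskip

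\textbf{Expected main obstacle.} Honestly there is no serious obstacle here: the only thing to be careful about is bookkeeping — keeping track of which direction the inequalities go through the squaring and the exponential rewriting, and confirming the expressions under the square roots are nonnegative so that squaring is legitimate. The ``hard part'' is merely to present the chain of equivalences cleanly; I would write each as a short display of the form $\text{LHS} < \text{RHS} \iff \cdots \iff p < 2$ (resp.\ $p > 2$), so that the conclusion is immediate from the hypothesis on $p$.
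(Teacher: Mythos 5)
Your proof is correct and follows essentially the same route as the paper: both reduce the claim by squaring to $4-4^{1/p}<4^{1/p}$ (resp.\ $1-4^{-1/p}<4^{-1/p}$), which is immediate from $p<2$ (resp.\ $p>2$). Your version just spells out the reversible chain of equivalences more explicitly, which is fine.
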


\begin{proof}
 (i). When $1< p < 2,$ we have $4- 4^{\frac{1}{p}} < 4^{\frac{1}{p}}$ and therefore, $\frac{4-4^{\frac{1}{p}}}{4} < 4^{\frac{1}{p}-1} = 2^{\frac{2}{p}-2}.$

\medskip
 
 (ii). When $2< p <\infty,$ we have $1-4^{-\frac{1}{p}} < 4^{-\frac{1}{p}}$ and therefore, $\sqrt{1-4^{-\frac{1}{p}}} < 2^{-\frac{1}{p}}.$
\end{proof}

\smallskip

\begin{eg} \label{exmp:main-01}

Let $\X = \mathcal{L}_p(\Omega, M, \mu)$ ($1\leq p\leq \infty$, $p\neq 2$) for some measure space ($\Omega, M, \mu$). Let $A,~ B\in M $ with $A \cap B =\emptyset$ and $\mu(A) \neq 0 , ~ \mu(B) \neq 0.$ Consider the unit vectors $f= \frac{\chi_A}{\mu(A)^\frac{1}{p}}$ and $g= \frac{\chi_B}{\mu(B)^\frac{1}{p}}.$ A direct computation shows that for closed subspaces $\mathbb{Y}= \overline{span} \{ f,~g \}$ and $\widetilde{\mathbb{Y}}=\left\{ f\chi_{A^c \cap B^c} : ~ f\in \mathbb{X}\right\}$ of $\X$, we have $\mathbb{X}=\mathbb{Y} \oplus_\SR \widetilde{\mathbb{Y}}.$

\medskip

\noindent Case I: Let $p\in [1,2)$. Choose a scalar $\lambda$ such that $\lambda\in (0,1)$ if $p=1$, and $\sqrt{\frac{4-4^{\frac{1}{p}}}{4}} < \lambda < 2^{\frac{1}{p}-1}$ if $1<p<2$. This choice of $\lambda$ is guaranteed by Lemma \ref{Inequality for p greater than or equal 1}. Every $\phi\in \X$ can be uniquely expressed as $\phi =\alpha_\phi f + \beta_\phi g + h_\phi$ for some $\alpha_\phi, \beta_\phi \in \mathbb{C}$ and $h_\phi\in \widetilde{\mathbb{Y}}$. Let $T: \mathbb{X} \to \mathbb{X}$ be defined by
\[
  T(\phi) = \lambda \left(\alpha_\phi - \beta_\phi \right)f, \qquad \phi\in\X.
\]
We show that $T$ is a strict contraction. For any $\phi\in S_\X,$ we have
\[
  \| \alpha_\phi f + \beta_\phi g \|_p = \left( \vert \alpha_\phi \vert ^p + \vert \beta_\phi \vert ^p \right)^{\frac{1}{p}} \leq \|\phi\|_p=1,
\]
by virtue of $\Y\perp_B \widetilde{\mathbb{Y}}$. Consequently, we have
\[
 \| T\phi \|_p = \lambda  ~ \vert \alpha_\phi - \beta_\phi \vert \leq \lambda \left( \vert \alpha_\phi \vert + \vert \beta_\phi \vert \right) \leq \lambda \left(\vert \alpha_\phi \vert ^p + \vert \beta_\phi \vert ^p \right)^{\frac{1}{p}} 2^{1-\frac{1}{p}} \leq \lambda . 2^{1-\frac{1}{p}}.\|\phi\|_p.
\]
Therefore, $\|T\| \leq \lambda . 2^{1-\frac{1}{p}} <1$ by an appropriate choice of $\lambda$. Observe that
\[
  \| f+g \|_p = 2^{\frac{1}{p}}, ~ Tf = \lambda f = -Tg, ~ A_T(f) = \left( 1 - \vert \lambda \vert^2 \right)^{\frac{1}{2}} = A_T(g).
\]
A straightforward computation now shows that the inequality (\ref{Triangle inequality}) is violated for this particular choice of $T$, $f$ and $g$. Consequently, the function $A_T$ as in (\ref{Equivalent norm function}) is not a norm on $\X.$

\medskip

\noindent Case II: Let $p\in (2,\infty].$ By $p=\infty$, we mean $\frac{1}{p}$ to be $0$. Choose a scalar $\lambda$ such that $\sqrt{1-2^{-\frac{2}{p}}} < \lambda < 2^{-\frac{1}{p}}$ if $2<p<\infty$, and $0 < \lambda < 1$ if $p=\infty.$  This choice of $\lambda$ is guaranteed by Lemma \ref{Inequality for p greater than or equal 1}. Observe that $\Y=span\{\chi_A, \chi_B\}.$ Thus, every $\phi\in \X$ can be uniquely expressed as $\phi =\alpha_\phi \chi_A + \beta_\phi \chi_B + h_\phi$ for some $\alpha_\phi, \beta_\phi \in \mathbb{C}$ and $h_\phi \in \widetilde{\Y}$. Let $S: \mathbb{X} \to \mathbb{X}$ be defined by
\[
  S(\phi) = \lambda \alpha_\phi \left(\chi_A - \left(\frac{\mu(A)}{\mu(B)}\right)^{\frac{1}{p}}\cdot \chi_B \right),\qquad \phi\in \mathbb{X}.
\]
We show that $S$ is a strict contraction. For any $\phi\in S_\X,$ we have
\[
  \| \alpha_\phi \chi_A + \beta_\phi \chi_B \|_p =
    \begin{cases}
       \left( \vert \alpha_\phi \vert ^p \mu(A) + \vert \beta_\phi \vert ^p\mu(B) \right)^{\frac{1}{p}} & \text{ if } 2 < p < \infty \\
       \max\{ \vert \alpha_\phi \vert, \vert \beta_\phi \vert \} & \text{ if } p =\infty
    \end{cases}
\]
and in either case we have $\vert \alpha_\phi \vert\cdot \mu(A)^{\frac{1}{p}} \leq \|\alpha_\phi \chi _A + \beta_\phi \chi_B \|_p \leq \|\phi\|_p =1$, since $\Y\perp_B \widetilde{\mathbb{Y}}$. Consequently, we have
\begin{align*}
 \| S\phi \|_p & = \lambda\cdot \vert \alpha_\phi \vert \cdot \left\|\chi_A - \left(\frac{\mu(A)}{\mu(B)}\right)^{\frac{1}{p}}\cdot\chi_B \right\|_p\\
               & = \lambda \cdot\vert \alpha_\phi \vert\cdot \left(\mu(A) + \frac{\mu(A)}{\mu(B)}\cdot \mu(B)\right)^{\frac{1}{p}} \\
               & = \lambda \cdot\vert \alpha_\phi \vert\cdot \left(\mu(A) + \mu(A)\right)^{\frac{1}{p}} \\
               & = \lambda \cdot 2^{\frac{1}{p}}\cdot \vert \alpha_\phi \vert\cdot \mu(A)^{\frac{1}{p}} \leq \lambda . 2^{\frac{1}{p}}.\|\phi\|_p. \quad \left[~ \vert \alpha_\phi \vert\cdot \mu(A)^{\frac{1}{p}}\leq \|\phi\|_{p} =1 ~\right]
\end{align*}
Therefore, $\|S\| \leq \lambda . 2^{\frac{1}{p}} < 1$ by an appropriate choice of $\lambda$. Now, consider the unit vectors 
\[
u= \frac{\chi_A}{\left(2\cdot\mu(A)\right)^\frac{1}{p}} + \frac{\chi_B}{\left(2\cdot\mu(B)\right)^\frac{1}{p}} \, ;\quad v= \frac{-\chi_A}{\left(2\cdot\mu(A)\right)^\frac{1}{p}} + \frac{\chi_B}{\left(2\cdot\mu(B)\right)^\frac{1}{p}} \ .
\] 
Observe that
\[
 u+v = \frac{2\cdot\chi_B}{\left(2\cdot \mu(B)\right)^{\frac{1}{p}}}, \quad Su = \frac{\lambda}{\left(2\cdot \mu(A)\right)^{\frac{1}{p}}} \left[\chi_A -\left(\frac{\mu(A)}{\mu(B)}\right)^{\frac{1}{p}}\cdot\chi_B\right] = -S(v),
\]
and
\[
\|u+v\|_p = \frac{2}{2^{\frac{1}{p}}}, \quad S(u+v) = 0, \quad \|Su\|_p =\lambda =\|Sv\|_p, \quad \text{ and }\quad A_S(u) = \left( 1 - \vert \lambda \vert^2 \right)^{\frac{1}{2}} = A_S(v).
\]
A straightforward computation shows that the inequality (\ref{Triangle inequality}) is violated for this particular choice of $S$, $u$ and $v$. Consequently, the function $A_S$ as in (\ref{Equivalent norm function}) is not a norm on $\X.$  \qed

\end{eg}

Example \ref{exmp:main-01} along with Theorem \ref{Norm if and only if Hilbert space} proves the following well-known result.

\begin{thm} \label{thm:new-003A}
For $1\leq p\leq \infty$, $\mathcal{L}_p$ is a Hilbert space if and only if $p=2$.
\end{thm}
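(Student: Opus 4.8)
The plan is to deduce the statement directly from Theorem~\ref{Norm if and only if Hilbert space} together with the explicit computations already assembled in Example~\ref{exmp:main-01}. First I would dispose of the trivial direction: when $p=2$, the space $\mathcal{L}_2(\Omega,M,\mu)$ is a Hilbert space by the classical parallelogram-law characterization, so nothing remains to be proved there. The substance is the converse, i.e. that $\mathcal{L}_p$ is \emph{not} a Hilbert space for $p\neq 2$ in the range $1\leq p\leq\infty$.

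For the converse I would argue by contraposition. Suppose $1\leq p\leq\infty$ with $p\neq 2$. By Theorem~\ref{Norm if and only if Hilbert space}, $\mathcal{L}_p$ is a Hilbert space if and only if $A_T$ defines a norm on $\mathcal{L}_p$ for \emph{every} strict contraction $T\in\mathcal{B}(\mathcal{L}_p)$. So it suffices to exhibit a single strict contraction $T$ on $\mathcal{L}_p$ for which $A_T$ fails to be a norm. This is exactly what Example~\ref{exmp:main-01} does: for $p\in[1,2)$ one takes the measurable sets $A,B$ with $A\cap B=\emptyset$ and $\mu(A),\mu(B)\neq 0$ (such sets exist as soon as the measure space is nontrivial; otherwise $\mathcal{L}_p$ is one-dimensional and the claim $p=2$ is vacuous or one must phrase it for a fixed nontrivial measure space, which is the standard convention), the normalized indicators $f,g$, the rank-one operator $T(\phi)=\lambda(\alpha_\phi-\beta_\phi)f$ with $\lambda$ chosen in the interval from Lemma~\ref{Inequality for p greater than or equal 1}(i) so that $\|T\|<1$, and checks that the triangle-inequality criterion~\eqref{Triangle inequality} of the preceding proposition is violated at the pair $(f,g)$. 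For $p\in(2,\infty]$ one uses instead the operator $S(\phi)=\lambda\alpha_\phi\big(\chi_A-(\mu(A)/\mu(B))^{1/p}\chi_B\big)$ with $\lambda$ from Lemma~\ref{Inequality for p greater than or equal 1}(ii), and checks that~\eqref{Triangle inequality} fails at the pair $(u,v)$ of unit vectors displayed there. In either case $A_T$ (resp.\ $A_S$) is not a norm, so by Theorem~\ref{Norm if and only if Hilbert space} $\mathcal{L}_p$ is not a Hilbert space.

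Putting the two directions together gives the equivalence. I expect the only real obstacle to be the case $p=\infty$: there $A_T$ must be interpreted with $1/p=0$, the interval for $\lambda$ degenerates to $0<\lambda<1$, and one should double-check that $\mathcal{L}_\infty$ still admits disjoint sets of positive measure and that the norm computations $\|\alpha_\phi\chi_A+\beta_\phi\chi_B\|_\infty=\max\{|\alpha_\phi|,|\beta_\phi|\}$ feeding into~\eqref{Triangle inequality} indeed break the inequality---but all of this is already carried out in Example~\ref{exmp:main-01}, so the proof of the theorem itself is a one-line invocation: combine Example~\ref{exmp:main-01} with Theorem~\ref{Norm if and only if Hilbert space}. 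I would therefore write the proof as: ``If $p=2$ the result is classical. If $p\neq 2$, Example~\ref{exmp:main-01} produces a strict contraction $T$ on $\mathcal{L}_p$ for which $A_T$ is not a norm, whence $\mathcal{L}_p$ is not a Hilbert space by Theorem~\ref{Norm if and only if Hilbert space}.'' No further calculation is needed.
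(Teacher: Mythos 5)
Your proposal is correct and is exactly the argument the paper uses: the theorem is stated immediately after Example \ref{exmp:main-01} with the remark that it follows by combining that example with Theorem \ref{Norm if and only if Hilbert space}. Your additional caveats about nontrivial measure spaces and the $p=\infty$ interpretation are sensible but not part of the paper's (one-line) proof.
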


%\smallskip

\subsection{Induced semi-norm and dilation in Banach spaces} \label{Subsection:new-6.1}
Let $\X$ be a Banach space and $T \in \mathcal{B}(\X)$. Then $\|T\| \leq 1$ is equivalent to the positivity of $A_T$ as in (\ref{Equivalent norm function}), i.e., $A_T(x) \geq 0$ for all $x \in \X$. Also, the strict positivity of $A_T$, i.e., $A_T(x)>0$ for all nonzero $x$ in $\X$ is equivalent to $\|Tx\| < \|x\|$ for all nonzero $x \in \X$. In this Subsection, we study when $A_T$ defines a norm or a semi-norm on $\X$ for operators $T$ with $\|T\|=1$.

\smallskip

 The set of all contractions on $\X$ is a disjoint union of the open unit ball $\mathcal{B}_1(\X)$ and the unit sphere $S_{\mathcal{B}(\X)}$ in $\mathcal{B}(\X)$. All operators \(T \in \mathcal{B}_1(\X)\) for which the function \(A_T\) defines a norm on \(\X\) have been characterized in Theorem \ref{Necessary and sufficient condition for a contraction to be dilated 1}. The operators $T \in S_{\mathcal{B}(\X)}$ can be further split into two disjoint classes based on their norm attainment sets $M_T = \{x \in S_{\X} : \|Tx\| = \|T\|\}$, namely
 \[
 \mathcal{G}_1(\X) = \{T \in S_{\mathcal{B}(\X)} : M_T = \emptyset\} \ \ \text{ and } \ \ \mathcal{G}_2(\X) = \{T \in S_{\mathcal{B}(\X)} : M_T \neq \emptyset\}.
 \]
The homogeneity property of $A_T$ is automatic for all contractions $T$. The strict positivity of $A_T$ also holds for each $T \in \mathcal{G}_1(\X)$, whereas the same is not true for all $T \in \mathcal{G}_2(\X)$. For $T \in \mathcal{G}_2(\X)$ it is naturally asked if the function $A_T$ defines a semi-norm on $\X$. Recall that a semi-norm $N$ on a vector space $\mathbb V$ is a function $N: \mathbb V \to [0,\infty)$ satisfying $N(\alpha v) = |\alpha| N(v)$ and $N(v + w) \leq N(v) + N(w)$ for all $\alpha \in \mathbb{C}$ and $u, w \in \mathbb V$. The following four examples show that nothing is conclusive about the function $A_T$, i.e., if it defines a norm or semi-norm for $T$ in $\mathcal{G}_1(\X)$ or $\mathcal{G}_2(\X)$.

\begin{eg}[$A_T$ does not define a norm on $\X$ for $T\in \mathcal{G}_1(\X)$]
Consider the Banach space $\X= \ell_1= \{(x_n): \sum_{n=1}^\infty \vert x_n\vert < \infty, ~~ x_n\in \mathbb{C}, ~~ n\geq 1 \}$ and the sequence $(\lambda_n)\in \ell_\infty$, where $\lambda_n = \frac{2n-1}{2n}$ for all $n\in \mathbb{N}$. Let $T: \X \to \X$ be defined by 
\[
   T((x_n))= (\lambda_1(x_1-x_2), \lambda_2 x_3, \lambda_3 x_4, \dotsc,), \qquad (x_n)\in \X.
\]
Then it is easy to see that ${ \displaystyle \|T\| = \sup_{n}\lambda_n = 1 }$, and $\|T((x_n))\| < \|(x_n)\|$ for all $(x_n)\in \X$. So $T\in \mathcal{G}_1(\X)$. However, the function $A_T$ does not define a norm on $\X$, since it does not satisfy the triangle inequality for $e_1=(1,0,0,\dotsc)$ and $e_2=(0,1,0,0,\dotsc)$.  \qed

\end{eg}

\begin{eg}[$A_T$ defines a norm on $\X$ for $T\in \mathcal{G}_1(\X)$]
Consider the classical Hilbert space $\X= \ell_2$, where
\[
\ell_2= \{(x_n): \sum_{n=1}^\infty \vert x_n\vert^2 < \infty, ~~ x_n\in \mathbb{C}, ~~ n\geq 1 \},
\]
and the multiplication operator $T: \ell_2 \to \ell_2$ defined by
\[
   T((x_n)) = (\lambda_n x_n), \qquad (x_n)\in \ell_2, ~~ \lambda_n = \frac{n-1}{n},~~ n\geq 1.
\]
Then it is easy to see that $T\in \mathcal{G}_1(\X)$, i.e., $\|T\| =1$ and $\|T(x_n)\| < \|(x_n)\|$ for all nonzero $(x_n)\in \ell_2$. Also, note that $A_T((x_n)) = \|D_T((x_n))\|$ for all $(x_n)\in \ell_2$. Consequently, the linearity of $D_T$ shows that $A_T$ defines a norm on $\X$.  \qed

\end{eg}

\begin{eg}[$A_T$ does not define a semi-norm on $\X$ for $T\in \mathcal{G}_2(\X)$]
Let $\X = (\mathbb{C}^3, \|\cdot\|_1)$, and choose $0< \lambda < 1$. Consider the operator $T: \X \to \X$ defined by $T(x,y,z) = (x, \lambda(y-z), 0)$. Then $\|T\| =1$ and $M_T= span\{(1,0,0)\} \cap S_{\X}$, i.e., $T\in \mathcal{G}_2(\X)$. However, the function $A_T$ does not define a semi-norm on $\X$, since it does not satisfy the triangle inequality for the vectors $(0,1,0)$ and $(0,0,1)$. \qed

\end{eg}

\begin{eg}[$A_T$ defines a semi-norm on $\X$ for $T\in \mathcal{G}_2(\X)$]
Let $\X$ be a Banach space. Consider the backward shift operator $\widehat M_{z}: \ell_2(\X) \to \ell_2(\X)$ (as in (\ref{backward shift})) defined by
\[
   \widehat M_{z}(x_0, x_1, \dotsc) = (x_1, x_2, \dotsc), \qquad (x_0, x_1,\dotsc) \in \ell_2(\X).
\]
Then, $\|\widehat M_{z}\| =1$ and the norm attainment set is given by 
\[
{\displaystyle M_{\widehat M_{z}} =\{(x_0, x_1, \dotsc)\in \ell_2(\X): x_0 =\mathbf{0}, \ \ \sum_{n=1}^{\infty} \ \|x_n\|^2 =1\} }.
\]
Therefore, $\widehat M_{z}\in \mathcal{G}_2(\ell_2(\X))$. Let $P: \ell_2(\X) \to \X$ be the norm-one projection onto the first coordinate of $\ell_2(\X)$. Then we have
\[
  \|x\|^2 - \|A_{\widehat M_{z}} x\|^2 = \|x_0\|^2 = \|P(x)\|^2, \qquad x=(x_0,x_1,x_2 \dots)\in \ell_2(\X).
\]
So the function $A_{\widehat M_{z}}$ defines a semi-norm on $\ell_2(\X)$, as the triangle inequality follows from  the linearity of $P$. \qed

\end{eg}

Now, we characterize all the operators $T\in \mathcal{G}_1(\X)$ ($\mathcal{G}_2(\X)$) in terms of isometric dilation on a Banach (semi-normed linear) space.
\begin{thm} \label{thm:new-6.26}
Let $\X$ be a Banach space and let $T\in \mathcal{G}_1(\X)$  $($or, $T \in \mathcal{G}_2(\X))$ be arbitrary. Then the function $A_T$ $($as in $($\ref{Equivalent norm function}$) )$ defines a norm $($or, a semi-norm$)$ on $\X$ if and only if $T$ dilates to an isometry on a normed space $($or, a semi-normed linear space$)$.
\end{thm}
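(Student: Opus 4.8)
The statement is a norm-one analogue of Theorem \ref{Necessary and sufficient condition for a contraction to be dilated 1}, so the plan is to adapt the same two constructions while carefully tracking where the strict-contraction hypothesis was used and replacing ``Banach space'' by ``normed space'' or ``semi-normed linear space'' as appropriate. For the forward direction, suppose $A_T$ defines a norm (when $T \in \mathcal{G}_1(\X)$) or a semi-norm (when $T \in \mathcal{G}_2(\X)$). In the first case let $\X_0$ be the \emph{normed} linear space $(\X, A_T)$; in the second case let $\X_0$ be the \emph{semi-normed} linear space $(\X, A_T)$. Note that by Lemmas \ref{lem:new-0122} and the discussion following it, $\X_0$ is \emph{not} complete in either case, which is exactly why the dilation space cannot be Banach. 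Now mimic the construction in the sufficiency part of Theorem \ref{Necessary and sufficient condition for a contraction to be dilated 1}: form $\mathcal{K} = \ell_2(\X \oplus_2 \X_0)$ (interpreting $\ell_2$ of a semi-normed space with the obvious semi-norm), define $\mathbf{D}, \mathbf{T} : \X \oplus_2 \X_0 \to \X \oplus_2 \X_0$ by the same block matrices $\left[\begin{smallmatrix} 0 & 0 \\ I & 0 \end{smallmatrix}\right]$ and $\left[\begin{smallmatrix} T & 0 \\ 0 & I \end{smallmatrix}\right]$, and set $V$ by the same formula (\ref{eqn:new-061}). The identity $\|\mathbf{T}\underline{x}\|^2 + \|\mathbf{D}\underline{x}\|^2 = \|\underline{x}\|^2$ still holds verbatim because it only uses $\|T x\|^2 + A_T(x)^2 = \|x\|^2$, which is the definition of $A_T$ and requires only $\|T\| \le 1$, not strictness. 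Hence $V$ is an isometry on $\mathcal{K}$, the embedding $W(x) = ((x,\mathbf{0}), \mathbf{0}, \mathbf{0}, \dots)$ works, $W(\X)$ is right-complemented (so $\|P_{W(\X)}\| = 1$), and the computation $P_{W(\X)} V^n W x = W(T^n x)$ goes through unchanged. This gives an isometric dilation of $T$ onto $\mathcal{K}$, which is a normed space or a semi-normed linear space respectively.

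For the converse, suppose $T$ dilates to an isometry $V$ on a normed space (resp. semi-normed linear space) $\widetilde{\X}$ with $\widetilde{\X} \cong W(\X) \oplus_2 \BL$ and $\|P_{W(\X)}\| = 1$. As in the proof of (iii)$\implies$(iv) of Theorem \ref{Characterisation of Hilbert spaces in terms of Dilation on Banach space}, for every $x \in \X$ we have
\[
\|x\|^2 - \|Tx\|^2 = \|VWx\|^2 - \|P_{W(\X)} VWx\|^2 = \|(I - P_{W(\X)}) VWx\|^2,
\]
so $A_T(x) = \|(I - P_{W(\X)}) VWx\|$ is the (semi-)norm of the image of $x$ under the \emph{linear} map $x \mapsto (I - P_{W(\X)}) VWx$. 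Homogeneity of $A_T$ is automatic, positivity follows from $\|T\| \le 1$, and the triangle inequality is inherited from the triangle inequality of the (semi-)norm on $\widetilde{\X}$ together with linearity of that map. In the $\mathcal{G}_1(\X)$ case the target (semi-)norm is genuinely a norm on $\widetilde{\X}$ and one additionally checks that $A_T(x) = 0$ forces $x = \mathbf{0}$: indeed $A_T(x) = 0$ means $\|Tx\| = \|x\|$, impossible for nonzero $x$ when $M_T = \emptyset$; so $A_T$ is a norm. In the $\mathcal{G}_2(\X)$ case we only claim it is a semi-norm, which is exactly what the above gives. Finally, one should record the ``moreover'' observation that the minimal dilation space is never Banach: if it were, then $\X_0 = (\X, A_T)$ would be a closed subspace (the image of a bounded-below linear map into a Banach space, after identifying), hence complete, contradicting Lemma \ref{lem:new-0122}; alternatively, run the argument of that lemma directly on the restriction of the dilation.

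The main obstacle is bookkeeping rather than a deep difficulty: one must check that every step which in Theorem \ref{Necessary and sufficient condition for a contraction to be dilated 1} invoked $\|T\| < 1$ actually only needs $\|T\| \le 1$, and one must set up the semi-normed linear space framework carefully — in particular that $\ell_2$ of a semi-normed space is again semi-normed, that Birkhoff–James orthogonality and norm-one projections make sense there (the inequality $\|Pv\| \le \|v\|$ is all that is used), and that the quotient/completion subtleties do not sneak in. The one genuinely new point, compared to the strict case, is disentangling why the $\mathcal{G}_1$ case yields a norm on $\widetilde{\X}$ while $\mathcal{G}_2$ only yields a semi-norm: this is precisely the dichotomy $M_T = \emptyset$ versus $M_T \neq \emptyset$, and it is handled by the elementary remark that $A_T(x) = 0 \iff x \in M_T \cup \{\mathbf 0\}$ (up to scaling). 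The incompleteness of the dilation space, needed for the ``moreover'', is the content of Lemma \ref{lem:new-0122} and transfers with no real effort.
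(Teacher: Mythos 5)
Your proposal is correct and follows essentially the same route as the paper: the paper's own proof simply reuses the dilation construction of Theorem \ref{Necessary and sufficient condition for a contraction to be dilated 1} on $\mathcal{K}=\ell_2(\X\oplus_2\X_0)$ for the forward direction (noting via Lemma \ref{lem:new-0122} that $\X_0$ is not complete) and invokes the linearity of $x\mapsto (I-P_{_{W(\X)}})VWx$ for the converse, exactly as you do. Your additional bookkeeping — checking that the key identity uses only $\|T\|\le 1$, and the observation that $A_T(x)=0$ for $x\in S_\X$ is precisely membership in $M_T$, which separates the $\mathcal{G}_1$ and $\mathcal{G}_2$ cases — is a useful elaboration of details the paper leaves implicit, but it is not a different argument.
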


\begin{proof}

First we prove the necessary part. It follows from Lemma \ref{lem:new-0122} that the space $\X_0 = (\X, A_T)$ is not a Banach space. Now, consider the normed space (or, the semi-normed linear space) $\X_0 = (\X, A_T)$ and let $\mathcal{K} = \ell_2(\X\oplus_2 \X_0)$. Then the operator $V:\mathcal{K} \to \mathcal{K}$ as in the proof of Theorem \ref{Necessary and sufficient condition for a contraction to be dilated 1} remains an isometry and dilates $T$.

\smallskip

The proof of sufficiency follows in a similar manner, utilizing the linearity of the operator $\left(I - P_{_{W(\X)}} \right)VW: \X \to \widetilde{\X}$, where $V: \widetilde{\X} \to \widetilde{\X}$ is the isometric dilation and $W: \X \to \widetilde{\X}$ is the isometric embedding of $\X$ into the normed space (or, semi-normed linear space) $\widetilde{\X}$ (see the proof of Theorem \ref{Necessary and sufficient condition for a contraction to be dilated 1}). The proof is complete.
\end{proof}

\subsection{A new adjoint for Banach space operators and characterizations for Hilbert spaces} \label{Subsection:new-6.2} Unlike Hilbert space operators, it is not possible to define an adjoint for a Banach space operator that acts like Hilbert space adjoint. In this Subsection, we make an attempt of defining an adjoint of a Banach space operator $T$ that is not exactly the Banach adjoint $T^{\times}$. However, we shall see that our notion of adjoint for a Banach space operator generalizes that of a Hilbert space operator and leads to a necessary and sufficient condition such that a Banach space with dimension greater than $2$ becomes a Hilbert space.

\smallskip

For an operator $T$ on a Banach space $\mathbb{X}$, the Banach adjoint $T^\times$ is an operator defined on the dual space $\mathbb{X}^*$ by $f\mapsto f\circ T$ for every $f\in \mathbb{X}^*.$ If $T$ acts on a Hilbert space $\mathcal{H},$ then the Hilbert space adjoint $T^*$ of the operator $T$ is equal to the operator $\mathbb{J}_\X^{-1}T^{\times}\mathbb{J}_\X$, where $\mathbb{J}_\X:\mathcal{H}\to \HS (=\mathcal{H}^*)$ mapping $x$ to $\mathbb{J}_\X(x)$ is defined by $\mathbb{J}_\X(x)(y)=\langle y,x\rangle,~ y\in \mathcal{H}$. See Section \ref{sec:02} for the definition of $\mathbb{J}_\X$ in more general Banach space setting. Needless to mention that $\mathbb{J}_\X$ is a conjugate surjective linear isometry. The fact that the Hilbert space adjoint $T^*$ coincides with $\mathbb{J}_\X^{-1}T^{\times}\mathbb{J}_\X$ can be proved easily. Indeed, if we denote by $\widetilde T$ the operator $\mathbb{J}_\X^{-1}T^{\times}\mathbb{J}_\X$, then for $x,y \in \HS$ it follows that
\[
 T^\times \mathbb{J}_\X(x)(y)=\langle y, z \rangle, \quad z =\widetilde{T}(x).
\]
This further implies that
\[
  \langle Ty, x \rangle = T^\times \mathbb{J}_\X(x)(y)=\langle y, z \rangle = \langle y, \widetilde{T}(x)\rangle, 
\]
and consequently, $T^*= \widetilde{T}.$ Thus, we have a way of defining an adjoint for an operator $T$ on a Banach space $\X$ in terms of $\mathbb{J}_\X^{-1}T^{\times}\mathbb{J}_\X$ which coincides with the Hilbert space adjoint $T^*$ when $\X$ is a Hilbert space. For a Banach space $\X$, the map $\mathbb{J}_\X:\mathbb{X} \to \mathbb{X}^*$ can be canonically defined by $x\mapsto f_x$, where $f_x$ denotes support functional at the point $x\in \X$. However, there are Banach spaces where one can find multiple support functionals at a point. Also, a linear functional on a Banach space may not attain its norm at a point or, even if it attains its norm at a point, the point may not be unique. To get rid of such ambiguities and to define $\mathbb J_{\X}$ appropriately, we assume that the underlying Banach space is reflexive, smooth and strictly convex. Now the map $\mathbb{J}_\X:\mathbb{X} \to \mathbb{X}^*$ given by $x\mapsto f_x$ is well-defined and has the following properties:
\begin{enumerate}
\item[(i)] $\mathbb{J}_\X(\alpha x) = \bar{\alpha}\Phi(x), ~\alpha\in \mathbb{C},~ x\in \mathbb{X}$ ;

\smallskip

\item[(ii)]if $\dim(\X) >2$, then $\mathbb{X}$ is a Hilbert space if and only if $\mathbb{J}_\X$ is a linear map, because, the linearity of $\mathbb{J}_\X$ is equivalent to the left-additivity of Birkhoff-James orthogonality. (See \cite[Theorem 2]{James 2} for details.)
\end{enumerate}
Even without linearity in general, the map $\mathbb{J}_\X$ is a surjective isometry. Thus, for an operator $T$ on a reflexive, smooth and strictly convex Banach space $\X$, we define an adjoint $T_*$ of $T$ in the following way:
\begin{align*}
T_* \,  : & \, \, \mathbb{X}\to\mathbb{X} \\
 & x \mapsto \mathbb{J}_\X^{-1} T^\times \mathbb{J}_\X(x).
\end{align*} 
Note that unlike the Banach adjoint $T^{\times}$, the map $T_*$ acts on $\X$ itself. The adjoint $T_*$ has the following basic properties.
\begin{enumerate}
    \item $T^\times f_x = f_{T_*x}$ for every $x\in \mathbb{X}$.
    \item $\| T_*(x) \| \leq \| T\| ~ \|x\|$ for every $x\in \mathbb{X}$.
    \item $T_*(\alpha x)=\alpha T_*x$ for every $x\in\mathbb{X}$ and $\alpha\in \mathbb{C}.$
    \item $T_*$ is not linear in general.
    \item $f_x(TT_*x)= \| T_*x\|^2$ and $f_{T_*Tx}(x)= \| Tx \|^2$ for all ~$x\in \mathbb{X}.$ Moreover, $\displaystyle \sup_{\|x\|=1} \|T_*T x\| = \|T\|^2.$
    \item $(\alpha T)_* = \overline{\alpha}T_*$ for all $\alpha \in \mathbb{C}.$
    \item $(T_1T_2)_*= T_{2*}T_{1*}$ for every $T_1,T_2\in \mathcal{B}(\mathbb{X}).$
    \item  $(T_1 + T_2)_*$ is not equal to $T_{1*}+ T_{2*}$ in general.
\end{enumerate}

\smallskip

We now investigate the behaviour of the map $T_*$ in particular cases to have some more interesting features. We begin with the unilateral shift and shall see that its adjoint in our sense is the backward shift operator.

\begin{prop}\label{Adjoint of unilateral shift in Banach space setup}
Suppose $\mathbb{X}$ is a reflexive, smooth and strictly convex Banach space and $M_z:\ell_2(\mathbb{X})\to\ell_2(\mathbb{X})$ is the unilateral shift operator as in $(\ref{Forward shift})$. Then $M_{z*}= \widehat M_z ,$ where $\widehat M_z$ is the backward shift operator as in $(\ref{backward shift})$.
\end{prop}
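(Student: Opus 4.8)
The plan is to unravel the definition $M_{z*} = \mathbb{J}_{\ell_2(\X)}^{-1} M_z^\times \mathbb{J}_{\ell_2(\X)}$ by computing each of the three maps explicitly and then recognising the composition as $\widehat{M}_z$. The key observation is that in a reflexive, smooth, strictly convex Banach space $\X$, the space $\ell_2(\X)$ is again reflexive, smooth and strictly convex, so $\mathbb{J}_{\ell_2(\X)}$ is a well-defined surjective isometry and the support functional at each point is unique; moreover the dual of $\ell_2(\X)$ is $\ell_2(\X^*)$ with the natural pairing $\langle (x_n),(f_n)\rangle = \sum_n f_n(x_n)$. So the first step is to identify, for a point $x = (x_0,x_1,x_2,\dots) \in \ell_2(\X)$, the support functional $f_x \in \ell_2(\X^*)$. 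The natural candidate is $f_x = (f_{x_0}, f_{x_1}, f_{x_2}, \dots)$, where $f_{x_n} \in \X^*$ is the unique support functional at $x_n$ in $\X$; one checks that this has norm $\left(\sum_n \|f_{x_n}\|^2\right)^{1/2} = \left(\sum_n \|x_n\|^2\right)^{1/2} = \|x\|$ and that $f_x(x) = \sum_n f_{x_n}(x_n) = \sum_n \|x_n\|^2 = \|x\|^2$, so by uniqueness of support functionals in the smooth space $\ell_2(\X)$ this is indeed $\mathbb{J}_{\ell_2(\X)}(x)$.

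Next I would compute $M_z^\times$ acting on $\ell_2(\X^*)$. Since $M_z(x_0,x_1,\dots) = (\mathbf 0, x_0, x_1,\dots)$, for any $g = (g_0,g_1,g_2,\dots) \in \ell_2(\X^*)$ we have $(M_z^\times g)(x_0,x_1,\dots) = g(M_z(x_0,x_1,\dots)) = g(\mathbf 0, x_0,x_1,\dots) = \sum_{n\geq 0} g_{n+1}(x_n)$, so $M_z^\times g = (g_1, g_2, g_3, \dots)$; that is, $M_z^\times$ is precisely the backward shift on $\ell_2(\X^*)$. Then for $x = (x_0,x_1,x_2,\dots)$,
\[
M_z^\times \mathbb{J}_{\ell_2(\X)}(x) = M_z^\times (f_{x_0},f_{x_1},f_{x_2},\dots) = (f_{x_1}, f_{x_2}, f_{x_3}, \dots).
\]
Finally, applying $\mathbb{J}_{\ell_2(\X)}^{-1}$: since $\mathbb{J}_{\ell_2(\X)}$ sends $(x_1,x_2,x_3,\dots)$ to $(f_{x_1},f_{x_2},f_{x_3},\dots)$ and $\mathbb{J}_\X$ is a bijection (reflexive and strictly convex $\Rightarrow$ surjective, smooth $\Rightarrow$ injective on rays, and in fact a bijection up to the conjugate-homogeneity), we get $\mathbb{J}_{\ell_2(\X)}^{-1}(f_{x_1},f_{x_2},\dots) = (x_1,x_2,x_3,\dots)$. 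Hence $M_{z*}(x_0,x_1,x_2,\dots) = (x_1,x_2,x_3,\dots) = \widehat{M}_z(x_0,x_1,\dots)$, which is the claim.

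The main obstacle — and the one step deserving genuine care rather than routine verification — is the identification of the duality map on $\ell_2(\X)$: namely that the unique support functional at $(x_n)$ has the coordinatewise form $(f_{x_n})$, and equally the verification that $\mathbb{J}_{\ell_2(\X)}^{-1}$ acts coordinatewise by $\mathbb{J}_\X^{-1}$. This rests on $\ell_2(\X)$ inheriting smoothness and strict convexity (and reflexivity) from $\X$, which I would either cite or prove via the standard $\ell_2$-sum arguments, together with the elementary inequality showing that a coordinatewise-support vector achieves the right norm and pairing. Once the duality map is pinned down, the rest is the two-line computation above. A minor subtlety to flag is that $\widehat{M}_z$ is genuinely linear (so no inconsistency with property (4) in the list that $T_*$ need not be linear — here it happens to be), which is consistent with $M_z$ having right-complemented range; I would remark on this briefly but it is not needed for the proof.
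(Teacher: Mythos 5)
Your proposal is correct and follows essentially the same route as the paper: identify $\ell_2(\X)^*$ with $\ell_2(\X^*)$, observe that the support functional at $(x_n)$ is the coordinatewise tuple $(f_{x_n})$, note that $M_z^\times$ is the backward shift on $\ell_2(\X^*)$, and conjugate back by the duality map. The only difference is that you justify the coordinatewise form of $\mathbb{J}_{\ell_2(\X)}$ (via the norm and pairing computation and smoothness of $\ell_2(\X)$), which the paper simply asserts.
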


\begin{proof}
We have that $\ell_2(\mathbb{X})^* \cong \ell_2(\mathbb{X}^*)$ by the map $R: \ell_2(\mathbb{X})^* \to \ell_2(\mathbb{X}^*)$ defined by $R(f)=(f_n)$, where $f_n(x)= f(\mathbf{0},\mathbf{0},\cdots, \underbrace{x}_{\text{$n$-th}},\mathbf{0},\mathbf{0},\cdots),~x\in \mathbb{X}.$ Therefore, for every $\underline{x}= (x_n)\in \ell_2(\mathbb{X})$ we have
\[
f_{\underline{x}}= R^{-1}((f_{x_n})) \, \, \text{ and } \,\, \left(M_z^\times f_{\underline{x}}\right)\underline{y} = f_{\underline{x}}(M_z(\underline{y})) = f_{x_2}(y_1)+ f_{x_3}(y_2)+\cdots = f_{\widehat{M}_{z}(\underline{x})}(\underline{y}), \quad \underline{y}\in \ell_2(\mathbb{X})^*.
\]
Consequently, $M_{z*}(\underline{x}) = \widehat{M}_{z}(\underline{x})$ for all $\underline{x}\in \ell_2(\mathbb{X}).$
\end{proof}

The adjoint of a Hilbert space unitary is its inverse. Interestingly, the same happens for the Banach space unitaries.

\begin{prop}\label{Adjoint of bilateral shift in Banach space setup}
Suppose $\mathbb{X}$ is a reflexive, smooth and strictly convex Banach space and $\widetilde{U}$ is a unitary operator on $\X$. Then $\widetilde{U}_*= \widetilde{U}^{-1}$.
\end{prop}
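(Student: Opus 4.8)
The plan is to argue directly from the definition $\widetilde{U}_* = \mathbb{J}_{\X}^{-1}\,\widetilde{U}^\times\,\mathbb{J}_{\X}$, using the uniqueness of support functionals guaranteed by smoothness. Since $\mathbb{J}_{\X}$ is a bijection of $\X$ onto $\X^*$, it suffices to establish the identity $\widetilde{U}^\times \mathbb{J}_{\X}(x) = \mathbb{J}_{\X}(\widetilde{U}^{-1}x)$ for every $x\in\X$, that is, $f_x\circ\widetilde{U} = f_{\widetilde{U}^{-1}x}$, where $f_z$ denotes the unique support functional at $z$. Applying $\mathbb{J}_{\X}^{-1}$ to this equality then yields $\widetilde{U}_* x = \widetilde{U}^{-1}x$ for all $x$, which is exactly the assertion.

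First I would record that, being a surjective isometry, $\widetilde{U}$ has a bounded inverse $\widetilde{U}^{-1}$ which is again a surjective isometry; in particular $\|\widetilde{U}^{-1}x\| = \|x\|$ and $\widetilde{U}$ maps $S_\X$ onto $S_\X$. Then I would verify that the functional $g := f_x\circ\widetilde{U}$ satisfies the two defining conditions of a support functional at $w := \widetilde{U}^{-1}x$. For the evaluation condition, $g(w) = f_x(\widetilde{U}\widetilde{U}^{-1}x) = f_x(x) = \|x\|^2 = \|w\|^2$. For the norm condition, $\|g\| = \sup_{\|y\|=1}|f_x(\widetilde{U}y)| = \sup_{\|z\|=1}|f_x(z)| = \|f_x\| = \|x\| = \|w\|$, where the middle equality uses that $\widetilde{U}$ carries $S_\X$ onto $S_\X$. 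Hence $g\in J(w)$.

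Now I would invoke smoothness of $\X$: the set $J(w)$ is a singleton, so $g = f_w$, i.e. $f_x\circ\widetilde{U} = f_{\widetilde{U}^{-1}x}$, and the conclusion follows as indicated above. The same identity can alternatively be read off the first of the listed basic properties of $T_*$, namely $\widetilde{U}^\times f_x = f_{\widetilde{U}_*x}$, once one checks $\widetilde{U}^\times f_x = f_{\widetilde{U}^{-1}x}$ and uses that $\mathbb{J}_{\X}$ is injective.

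There is no serious obstacle here: reflexivity and strict convexity serve only to make $\mathbb{J}_{\X}$ well defined in the first place, and the single substantive point is the appeal to smoothness to conclude that the two functionals $f_x\circ\widetilde{U}$ and $f_{\widetilde{U}^{-1}x}$, having equal norm and agreeing with $\|\cdot\|^2$ at $\widetilde{U}^{-1}x$, must coincide. The only thing worth keeping an eye on is the conjugate-linearity of $\mathbb{J}_{\X}$; but the argument never scales a vector by a non-real scalar (we only compose with $\widetilde{U}$ and evaluate), so it plays no role.
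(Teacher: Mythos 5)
Your proposal is correct and follows essentially the same route as the paper: the paper's proof simply cites its earlier identity $V^\times f_{Vx}=f_x$ (Proposition \ref{V cross f_Vx}), which was established by exactly the computation you carry out — showing that $f_x\circ\widetilde{U}$ is a support functional at $\widetilde{U}^{-1}x$ and invoking smoothness for uniqueness. The only cosmetic difference is that you exploit surjectivity of $\widetilde{U}$ to get the norm equality directly, while the paper's lemma handles general isometries via an inequality plus evaluation at a witness point.
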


\begin{proof}
It follows from (\ref{Definition of Phi}) that $\widetilde{U}^{\times} f_x = \widetilde{U}^{\times}f_{\widetilde{U}\widetilde{U}^{-1}(x)} = f_{\widetilde{U}^{-1}(x)}$ for all $x\in \X.$ Therefore, $\widetilde{U}_*(x)= \widetilde{U}^{-1}(x)$ for all $x\in \X$.
\end{proof}

Now we present the main result of this Subsection. This provides a pair of characterizations for a Hilbert space of dimension greater than $2$.

\begin{thm}\label{Characterisation of Hilbert space by rank one operators}
Let $\mathbb{X}$ be a reflexive, smooth and strictly convex Banach space with $\dim(\mathbb{X})> 2.$ Then, the following are equivalent.
\begin{enumerate}
\item $\mathbb{X}$ is a Hilbert space.

\item $T_*$ is a linear map for every rank one linear map $T\in \mathcal{B}(\mathbb{X}).$

\item $(I+U)_*= I + U_*$, where $U$ is the bilateral shift operator $U$ on $\ell_2(\mathbb{Z}, \mathbb{X})$ defined by
\[
\quad U(\dots, x_{-2}, x_{-1}, \boxed{x_0}, x_1, x_2, \dots)= (\dots, x_{-2}, \boxed{x_{-1}}, x_0, x_1, x_2, \dots), \quad (x_n)_{n\in\mathbb{Z}}\in \ell_2(\mathbb{Z},\mathbb{X}),
\]
where the box denotes the $0$-th position in the sequence.
\end{enumerate}
 
\end{thm}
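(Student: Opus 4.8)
The plan is to prove the chain of implications $(1)\Rightarrow(2)\Rightarrow(3)\Rightarrow(1)$, exploiting the dictionary between linearity of $\mathbb J_\X$ (equivalently of $T_*$ for appropriate $T$) and left-additivity of Birkhoff--James orthogonality, which by James's theorem \cite[Theorem 2]{James 2} is equivalent to $\X$ being a Hilbert space when $\dim\X>2$.

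\textbf{$(1)\Rightarrow(2)$.} If $\X$ is a Hilbert space, then for every $T\in\mathcal B(\X)$ the adjoint $T_*$ coincides with the ordinary Hilbert space adjoint $T^*$ (as explained in the text, $T_*=\mathbb J_\X^{-1}T^\times\mathbb J_\X$ and this is exactly $T^*$), which is linear. In particular $T_*$ is linear for every rank one $T$, so $(2)$ holds trivially.

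\textbf{$(2)\Rightarrow(1)$.} This is the heart of the argument. Assume $T_*$ is linear for every rank one $T\in\mathcal B(\X)$. I want to deduce that $\mathbb J_\X$ is additive, i.e.\ $f_{x+y}=f_x+f_y$ whenever $x\perp_B y$ appropriately, or more directly that Birkhoff--James orthogonality is left-additive. Given nonzero $x,y\in\X$, build a rank one operator $T$ whose adjoint $T_*$ ``sees'' both $f_x$ and $f_y$: a natural candidate is $T(z)=f_x(z)\,u$ (for a fixed unit vector $u$), whose Banach adjoint sends $g\mapsto g(u)f_x$, so that $T^\times f_z = f_z(u)\,f_x$ and hence $T_*(z)=\mathbb J_\X^{-1}\big(f_z(u)f_x\big)$. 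Linearity of $T_*$ then forces $\mathbb J_\X^{-1}$ to be additive along the one-dimensional family $\{f_z(u)f_x\}$, which is not yet enough. The better route is to use property (5) / property (1) of $T_*$, namely $T^\times f_x=f_{T_*x}$: choosing $T$ rank one with range spanned by a vector adapted to $x+y$ and comparing $T_*(x+y)$ with $T_*x+T_*y$ produces, via smoothness and strict convexity, the identity $f_{x+y}=f_x+f_y$ for $x\perp_B y$. Once additivity of the (unique) support map $\mathbb J_\X$ is established on enough vectors, one invokes \cite[Theorem 2]{James 2}: linearity of $\mathbb J_\X$ is equivalent to left-additivity of Birkhoff--James orthogonality, which for $\dim\X>2$ characterizes Hilbert spaces. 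The main obstacle here is the bookkeeping: from ``$T_*$ linear for every rank one $T$'' one must extract the full additivity $\mathbb J_\X(x+y)=\mathbb J_\X(x)+\mathbb J_\X(y)$ for \emph{all} $x,y$, not merely orthogonal pairs; this requires choosing, for each pair $x,y$, a rank one operator whose adjoint's linearity encodes precisely the relation $f_{x+y}=f_x+f_y$, and handling the scalar field correctly in view of the conjugate-linearity property $\mathbb J_\X(\alpha x)=\bar\alpha\mathbb J_\X(x)$.

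\textbf{$(1)\Leftrightarrow(3)$.} Since $U$ is a Banach space unitary (Example \ref{Example:new-051}), Proposition \ref{Adjoint of bilateral shift in Banach space setup} gives $U_*=U^{-1}$. So $(3)$ reads $(I+U)_*=I+U^{-1}$. If $\X$ is a Hilbert space then $\ell_2(\mathbb Z,\X)$ is a Hilbert space, $U$ is a genuine unitary, and $(I+U)_*$ is the Hilbert adjoint $(I+U)^*=I^*+U^*=I+U^{-1}$, giving $(3)$. Conversely, assume $(I+U)_*=I+U_*=I+U^{-1}$. Unravel this using property (1): for $\underline{x}\in\ell_2(\mathbb Z,\X)$, $(I+U)^\times f_{\underline x}=f_{(I+U)_*\underline x}=f_{\underline x+U^{-1}\underline x}$. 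Evaluating on a one-point-supported vector $\underline{x}=(\dots,\mathbf 0,\boxed{x},\mathbf 0,\dots)$ and a neighbouring test vector, the support functional of $\underline x+U^{-1}\underline x$ must equal $f_{\underline x}+f_{U^{-1}\underline x}$; but $\underline x$ and $U^{-1}\underline x$ sit in orthogonal coordinates, so $f_{\underline x}\perp_B f_{U^{-1}\underline x}$ and one reads off that the support map is additive on such orthogonal pairs inside $\ell_2(\mathbb Z,\X)$. Translating this back through the coordinate structure to $\X$ itself (using that $x$ is arbitrary in $\X$ and that any two vectors of $\X$ can be placed in distinct coordinates) yields left-additivity of Birkhoff--James orthogonality in $\X$ for the pair $x,x$ after scaling --- more precisely it yields $f_{x+y}=f_x+f_y$ for all $x,y\in\X$ by suitably building bi-sequences with two nonzero entries --- and then \cite[Theorem 2]{James 2} again gives that $\X$ is a Hilbert space, completing the cycle. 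I expect the delicate point in $(3)\Rightarrow(1)$ to be confirming that the additivity relation obtained on special bi-sequences in $\ell_2(\mathbb Z,\X)$ genuinely descends to \emph{all} pairs in $\X$, which is why the two-nonzero-coordinate test vectors, rather than single-coordinate ones, are needed.
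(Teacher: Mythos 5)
Your overall architecture coincides with the paper's: rank-one operators of the form $z\mapsto f_x(z)u$ for $(2)\Rightarrow(1)$, the identity $U_*=U^{-1}$ plus two-coordinate test vectors for $(3)\Rightarrow(1)$, and in both cases a reduction to left-additivity of Birkhoff--James orthogonality followed by James's theorem. However, in both nontrivial directions you stop short of the decisive computation, and in one of them you explicitly reject the step that actually closes the argument.

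In $(2)\Rightarrow(1)$ you write down $T(z)=f_x(z)u$, compute $T^\times f_z=f_z(u)f_x$, and then assert that linearity of $T_*$ only gives ``additivity of $\mathbb J_\X^{-1}$ along a one-dimensional family, which is not yet enough.'' This is incorrect. Since $\mathbb J_\X(\alpha w)=\bar\alpha\,\mathbb J_\X(w)$ holds unconditionally, $T_*(z)=\mathbb J_\X^{-1}\bigl(f_z(u)f_x\bigr)=\overline{f_z(u)}\,x$, so additivity of $T_*$ says exactly $f_{z_1+z_2}(u)=f_{z_1}(u)+f_{z_2}(u)$. The point you miss is that $u$ is a free parameter: each $u\in\X$ yields its own rank-one operator, so quantifying over all such $T$ gives $f_{z_1+z_2}=f_{z_1}+f_{z_2}$ as functionals, which is full additivity of the support map and in particular left-additivity of $\perp_B$; James's theorem then applies. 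This is precisely the paper's argument with $T_{xy}(z)=f_x(z)y$ and $T_{xy*}(z)=\overline{f_z(y)}\,x$. The ``better route'' you substitute (an operator ``adapted to $x+y$'' producing $f_{x+y}=f_x+f_y$ for orthogonal pairs) is never made concrete, and your closing worry about extending from orthogonal pairs to all pairs is moot once the above is carried out. Similarly, in $(3)\Rightarrow(1)$ you correctly diagnose that single-coordinate bi-sequences give nothing (the two summands occupy disjoint slots) and that two-nonzero-coordinate vectors are needed, but you never exhibit one that works. The choice that does is $\underline{x_0}=(\dots,\mathbf 0,\boxed{x},\,y-x,\mathbf 0,\dots)$: under the identification $\ell_2(\mathbb Z,\X)^*\cong\ell_2(\mathbb Z,\X^*)$, the $0$-th coordinate of $f_{\underline{x_0}}+f_{U^{-1}\underline{x_0}}$ is $f_x+f_{y-x}$ while the $0$-th coordinate of $f_{\underline{x_0}+U^{-1}\underline{x_0}}$ is $f_y$, and equating them gives $f_x+f_{y-x}=f_y$ for all $x,y\in\X$, i.e.\ full additivity again. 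Without these two explicit constructions the proposal is a plan rather than a proof.
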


\begin{proof}

$(1) \Leftrightarrow (2)$. The forward direction is obvious. We prove the backward direction, i.e. $(2) \Rightarrow (1)$. Let $x,y\in \mathbb{X}$ be arbitrary. Consider the rank one operator $T_{xy}: \mathbb{X}\to\mathbb{X}$ defined by $T_{xy}(z)= f_x(z)y,~z\in \mathbb{X}.$ Then $T_{xy}$ is a bounded linear map and $\|T_{xy}\|= \|x\|\|y\|.$ Now, we find the adjoint operator, i.e. $T_{xy*}.$ Let $z\in \mathbb{X}$ be arbitrary. Then
\[
       (T_{xy}^\times f_z) (w) = f_z(T_{xy}w)= f_z(y)f_x(w)= f_{\overline{f_z(y)}x}(w), \quad w\in \mathbb{X}.
\]
Therefore, by the smoothness of $\mathbb{X}$ we have that $T_{xy*}(z)= \overline{f_z(y})x$ for any $z\in \mathbb{X}.$ Now, the linearity of the family of operators $\{T_{xy*}: ~ x,y\in \mathbb{X}\}$ implies that the Birkhoff-James orthogonality is left additive on $\mathbb{X}.$ Consequently, $\mathbb{X}$ is a Hilbert space by \cite[Theorem 2]{James 2}.

\smallskip

\noindent $(1) \Leftrightarrow (3)$. Again the forward direction is obvious. We prove $(3) \Rightarrow (1)$.
By hypothesis, we have
\begin{equation}\label{Equation 8}
 f_{\underline{x}}+ f_{U^{-1}\underline{x}} = (I+U)^\times f_{\underline{x}} = f_{\left(I+U\right)_*~\underline{x}} = f_{\left(I+U_*\right)\underline{x}} = f_{\left(\underline{x}+ U^{-1}\underline{x}\right)}, \quad \underline{x}\in \ell_2(\mathbb{Z},\mathbb{X}).
\end{equation}
The last equality in (\ref{Equation 8}) follows from Proposition \ref{Adjoint of bilateral shift in Banach space setup}. Let $x,y\in \mathbb{X}$ be arbitrary. Consider, $\underline{x_0}= ( \cdots, \mathbf{0}, \boxed{x},y-x,\mathbf{0},\cdots)\in \ell_2(\mathbb{Z}, \mathbb{X}).$ Then it follows from the identification $\ell_2(\mathbb{Z},\mathbb{X})^* \cong \ell_2(\mathbb{Z},\mathbb{X}^*)$ that the element $f_{\underline{x_0}}\in \ell_2(\mathbb{Z},\mathbb{X})^*$ is uniquely identified with $(\cdots, \mathbf{0}, \boxed{f_{x}}, f_{y-x}, \mathbf{0}, \cdots )\in \ell_2(\mathbb{Z},\mathbb{X}^*).$ Thus, it follows from (\ref{Equation 8}) that the elements $(f_{\underline{x_0}}+ f_{U^{-1}\underline{x_0}})$ and $f_{\left(\underline{x_0}+ U^{-1}\underline{x_0}\right)}$ are equal. Consequently,
\[
  (\cdots , \mathbf{0}, f_x, \boxed{f_x+ f_{y-x}}, f_{y-x}, \mathbf{0}, \cdots )= (\cdots, \mathbf{0}, f_x, \boxed{f_y},f_{y-x}, \mathbf{0}, \cdots ), 
\]
as $\underline{x_0} + U(\underline{x_0}) = (\cdots, \mathbf{0}, x,\boxed{y}, y-x,\mathbf{0},\cdots )$.
By component-wise comparison, we have $f_x + f_{y-x} = f_y.$ Therefore, for $x,y \in \X$ we have
$
  f_x -f_y = f_x + f_{-y} = f_{x-y}$.
This shows that the Birkhoff-James orthogonality is left additive and consequently $\mathbb{X}$ is a Hilbert space by \cite[Theorem 2]{James 2}.

\end{proof}

%\vspace{0.05cm}

\section{Dilation and norm attainment set of an operator} \label{Sec:08}

\vspace{0.3cm}
\noindent
Throughout this Section, we shall consider only nonzero operators. The norm attainment set of an operator is a well-studied area in Banach space theory, e.g. see \cite{BP, CN, Lindenstrauss, Sain3,  Sain4, Sain and Paul, Zizler}. In this Section, we study the connection between the norm attainment set and (isometric) dilation of a contraction. The norm attainment set of an operator $T$ on a Banach space $\X$ is defined as 
\[
M_T=\{x\in S_{\X}\,:\, \|Tx\|=\|T\| \}.
\]
Recently, Paul and Sain characterized a finite-dimensional real Hilbert space in terms of the norm attainment sets of operators acting on it. The result is stated below.

\begin{thm}\cite[Theorem 2.2]{Sain and Paul}\label{Thm:IPS & norm sub-sp.}
A finite dimensional real normed linear space $\X$ is an inner product space if and only if for any linear operator $T$ on $\X$, $T$ attains its norm at $e_1,e_2 \in S_\X$ implies $T$ attains its norm at $span\{e_1,e_2\}\cap S_\X$.
\end{thm}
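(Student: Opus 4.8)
The forward implication is the routine one. Suppose $\X$ is an inner product space; being finite dimensional it is a real Hilbert space, so for every operator $T$ on $\X$ the operator $T^{*}T$ is self-adjoint and positive semidefinite with $\|T\|^{2}=\lambda_{\max}(T^{*}T)$. For a unit vector $x$ one has $\|Tx\|^{2}=\langle T^{*}Tx,x\rangle$, and since $T^{*}T$ is positive semidefinite this equals $\|T\|^{2}$ precisely when $x$ lies in the eigenspace $E$ of $T^{*}T$ corresponding to $\lambda_{\max}(T^{*}T)$. Hence $M_T\cup\{\mathbf 0\}=E$, a linear subspace; in particular if $T$ attains its norm at $e_{1},e_{2}\in S_{\X}$ then $\mathrm{span}\{e_{1},e_{2}\}\subseteq E$, so $T$ attains its norm at every unit vector of $\mathrm{span}\{e_{1},e_{2}\}$. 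This is essentially all there is to the ``only if'' part.

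For the converse my plan is to argue by contraposition: assuming $\X$ is \emph{not} an inner product space, I would produce a single operator $T$ on $\X$ for which $M_T\cup\{\mathbf 0\}$ fails to be a linear subspace, thus violating the hypothesis applied to two linearly independent maximizers of $T$. Two preliminary reductions organize the work. First, applying the hypothesis to the rank-one operators $z\mapsto f(z)w$ (for a fixed nonzero $w$ and arbitrary nonzero $f\in\X^{*}$), whose norm-attainment set is exactly $M_f$, shows that $M_f\cup\{\mathbf 0\}$ is a subspace for every functional $f$; but this says nothing new (it already holds in every strictly convex space, e.g.\ $\ell_p^{2}$ with $p\neq2$), so one must feed genuinely two-dimensional-range operators into the hypothesis. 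Second, for $\dim\X\ge 3$ I would reduce to two dimensions: by the Jordan--von Neumann criterion $\X$ is an inner product space as soon as the parallelogram law holds, and the parallelogram law involves only two vectors, hence may be checked inside every two-dimensional subspace $Y\subseteq\X$; so it is enough to detect non-Euclideanness of $Y$ and transport it to an operator on $\X$.

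The heart of the proof is the construction of the bad operator on a non-Euclidean two-dimensional space $Y$, the model being $\ell_\infty^{2}$, where a rotation $R_\theta$ of the unit square by a small angle $\theta\in(0,\pi/4)$ has operator norm $\cos\theta+\sin\theta$, attained exactly at the two diagonal directions $(1,1)$ and $(1,-1)$ and at no unit vector of the line they span. In general I would choose an operator on $Y$ (a small rotation of the unit ball, or a suitable diagonal-type operator written in an Auerbach basis, which one can take mutually Birkhoff--James orthogonal) whose norm is attained at two independent unit vectors but not on all of their span --- possible precisely because $B_Y$ is not an ellipse --- and then extend it to $\X$ without enlarging either its norm or its norm-attainment set, for instance by composing with a norm-one projection onto $Y$ when one exists, or by a direct perturbation argument. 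From the resulting pair of unit vectors $x,y$ (with $\|x\|=\|y\|$) at which $T$ is norm-attaining while some unit vector of $\mathrm{span}\{x,y\}$ is not, the contradiction is immediate; alternatively, tracking the identities produced along the way one arrives at a relation of the form $\|ax-by\|=\|bx-ay\|$ for all real $a,b$ whenever $\|x\|=\|y\|$, which by (a real form of) Ficken's theorem \cite{Ficken} already forces $\X$ to be an inner product space.

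The step I expect to be the main obstacle is exactly this construction. In an arbitrary finite-dimensional real normed space we have no smoothness, strict convexity, or polyhedrality to exploit, so identifying $M_T$ \emph{exactly} for the candidate operator --- proving that its norm is attained at the two prescribed independent directions and at \emph{no} other unit vector, in particular at none inside their span --- calls for a careful compactness-and-extreme-point analysis of the rotated (or deformed) unit ball. A second delicate point is the passage from an operator on a two-dimensional subspace to an operator on $\X$ with the same norm and controlled norm-attainment set. Finally, one should note that the argument cannot be shortcut through symmetry of Birkhoff--James orthogonality: although Theorem~\ref{Symmetry of Birkhoff-James orthogonality} would dispose of the case $\dim\X\ge 3$, it genuinely excludes dimension two (there exist two-dimensional non-Euclidean spaces in which Birkhoff--James orthogonality is symmetric), and since the statement to be proved covers the plane as well, the operator construction above is indispensable.
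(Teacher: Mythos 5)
First, a point of comparison: the paper does not prove this statement at all --- it is quoted verbatim from \cite{Sain and Paul} as background --- so your attempt can only be measured against the cited source and on its own merits. Your forward direction is correct and complete: in a finite-dimensional real inner product space $M_T\cup\{\mathbf 0\}$ is exactly the top eigenspace of $T^{*}T$, hence a subspace, and the claimed property follows. The problem is the converse, which in your write-up is a plan rather than a proof. The entire content of the hard direction is the assertion that on \emph{every} non-Euclidean two-dimensional real normed space $Y$ there exists an operator whose norm is attained at two linearly independent unit vectors but not at every unit vector of $Y$. You justify this only by the single example of a rotation on $\ell_\infty^2$ and the phrase ``possible precisely because $B_Y$ is not an ellipse''; no general construction is given, and you yourself flag this as ``the main obstacle.'' Producing such an operator uniformly over all non-Euclidean planes (with no smoothness, strict convexity, or polyhedrality available) is exactly what \cite{Sain and Paul} had to do, so as it stands the proposal assumes the theorem's hard kernel rather than proving it.

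The second concrete gap is the extension step for $\dim\X\ge 3$. After locating a non-Euclidean two-dimensional subspace $Y$ via the parallelogram law, you propose to extend the bad operator to $\X$ ``by composing with a norm-one projection onto $Y$ when one exists, or by a direct perturbation argument.'' A norm-one projection onto $Y$ need not exist: by the Kakutani--Bohnenblust characterization, if every two-dimensional subspace of a space of dimension at least three were $1$-complemented the space would already be an inner product space, so in the contrapositive setting you must expect $Y$ not to be $1$-complemented. Composing with a projection of norm greater than one can strictly increase the operator norm, in which case the extended operator may fail to attain its norm anywhere on $Y$ and the intended pair $e_1,e_2$ drops out of $M_T$ entirely; the ``direct perturbation argument'' that is supposed to rescue this is not described. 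Your closing observation --- that one cannot shortcut through symmetry of Birkhoff--James orthogonality because Radon planes are non-Euclidean yet have symmetric orthogonality --- is accurate, but it only underlines that the two-dimensional construction is unavoidable and still missing. Until the existence of the bad operator on an arbitrary non-Euclidean plane and a norm-controlled extension to $\X$ are actually supplied, the converse is not established.
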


The necessary part of the theorem holds for any inner product space, real or complex and irrespective of dimension. For an operator $T$ acting on a Hilbert space $\mathcal{H}$, $T$ attains its norm at a unit vector $x$ if and only if $T^*Tx = \|T\|^2 x$. In other words, if $T\in \mathcal{B}(\mathcal{H})$ attains its norm, then $M_T = S_{\mathcal{H}_0}$, where $\mathcal{H}_0$ is the eigenspace of $T^*T$ corresponding to the eigenvalue $\|T\|^2$. However, norm attainment set of a Banach space operator is not so straight forward. In \cite{Sain 5} Sain characterized the norm attainment set of a Banach space operator in terms of semi-inner-products \cite{JRG, GL, Sain 5}.

\medskip

Let $\X$ be a Banach space and $T\in \mathcal{B}(\X)$. If $T= \alpha S$ for some operator $S$ on $\X$ and $\alpha\in \mathbb{C}\setminus \{0\}$, then it is obvious that $M_T = M_S$. So, it follows from here that $M_{T} = M_{A}$, where $A= \frac{S}{\|S\|}$ is a norm-one contraction. Thus, for studying the norm attainment sets of scalar multiples of norm-one contractions that admit isometric dilations, it suffices to consider the following class:
\[
 \mathcal{A}(\X) = \left\{ T\in \mathcal{B}(\X): \text{$\frac{T}{\|T\|}$ dilates to  an isometry on some normed (or, semi-normed) space} \right\}.
\]
Evidently, this class coincides with $\mathcal{B}(\X)$ if the underlying space $\X$ is a Hilbert space. In case of Banach space, the class $\mathcal{A}(\X)$ is nonempty as it contains all scalar multiples of isometries, by Theorem \ref{Necessary and sufficient condition for a contraction to be dilated 1}. Nevertheless, every operator in $\mathcal{A}(\X)$ can be characterized in view of Theorem \ref{Necessary and sufficient condition for a contraction to be dilated 1} in the following way.

\begin{thm}
Let $\X$ be a Banach space and $T\in \mathcal{B}(\X)$. Then $T\in \mathcal{A}(\X)$ if and only if the function $\widehat{A}_T: \X \to [0,\infty)$ given by $\widehat{A}_T(x) = \left(\|T\|^2 \|x\|^2 -\|Tx\|^2 \right)^{1\slash 2}$ defines a norm $($or semi-norm$)$ on $\X$.
\end{thm}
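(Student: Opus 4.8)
The plan is to reduce this statement to Theorem \ref{Necessary and sufficient condition for a contraction to be dilated 1} by passing to the normalized operator. Write $S = T/\|T\|$, so that $S$ is a norm-one contraction. The first step is to observe the elementary identity relating the two auxiliary functions: for every $x \in \X$,
\[
  \widehat{A}_T(x) = \left(\|T\|^2\|x\|^2 - \|Tx\|^2\right)^{1/2} = \|T\| \left(\|x\|^2 - \|Sx\|^2\right)^{1/2} = \|T\|\, A_S(x).
\]
Since $\|T\| > 0$ (we consider only nonzero operators in this Section), $\widehat{A}_T$ is a norm (respectively a semi-norm) on $\X$ if and only if $A_S$ is a norm (respectively a semi-norm) on $\X$: multiplying by the positive constant $\|T\|$ preserves positivity, strict positivity, absolute homogeneity and the triangle inequality, and conversely.

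Next I would split according to whether $S$ lies in the open unit ball or on the unit sphere of $\mathcal B(\X)$. If $\|S\| < 1$ — which cannot happen here since $\|S\| = 1$ by construction, so this case is vacuous — one would invoke Theorem \ref{Necessary and sufficient condition for a contraction to be dilated 1} directly. Since in fact $\|S\| = 1$, the operator $S$ belongs to $\mathcal G_1(\X) \cup \mathcal G_2(\X)$ depending on whether its norm attainment set $M_S$ is empty or not, and the relevant tool is Theorem \ref{thm:new-6.26}: $S \in \mathcal G_1(\X)$ (resp. $\mathcal G_2(\X)$) dilates to an isometry on a normed (resp. semi-normed linear) space if and only if $A_S$ defines a norm (resp. a semi-norm) on $\X$. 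Combining this equivalence with the identity $\widehat{A}_T = \|T\|\, A_S$ from the first step, and with the definition of $\mathcal A(\X)$ as the set of operators $T$ for which $T/\|T\| = S$ dilates to an isometry on some normed or semi-normed space, gives exactly the asserted equivalence: $T \in \mathcal A(\X)$ iff $S$ dilates to such an isometry iff $A_S$ is a norm or semi-norm iff $\widehat{A}_T$ is a norm or semi-norm on $\X$.

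I do not anticipate a serious obstacle here; the statement is essentially a rescaling of results already established. The only point requiring mild care is the bookkeeping between the two dichotomies ``norm vs.\ semi-norm'' on the analytic side and ``$\mathcal G_1$ vs.\ $\mathcal G_2$'' (equivalently ``$M_S = \emptyset$ vs.\ $M_S \neq \emptyset$'') on the operator side, so that the phrase ``defines a norm (or semi-norm)'' in the conclusion matches the corresponding clause in the definition of $\mathcal A(\X)$. One should also note explicitly that $M_S = M_T$ whenever $S$ is a nonzero scalar multiple of $T$, so that the $\mathcal G_1$/$\mathcal G_2$ classification of $S$ is intrinsic to $T$; this was already remarked at the start of the Section. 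With these observations in place the proof is a direct citation of Theorem \ref{thm:new-6.26} (together with Theorem \ref{Necessary and sufficient condition for a contraction to be dilated 1} in the degenerate strict-contraction case) applied to $S = T/\|T\|$.
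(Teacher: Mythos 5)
Your proposal is correct and takes essentially the same route as the paper: write $S=T/\|T\|$, note the identity $\widehat{A}_T=\|T\|\,A_S$ so that the norm/semi-norm property transfers under the positive rescaling, and then invoke the dilation characterization for $S$. If anything you are slightly more careful than the paper, which cites Theorem \ref{Necessary and sufficient condition for a contraction to be dilated 1} (stated for strict contractions) even though $\|S\|=1$ always, whereas you correctly identify Theorem \ref{thm:new-6.26} as the result that actually applies to the normalized operator.
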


\begin{proof}
Consider the operator $S= \frac{T}{\|T\|}$. Then for all $x\in \X$, we have $\widehat{A}_T(x) = \|T\| A_S(x)$, where $A_S$ is as in (\ref{Equivalent norm function}). Therefore, $\widehat{A}_T$ defines a norm (or semi-norm) on $\X$ if and only if $A_S$ does the same. The rest follows from Theorem \ref{Necessary and sufficient condition for a contraction to be dilated 1}.
\end{proof}

The fact that $\widehat A_T$ defines a norm and that $A_{T/\|T\|}$ defines a norm are equivalent. Thus, it follows from Lemma \ref{lem:new-0122} that the function $\widehat{A}_T$ is not equivalent to the original norm on $\X$ for any $T\in \mathcal{A}(\X)$.
Now, we arrive at the main result of this Section.

\begin{thm}\label{thm:623}
Let $\X$ be a Banach space and $T\in \mathcal{A}(\X)$. Let $V$ on $\X'$ be an isometric dilation of $\frac{T}{\|T\|}$ with $W:\X \rightarrow \X'$ being the isometric embedding. Then $x\in M_{T}$ if and only if $x \in \ker \left( ( I-P_{_{W(\X)}}) VW \right) \bigcap S_{\X}$.
\end{thm}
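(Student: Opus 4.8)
The plan is to express the norm attainment condition for $T$ directly through the defect map coming from the dilation. Since $M_T = M_S$ where $S = T/\|T\|$, it suffices to work with $S$, a norm-one contraction that dilates to the isometry $V$ on $\X'$. By Definition \ref{Definition of Dilation of contractions on Banach space}, we have $\widehat{S} = \widehat{W} S \widehat{W}^{-1}$ and $q(\widehat{S}) = P_{_{W(\X)}} q(V)|_{W(\X)}$ for all polynomials $q$. Taking $q(z) = z$ gives $\widehat{W} S \widehat{W}^{-1} = P_{_{W(\X)}} V|_{W(\X)}$, equivalently $WSx = P_{_{W(\X)}} V W x$ for every $x \in \X$. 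The key computation, exactly as in the proof of (iii)$\implies$(iv) of Theorem \ref{Characterisation of Hilbert spaces in terms of Dilation on Banach space}, is that since $\X' \cong W(\X) \oplus_2 \BL$ with $P_{_{W(\X)}}$ the norm-one projection, the vector $VWx$ splits orthogonally as $VWx = P_{_{W(\X)}}VWx + (I - P_{_{W(\X)}})VWx$, so that
\[
\|x\|^2 = \|Wx\|^2 = \|VWx\|^2 = \|P_{_{W(\X)}}VWx\|^2 + \|(I-P_{_{W(\X)}})VWx\|^2 = \|Sx\|^2 + \|(I-P_{_{W(\X)}})VWx\|^2.
\]

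From this identity the proof is immediate. First I would note that $\|S\| = 1$, so $x \in M_S$ means $\|Sx\| = \|x\| = 1$ with $x \in S_\X$. Plugging $\|x\| = 1$ and $\|Sx\| = 1$ into the displayed identity forces $\|(I-P_{_{W(\X)}})VWx\| = 0$, i.e. $x \in \ker\big((I-P_{_{W(\X)}})VW\big)$; and of course $x \in S_\X$. Conversely, if $x \in \ker\big((I-P_{_{W(\X)}})VW\big) \cap S_\X$, then the identity gives $\|Sx\|^2 = \|x\|^2 = 1 = \|S\|^2$, so $\|Sx\| = \|S\|$ and $x \in M_S = M_T$. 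Since $M_T = M_S$ by the opening remark ($T = \|T\| S$ with $\|T\| \neq 0$), this is exactly the claimed equivalence.

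There is really no hard obstacle here; the content is entirely contained in the orthogonal splitting $\X' \cong W(\X)\oplus_2 \BL$ together with the identity $q(\widehat{S}) = P_{_{W(\X)}}q(V)|_{W(\X)}$ at $q(z)=z$, both of which are part of the definition of isometric dilation. The only point requiring a word of care is that I must invoke the $2$-direct-sum structure to get the Pythagorean identity $\|u\|^2 = \|P_{_{W(\X)}}u\|^2 + \|(I-P_{_{W(\X)}})u\|^2$ for $u = VWx$; this is legitimate because $P_{_{W(\X)}}$ is precisely the norm-one projection associated with the decomposition $W(\X)\oplus_2\BL$, so $(I-P_{_{W(\X)}})u \in \BL$ and the norm on $\X'$ restricted to $W(\X)\oplus_2\BL$ is the $2$-sum norm. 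I would also remark, for completeness, that the statement is insensitive to which isometric dilation of $S$ is chosen, and in particular applies to the explicit dilation constructed in Theorem \ref{Necessary and sufficient condition for a contraction to be dilated 1} as well as to the one in Theorem \ref{Classification I}.
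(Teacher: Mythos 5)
Your proposal is correct and follows essentially the same route as the paper: both rest on the single identity $\|T\|^2\|x\|^2-\|Tx\|^2=\|T\|^2\,\|(I-P_{_{W(\X)}})VWx\|^2$, obtained from the orthogonal $2$-sum splitting of $\X'$ and the dilation relation at $q(z)=z$. Your write-up merely makes explicit the Pythagorean step and the reduction from $T$ to $S=T/\|T\|$ that the paper leaves implicit.
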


\begin{proof}
Let $x\in \X$ be arbitrary. Then we have
\begin{align*}
  \|T\|^2 \|x\|^2 - \|Tx\|^2 = \|T\|^2\left( \|x\|^2 - \left\|\dfrac{T}{\|T\|}(x)\right\|^2\right) & = \|T\|^2 \left(\|V W(x)\|^2 - \| P_{_{W(\X)}}VW (x)\|^2\right) \nonumber\\
 & = \|T\|^2 \| (I-P_{_{W(\X)}})V W(x)\|^2.
\end{align*}

Consequently, $x\in M_T$ if and only if $x\in \ker\left((I-P_{_{W(\X)}})VW \right)\bigcap S_\X$.
\end{proof}

For a Banach space contraction $T\in \mathcal{B}(\X)$, let us consider the set 
\[
\widehat{M}_T= \{x\in \X: \|T\|\|x\| = \|Tx\|\}.
\]
Then $M_T = S_\X \cap \widehat{M}_T$. For all scalar multiple of isometries $V$ on $\X$, the set $\widehat{M}_V$ is equal to the whole space $\X$ and $M_V=S_\X $. In general the sets $M_T$ and $\widehat{M}_T$ are related in the following way.

\begin{lem}\label{lem:norm sub-sp.}
Let $\X$ be a Banach space and $T\in \mathcal{B}(\X)$. Then the following are equivalent:

\begin{enumerate}
\item[(i)] $span\{x,y\}\cap S_{\X} \subseteq M_T$ whenever $x,y\in M_T$;
\item[(ii)] $\widehat{M}_T$ is a linear subspace of $\X$.
\end{enumerate}
\end{lem}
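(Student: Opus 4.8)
The statement is an equivalence between a spanning-set condition on the norm attainment set $M_T$ and the assertion that $\widehat{M}_T$ is a linear subspace. The plan is to prove both implications directly from the definitions, using the homogeneity of the norm to pass freely between $M_T$ (unit vectors) and $\widehat{M}_T$ (the full cone of norm-attaining vectors).

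First I would record the elementary observation that $\widehat{M}_T$ is always closed under scalar multiplication: if $x\in\widehat{M}_T$ and $\alpha\in\mathbb{C}$, then $\|T(\alpha x)\|=|\alpha|\|Tx\|=|\alpha|\|T\|\|x\|=\|T\|\|\alpha x\|$, so $\alpha x\in\widehat{M}_T$. Thus $\widehat{M}_T$ is a linear subspace if and only if it is closed under addition, and the whole problem reduces to comparing ``$x,y\in\widehat{M}_T\implies x+y\in\widehat{M}_T$'' with condition (i). I would also note the bridge between the two sets: $x\in M_T$ iff $x\in\widehat{M}_T$ and $\|x\|=1$, and every nonzero $x\in\widehat{M}_T$ gives $x/\|x\|\in M_T$.

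For (ii)$\implies$(i): suppose $\widehat{M}_T$ is a subspace and take $x,y\in M_T\subseteq\widehat{M}_T$. Any element of $\mathrm{span}\{x,y\}\cap S_\X$ has the form $z=\alpha x+\beta y$ with $\|z\|=1$; since $\widehat{M}_T$ is a subspace, $z\in\widehat{M}_T$, and being of norm one it lies in $M_T$. For (i)$\implies$(ii): by the first paragraph it suffices to show $\widehat{M}_T$ is closed under addition. Take nonzero $x,y\in\widehat{M}_T$ (the cases where one is zero, or where $x+y=0$, are trivial since $\mathbf 0\in\widehat{M}_T$ always). Then $x/\|x\|,\,y/\|y\|\in M_T$, so by (i) every unit vector in $\mathrm{span}\{x/\|x\|,y/\|y\|\}=\mathrm{span}\{x,y\}$ lies in $M_T\subseteq\widehat{M}_T$; in particular, if $x+y\neq\mathbf 0$ then $(x+y)/\|x+y\|\in\widehat{M}_T$, and scaling back gives $x+y\in\widehat{M}_T$. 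Hence $\widehat{M}_T$ is closed under addition and scalar multiplication, so it is a linear subspace.

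I do not anticipate a genuine obstacle here; the only point requiring a little care is the bookkeeping of degenerate cases (zero vectors and the situation $\alpha x+\beta y=\mathbf 0$), where one must invoke $\mathbf 0\in\widehat{M}_T$ directly rather than normalizing. Everything else is a routine consequence of homogeneity of the norm and of the operator $T$.
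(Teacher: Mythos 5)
Your proof is correct; the paper simply declares the lemma ``Obvious'' and omits the argument, and what you have written is exactly the routine verification (homogeneity gives closure of $\widehat{M}_T$ under scalars, so everything reduces to closure under addition, handled by normalizing and invoking (i)). The degenerate cases you flag are handled properly, so nothing is missing.
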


\begin{proof}
Obvious.
\end{proof}

Theorem \ref{Thm:IPS & norm sub-sp.} and Lemma \ref{lem:norm sub-sp.} show that for every non-Hilbert real finite-dimensional Banach space $\X$, there is an operator $T \in \mathcal{B}(\X)$ such that $\widehat{M}_T$ is not a linear subspace of $\X$. However, it is evident that $\mathbf{0}\in \widehat{M}_T$ for every $T\in \mathcal{B}(\X)$ and $x\in \widehat{M}_T$ implies that $\alpha x\in \widehat{M}_T$ for all scalars $\alpha$. In the following example, we construct an operator $T$ on a complex Banach space for which $\widehat{M}_T$ is not a linear subspace. 

\begin{eg} \label{Example:new-0711}
Let $\mathbb{\X}= \left(\mathbb{C}^2, \|\cdot\|_1\right)$ over the field of complex numbers and choose $0< \lambda < 1$. Consider the operator $T: \X \to \X$ defined by $T(x,y) = (\lambda (x-y), 0)$ for all $(x,y)\in \X$. Then $\|T\| = \lambda$ and $e_1=(1,0)$, $e_2=(0,1)$ belong to $M_T$ but $\frac{e_1+e_2}{\|e_1 +e_2\|} \notin M_T$. Consequently, $\widehat{M}_T$ is not a subspace of $\X$.
\end{eg}

Note that Theorem \ref{thm:623} explicitly describes the subspace $\widehat{M}_T$ for every $T\in \mathcal{A}(\X)$. Despite the fact that $\widehat{M}_T$ is not a subspace in general, Theorem \ref{thm:623} also shows that there is a wide class of operators $T$ for which $\widehat{M}_T$ is a linear subspace of $\X$. Especially, if $T$ dilates to $\|T\|V$ for some isometry $V$ on a normed (or semi-normed) space, then $\widehat{M}_T$ is a subspace of $\X$. It follows from the dilation theorem due to Sz.-Nagy (see \cite[Ch. 1, Sec. 4, Theorem 4.1]{Nagy Foias}) that the class $\mathcal{A}(\X)$ coincides with $\mathcal{B}(\X)$ if $\X$ is a Hilbert space. 

\begin{lem}\label{lem:mod. norm. Hilbert}
Let $\mathcal{H}$ be a Hilbert space, and let $T\in \mathcal{B}(\mathcal{H})$ be arbitrary. Then $\widehat{A}_T$ defines a norm if $M_T = \emptyset$ and a semi-norm if $M_T \neq\emptyset$.
\end{lem}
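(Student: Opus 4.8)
The plan is to reduce the statement for $T$ on a Hilbert space $\mathcal H$ to the already-established dictionary between the function $A_T$ (here its rescaled version $\widehat A_T$) and the defect operator. Recall that for $S = T/\|T\|$ we have $\widehat A_T(x) = \|T\|\, A_S(x)$ for all $x\in\mathcal H$, so $\widehat A_T$ is a norm (respectively semi-norm) on $\mathcal H$ if and only if $A_S$ is a norm (respectively semi-norm) on $\mathcal H$. Moreover $M_T = M_S$, so $M_T = \emptyset$ iff $M_S = \emptyset$ iff $S$ is a strict contraction on $\mathcal H$ (since on a Hilbert space a norm-one operator fails to attain its norm precisely when $\|Sx\|<\|x\|$ for every nonzero $x$, while if $\|S\|=1$ it need not attain its norm, and if $\|S\|<1$ then $M_S=\emptyset$ trivially).

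First I would treat the case $M_T = \emptyset$. Here $S=T/\|T\|$ satisfies $\|Sx\| < \|x\|$ for all nonzero $x$, so $A_S(x) = (\|x\|^2 - \|Sx\|^2)^{1/2} > 0$ for every nonzero $x$. For positivity and the triangle inequality one uses the defect operator $D_S = (I - S^*S)^{1/2}$, which is a well-defined bounded linear operator on $\mathcal H$ with $\|D_S x\|^2 = \langle (I-S^*S)x, x\rangle = \|x\|^2 - \|Sx\|^2 = A_S(x)^2$. Thus $A_S(x) = \|D_S x\|$, and linearity of $D_S$ immediately gives homogeneity and the triangle inequality $A_S(x+y) = \|D_S(x+y)\| \le \|D_S x\| + \|D_S y\| = A_S(x) + A_S(y)$. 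Combined with strict positivity this shows $A_S$, hence $\widehat A_T$, is a genuine norm. This is exactly the argument already used in the Hilbert-space lemma earlier in Section \ref{sec:07}.

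Next I would treat the case $M_T \neq \emptyset$. Then $\|S\| = 1$ and $S$ attains its norm at some unit vector $x_0$, so $\|Sx_0\| = \|x_0\| = 1$ and hence $A_S(x_0) = 0$ with $x_0 \neq \mathbf 0$. So $A_S$ cannot be a norm, but I claim it is still a semi-norm. Again writing $A_S(x) = \|D_S x\|$ with $D_S = (I - S^*S)^{1/2}$ linear and bounded, homogeneity and the triangle inequality follow verbatim as above; only positivity-definiteness fails. Therefore $A_S$ is a semi-norm, and consequently $\widehat A_T = \|T\| A_S$ is a semi-norm on $\mathcal H$. Note that $\widehat A_T$ is genuinely not a norm, since $\widehat A_T(x_0) = \|T\|\,A_S(x_0) = 0$ for the nonzero vector $x_0$.

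The main point to be careful about is simply the equivalence ``$M_T = \emptyset \iff S$ is a strict contraction'' that pins down which of the two conclusions applies; everything else is the routine observation that on a Hilbert space the function $x \mapsto A_T(x)$ always equals $\|D_T x\|$ for the linear operator $D_T$, so the seminorm axioms other than definiteness are automatic, and definiteness holds precisely when $T$ (equivalently $S = T/\|T\|$) attains its norm at no nonzero vector. There is no real obstacle here: the lemma is essentially a bookkeeping corollary of the earlier Hilbert-space discussion, recorded to contrast with the Banach-space examples where $A_T$ can fail even the triangle inequality.
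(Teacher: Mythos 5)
Your proof is correct and is essentially the paper's own argument: pass to $S=T/\|T\|$, write $\widehat{A}_T(h)=\|T\|\,\|D_S h\|$ with $D_S=(I-S^*S)^{1/2}$ linear, so homogeneity and the triangle inequality are automatic and definiteness fails exactly at the vectors where $T$ attains its norm. One terminological slip worth fixing: $S$ is never a strict contraction (it has norm one), and a strict contraction can perfectly well attain its norm (e.g.\ $\tfrac12 I$), so the chain ``$M_S=\emptyset$ iff $S$ is a strict contraction'' is wrong as stated; what you actually need and use is only that $M_T=\emptyset$ if and only if $\|Sx\|<\|x\|$ for every nonzero $x$, which is correct.
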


\begin{proof}
Note that $T \neq \mathbf 0$ and let us consider the operator $S= \frac{T}{\|T\|}$. Then it is easy to see that $\widehat{A}_T(h) = \|T\| \|D_{S}(h)\|$ for all $h\in \mathcal{H}$, where $D_S=(I-T^*T)^{\frac{1}{2}}$. Therefore, the linearity of the operator $D_S$ shows that $\widehat{A}_T$ defines a norm if $M_T = \emptyset$ and a semi-norm if $M_T \neq \emptyset$.
\end{proof}

For an arbitrary Banach space $\X$, the class of operators $T\in \mathcal{B}(\X)$ for which $\widehat{M}_T$ is a linear subspace of $\X$, can be strictly larger than the class $\mathcal{A}(\X)$. In the following example, we find a Banach space operator $T$ for which $\widehat{M}_T$ is a linear subspace but $T\notin \mathcal{A}(\X)$.

\begin{eg}
Let $\X = (\mathbb{C}^3, \|\cdot\|_1)$, and choose $0< \lambda < 1$. Consider the operator $T: \X \to \X$ defined by $T(x,y,z) = (x, \lambda(y-z), 0)$. Then $\|T\| =1$ and $\widehat{M}_T = span\{(1,0,0)\}$. But $T\notin \mathcal{A}(\X)$, as the function $\widehat{A}_T$ does not satisfy the triangle inequality for the vectors $e_2 = (0,1,0)$ and $e_3=(0,0,1)$.
\end{eg}

We conclude this section with an example of an operator on a Banach space $\X$ that belongs to the class $\mathcal{A}(\X)$.

\begin{eg}
Let $\X$ be a complex Banach space and let $(\lambda_n)$ be a bounded sequence of complex numbers. Consider the operator $T_\lambda: \ell_2(\X) \to \ell_2(\X)$ defined by $T_\lambda(x_n) = (\lambda_n x_n)$. Then $\|T_\lambda\| = \displaystyle\sup_{n\in \mathbb{N}}\vert \lambda_n \vert$. So, for any $(x_n) \in \ell_2(\X)$ we have
\begin{align*}
 \left(\widehat{A}_{T_\lambda}(x_n) \right)^2 & = \|T_{\lambda}\|^2 \|(x_n)\|^2 - \|T_{\lambda}(x_n)\|^2 = \sum_{n=1}^\infty \|x_n\|^2 \left( \|T_\lambda\|^2 - \vert\lambda_n\vert ^2\right) = \|T_{\mu}(x_n)\|^2,
\end{align*}
where $\mu_n = \left( \|T_\lambda\|^2 - \vert\lambda_n\vert ^2\right)^{1\slash 2}$ for all $n\in \mathbb{N}$ and $T_\mu: \ell_2(\X) \to \ell_2(\X)$ is defined by $T_\mu(x_n) = (\mu_n x_n)$. Evidently, $\widehat{A}_{T_\lambda}$ defines a norm on $\ell_2(\X)$ if $\vert \lambda_k \vert \neq \displaystyle\sup_{n\in \mathbb{N}}\vert\lambda_n\vert$ for any $k\in \mathbb{N}$ and a semi-norm if $\vert \lambda_k \vert = \displaystyle \sup_{n\in \mathbb{N}} \vert\lambda_n\vert$ for some $k\in \mathbb{N}$. \qed
\end{eg}

%\vspace{0.1cm}

\section{Canonical decomposition of a Banach space contraction} \label{sec:06}

\vspace{0.3cm}

\noindent The canonical decomposition of a contraction $T$ on a Hilbert space $\mathcal{H}$ (see Chapter-I of \cite{Nagy Foias}), splits $T$ into two orthogonal parts of which one is a unitary and the other is a completely non-unitary (c.n.u.) contraction, i.e., $\mathcal{H}$ admits an orthogonal decomposition $\mathcal{H} = \mathcal{H}_0 \oplus_2 \mathcal{H}_1$ into reducing subspaces $\mathcal{H}_0$, $\mathcal{H}_1$ of $T$ such that $T|_{\mathcal{H}_0}$ is a unitary and $T|_{\mathcal{H}_1}$ is a c.n.u. contraction. The space $\mathcal{H}_0$ is given by
\[
  \mathcal{H}_0 = \{ h\in \mathcal{H} : \|T^n h\| = \| h\| = \|T^{*n}h\|, ~ n\in \mathbb{N}\}.
\]
Thus, $\mathcal{H}_0$ is the maximal reducing subspace of $T$ such that $T|_{\mathcal{H}_0}$ is a unitary. Note that the maximal invariant subspace on which $T$ is an isometry is the following:
\[
  \mathcal{H}(T) = \{ h\in \mathcal{H} : \|T^n h\| = \|h\|, ~ n\in \mathbb{N}\}.
\]
For a Banach space $\X$ and a contraction $T$ on $\X$, the space
\begin{equation}\label{Max. isometry subspace}
  \X(T) = \{ h\in \mathcal{H} : \|T^n h\| = \|h\|, \ n\in \mathbb N \}
\end{equation}
is always a closed subset of $\X$ but may not be a linear subspace as the following example shows.

\begin{eg}
Consider the Banach space $\X = \left(\mathbb{C}^3, \|\cdot \|_1 \right)$. Then, every $(x,y,z) \in \X$ has the norm $|x|+|y|+|z|$. Consider the operator $T: \X \to \X$ defined by $T(x,y,z) = (x-y, 0, z)$ for all $(x,y,z)\in \X$. Then it is obvious that $T^n = T$ for all $n\geq 1$. Thus, $\X(T) = \{a\in \X : \|Ta\| =\|a\| \}$. This is not a linear subspace because $e_1=(1,0,0)$, $e_2 = (0,1,0)$ belong to $\X(T)$ but $(e_1+e_2) \notin \X(T)$.
\end{eg}

Despite the fact that $\X(T)$ is not a closed (linear) subspace of a Banach space $\X$ in general, there is a wide class of contractions $T\in \mathcal{B}(\X)$ for which $\X(T)$ is a closed subspace. Indeed, if the function $A_T$ (as in (\ref{Equivalent norm function})) gives a semi-norm on $\X$, then $\X(T)$ becomes a closed subspace as the following result explains.

\begin{lem}
Let $\X$ be a Banach space and $T\in \mathcal{B}(\X)$ be a contraction. If $A_T$ $($as in $(\ref{Equivalent norm function}))$ defines a semi-norm on $\X$, then $\X(T)$ (as in $(\ref{Max. isometry subspace})$) is a closed subspace of $\X$.
\end{lem}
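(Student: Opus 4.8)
The plan is to realize $\X(T)$ as a countable intersection of preimages, under the iterates of $T$, of one fixed closed subspace, and then invoke the elementary fact that such an intersection is again a closed subspace. First I would record that, since $T$ is a contraction, $\|Tx\|\le\|x\|$ for all $x$, so the expression in $(\ref{Equivalent norm function})$ is well defined, non-negative, and continuous on $\X$, and $A_T(h)=0$ if and only if $\|Th\|=\|h\|$. Using the hypothesis that $A_T$ is a semi-norm on $\X$, its null set
\[
 N:=\{h\in\X:\ A_T(h)=0\}
\]
is a linear subspace: absolute homogeneity gives $\alpha N\subseteq N$ for every scalar $\alpha$, and the triangle inequality gives $A_T(h_1+h_2)\le A_T(h_1)+A_T(h_2)=0$ whenever $h_1,h_2\in N$. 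Continuity of $A_T$ then forces $N$ to be closed.

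The key step — which I expect to be the only substantive point — is the identity
\[
 \X(T)=\bigcap_{n=0}^{\infty}\,(T^n)^{-1}(N).
\]
Indeed, $h\in(T^n)^{-1}(N)$ means $A_T(T^nh)=0$, i.e. $\|T^{n+1}h\|=\|T^nh\|$; so $h$ lies in the intersection precisely when $\|h\|=\|Th\|=\|T^2h\|=\cdots$, which telescopes to $\|T^nh\|=\|h\|$ for all $n\ge1$, that is, $h\in\X(T)$ as in $(\ref{Max. isometry subspace})$; the converse inclusion is immediate. I would stress that this reformulation is genuinely needed: the subspace $N$ itself need not be $T$-invariant, as the nilpotent map $T(x_1,x_2)=(x_2,0)$ on $\mathbb{C}^2$ illustrates (there $A_T(x)=|x_1|$ is a semi-norm, yet $T(N)\not\subseteq N$), so one cannot simply identify $\X(T)$ with $N$.

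Finally, each $T^n\colon\X\to\X$ is a bounded linear operator, hence continuous and linear, so $(T^n)^{-1}(N)$ is the preimage of a closed linear subspace under a continuous linear map and is therefore itself a closed linear subspace of $\X$. An arbitrary intersection of closed linear subspaces is again a closed linear subspace, and consequently $\X(T)$ is a closed subspace of $\X$, completing the proof. No delicate estimates are involved; the only thing requiring care is the bookkeeping in the displayed identity.
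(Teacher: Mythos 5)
Your proof is correct, and it takes a genuinely different and more elementary route than the paper's. The paper invokes the isometric dilation $V$ on $\mathcal{K}=\ell_2(\X\oplus_2\X_0)$ built from the semi-norm $A_T$ (the construction of Theorem \ref{Necessary and sufficient condition for a contraction to be dilated 1}, extended to the semi-normed case), and identifies each level set $\X_n=\{x\in\X:\|T^nx\|=\|x\|\}$ with $\ker\bigl((I-P_{_{W(\X)}})V^nW\bigr)$, a kernel of a bounded operator whose norm computes $\left(\|x\|^2-\|T^nx\|^2\right)^{1/2}$; closedness and linearity of $\X(T)=\bigcap_{n\geq 1}\X_n$ then follow. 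You instead work directly with the null space $N=\{h:A_T(h)=0\}$ of the semi-norm, which is a linear subspace by homogeneity and the triangle inequality and is closed by continuity of $A_T$ (indeed $A_T\le\|\cdot\|$ gives a Lipschitz bound), and you observe that $\X(T)=\bigcap_{n\ge0}(T^n)^{-1}(N)$, since $A_T(T^nh)=0$ says exactly $\|T^{n+1}h\|=\|T^nh\|$ and the chain of equalities telescopes. The identity is right, and it is consistent with the paper's: combining your conditions for $0\le k\le n-1$ recovers exactly the paper's $\X_n$, because $\|T^nx\|\le\|T^{n-1}x\|\le\cdots\le\|x\|$ forces all intermediate equalities once the endpoints agree. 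The remaining steps are the standard facts that preimages of closed subspaces under bounded linear maps are closed subspaces and that intersections of closed subspaces are closed subspaces. What the paper's approach buys is continuity with the dilation-theoretic theme of the section (the same kernel description reappears in Theorem \ref{thm:623}); what yours buys is self-containment, needing nothing beyond the semi-norm hypothesis itself. Your aside that $N$ need not be $T$-invariant is correct and usefully explains why one must intersect preimages rather than identify $\X(T)$ with $N$.
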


\begin{proof}
Consider the semi-normed linear space $\X_0 = (\X, A_T)$ and $\mathcal{K} = \ell_2(\X \oplus_2 \X_0)$. Then the map $W: \X \to \mathcal{K}$ defined by
\[
  W(x) = ((x,\mathbf{0}), \mathbf{0}, \mathbf{0}, \dotsc, ), \qquad x\in \X
\]
is an isometric embedding of $\X$ into $\mathcal{K}$ and the map $V: \mathcal{K} \to \mathcal{K}$ as in (\ref{eqn:new-061}) (see the proof of Theorem \ref{Necessary and sufficient condition for a contraction to be dilated 1}) is an isometric dilation of $T$. Thus, for every $n\geq 1$, we have
\[
  \X_n = \{x\in \X : \|T^n x\| = \|x\|\} = \ker\left(\left( I- P_{_{W(\X)}} \right) V^n W \right),
\]
which shows that $\X_n$ is a closed subspace of $\X$. Consequently, $\X(T) = \cap_{n=1}^\infty \X_n$ is a closed subspace of $\X$ and the proof is complete.
\end{proof}

We denote the class of contractions $T$ on a Banach space $\X$ such that $\X(T)$ is a closed subspace by
\[
  \mathcal{F}(\X) =\{ T\in \mathcal{B}(\X) : \X(T) \text{ is a closed subspace of }\X\}.
\]
Interestingly, the class $\mathcal F(\X)$ is strictly bigger than the class for which $A_T$ defines a semi-norm. In the following example, we find a Banach space contraction $T$ for which $A_T$ does not define a semi-norm yet $\X(T)$ is a closed subspace.

\begin{eg}
Let $\X = (\mathbb{C}^3, \|\cdot\|_1)$, and choose $0< \lambda < 1$. Consider the operator $T: \X \to \X$ defined by $T(x,y,z) = (x, \lambda(y-z), 0)$. Then for all $n\geq 1$, $T^n(x,y,z) = (x, \lambda^n (y-z), 0)$ for all $(x,y,z)\in \X$. Therefore, $\X(T) = span\{(1,0,0)\}$ is a closed subspace but the function $A_T$ does not define a semi-norm on $\X$, since it does not satisfy the triangle inequality for the vectors $(0,1,0)$ and $(0,0,1)$. 
\end{eg}

To go parallel with the Hilbert space theory, we now define completely non-unitary (c.n.u.) contraction on a Banach space.
\begin{defn}
A contraction $T$ on a Banach space $\mathbb{X}$ is said to be \textit{completely non-unitary} or simply \textit{c.n.u.} if there is no nonzero invariant subspace $\mathbb{Y}$ of $\mathbb{X}$ such that $T|_\mathbb{Y}: \mathbb{Y}\to\mathbb{Y}$ is a unitary.
\end{defn}

Note that if $\X$ is a Hilbert space and $T|_\Y:\Y\to \Y$ is a unitary for some closed subspace $\Y$ of $\X$, then $\Y$ is also a reducing subspace for $T$. Thus, our definition of a c.n.u. contraction on a Banach space generalizes that of a c.n.u. Hilbert space contraction. Now, we present a canonical decomposition for a Banach space contraction that belongs to the class $\mathcal{F}(\X)$.

\begin{thm}\label{Canonical Decomposition Banach space}
Let $T$ be a contraction on a Banach space $\mathbb{X}$ and let $\X(T)$ as in $(\ref{Max. isometry subspace})$ be a linear subspace of $\X$. If $T|_{\X(T)}$ is a Wold isometry with right-complemented range, then there is a unique closed subspace $\mathbb{W}$ of $\X$ such that 
\begin{itemize}
    \item[(a)] $\mathbb{W}$ is invariant under $T$ ;
    
    \smallskip
    
    \item[(b)] $T|_{\mathbb{W}}:\mathbb{W}\to \mathbb{W}$ is a unitary ; 
        
    \smallskip

    \item[(c)] if $\mathbb{M}$ is an invariant subspace of $T$ for which $T|_{\mathbb{M}}: \mathbb{M}\to \mathbb{M}$ is a unitary, then $\mathbb{M}\subseteq \mathbb{W}$.
\end{itemize} 

In addition, if $\mathbb{W}$ is smooth and $\mathbb{X} = \mathbb{W}\bigoplus_\SR \mathbb{W}'$, then $T|_{\mathbb{W}'}$ is a completely non-unitary contraction.
\end{thm}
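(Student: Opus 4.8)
The plan is to take $\mathbb{W}$ to be the unitary part of the Wold decomposition of the isometry $T|_{\X(T)}$, read off properties (a)--(c) from that decomposition, and then deduce the completely non-unitary statement from the maximality in (c).

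First I would check that $\X(T)$ is itself invariant under $T$ and that $T|_{\X(T)}$ is an isometry: if $x\in\X(T)$ then $\|T^n(Tx)\|=\|T^{n+1}x\|=\|x\|=\|Tx\|$ for every $n$, so $Tx\in\X(T)$. By hypothesis $T|_{\X(T)}$ is a Wold isometry with right-complemented range, so $\X(T)=\X_1\oplus\X_2$ with $\X_1=\bigcap_{n\ge 0}T^n(\X(T))$, with $T|_{\X_1}$ a unitary and $T|_{\X_2}$ a pure isometry (in fact a unilateral shift, by Lemma \ref{lem:new-041}). Set $\mathbb{W}:=\X_1$. Then (a) and (b) are immediate. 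For (c), if $\mathbb{M}$ is a closed $T$-invariant subspace with $T|_{\mathbb{M}}$ a unitary, then every $m\in\mathbb{M}$ satisfies $\|T^n m\|=\|m\|$, so $\mathbb{M}\subseteq\X(T)$; surjectivity of $T|_{\mathbb{M}}$ gives $T^n\mathbb{M}=\mathbb{M}$ for all $n$, whence $\mathbb{M}=\bigcap_n T^n\mathbb{M}\subseteq\bigcap_n T^n(\X(T))=\mathbb{W}$. Uniqueness of $\mathbb{W}$ then follows formally: any two subspaces satisfying (a)--(c) each contain the other by (c).

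For the final assertion, assume $\mathbb{W}$ is smooth and $\X=\mathbb{W}\oplus_\SR\mathbb{W}'$, let $P$ be the norm-one projection of $\X$ onto $\mathbb{W}$ along $\mathbb{W}'$ (Proposition \ref{Range and kernel of norm one Projection}), and put $U:=T|_{\mathbb{W}}$, a surjective isometry of $\mathbb{W}$. The crucial step is to prove that $\mathbb{W}'$ is invariant under $T$. I would do this by introducing $\Gamma:=U^{-1}PT:\X\to\mathbb{W}$; since $T$ maps $\mathbb{W}$ into $\mathbb{W}$ one checks $\Gamma|_{\mathbb{W}}=\mathrm{id}_{\mathbb{W}}$, so $\Gamma$ is a projection onto $\mathbb{W}$, and $\|\Gamma\|\le\|U^{-1}\|\,\|P\|\,\|T\|\le 1$, so $\Gamma$ is a norm-one projection onto $\mathbb{W}$ with $\ker\Gamma=T^{-1}(\mathbb{W}')$. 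Consequently $T^{-1}(\mathbb{W}')$ is again a closed vector-space complement of $\mathbb{W}$ with $\mathbb{W}\perp_B T^{-1}(\mathbb{W}')$. Using the smoothness of $\mathbb{W}$ to pin down the $\perp_B$-right-complement of $\mathbb{W}$ (equivalently, to show the two norm-one projections $P$ and $\Gamma$ onto $\mathbb{W}$ agree, via an analysis of support functionals at points of $\mathbb{W}$), I would conclude $\mathbb{W}'=T^{-1}(\mathbb{W}')$, i.e.\ $T\mathbb{W}'\subseteq\mathbb{W}'$. Granting this, $T|_{\mathbb{W}'}$ is a well-defined contraction, and it is completely non-unitary: any closed $T|_{\mathbb{W}'}$-invariant subspace $\mathbb{Y}\subseteq\mathbb{W}'$ on which $T$ restricts to a unitary is $T$-invariant, hence $\mathbb{Y}\subseteq\mathbb{W}$ by (c), and $\mathbb{W}\cap\mathbb{W}'=\{\mathbf 0\}$ forces $\mathbb{Y}=\{\mathbf 0\}$.

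The step I expect to be the main obstacle is precisely the invariance of $\mathbb{W}'$, that is, the identification $\mathbb{W}'=T^{-1}(\mathbb{W}')=\ker\Gamma$ of two $\perp_B$-right-complements of $\mathbb{W}$: in general a right-complement need not be unique (Example \ref{exm:new-001}), so the smoothness hypothesis on $\mathbb{W}$ has to be invoked carefully here. Everything else is either bookkeeping with the Wold decomposition of $T|_{\X(T)}$ or the short maximality argument above.
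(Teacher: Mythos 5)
Your proposal is correct, and for the construction of $\mathbb{W}$ and the verification of (a)--(c) it is essentially the paper's argument: the paper also takes $\mathbb{W}$ to be (in effect) the unitary part $\bigcap_{n\ge 0}T^n(\X(T))$ of the Wold decomposition of $T|_{\X(T)}$ — its ostensibly smaller set $\bigl(\bigcap_n T^n(\X(T))\bigr)\cap\bigl(\bigcap_n A^n(\X(T))\bigr)$, with $A$ the left inverse from Theorem \ref{Left Inverse of Wold Isometry}, coincides with yours because $A$ is surjective — and its proof of maximality is your argument phrased through $A$. Where you genuinely diverge is the last step, the invariance $T\mathbb{W}'\subseteq\mathbb{W}'$. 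The paper works directly with support functionals: smoothness of $\mathbb{W}$ gives $J(w)=\{T^\times f_{Tw}\}$, whence $f_{Tw}(Tw')=0$ for all $w'\in\mathbb{W}'$, so $\mathbb{W}\perp_B T\mathbb{W}'$; writing $Tw'=u+v$ with $u\in\mathbb{W}$, $v\in\mathbb{W}'$, right-additivity of $\perp_B$ at the smooth point $u$ forces $u\perp_B u$, hence $u=\mathbf{0}$. Your route instead manufactures a second norm-one projection $\Gamma=U^{-1}PT$ onto $\mathbb{W}$ with kernel $T^{-1}(\mathbb{W}')$ and appeals to uniqueness of the right complement of a smooth subspace. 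That uniqueness is exactly the same right-additivity computation (if $\mathbb{W}\perp_B\Y_1$ and $\mathbb{W}\perp_B\Y_2$ with both complements, decompose $y_1=w+y_2$ and conclude $w\perp_B w$), so the obstacle you flag is real but closes with the tool the paper already uses; your packaging is arguably cleaner and makes the role of the norm-one projections explicit, at the cost of needing the uniqueness lemma to be stated and proved rather than folded into the orthogonality computation. Everything else (uniqueness of $\mathbb{W}$, the c.n.u. conclusion from (c) and $\mathbb{W}\cap\mathbb{W}'=\{\mathbf{0}\}$) matches the paper.
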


\begin{proof}
Since the operator $\widetilde{T}:= T|_{\X(T)}:\X(T)\to \X(T)$ is a Wold isometry with right-complemented range, there are subspaces $\X_1,\X_2$ of $\X(T)$ that are invariant under $T$ such that $\X(T) =\X_1\oplus \X_2$, and $T|_{\X_1}$ is a unitary whereas $T|_{\X_2}$ is a unilateral shift by Lemma \ref{lem:new-041}. By Theorem \ref{Left Inverse of Wold Isometry}, $\widetilde{T}$ has a left inverse, say $A:\X(T)\to \X(T)$. Let $T_i:=T|_{\X_i}: \X_i \to {\X_i}$ ($i=1,2$). Suppose, $\widetilde{T}_2$ is the left inverse of $T_2$ as in Theorem \ref{Left Inverse of Wold Isometry}. Then with respect to the decomposition $\X(T)=\X_1\oplus \X_2$, $\widetilde{T}$ and its left inverse $A$ can be written as
\[
\widetilde{T}=\begin{bmatrix}
				T_1 & 0 \\
				0 & T_2  \\
			\end{bmatrix} , ~ 
            A=\begin{bmatrix}
				T_1^{-1} & 0 \\
				0 & \widetilde{T_2} \\
			\end{bmatrix}.
   \]
Evidently, 
\begin{equation}\label{Equation 4}
    A\widetilde{T}=\begin{bmatrix}
				I_{\X_1} & 0 \\
				0 & I_{\X_2} \\
			\end{bmatrix}=I_{\X(T)}\quad \mathrm{and} \quad \widetilde{T}A=\begin{bmatrix}
				I_{\X_1} & 0 \\
				0 & T_2\widetilde{T_2} \\
			\end{bmatrix}.
\end{equation}
Set
\[
 \mathbb{W} = \left(\bigcap_{n= 0}^{\infty}T^n(\X(T))\right)\bigcap\left(\bigcap_{n= 0}^{\infty}A^n(\X(T))\right).
\]
Now, we prove the statements (a), (b), (c) of the theorem.

\medskip

\noindent {\bf (a)} Let $x\in \mathbb{W}$ be arbitrary. Then for each $n\in\mathbb{N},$ there exist $x_n, y_n$ in $\X(T)$ such that 
$
T^nx_n=x=A^ny_n.
$
Therefore,
\[
Tx = T^{n+1}x_n = T^n(Tx_n),\quad n\geq 1,
\]
 and
\[
 Tx=A^nT^n(Tx)=A^n(T^{n+1}x),\quad n\geq 1.
\] Thus, $Tx\in \mathbb{W}$ and consequently $\mathbb{W}$ is invariant under $T$.

\medskip

\noindent {\bf (b)} Since $\widetilde T=T|_{\X(T)}$ is an isometry and since $\mathbb{W}\subseteq\X(T)$ is invariant under $T$, we have that $T|_\mathbb{W}:\mathbb{W}\to \mathbb{W}$ is also an isometry. Let $x\in \mathbb{W}$ be arbitrary, and $x_n,~y_n$ be as in part-{\bf (a)}. Then by (\ref{Equation 4}), we have
\[
\|Ax\|  = \|ATx_1\|  = \|x_1\| = \|Tx_1\|    = \|x\|.     
\]
Therefore, $A$ is an isometry. Also, for all $n \in \mathbb N$ we have that
$
  Ax=A^{n+1}y_n$ and 
\[
  Ax=A(T^nx_n)=AT(T^{n-1}x_n)=T^{n-1}x_n.
\]
Consequently, $Ax\in \mathbb{W}$ and $\mathbb{W}$ is invariant under $A$. Moreover,
\[
TAx=TATx_1=T(ATx_1)=Tx_1=x=A\widetilde{T}x = ATx.
\]
Therefore, $A$ and $T$ are unitaries on $\mathbb{W}$ and they are inverse of each other.

\medskip

\noindent {\bf (c)} Let $\mathbb{M}$ be a subspace of $\X$ satisfying the stated conditions. Consider the operator $S:=T|_\mathbb{M}:\mathbb{M}\to \mathbb{M}.$ Since $\mathbb{M}$ is invariant under $T$, it follows that
$
S^nx =T^nx$ for all $n\geq 1$ and $x\in \mathbb{M}$.
Also, for all $n \geq 0$ and for all $x \in \mathbb M$, we have
$
  \|S^n(x)\|=\|T^n(x)\|=\|x\|$.
Thus, $\mathbb{M}\subseteq \X(T)$. Let $x\in \mathbb{M}$ be arbitrary. It follows from the surjectivity of $S^n$ that for every $n\in\mathbb{N},$ there is $x_n\in \mathbb{M}$ such that $ S^nx_n=x$. Therefore, for $n \geq 1$ we have $x=S^n(x_n)=T^n(x_n)$ and $x = A^n T^nx = A^ny_n$, where $y_n=T^nx\in \X(T)$. Consequently, $x\in \mathbb{W}$ and $\mathbb{M}\subseteq \mathbb{W}$. If there is another subspace $\widetilde{\mathbb{W}}$ satisfying (a), (b), (c) of the statement, then it follows from (c) that $\mathbb{W}=\widetilde{\mathbb{W}}$.

\medskip

 In addition, if $\mathbb{W}$ is smooth, then $J(w)=\{T^\times f_{Tw}\}$ for every nonzero $w\in \mathbb{W}$. Indeed, for any $x\in \X$ we have
\[
 \left|T^\times f_{Tw}(x)\right|\leq \|Tx\|\|Tw\| \leq \|w\|\|x\|,
\]
and for $x=w$ it reduces to $
 f_{Tw}(Tw) = \|Tw\|^2 =\|w\|^2$.
Thus, $J(w) =\{T^\times f_{Tw}\}$ by the smoothness of $\mathbb{W}$. Whenever $\X=\mathbb{W}\bigoplus_\SR \mathbb{W}'$, by virtue of $T\mathbb{W} = \mathbb{W}$ and $\mathbb{W}\perp_B \mathbb{W}'$, we have 
$
 T^\times f_{Tw}(w') = 0$ for all $w\in \mathbb{W}'$.
This shows that $\mathbb{W}\perp_B T\mathbb{W}'$. If $Tw'=u+v$ for $u\in \mathbb{W}$ and $w'\in \mathbb{W}'$, then we have $u\perp_B (u+v)$ and $u\perp_B v$. Since by hypothesis $u$ is smooth, it follows from the right-additivity of Birkhoff-James orthogonality that $u=\mathbf{0}$ and hence $T\mathbb{W}'\subseteq \mathbb{W}'$. Consequently, $T|_{\mathbb{W}'}$ is completely non-unitary contraction by virtue of (c). This completes the proof.
\end{proof}

Theorem \ref{Canonical Decomposition Banach space} generalizes the canonical decomposition of a Hilbert space contraction. The subspace $\mathbb{W}$ of $\X$ as in Theorem \ref{Canonical Decomposition Banach space} is the maximal invariant subspace of $T$ such that $T|_{\mathbb W}$ is a unitary. Thus, $T|_{\mathbb W}$ is called the \textit{unitary part} of $T$. Let us have an explicit example of a Banach space contraction and see its canonical decomposition.

\begin{eg}\label{Eg: Canonical Decomposition}
For $1<p< \infty$ and $p\neq 2$, let $\X=\ell_p$. Consider the operator $T:\X\to \X$ defined by
\begin{align*}
& T(x_0,x_1,x_2,x_3,x_4,x_5,x_6,x_7,x_8,x_9,x_{10},x_{11},x_{12},x_{13}, \dots)\\
& = \left(\frac{1}{2}x_0,x_1,\frac{1}{2^2}x_2,0,\frac{1}{2^3}x_4, x_5,\frac{1}{2^4}x_6, x_3, \frac{1}{2^5}x_8,x_9, \frac{1}{2^6}x_{10},x_7,\frac{1}{2^7}x_{12}, x_{13}, \dots  \right).
\end{align*}
Evidently, $T$ is a contraction. For any non-negative integer $k$, let $e_k$ denote the vector having $1$ at $k$-th coordinate and zero elsewhere. It follows from the definition of $T$ that
\[
T(e_{2k}) = \frac{1}{2^{k+1}}e_{2k}, ~ T(e_{4k+1}) = e_{4k+1},~ T(e_{4k+3}) = e_{4k+7}, \qquad k\geq 0.
\]
Also,
\[
 T^n (e_{2k}) = \frac{1}{2^{n(k+1)}}e_{2k},~T^n (e_{4k+1}) = e_{4k+1},~T^n(e_{4k+3}) = e_{4(n+k)+3}, \qquad n\geq 1, \qquad k \geq 0.
\]
Consider the collection
\[
 \X(T)= \left\{(x_0,x_1, \dots)\in \X:~\|T^n (x_0,x_1,\dots)\| = \|(x_0,x_1,\dots)\|,~n\geq 0\right\}.
\]
Then $\X(T)$ is a closed subspace of $\X$ and
$
 \X(T) =\{(x_0,x_1, \dots)\in \X: x_{2n}=0, ~n\geq 0\}.
$
Note that
\[
 T(\X(T))=\{(x_0,x_1, \dots)\in \X: x_{2n}=0,~x_3=0,~n\geq 0\}\subseteq \X(T).
\]
Thus, $\X(T)$ is invariant under $T$ and $T|_{\X(T)}$ is an isometry. The range of $T|_{\X(T)}$ is right-complemented by the subspace $\mathcal{L}=\{(0,0,0,x,0,0,,\dots):~x\in \mathbb{C}\}$. Therefore, $T|_{\X(T)}$ is a Wold isometry with right-complemented range and the hypotheses of Theorem \ref{Canonical Decomposition Banach space} are satisfied. Thus, there is a subspace $\mathbb{W}$ in $\X$ satisfying (a), (b), (c) of Theorem \ref{Canonical Decomposition Banach space}. It is not difficult to see the subspace $\mathbb{W}$ for this example is of the form
\[
 \mathbb{W}= \{(x_0,x_1, \dots)\in \X: x_{2n}=x_{4n+3}=0,~n\geq 0\}.
\]
In addition, $\mathbb{W}$ is smooth and
\[
 \mathbb{W}' = \{(x_0,x_1,\dots)\in \X: x_{4n+1}=0,~ n\geq 0\}
\]
is the right-complement of $\mathbb{W}$ in $\X$. Also, $T|_{\mathbb{W}'}$ is a completely non-unitary contraction. \qed
\end{eg}

In \cite{Levan}, Levan took a next step to decompose the c.n.u. part of a Hilbert space contraction. In this connection, we mention that a contraction $T$ on a Hilbert space $\mathcal{H}$ is said to be \emph{completely non-isometry} or simply \textit{c.n.i.} if there is no nonzero invariant subspace $\mathcal{H}_0 \subseteq \mathcal{H}$ of $T$ such that $T|_{\mathcal{H}_0}:{\mathcal{H}_0}\to{\mathcal{H}_0}$ is an isometry. Levan's decomposition of a c.n.u. Hilbert space contraction $T$ splits $T$ further into two orthogonal parts of which one is a c.n.i. contraction and the other is a unilateral shift. We adopt the same definition for a c.n.i. contraction in the Banach space setting. Below we see that a c.n.u. Banach space contraction can also admit a Levan-type decomposition under certain assumptions. Interestingly, these assumptions are automatic if the underlying Banach space is a Hilbert space.

\medskip

Suppose that $T$ is a contraction on a Banach space $\X$ satisfying the hypotheses of Theorem \ref{Canonical Decomposition Banach space} and that the maximal invariant subspace $\mathbb{W}$ on which $T$ is a unitary is smooth and right-complemented in $\X$. Consider the collection 
\[
 \mathfrak{W}=\left\{ \widetilde{\Y}\subseteq \X(T): T(\widetilde{\Y})\subseteq \widetilde{\Y},~ \X= \widetilde{\Y}\oplus \widetilde{\Y}_1,~T(\widetilde{\Y}_1)\subseteq \widetilde{\Y}_1 \text{ for some closed subspace } \widetilde{\Y}_1 \text{ in } \X\right\},
\]
where $\X(T) \subseteq \X$ is as in (\ref{Max. isometry subspace}) and $\X(T) \supseteq \mathbb{W}$. Then the collection $\mathfrak{W}$ is nonempty, since $\mathbb{W}\in \mathfrak{W}.$ Then by Zorn's Lemma, $\mathfrak{W}$ has a maximal element, say $\widetilde{\mathbb{W}}$. Therefore, $\X$ can be decomposed as $\X=\widetilde{\mathbb{W}}\oplus \mathbb{W}'.$ Evidently, $T|_{\widetilde{\mathbb{W}}}:\widetilde{\mathbb{W}}\to\widetilde{\mathbb{W}}$ is an isometry. If $T|_{\widetilde{\mathbb{W}}}$ is a Wold isometry with right-complemented range, then there exist subspaces $\mathbb{W}_1$ and $\mathbb{W}_2$ of $\widetilde{\mathbb{W}}$ such that $\widetilde{\mathbb{W}}= \mathbb{W}_1\oplus \mathbb{W}_2$, where $T|_{\mathbb{W}_1}:\mathbb{W}_1\to \mathbb{W}_1$ is a unitary and $T|_{\mathbb{W}_2}:\mathbb{W}_2\to \mathbb{W}_2$ is a unilateral shift by Lemma \ref{lem:new-041}. Altogether, we have 
\begin{equation}\label{Levan Decomposition in Banach space}
 \X = \left(\mathbb{W}_1 \oplus \mathbb{W}_2\right) \oplus \mathbb{W}'
\end{equation}
where $\mathbb{W}'$ is invariant under $T$ and $T|_{\mathbb{W}'}:\mathbb{W}'\to \mathbb{W}'$ is a completely non-isometry contraction. We end this Section with the following example that illustrates Levan-type decomposition of a c.n.u. Banach space contraction.
 
\begin{eg}
We consider the c.n.u. part of the contraction $T$ as in Example \ref{Eg: Canonical Decomposition}. Then 
\[
 \X(T) =\{(x_0,x_1, \dots)\in \X:~ x_{2n}=0, ~n\geq 0\}
\]
is a subspace of $\X$. So, the maximal element $\widetilde{\mathbb{W}}$ of $\mathfrak{W}$ is $\X(T)$ itself. It also follows from Example \ref{Eg: Canonical Decomposition} that $T|_{\X(T)}:\X(T)\to \X(T)$ is a Wold isometry. Naturally, $\X$ admits a decomposition as in (\ref{Levan Decomposition in Banach space}). \qed
\end{eg}

\vspace{0.2cm}

\noindent \textbf{Concluding remark.} We conclude this article here. The fact that a Banach space contraction may not always dilate to an isometry gives rise to several questions. For example, if we can characterize the Banach space contractions that dilate to isometries in terms of spectral sets for those contractions. A sequel of this article will appear shortly where we shall answer such questions. Our main focus will be on exploring the relation between spectral set and isometric dilation of a Banach space contraction.

\vspace{0.3cm}

\noindent \textbf{Acknowledgement.}  The first named author is supported by the ``Prime Minister's Research Fellowship (PMRF)" with Award No. PMRF-1302045. The second named author is supported by ``Core Research Grant" of Science and Engineering Research Board (SERB), Govt. of India, with Grant No. CRG/2023/005223 and the ``Early Research Achiever Award Grant" of IIT Bombay with Grant No. RI/0220-10001427-001. The third named author thanks IIT Bombay for providing the ``Institute Postdoctoral Fellowship (IPDF)" during the course of the paper.

\vspace{0.2cm}

\end{document}